\numberwithin{equation}{section}
\newtheorem{remark}{Remark}[section]
\newtheorem{defn}{Definition}[section]
\newtheorem{proposition}{Proposition}[section]
\newcommand{\nbb}{\nabla_\Omega}
\newcommand{\bs}[1]{\boldsymbol{#1}}
\newcommand{\R}{\mathbb R}
\newcommand{\M}{\mathcal M}
\newcommand{\ra}{\rightarrow}
\newcommand{\dint}[1]{\int_0^T\int_\Omega{#1}dxdt}
\newcommand{\intlr}[1]{\left({#1}\right)_{\Omega_{t}}}
\newcommand{\intlrh}[1]{\left({#1}\right)_{\Omega_{t,h}}}
\newcommand{\lrangle}[1]{\left\langle{#1}\right\rangle}
\newcommand{\kpo}{{k+1}}
\newcommand{\D}{\mathcal{D}}
\newcommand{\ih}{{i,h}}
\newcommand{\su}{\sigma_u}
\newcommand{\sph}{\sigma_{\phi}}
\begin{document}
\title[MFC Barycenter]
{Efficient Computation of Mean field Control based Barycenters from Reaction-Diffusion Systems}
%{Efficient Computation of Higher Dimensional Wasserstein barycenters with Reaction-Diffusion Systems via Mean Field Control and High-Order Finite Elements}
% Alternate title
%\title{Optimal Transport-Based 3D Barycenter Computation with Reaction-Diffusion Dynamics: A High-Order Finite Element Approach}

\author[Vijaywargiya]{Arjun Vijaywargiya}
\author[Fu]{Guosheng Fu}
\address{Department of Applied and Computational Mathematics and
Statistics, University of Notre Dame, USA.}
\email{avijaywa@nd.edu, gfu@nd.edu}
\author[Osher]{Stanley Osher}
\email{sjo@math.ucla.edu}
\address{Department of Mathematics, University of California, Los Angeles}
\author[Li]{Wuchen Li}
\email{wuchen@mailbox.sc.edu}
\address{Department of Mathematics, University of South Carolina}
% \thanks{This research is partially supported by NSF grant DMS-2012031. }
\keywords{Mean field control; Barycenter; Finite element methods; Reaction-diffusion systems.}
\subjclass{65N30, 65N12, 76S05, 76D07}
\begin{abstract}
    We develop a class of barycenter problems based on 
 mean field control problems in three dimensions with associated reactive-diffusion systems of unnormalized multi-species densities. This problem is the generalization of the Wasserstein barycenter problem for single probability density functions. The primary objective is to present a comprehensive framework for efficiently computing the proposed variational problem: generalized Benamou-Brenier formulas with multiple input density vectors as boundary conditions. Our approach involves the utilization of high-order finite element discretizations of the spacetime domain to achieve improved accuracy. The discrete optimization problem is then solved using the primal-dual hybrid gradient (PDHG) algorithm, a first-order optimization method for effectively addressing a wide range of constrained optimization problems. The efficacy and robustness of our proposed framework are illustrated through several numerical examples in three dimensions, such as the computation of the barycenter of multi-density systems consisting of Gaussian distributions and reactive-diffusive multi-density systems involving 3D voxel densities. Additional examples highlighting computations on 2D embedded surfaces are also provided.
\end{abstract}
\maketitle

\section{Introduction}
Interpolating and averaging multiple unnormalized densities is an essential problem in computer vision, data science, Bayesian computation, and scientific computing \cite{cot, solomon:hal-01188953,Carlier15}. The averaging density is often called the barycenter \cite{AguehCarlier11}. One usually needs to solve optimization problems to obtain the barycenter \cite{CuturiPeyre16,staib2017parallel,Rabin11,CuturiDoucet14,GramfortPeyre15,Carlier15,Alvarez16,BenamouCarlier15,Dvurechensky2018,LasryLions2007}. It is frequently used to minimize information divergences, such as Kullback-Leibler divergence \cite{JMLR:v6:banerjee05b}, among multiple input densities. The minimizer forms the ``center'' of those input densities. In recent years, optimal transport has introduced a class of distances between probability densities, namely Wasserstein distances \cite{Villani2009_optimalb}. Distinguished from the information divergences, a critical feature of Wasserstein distance is that it incorporates the ground costs in the sample space. Thus, the interpolation in terms of Wasserstein distances, namely the Wasserstein barycenter problems, has been widely studied 
\cite{solomon:hal-01188953, cot} in areas involving density functions. 

Meanwhile, mean field control problems (MFC) have been introduced \cite{LasryLions2007,CainesHuangMalhame2006_large,bensoussan2018mean}, which generalize optimal transport and Wasserstein distances. The MFC problems model general optimal control problems of density evolutions, which come with flexible choices of initial/terminal densities, running costs, and interaction costs during the density evolution. In this sense, the optimal functional value in MFC problems also defines generalized type ``distances'' between densities. More recently, \cite{Chizat18,Mielke11,LiLeeOsher22, fu2023generalized} have introduced the MFC problems of reaction-diffusion equations and systems. The MFC formulation models the evolution of multiple species densities and controls the complex interaction behaviors between species. In addition, the MFC problem of reaction-diffusion systems can also handle unbalanced densities from the source terms in the reaction-diffusion systems.

In this paper, we formulate MFC barycenter problems for multi-species unnormalized densities. 
These MFC barycenter problems generalize Wasserstein barycenters. Here the MFC problem utilizes optimal control problems of multi-species densities following the formulation of reaction-diffusion systems, where the running cost is with kinetic and interaction energies among different species of unnormalized densities, 
% We first introduce the MFC problem based reaction-diffusion system. 
% We next formulate MFC barycenter minimization problem, where the objective is the average of optimal values of MFC problems for both transport and reaction kinetic energies between a barycenter and a set of unnormalized density vector functions. 
and the constraint is based on the set of reaction-diffusion systems. 
The MFC barycenter model is discretized using the high-order space-time finite element method \cite{FuLiu23}. We then solve the discretized variational problem by the primal-dual hybrid gradient (PDHG) algorithm \cite{Chambolle,gprox}. 
Numerical examples in three-dimensional and complex two-dimensional surfaces demonstrate the effectiveness of the proposed computational method.

In literature, Wasserstein barycenter problems have been widely studied, with applications in statistical learning and computer visions \cite{solomon:hal-01188953,cot,papadakis2014optimal}. Fast algorithms toward barycenter problems are often based on the Kantorovich formulation of Wasserstein distances with the entropic regularization, namely the Sinkhorn algorithm \cite{cot}.  
On the other hand, the Wasserstein distance can also be defined using the dynamical formulation, known as the Benamou-Brenier formula \cite{benamou2000computational}. The dynamical formulation is a particular optimal control problem in density spaces, which is a special case of MFC problems. Nowadays, MFC has been widely used in population dynamics in financial markets \cite{Gu2011}, propagation of pandemics \cite{lee2021controlling}, etc. In this direction, the MFC associated with reaction-diffusion systems has been proposed in \cite{lee2021controlling}, which models both transport behaviors of populations and the complex interactions with multiple densities \cite{lee2022mean}. We note that the MFC has not been fully used in the modeling and computation of barycenter problems, which is the subject of the current study. 
Our formulation is also motivated by the reaction-diffusion systems from gradient flow formulations, known as Onsager principles \cite{OnsagerMachlup1953_fluctuations}. This formulation allows us to obtain MFC-induced barycenters from complicated reaction terms among different species and handle the unnormalized densities. 
% We provide simulation results on both three-dimensional domains and complex two-dimensional surface geometries in numerical examples.

The paper is organized as follows. In Section \ref{sec2}, we briefly review the generalized Wasserstein distance associated with the
reaction-diffusion systems. These motivate us to define the general MFC-based barycenter problems to interpolate unnormalized multi-species densities.  
% We then propose the generalized MFC-based Barycenter problems. 
In Section \ref{sec3}, we introduce numerical algorithms to solve the proposed MFC barycenter problems. We use the space-time high-order finite element method to discretize the MFC problem and apply the PDHG algorithm to compute the resulting discrete optimization problem. In Section \ref{sec4}, we provide numerical examples for densities in three-dimensional and surface domains to demonstrate the quality of proposed MFC barycenters. We conclude in Section \ref{sec5}.

\section{Methodology}\label{sec2}
In this section, we first review the generalized Wasserstein distance associated with reaction-diffusion equations/systems. We then formulate the corresponding generalized Wasserstein barycenter problem. Its detailed numerical discretization will be discussed in the next section.
\subsection{Reactive-diffusive Wasserstein distances}
In line with the preceding work in \cite{FuLiu23, fu2023generalized}, before we set up a mean-field control problem to compute the multi-density barycenter, we need to construct a generalized Wasserstein distance metric that incorporates the reaction-diffusion equations/systems. For a more detailed review of the underlying concepts of metric distances, gradient flow, generalized optimal transport, and mean-field control problems, please refer to \cite{FuOsherLi,LegerLi2021_hopfcole, LiLeeOsher22,Mielke11}. 

We start by constructing the distance for the \textit{scalar} case. We then generalize to the \textit{system} case. Throughout this manuscript, we denote $\Omega$ either as a 3D volume domain in $\mathbb{R}^3$
with boundary $\partial\Omega$, or a closed 2D surface domain embedded in $\mathbb{R}^3$. 
We denote $\nbb$ and $\nbb\cdot$  as the standard gradient and divergence operators when $\Omega$ is a 3D volume domain, or as the surface gradient and divergence operators when $\Omega$ is a surface domain. Definitions of surface gradient and divergence operators follow from the standard literature on 
surface PDEs;  see, e.g., \cite{surface}.

\subsubsection{Scalar distance}
On the spatial domain $\Omega$, consider the following dissipative reaction-diffusion equation for the non-negative density function $\rho:[0,T] \times \Omega \ra \R_+$ \cite{LiLeeOsher22,FuOsherLi}:
\begin{equation}
\label{scalarPDE}
\partial_t \rho=\nbb \cdot\left(V_1(\rho) \nbb \frac{\delta}{\delta \rho} \mathcal{E}(\rho)\right)-V_2(\rho) \frac{\delta}{\delta \rho} \mathcal{E}(\rho), \text { on }[0, T] \times \Omega,
\end{equation}
where $V_1,V_2: \R_+ \ra \R_+$ are both positive mobility functions, and 
$\mathcal E:\mathcal{M} \ra \R$ is an energy functional with $\frac{\delta \mathcal E}{\delta \rho}$ denoting its first variation with respect to the density function $\rho$ in $L^2$ space. We define the space
\begin{equation}
    \mathcal{M}=\left\{\rho \in C(\Omega): \rho \geq 0\right\}, \quad \forall t \geq 0,
\end{equation}
and take the density $\rho$ such that it satisfies
$
\rho(t,\cdot) \in \mathcal{M.}
$
The PDE \eqref{scalarPDE} is supplied with homogeneous Neumann boundary conditions so that
\begin{equation}
\label{bdry}
\left.V_1(\rho) \nbb \frac{\delta}{\delta \rho} \mathcal{E}(\rho) \cdot \boldsymbol{\nu}\right|_{\partial \Omega}=0,
\end{equation}
with $\bm\nu$ denoting the outward unit normal direction on the boundary $\partial \Omega$.
We note that when $\Omega$ has no boundary, such as a periodic volume geometry or a closed surface geometry, the boundary condition 
\eqref{bdry} is ignored since $\partial\Omega=\emptyset$ is an empty set. 
The PDE \eqref{scalarPDE} is dissipative in the energy functional $\mathcal E$, which satisfies the following dissipation law.
\begin{equation}
\frac{d}{d t} \mathcal{E}(\rho)=-\int_{\Omega}\left[\left\|\nbb \frac{\delta}{\delta \rho} \mathcal{E}(\rho)\right\|^2 V_1(\rho)+\left|\frac{\delta}{\delta \rho} \mathcal{E}(\rho)\right|^2 V_2(\rho)\right] d x \leq 0.
\end{equation}
% since the mobility functions $V_1$ and $V_2$ are assumed to be positive.
This energy law corresponds to a gradient flow in the metric space $\M$. We describe the metric in the following definition which characterizes the distance between densities $\rho^0, \rho^1 \in \M$. 

\begin{defn}
\label{scalardist}
    (Scalar reactive-diffusive Wasserstein distance) The distance functional
$$
\operatorname{Dist}_{V_1, V_2}: \mathcal{M} \times \mathcal{M} \ra \mathbb{R}_{+}
$$
can be defined by considering the following optimal control problem:
\begin{subequations}
\label{scalardisteq}
    \begin{equation}
\operatorname{Dist}_{V_1, V_2}\left(\rho^0, \rho^1\right)^2:=\inf _{\rho, \boldsymbol{v}_1, v_2} \int_0^T \int_{\Omega}\left[\left\|\boldsymbol{v}_1\right\|^2 V_1(\rho)+\left|v_2\right|^2 V_2(\rho)\right] d x d t, 
% + \mathcal E(\rho),
\end{equation}
where the constraint satisfies the following equation connecting the initial and terminal densities $\rho^0,\rho^1 \in \M$:
\begin{equation}
\left\{\begin{array}{l}
\partial_t \rho+\nbb \cdot\left(V_1(\rho) \boldsymbol{v}_1\right)=V_2(\rho) v_2, \quad \text { on }[0, T] \times \Omega, \\
\rho(0, x)=\rho^0(x), \quad \rho(T, x)=\rho^1(x),
\end{array}\right.,
\end{equation}
with the homogeneous boundary condition
$
\left.V_1(\rho) \boldsymbol{v}_1 \cdot \boldsymbol{\nu}\right|_{\partial \Omega}=0.
$ Here, $\boldsymbol{v}_1$ is the drift vector field, $v_2$ is the reaction rate, $V_1$ is the drift mobility, and $V_2$ denotes the reaction mobility. 
\end{subequations}
\end{defn}
\begin{remark}
In the above definition, if $V_1(\rho) = \rho$ and $V_2(\rho)=0$, this is the classical Wasserstein-2 distance; see \cite{benamou2000computational}. If $V_1(\rho) = \rho$ and $V_2(\rho)=\rho$, this is the Wasserstein-Fisher-Rao distance for unbalanced densities; see, e.g., \cite{Mielke11,Chizat18,LieroMielkeSavare2018_optimalb}. If $V_1(\rho)=\rho$, $V_2(\rho)=1$, it belongs to the generalized unnormalized Wassrstein distance \cite{LeeLaiLiOsher21}. 
\end{remark}

The control problem in definition \ref{scalardist} can be reformulated by the introduction of a flux function $\bs m(t,x):[0,T] \times \Omega \ra \R^d$ and a source function $s(t,x):[0,T]\times\Omega \ra \R$ such that they satisfy
\begin{equation}
\bs{m}(t, x)=V_1(\rho(t, x)) \boldsymbol{v}_1(t, x), \quad s(t, x)=V_2(\rho(t, x)) v_2(t, x). 
\end{equation}
By using the above change of variables, the distance formula in \eqref{scalardisteq} takes the form:
\begin{subequations}
    \begin{equation}
\operatorname{Dist}_{V_1, V_2}\left(\rho^0, \rho^1\right)^2:=\inf _{\rho, \boldsymbol{m}, s} \int_0^T \int_{\Omega}\left[\frac{\|\boldsymbol{m}\|^2}{V_1(\rho)}+\frac{|s|^2}{V_2(\rho)}\right] d x d t,
% + \mathcal E(\rho),
\end{equation}
where the constraint is given as
\begin{equation}
\begin{aligned}
& \partial_t \rho(t, x)+\nbb \cdot \boldsymbol{m}(t, x)=s(t, x), \quad \text { in }[0, T] \times \Omega, \\
& \boldsymbol{m} \cdot \boldsymbol{\nu}=0, \quad \text { on }[0, T] \times \partial \Omega, \\
& \rho(0, x)=\rho^0(x), \quad \rho(T, x)=\rho^1(x), \quad \text { in } \Omega .
\end{aligned}
\end{equation}
\end{subequations}
This optimization problem with linear constraints will be convex if we assume that the mobility functions $V_1$ and $V_2$ are positive and concave.

\subsubsection{System distance}
Analogous to the scalar case, the distance functional for the \textit{system} case comes from a reaction-diffusion system. We consider the following reaction-diffusion system with $N$ species undergoing $R$ reactions \cite{Mielke11, fu2023generalized}:
% The PDE is given by
\begin{equation}
    \label{systemPDE}
\partial_t \rho_i=\nbb \cdot\left(V_{1, i}\left(\rho_i\right) \nbb \frac{\delta}{\delta \rho_i} \mathcal{E}_i\left(\rho_i\right)\right)-\sum_{p=1}^R V_{2, p}(\boldsymbol{\rho}) \gamma_{i, p} \sum_{j=1}^N \gamma_{j, p} \frac{\delta}{\delta \rho_j} \mathcal{E}_j\left(\rho_j\right),
\quad \forall 1\le i\le N,
\end{equation}
which is endowed with homogeneous Neumann boundary conditions
$$
\left.V_{1, i}\left(\rho_i\right) \nbb \frac{\delta}{\delta \rho_i} \mathcal{E}_i\left(\rho_i\right) \cdot \boldsymbol{\nu}\right|_{\partial \Omega}= 0, \enspace \text{for } 1\leq i \leq N.
$$
For species $i$, $\rho_i$ denotes the density function which is defined such that $\rho_i(t,\cdot) \in \M$, $\mathcal E_i:\M \ra \R$ is its energy functional, and $V_{1,i}:\R_+\ra\R_+$ is its mobility function which is assumed to be positive. In contrast, $V_{2,p}:\R^N_+\ra\R_+$ denotes the mobility function for the $p$-th reaction for $1\le p\le R$, which is also considered positive. Moreover, $\bs \rho = (\rho_1,\cdots,\rho_N)$ represents the collection of all $N$ densities. Finally, the coefficient matrix $\bs \Gamma = (\gamma_{i,p}) \in \R^{N \times R}$ is chosen such that the following constraint is met
$$
\sum_{i=1}^N \gamma_{i,p} = 0,\quad \enspace \forall 1\leq p \leq R.
$$
This is done to ensure the total mass  conservation:
$$
\frac{d}{dt}\int_\Omega \sum_{i=1}^N \rho_i dx = 0.
$$
Under the above-stated assumptions on all the functions involved, the PDE \eqref{systemPDE} can be shown to satisfy the following energy dissipation law \cite{Mielke11,fu2023generalized}:
\begin{equation}
\frac{d}{d t} \sum_{i=1}^N \mathcal{E}_i\left(\rho_i\right)=-
\int_{\Omega}\left[\sum_{i=1}^N\left\|\nbb \frac{\delta \mathcal{E}_i}{\delta \rho_i}\right\|^2 V_{1, i}\left(\rho_i\right)+\sum_{p=1}^R\left|\sum_{j=1}^N \gamma_{j, p} \frac{\delta \mathcal{E}_j}{\delta \rho_j}\right|^2 V_{2, p}(\boldsymbol{\rho})\right] d x
 \leq 0
\end{equation}
Similar to the \textit{scalar} case, this energy law corresponds to a gradient flow in the metric space $\M^{N}$. This metric characterizes the distance between density vectors $\bs \rho^0, \bs \rho^1 \in \M^N$.

\begin{defn}
    (System reactive-diffusive Wasserstein distance) The distance functional
    $$
\operatorname{ Dist }_{\boldsymbol{V}_1, \boldsymbol{V}_2}: \mathcal{M}^N \times \mathcal{M}^N \rightarrow \mathbb{R}_{+}
    $$
can be defined by constructing the following optimal control problem
\begin{subequations}
    \begin{equation}
\operatorname{Dist}_{\boldsymbol{V}_1, \boldsymbol{V}_2}\left(\boldsymbol{\rho}^0, \boldsymbol{\rho}^1\right)^2:=\inf _{\boldsymbol{\rho}, \underline{\boldsymbol{m}}, \boldsymbol{s}}\int_0^T \int_{\Omega}\left(\sum_{i=1}^N \frac{||\bs m_i||^2}{V_{1, i}\left(\rho_i\right)}+\sum_{p=1}^R \frac{\left|s_p\right|^2}{V_{2, p}(\boldsymbol{\rho})}\right) d x d t,
% + \sum_{i=1}^N \mathcal E_i(\rho_i)
\end{equation}
where the constraints satisfy the following equations connecting the initial and terminal density vectors $\boldsymbol{\rho}^0,
\boldsymbol{\rho}^1 \in \M^N$:
% constrained so that 
% the following set of reaction-diffusion equations connecting the initial and terminal densities $\bs \rho^0, \bs\rho^1 \in \M^N$:
\begin{equation}
\begin{aligned}
& \left\{\begin{array}{l}
\partial_t \rho_i+\nbb \cdot \boldsymbol{m}_i=\sum_{p=1}^R \gamma_{i, p} s_p, \enspace \forall 1 \leq i \leq N, \\
\left.\boldsymbol{m}_i \cdot \boldsymbol{\nu}\right|_{\partial \Omega}=0,\enspace  \boldsymbol{\rho}(0, x)=\boldsymbol{\rho}^0, \enspace \boldsymbol{\rho}(T, x)=\boldsymbol{\rho}^1, 
\end{array}\right.
\end{aligned}
\end{equation}
with the collection of flux terms $\bs{\underline m} = (\bs m_1, \cdots, \bs m_N)$, and the collection of source terms $\bs s = (s_1,\cdots,s_R)$.
\end{subequations}
\end{defn}

\subsection{The MFC barycenter problem}
For simplicity of presentation, we formulate the multi-density barycenter problem with a cyclical reaction-diffusion system that form a closed graph containing $N$ vertices, representing density species, and $N$ edges, representing strongly reversible pairwise reactions.
Furthermore, we take the forward and backward reaction rates both equal to 1, which corresponds to the coefficient matrix $\bs \Gamma = (\gamma)_{i,p} \in \R^{N\times N}$ being such that
\begin{equation}
\label{reac}
    \gamma_{i,p} = \begin{cases}
         1, &\enspace p = i\\
        -1, &\enspace p = i-1\\
         0, &\enspace \text{otherwise}
    \end{cases}.
\end{equation}
\begin{remark}
We note that while we focuses on the special case of $\bs \Gamma$ in \eqref{reac} in the following MFC barycenter formulation, our proposed numerical scheme can be 
 naturally extended to the case with $N\not= R$ with a more general coefficient matrix $\bs \Gamma$.     
\end{remark}
% Since our numerical simulations focuses on this cyclical reaction-diffusion case,  

Given a collection of densities $\bs \rho^0 = (\rho_1^0,\cdots,\rho_N^0)$, the Wasserstein barycenter is a density function ${\varrho}$ that minimizes the sum of Wasserstein distances to each density function $\rho_i^0$ in $\bs \rho^0$; see, e.g., \cite{AguehCarlier11}. 
In other words, we consider the following optimization problem
\begin{equation*}
 \inf_{\varrho\in \mathcal{M}} \quad \frac{1}{N}\sum_{i=1}^N\operatorname{Dist}_{V_1, V_2}\left({\rho}_i^0,  {\varrho}\right)^2, 
\end{equation*}
with $V_1(\rho) = \rho$ and $V_2(\rho)=0$.
% In particular, if $V_1(\rho) = \rho$ and $V_2(\rho)=0$, the above variational problem forms the classical Wasserstein-2 Barycenter problem. 

We now generalize the above Wasserstein barycenter problem by allowing reaction effects among different species and a more general form of mobility functions $V_{1,i}$ and $V_{2,i}$. We further
include a general potential functional $\bm{\mathcal{F}}\colon \mathcal{M}^N\rightarrow\mathbb{R}$.
This leads to the following MFC barycenter formulation.
\begin{defn}[MFC barycenter problem]
We consider the following minimization problem:
% \begin{equation*}
%  \inf_{\varrho\in \mathcal{M}} \quad \operatorname{Dist}_{\boldsymbol{V}_1, \boldsymbol{V}_2}\left(\boldsymbol{\rho}^0, {\varrho}\bs 1\right)^2 -\bm{\mathcal{F}}(\boldsymbol{\rho}) , 
% \end{equation*}
\begin{subequations}\label{bcprob}
\begin{equation}
\label{bcproba}
\inf _{\boldsymbol{\rho}, \underline{\boldsymbol{m}}, \boldsymbol{s}, {\varrho}}
\int_0^T 
\left[
\int_{\Omega}\left(\sum_{i=1}^N \frac{||\bs m_i||^2}{2V_{1, i}\left(\rho_i\right)}+\sum_{i=1}^N \frac{\left|s_i\right|^2}{2V_{2, i}(\boldsymbol{\rho})}\right)dx
-\bm{\mathcal{F}}(\boldsymbol{\rho})
\right] dt
% +\sum_{i=1}^N \mathcal E_i(\rho_i)
\end{equation}
such that
\begin{equation}\label{bcprobb}
\begin{aligned}
& \left\{\begin{array}{l}
\partial_t \rho_i+\nbb \cdot \boldsymbol{m}_i=s_{i}-s_{i-1}, \enspace \forall 1 \leq i \leq N, \\
\left.\boldsymbol{m}_i \cdot \boldsymbol{\nu}\right|_{\partial \Omega}=0,\enspace  \boldsymbol{\rho}(0, \cdot)=\boldsymbol{\rho}^0, \enspace \boldsymbol{\rho}(T, \cdot)={\varrho}\bs 1, 
\end{array}\right.
\end{aligned}
\end{equation}
where $\bs1=(1,\cdots, 1)\in\mathbb{R}^N$. 
\end{subequations}    
The above minimization problem involving mobilities and interaction energy is often named the MFC problem. In this sense, we call the minimization problem \eqref{bcprob} {\em MFC barycenter problem}, where the minimizer $\varrho$ is the MFC barycenter of the density vectors $\bs \rho^0$.
% We call the minimizer $\varrho$ the MFC barycenter of the density vectors $\bs \rho^0$.
\end{defn}
%\begin{remark}
%\label{modelx}

In our numerical simulations in Section \ref{sec4}, we make the following choices for the mobility functions $V_{1,i}$ and $V_{2,i}$:
\begin{equation}
\label{v1v2}
    \begin{split}
        V_{1,i}(\rho_i) &=
\rho_i,\\        % \alpha_1\rho_i^{\gamma_1} 
        % \text{ with } 0\leq\gamma_1\leq 1 \text{ and } \alpha_1 > 0\\
        V_{2,i}(\bs \rho) & = V_{2,i}(\rho_i,\rho_{i+1}) = \alpha \frac{\rho_i-\rho_{i+1}}{\log\rho_i-\log\rho_{i+1}} \text{ with } \alpha \geq 0.
    \end{split}
\end{equation}
Moreover, we take a separable interaction functional
$\bm{\mathcal{F}}(\bm\rho)=\sum_{i=1}^N\int_{\Omega}F_i(\rho_i)\,dx,
$
where $F_i: \mathbb{R}_+\rightarrow \mathbb{R}$ is the potential density function for component $i$
taking the following form:
\begin{align}
\label{pot}    
F_i(\rho_i) = -\beta_i\rho_i\log(\rho_i), \quad \beta_i\ge0.
\end{align}
With these choices, the optimization problem \eqref{bcprob} can be shown to be convex and admits a unique minimizer.
Note that if we set $\alpha=0$ and $\beta_i=0$,
the above problem reduces to the computation of the classical Wasserstein-2 barycenter problem \cite{AguehCarlier11} without any reaction effects. The above choices of $V_{1,i}$, $V_{2,i}$ have statistical physics interpolations, such as generalized Onsager principles; see details in \cite{OnsagerMachlup1953_fluctuations, Mielke11}. In this paper, we use these formulations to define and compute generalized barycenter problems.   
%\end{remark}

\subsection{The unconstrained optimization problem}
% We will solve the optimal control problem \eqref{bcprob} using the PDHG method. 
% However, before we get there, we need to reformulate \eqref{bcprob} as an
We now reformulate the MFC problem \eqref{bcprob} as a saddle-point problem, for which a high-order finite element discretization will be introduced in Section \ref{sec3}. To do so, we multiply the PDE constraints with corresponding Lagrange multipliers $\phi_i$ and add to \eqref{bcproba} to obtain the following unconstrained problem:
\begin{equation}\label{uncon0}
    \begin{aligned}
        \inf_{\boldsymbol{\rho}, \underline{\boldsymbol{m}}, \boldsymbol{s}, {\varrho}}\sup_{\bs \phi}\int_0^T
        \left[\int_{\Omega}\left(\sum_{i=1}^N \frac{||\bs m_i||^2}{2V_{1, i}\left(\rho_i\right)}+\sum_{i=1}^N \frac{\left|s_i\right|^2}{2V_{2, i}(\rho_i,\rho_{i+1})}\right) dx
        -\bm{\mathcal{F}}(\boldsymbol{\rho})
\right]
        dt\\
        +\sum_{i=0}^N \dint {(\partial_t\rho_i+\nbb\cdot\bs m_i+s_{i-1}-s_{i})\phi_i },
    \end{aligned}
\end{equation}
where $\bs \phi = (\phi_1,\cdots,\phi_N)$ denotes the collection of Lagrange multipliers $\phi_i$. Integrating the second integral in \eqref{uncon0} by parts, applying boundary conditions from \eqref{bcprobb}, and using the short-hand notations $(\bs a,\bs b)_{\Omega_t} = \dint{\bs a \cdot \bs b}$ where $\Omega_t = [0,T]\times \Omega$ is the space-time domain, and $\langle \bs a \cdot \bs b \rangle_{t=s} = \int_\Omega \bs a(s,x) \cdot \bs b(s,x) dx$, we obtain
\begin{equation}\label{uncon1}
    \begin{aligned}
        \inf_{\bs\rho, \underline{\bs m}, \bs s, {\varrho}}\sup_{\bs\phi}&
        % \left\{
        \sum_{i=1}^N \intlr{\frac{||\bs m_i||^2}{2V_{1, i}\left(\rho_i\right)}+\frac{\left|s_i\right|^2}{2V_{2, i}(\rho_i,\rho_{i+1})}, 1}
        - \int_0^T\bm{\mathcal{F}}(\boldsymbol{\rho})dt
        % +\sum_{i=1}^N \mathcal E_i(\rho_i)\right.
        \\
         &- \intlr{\bs \rho,\partial_t\bs \phi} - \intlr{\underline{\bs m},\underline{\nbb\bs\phi}} - \intlr{\bs s, \hat{\bs\phi}} + \lrangle{{\varrho},\bs \phi\cdot\bs 1}_{t=T} - \lrangle{\bs\rho^0,\bs\phi}_{t=0},
    \end{aligned}
\end{equation}
where $\underline{\bs m} = (\bs m_1, \cdots, \bs m_N)$, $\underline{\nbb\bs\phi} = (\nbb\phi_1,\cdots,\nbb\phi_N)$, and $\hat{\bs\phi} = (\phi_1-\phi_2,\cdots,\phi_N-\phi_1)$. We can further simplify \eqref{uncon1} by an introduction of a few new variables. Let $\bs u_i = (\rho_i,\bs m_i,s_i)$ and $\underline{\bs u} = (\bs u_1,\cdots,\bs u_N)$. Furthermore, let us define the operator $\D$ such that
\begin{equation}\label{dope}
\D\phi_i = (\partial_t \phi_i,\nbb\phi_i, \phi_i-\phi_{i+1}),
\end{equation}
and let $\underline{\D\bs\phi} = (\D\phi_1,\cdots,\D\phi_N)$. Using these new variables, we can rewrite \eqref{uncon1} into the following unconstrained-saddle point form:
\begin{subequations}\label{uncon2}
    \begin{equation}\label{uncon2a}
\inf_{\underline{\bs u},{\varrho}} \sup_{\bs\phi}\quad \underbrace{H(\underline{\bs u}) - \left(\underline{\bs u},\underline{\D\bs\phi}\right)_{\Omega_t} + \lrangle{{\varrho} ,\bs \phi\cdot\bs 1}_{t=T}-\lrangle{\bs\rho^0,\bs\phi}_{t=0}}_{:=\mathcal{L}(\underline{\bs u}, \varrho, \bs\phi)},
\end{equation}
where
\begin{equation}\label{uncon2b}
    H(\underline{\bs u}) := \sum_{i=1}^N\dint{ \left[\frac{||\bs m_i||^2}{2V_{1, i}\left(\rho_i\right)}+\frac{\left|s_i\right|^2}{2V_{2, i}(\rho_i,\rho_{i+1})}\right]}
            - \int_0^T\bm{\mathcal{F}}(\boldsymbol{\rho})dt.
    % +\sum_{i=1}^N \mathcal E_i(\rho_i)
\end{equation}
\end{subequations}
% \gf{Highlight advantage of our dynamic formulation against other Wass-bary approaches. Why we are better?}
% We work with the following function analytic setting so that 
% well-posedness of the optimization problem \eqref{uncon2a} can be proven under proper convexity assumptions.
We use the following function spaces for the saddle point problem \eqref{uncon2}, which ensure that all terms in the Lagrangian $\mathcal{L}(\underline{\bs u}, \varrho, \bs\phi)$
stays bounded:
\begin{subequations}
    \label{spaces}
\begin{align}
\bs m_i \in &\; [L^2([0,T]\times \Omega)]^3, \\
s_i \in &\; L^2([0,T]\times \Omega), \\
\label{rho-s}
\rho_i \in &\; \{\mu\in L^\infty([0,T]\times \Omega): \; \mu \ge \epsilon > 0, ~a.e.\}, \\
\label{rhobar-s}
\varrho \in &\; \{\mu\in L^\infty(\Omega): \; \mu \ge \epsilon > 0,~a.e.\}, \\
\phi_i \in &\; H^1([0,T]\times \Omega), 
\end{align}
\end{subequations}
for all $1\le i\le N$.
Note that here we require the density $\rho_i$ and $\varrho$ to be almost every positive and away from {\it zero} in 
\eqref{rho-s} and \eqref{rhobar-s} to avoid the technical issue of division by zero in $H(\underline{\bs u})$.

%% Do we need it?!
\subsection{The MFC barycenter system}
We conclude this section by formulating the critical point system for the optimization problem \eqref{uncon2a}. We name the derived PDE system as the {\em mean-field control barycenter system}.

\begin{proposition}(MFC barycenter system) 
    % Suppose $(\bs \rho, \underline{\bs m}, \bs s, \varrho)$ represents the critical point of the optimal control problem \ref{bcprob}. There exists a Lagrange multiplier $\bs \phi = (\phi_1, \cdots, \phi_N): [0,T]\times\Omega \rightarrow \R^N$ such that
    The critical point $(\bs \rho, \underline{\bs m}, \bs s, \varrho, \bs \phi)$ for the saddle point problem \eqref{uncon2}  satisfies the following conditions: for $1\le i\le N$
    \begin{subequations}
    \label{prop21}
        \begin{equation}
        \label{prop21a}
        \frac{\bs m_i}{V_{1,i}(\rho_i)} = \nabla_{\Omega}\phi_i,\quad \enspace \frac{s_i}{V_{2,i}(\rho_i, \rho_{i+1})}=\phi_{i}-\phi_{i+1},
        \end{equation}
    and 
    \begin{equation}
    \label{prop21b}
        \left\{\begin{split}
            &\partial_t\rho_i + \nabla_{\Omega}\cdot(V_{1,i}(\rho_i)\nabla_{\Omega}\phi_i) - V_{2,i}(\rho_i,\rho_{i+1})(\phi_i-\phi_{i+1}) \\
            &\hspace{4cm}+ V_{2,i-1}(\rho_{i-1},\rho_{i})(\phi_{i-1}-\phi_i)=0,\\
            &\partial_t\phi_i + \frac{1}{2}||\nabla_{\Omega}\phi_i||^2V_{1,i}^\prime(\rho_i) + \frac{1}{2}|\phi_{i}-\phi_{i+1}|^2 \frac{\partial}{\partial\rho_i}V_{2,i}(\rho_{i},\rho_{i+1})\\
            &\hspace{4cm} + \frac{1}{2}|\phi_{i-1}-\phi_{i}|^2 \frac{\partial}{\partial\rho_i}V_{2,i-1}(\rho_{i-1},\rho_{i})
            + \frac{\partial}{\partial \rho_i}\bm{\mathcal F}(\rho) = 0,
        \end{split}\right.
    \end{equation}
    with the initial and terminal conditions 
    \begin{equation}
    \label{prop21c}
        \rho_i(0,x) = \rho^0_i(x), \enspace
                \rho_i(T,x) = \varrho(x), 
        \enspace \sum_{i=1}^N\phi_i(T,x) = 0, \quad \text{ on }\Omega,
    \end{equation}
    and the boundary condition 
\begin{align}
                \bs m_i \cdot\bs \nu = 0 \text{ on } [0,T]\times \partial\Omega.
\end{align}
    \end{subequations}
\end{proposition}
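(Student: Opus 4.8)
The plan is to derive the critical point system by computing the first‑order variations of the Lagrangian $\mathcal{L}(\underline{\bs u},\varrho,\bs\phi)$ in \eqref{uncon2a} with respect to each of its arguments and setting them to zero. Since the optimization is a saddle point — infimum over $(\underline{\bs u},\varrho)$ and supremum over $\bs\phi$ — stationarity requires that the Gateaux derivative of $\mathcal L$ vanish in every admissible direction for each variable. I would organize the computation variable‑by‑variable: first $\bs m_i$, then $s_i$, then $\rho_i$, then $\varrho$, and finally $\phi_i$, since the first two give the algebraic relations \eqref{prop21a} which can then be substituted to simplify the later ones.

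First, varying $\bs m_i$: only the term $\dint{\|\bs m_i\|^2/(2V_{1,i}(\rho_i))}$ from $H(\underline{\bs u})$ and the term $-(\underline{\bs m},\underline{\nbb\bs\phi})_{\Omega_t}$ (the middle component of $-(\underline{\bs u},\underline{\D\bs\phi})_{\Omega_t}$) depend on $\bs m_i$. Differentiating in direction $\delta\bs m_i$ gives $\dint{(\bs m_i/V_{1,i}(\rho_i)-\nbb\phi_i)\cdot\delta\bs m_i}=0$, hence $\bs m_i/V_{1,i}(\rho_i)=\nbb\phi_i$, the first relation in \eqref{prop21a}. Similarly, varying $s_i$ picks up $\dint{|s_i|^2/(2V_{2,i}(\rho_i,\rho_{i+1}))}$ and the contribution of $s_i$ to $-(\bs s,\hat{\bs\phi})_{\Omega_t}$; from $\hat{\bs\phi}=(\phi_1-\phi_2,\dots,\phi_N-\phi_1)$ the $i$‑th source $s_i$ is paired with $\phi_i-\phi_{i+1}$, yielding $s_i/V_{2,i}(\rho_i,\rho_{i+1})=\phi_i-\phi_{i+1}$, the second relation in \eqref{prop21a}. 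Varying $\phi_i$ reproduces the PDE constraint: since $\mathcal L$ is linear in $\bs\phi$, $\partial_{\phi_i}\mathcal L=0$ is exactly $\partial_t\rho_i+\nbb\cdot\bs m_i=s_i-s_{i-1}$ together with the Neumann condition $\bs m_i\cdot\bs\nu=0$ and the endpoint conditions $\rho_i(0,\cdot)=\rho_i^0$, $\rho_i(T,\cdot)=\varrho$ recovered from the boundary terms; substituting \eqref{prop21a} turns this into the first PDE in \eqref{prop21b}. Varying $\varrho$ affects only $\lrangle{\varrho,\bs\phi\cdot\bs 1}_{t=T}$, giving $\sum_{i=1}^N\phi_i(T,\cdot)=0$ in \eqref{prop21c}.

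The main work, and the step I expect to be the most delicate, is the variation with respect to $\rho_i$. Here $\rho_i$ enters $H(\underline{\bs u})$ through three places — the drift term $\|\bs m_i\|^2/(2V_{1,i}(\rho_i))$, its own reaction term $|s_i|^2/(2V_{2,i}(\rho_i,\rho_{i+1}))$, and the neighboring reaction term $|s_{i-1}|^2/(2V_{2,i-1}(\rho_{i-1},\rho_i))$ (because $V_{2,i-1}$ depends on $(\rho_{i-1},\rho_i)$) — as well as $-\bm{\mathcal F}(\bs\rho)$ and the coupling term $-(\bs\rho,\partial_t\bs\phi)_{\Omega_t}$. Differentiating $\|\bs m_i\|^2/(2V_{1,i})$ gives $-\tfrac12\|\bs m_i\|^2 V_{1,i}'(\rho_i)/V_{1,i}(\rho_i)^2$, and after substituting $\bs m_i/V_{1,i}(\rho_i)=\nbb\phi_i$ this becomes $-\tfrac12\|\nbb\phi_i\|^2 V_{1,i}'(\rho_i)$; likewise the two reaction terms produce $-\tfrac12|\phi_i-\phi_{i+1}|^2\,\partial_{\rho_i}V_{2,i}$ and $-\tfrac12|\phi_{i-1}-\phi_i|^2\,\partial_{\rho_i}V_{2,i-1}$ after using $s_i/V_{2,i}=\phi_i-\phi_{i+1}$ and $s_{i-1}/V_{2,i-1}=\phi_{i-1}-\phi_i$. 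The term $-(\bs\rho,\partial_t\bs\phi)_{\Omega_t}$ contributes $-\partial_t\phi_i$; combined with $\partial_{\rho_i}\bm{\mathcal F}$ and an overall sign flip, the Euler–Lagrange equation $\partial_{\rho_i}\mathcal L=0$ becomes exactly the second (Hamilton–Jacobi type) equation in \eqref{prop21b}. Care must be taken with the indexing convention for the neighboring reaction (that $V_{2,i-1}$ couples $\rho_{i-1}$ and $\rho_i$) and with the fact that the admissible variations $\delta\rho_i$ are interior (by \eqref{rho-s}–\eqref{rhobar-s} the densities are bounded away from zero), so no complementarity/inequality terms arise and one genuinely gets an equality. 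Finally, I would note that the integration‑by‑parts in time used to pass from \eqref{uncon0} to \eqref{uncon1} is what generates the endpoint pairings $\lrangle{\varrho,\bs\phi\cdot\bs 1}_{t=T}$ and $\lrangle{\bs\rho^0,\bs\phi}_{t=0}$, and that no spatial boundary term survives in the $\bs m_i$ variation precisely because of the Neumann condition $\bs m_i\cdot\bs\nu=0$; collecting all stationarity conditions gives \eqref{prop21}.
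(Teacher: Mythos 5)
Your proposal is correct and follows essentially the same route as the paper: compute the first-order variations of $\mathcal{L}(\underline{\bs u},\varrho,\bs\phi)$ in each of $\bs m_i$, $s_i$, $\rho_i$, $\varrho$, $\phi_i$, obtain the algebraic relations \eqref{prop21a} from the $\bs m_i$ and $s_i$ variations, the Hamilton--Jacobi equation from the $\rho_i$ variation (including the neighboring contribution through $V_{2,i-1}(\rho_{i-1},\rho_i)$), the terminal condition $\sum_i\phi_i(T,\cdot)=0$ from the $\varrho$ variation, and the continuity equation with its endpoint and Neumann conditions from the $\phi_i$ variation with integration by parts. This matches the paper's proof, including the substitution of \eqref{prop21a} to put the $\rho_i$-variation in the stated form.
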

\begin{proof}
The critical point system is obtained by setting the  first-order variation of the  Lagrangian
$\mathcal{L}(\underline{\bs u}, \varrho, \bs\phi)$ to be zero. 
Hence, we have 
$\frac{\delta \mathcal{L}}{\delta \bs m_i}
= \frac{\bs m_i}{V_{1,i}(\rho_i)}-
\nabla_{\Omega}\phi_i = 0$
and 
$\frac{\delta \mathcal{L}}{\delta s_i}
= \frac{s_i}{V_{2,i}(\rho_i, \rho_{i+1})}-
(\phi_i-\phi_{i+1}) =0$, which implies \eqref{prop21a}.
Taking variation on $\rho_i$, we obtain 
\begin{align*}
0=    \frac{\delta \mathcal{L}}{\delta \rho_i}
= &
-
\frac{||\bs m_i||^2}{2V_{1, i}^2\left(\rho_i\right)}V'_{1, i}\left(\rho_i\right)
-\frac{\left|s_i\right|^2}{2V_{2, i}^2(\rho_i,\rho_{i+1})}
\partial_{\rho_i}V_{2, i}\left(\rho_i,\rho_{i+1}\right)\\
&\quad
-\frac{\left|s_{i-1}\right|^2}{2V_{2, i-1}^2(\rho_{i-1},\rho_{i})}
\partial_{\rho_i}V_{2, i-1}\left(\rho_{i-1},\rho_{i}\right)
-\partial_{\rho_i}\mathcal{F}(\bs \rho)
-\partial_t\phi_i,
\end{align*}
which is the backward equation for $\phi_i$ in \eqref{prop21b}.
Taking variational on the terminal density $\varrho$, we get the boundary condition for $\bs\phi$:
\[
\frac{\delta \mathcal{L}}{\delta \varrho}
=\sum_{i=1}^N\phi_i(T, x) = 0.
\]
Finally, taking variation on $\phi_i$
and applying integration by parts, we obtain
\begin{align*}
0 = \frac{\delta \mathcal{L}}{\delta \phi_i}(\delta \phi_i) 
= &\;
\left(\partial_t\rho_i+\nabla_{\Omega}\cdot\bs m_i
-(s_{i}-s_{i-1})
, \delta \phi_i\right)_{\Omega_t}
\\
&\quad
+\langle\varrho(x)-\rho_i(T,x), \delta \phi_i(T,x)\rangle_{t=T} \\
&\quad-
\langle\rho_i^0(x)-\rho_i(0,x), \delta \phi_i(0,x)\rangle_{t=0},\\
&\quad -
\int_0^T\!\!\!\int_{\partial\Omega}\bs m_i\cdot
\bs \nu \delta \phi_i ds dt.
\end{align*}
for all $\delta\phi_i\in H^1(\Omega_t)$.
This implies 
$
\partial_t\rho_i+\nabla_{\Omega}\cdot\bs m_i
-(s_{i}-s_{i-1}) = 0
$ in $\Omega_t$, 
$\rho_i(0,x) = \rho_i^0(x)$ and 
$\rho_i(T,x) = \varrho(x)$ on $\Omega$, 
and $\bs m_i \cdot\bs \nu = 0$ on
$[0,T]\times \partial\Omega$.
This completes the proof.

% of the unconstrained problem \eqref{uncon0} by $\mathcal{L}(\bs\rho.\underline{\bs m},\bs s, \varrho)$ and compute its saddle point:
%     $$
%     \left\{\begin{split}
%         &\frac{\delta\mathcal L}{\delta \bs m_i} = 0,\\
%         &\frac{\delta\mathcal L}{\delta s_i} = 0,\\
%         &\frac{\delta\mathcal L}{\delta \rho_i} = 0,\\
%         &\frac{\delta\mathcal L}{\delta \phi_i} = 0,\\
%         &\frac{\delta\mathcal L}{\delta \varrho} = 0,
%     \end{split}\right.
%     \implies
%     \left\{\begin{split}
%         & \bs m_i = V_{1,i}(\rho_i)\nabla\phi_i,\\
%         & s_i = V_{2,i}(\phi_i-\phi_{i+1}), \\
%         & -\frac{1}{2}\frac{||\bs m_i||^2}{V_{1,i}(\rho_i)^2}\frac{d}{d\rho_i}V_{1,i}(\rho_i) 
%           - \frac{1}{2}\frac{|s_i|^2}{V_{2,i}(\rho_i,\rho_{i+1})^2}\frac{\partial}{\partial \rho_i}V_{2,i}(\rho_i,\rho_{i+1}) \\
%           \enspace &- \frac{1}{2}\frac{|s_{i-1}|^2}{V_{2,i-1}(\rho_{i-1},\rho_{i})^2}\frac{\partial}{\partial \rho_i}V_{2,i-1}(\rho_{i-1},\rho_i) - \frac{\partial}{\partial \rho_i}\mathcal{F}_i(\rho_i) - \partial_t\phi_i = 0
%           \\
%         & \partial_t\rho_i + \nabla\cdot\bs m_i =   s_{i}-s_{i-1},\\
%         & \sum_{i=1}^N \phi_i(T,x) = 0 
%     \end{split}\right.,
%     $$
%     which implies the equations in \eqref{prop21}.
\end{proof}

\section{High order discretizations and optimization algorithm}\label{sec3}
In this section, we present a detailed high-order finite element discretization to the optimization problem \eqref{uncon2}.
The variational structure of the problem \eqref{uncon2}
makes the finite element method an ideal candidate.
We refer interested reader to \cite{benamou2000computational, FuLiu23, FuOsherLi,fu2023generalized} for related work. 

\subsection{Finite-element discretization}
The finite-element discretization for the optimization problem formulated above in \eqref{uncon2} follows the previous work in \cite{FuLiu23, fu2023generalized}. Let $\Omega_h=\{S_n\}_{n=1}^{N_S}$ be a conforming triangulation, containing $N_S$ cells, of the spatial domain $\Omega$ , and $\mathcal I_h = \{I_m\}_{m=1}^{N_T}$ be a uniform discretization of the time domain $[0,T]$ comprising of $N_T$ intervals. The spacetime mesh is taken to be the tensor-product mesh $\Omega_{t,h} := \mathcal I_h \otimes \Omega_h$. We assume the spatial mesh $\Omega_h$ consists of mapped cubic elements when $\Omega$ is a volume domain, or mapped quadrilateral elements when $\Omega$ is a surface domain. 
Given these meshes, we make use of the following finite-element spaces:
\begin{subequations}
    \begin{alignat}{3}
        V_h^k &:=\left\{v \in H^1\left(\Omega_{t}\right):\right.  \left.\left.v\right|_{I_m \times S_n} \in Q^k\left(I_m\right) \otimes Q^k\left(S_{n}\right), \enspace\forall m, n\right\}\\
        W_h^{k-1} &:=\left\{w \in L^2\left(\Omega_{t}\right):\right.  \left.\left.w\right|_{I_m \times S_{n}} \in Q^{k-1}\left(I_m\right) \otimes Q^{k-1}\left(S_{n}\right), \enspace\forall m, n\right\},\\
        M_h^{k-1} &:=\left\{\mu \in L^2(\Omega):\right.  \left.\left.\mu\right|_{S_{n}} \in Q^{k-1}\left(S_{n}\right), \enspace\forall n\right\},
    \end{alignat}
\end{subequations}
where $k \geq 1$ denotes the polynomial degree, and $Q^k(S_n)$ is polynomial space on $S_n$ formed by taking a tensor-product of polynomials of maximum degree $k$ in each coordinate direction. We use the discontinuous $L^2$ spaces to approximate the physical variables, since the system \eqref{uncon2} does not involve their derivatives. On the other hand, to approximate the $N$ components of the Lagrange multiplier $\bs \phi$, we use the $H^1$-conforming space $V_h^k$ as \eqref{uncon2} requires the spacetime gradient of $\bs \phi$. 
Notice that, using the spaces defined above, the discrete version of \eqref{uncon2} takes the following form: Find the optimal point of the following discrete system
\begin{equation}\label{uncon3}
    \inf_{\underline{\bs u_h},{\varrho_h}} \sup_{\bs\phi_h}\quad H_h(\underline{\bs u_h}) - \intlrh{\underline{\bs u_h},\underline{\D\bs\phi_h}} + \lrangle{{\varrho_h} ,\bs \phi_h\cdot\bs 1}_{t=T,h}-\lrangle{\bs\rho^0_h,\bs\phi_h}_{t=0,h},
\end{equation}
where we have the variables $\underline{\bs u_h} = (\bs u_{1,h},\cdots, \bs u_{N,h})$ with $\bs u_\ih = (\rho_\ih,{\bs{m}_\ih}, s_\ih)$ and $\bs \phi_h = (\phi_{1,h},\cdots,\phi_{N,h})$. We take $\bs{m}_\ih \in [W_h^{k-1}]^3$, $\rho_\ih, s_\ih \in W_h^{k-1}$, $\varrho_h \in M_h^{k-1}$ with $\varrho_h \geq 0$ a.e., and $\phi_{i,h} \in V_h^{k}$. Note that the spaces used to approximate the physical variables have a polynomial degree one less than the space used for the Lagrange multipliers. When $k=1$, we obtain a staggered scheme in which the physical variables are stationed at the center while the Lagrange multipliers are placed at the vertices of the mesh.

To evaluate the discrete integrals in \eqref{uncon3}, we use the Gauss-Legendre quadrature rule with $k$ quadrature points per spatial dimension. Since there are $N_T$ time elements and $N_S$ space elements, the total number of quadrature points used to evaluate an integral on $\Omega_{t,h}=\mathcal I \otimes \Omega_h$ is $(kN_T)\times (k^dN_S)$, where $d=2$ for the surface geometry and $d=3$ for the volume geometry. Let $\{\chi_i\}_{i=1}^{k^dN_S}$ be the set of spatial quadrature points with and $\{\tau_i\}_{i=1}^{N_T}$ be the set of temporal quadrature points. Let $\{\lambda_i\}_{i=1}^{k^dN_S}$ and $\{\theta_i\}_{i=1}^{N_T}$ be the sets of corresponding quadrature weights respectively. Using these, the discrete integrals can be evaluated as:
\begin{subequations}\label{dscrtint}
    \begin{equation}\label{dscrtinta}
            \intlrh{\bs a, \bs b }= \sum_{i=1}^{kN_T} \sum_{j=1}^{k^dN_S} \theta_i \lambda_j \bs a(\tau_i,\chi_k) \cdot \bs b(\tau_i,\chi_k),
    \end{equation}
    \begin{equation}\label{dscrtintb}
        \lrangle{a,\bs b \cdot \bs 1}_{t=T,h} = \sum_{j=1}^{k^dN_S}  \lambda_j  a(T,\chi_j)\bs b(T,\chi_j)\cdot \bs 1,
    \end{equation}
    \begin{equation}\label{dscrtintc}
        \lrangle{a,\bs b \cdot \bs 1}_{t=0,h} = \sum_{j=1}^{k^dN_S}  \lambda_j  a(0,\chi_j)\bs b(0,\chi_j)\cdot \bs 1.
    \end{equation}
\end{subequations}
Note that \eqref{dscrtinta} is used to evaluate the spacetime integral in $ H_h(\underline{\bs u_h})$ as well. 
In addition to the Gauss-Legendre quadrature rule, we use tensor-product Gauss-Legendre basis functions for the discontinuous spaces $W_h^{k-1}$ and $M_h^{k-1}$. This means that any function $w_h \in W_h^{k-1}$ has the following form:
\begin{equation}
    w_h = \sum_{i=1}^{kN_T}\sum_{j=1}^{k^dN_S}\text{w}_{i,j} \psi_i(t)\varphi_j(x), \enspace \text{w}_{i,j} \in \R, \enspace \text{w}_{i,j} = w_h(\tau_i,\chi_j).
\end{equation}
For any $i,j$, the basis function satisfies the nodal property, i.e.,
\begin{equation}
    \psi_i(\tau_{i'}) = \delta_{ii'} \text{ and } \enspace \varphi_j(\chi_{j'}) = \delta_{jj'},
\end{equation}
where $\delta_{ij}$ is the Kronecker-delta function with indices $i$ and $j$. Similarly, for any function $v_h \in M_h^{k-1}$, we have
\begin{equation}
v_h = \sum_{j=1}^{k^dN_S}\text{v}_{j} \varphi_j(x), \enspace \text{v}_{j} \in \R, \enspace \text{v}_{j} = v_h(\chi_j).
\end{equation}
Therefore, thanks to these basis functions, the unknowns in problem \eqref{uncon3} have the following forms:
\begin{subequations}\label{GLvars}
\begin{alignat}{4}
    \rho_\ih &= \sum_{j=1}^{kN_T}\sum_{\ell=1}^{k^dN_S}\uprho_{i,j\ell} \psi_j(t) \varphi_\ell(x), \enspace \uprho_{i,j\ell} \in \R_+, \enspace \uprho_{i,j\ell} = \rho_\ih(\tau_j,\chi_\ell),\\
    \varrho_h &= \sum_{j=1}^{kN_T}\sum_{\ell=1}^{k^dN_S} \upvarrho_{j\ell}\psi_j(t) \varphi_\ell(x), \enspace \upvarrho_{j\ell} \in \R_+, \enspace \upvarrho_{j\ell} = \varrho_h(\tau_j,\chi_\ell)\\
    \bs m_\ih &= \sum_{j=1}^{kN_T}\sum_{\ell=1}^{k^dN_S}\textbf{m}_{i,j\ell} \psi_j(t) \varphi_\ell(x), \enspace \textbf{m}_{i,j\ell} \in \R^3, \enspace \textbf{m}_{i,j\ell} = \bs m_\ih(\tau_j,\chi_\ell),\\
    s_\ih &= \sum_{j=1}^{kN_T}\sum_{\ell=1}^{k^dN_S}\textbf{s}_{i,j\ell} \psi_j(t) \varphi_\ell(x), \enspace \textbf{s}_{i,j\ell} \in \R, \enspace \textbf{s}_{i,j\ell} = s_\ih(\tau_j,\chi_\ell)
\end{alignat}
    
\end{subequations}
We highlight here that the combination of Gauss-Legendre quadrature rules and Gauss-Legendre basis functions for the $L^2$-conforming spaces $W_h^{k-1}$ and $M_h^{k-1}$ brings about a notable simplification of the optimization problem \eqref{uncon3} as it decouples the degrees of freedom of the physical variables $\underline{\bs u_h}$ and $\varrho_h$ for a fixed value of the Lagrange multiplier $\bs\phi_h$. Consequently, it becomes possible to independently solve the non-linear optimization problems for $\underline{\bs u_h}$ and $\varrho_h$ at each quadrature point. It is also clear from the first two equations in \eqref{GLvars} that the positivity of densities $\rho_\ih$ and the terminal density $\varrho_h$ are guaranteed at all quadrature points since at these points their respective admissible sets require $\uprho_{i,j\ell}$ and $\upvarrho_{j\ell}$ to be non-negative. On the other hand, the above-described features of the scheme do not depend on the choice of basis functions for the continuous space $V_h$. In our  implementation, we use nodal Gauss-Lobatto basis functions.

\subsection{Primal-Dual Hybrid Gradient Algorithm}
To tackle the saddle-point problem in \eqref{uncon3}, we apply the primal-dual hybrid gradient (PDHG) method\cite{Chambolle}. Given parameters $\sph,\su > 0$, and initial guess $(\underline{\bs u^0_h}, {\varrho}^0_h, \bs \phi^0_h)$, the PDHG method performs proximal gradient ascent on the variable $\bs \phi_h$ and proximal gradient descent on the variable $(\underline{\bs u_h} , {\varrho}_h)$ alternatively. The $(k+1)$-th iteration of the algorithm takes the following form:
\begin{subequations}
    \begin{itemize}
        \item Step 1: Proximal gradient ascent for $\bs\phi^\kpo_h$:
        \begin{equation}\label{pdhga}
            \begin{aligned}
                \bs\phi^\kpo_h = \underset{\bs\phi_h}{\operatorname{argmax}}  &- \intlrh{\underline{\bs u_h^k},\underline{\D\bs\phi_h}} + \lrangle{{\varrho}^k_h,\bs \phi_h\cdot\bs 1}_{t=T,h}-\lrangle{\bs\rho^0_h,\bs\phi_h}_{t=0,h} \\ &- \frac{1}{2\sph}\intlrh{\underline{\D(\bs\phi_h-\bs\phi^k_h)},\underline{\D(\bs\phi_h-\bs\phi^k_h)}} \\& -\frac{1}{2\sph}\lrangle{(\bs\phi_h-\bs\phi_h^k)\cdot\bs 1,(\bs\phi_h-\bs\phi_h^k)\cdot\bs 1}_{t=T,h}
            \end{aligned}   
        \end{equation}
        \item Step 2: Extrapolation for $\tilde{\bs\phi}^\kpo_h$:
        \begin{equation}\label{pdhgb}
            \tilde{\bs\phi}^\kpo_h = 2\bs\phi^\kpo_h-\bs\phi^k_h
        \end{equation}
        \item Step 3: Proximal gradient descent for $(\underline{\bs u_h^\kpo}$ and $\varrho^\kpo_h)$:
        \begin{equation}\label{pdhgc}
            {\varrho}^\kpo_h = \underset{{\varrho}_h}{\operatorname{argmin}} \lrangle{{\varrho}_h,\tilde{\bs\phi}^\kpo_h\cdot \bs 1}_{t=T,h}+\frac{1}{2\sigma_{\underline{\bs u_h}}} \lrangle{{\varrho}_h-{\varrho}^k_h,{\varrho}_h-{\varrho}^k_h}_{t=T,h}
        \end{equation}
        \begin{equation}\label{pdhgd}
            \begin{aligned}
                \underline{\bs u_h^\kpo} = \underset{\underline{\bs u_h}}{\operatorname{argmin}} \enspace H_h(\underline{\bs u_h}) - \intlrh{\underline{\bs u_h},\underline{\D\tilde{\bs\phi}^\kpo_h}} + \frac{1}{2\sigma_{\underline{\bs u_h}}} \intlrh{\underline{\bs u_h}-\underline{\bs u_h^k},\underline{\bs u_h}-\underline{\bs u_h^k}}
            \end{aligned}
        \end{equation}
    \end{itemize}
\end{subequations}
% The PDHG algorithm is 
% The PDHG method is insensitive to the initial guess and exhibits global convergence if the parameters $\sph, \sigma_{\underline{\bs u_h}}$ meet the following condition:
% \begin{equation}
%     \sigma_{\bs\phi}\sigma_{\underline{\bs u_h}} < \frac{1}{||\D^T\D||},
% \end{equation}
% where $||\bs v_h||^2 = \intlr{\bs v_h,\bs v_h}$.

\subsubsection{Step 1: Solving for $\bs\phi^\kpo_h$}
$\\$
If we write \eqref{pdhga} as a minimization problem:
\begin{equation}
\begin{aligned}
    \bs\phi^\kpo_h = \inf_{\bs\phi_h}  &\intlrh{\underline{\bs u_h^k},\underline{\D\bs\phi_h}} - \lrangle{{\varrho}^k_h,\bs \phi_h\cdot\bs 1}_{t=T,h}+\lrangle{\bs\rho^0_h,\bs\phi_h}_{t=0,h} \\ &+ \frac{1}{2\sph}\intlrh{\underline{\D(\bs\phi_h-\bs\phi^k_h)},\underline{\D(\bs\phi_h-\bs\phi^k_h)}}
    \\&+\frac{1}{2\sph}\lrangle{(\bs\phi_h-\bs\phi_h^k)\cdot\bs 1,(\bs\phi_h-\bs\phi_h^k)\cdot\bs 1}_{t=T,h},
\end{aligned}
\end{equation}
it becomes clear that the above is equivalent to solving the following elliptical problem: find $\bs \phi_h\in [V_h^k]^N$ such that for all $\bs\psi_h\in [V_h^k]^N$
\begin{equation}\label{elliptical}
    \begin{aligned}
        &\intlrh{\underline{\D (\bs \phi_h -\bs\phi^k_h)},\underline{\D \bs \psi_h}} + \lrangle{(\bs\phi_h-\bs\phi_h^k)\cdot\bs 1,\bs\psi_h\cdot\bs 1}_{t=T,h} \\
        = &-\sph\intlrh{\underline{\bs u_h^k},\underline{\D\bs\psi_h}} + \sph\lrangle{{\varrho}^k_h,\bs \psi_h\cdot \bs 1}_{t=T,h} 
        -\sph\lrangle{\bs\rho^0_h,\bs\psi_h}_{t=0,h}.
    \end{aligned}
\end{equation}
We will solve \eqref{elliptical} component-wise for each $\phi_\ih$. To do so, we use a Gauss-Seidel type splitting to break the cross-terms. Skipping details for brevity, we get the following solve for component $\delta\phi_\ih = \phi_\ih-\phi_\ih^k$:
for $1\le i\le N$,
find $\delta\phi_\ih\in V_h^k$ such that for all $\psi_\ih\in V_h^k$
\begin{subequations}
    \begin{equation}\label{step1ls}
    \begin{aligned}
        &\intlrh{\partial_t \delta\phi_\ih,\partial_t \psi_\ih}+\intlrh{\nbb \delta\phi_\ih,\nbb \psi_\ih} + 2\intlrh{\delta\phi_\ih,\psi_\ih} + \lrangle{\delta\phi_\ih,\psi_\ih}_{t=T,h}\\=&-\sph\intlrh{\rho_\ih^k,\partial_t\psi_\ih}-\sph\intlrh{\bs m_\ih^k,\nbb\psi_\ih} 
        -\sph\intlrh{s_\ih^k-s_{i-1,h}^k,\psi_\ih}
        \\   
        &+\sph\lrangle{{\varrho}_h^k,\psi_\ih}_{t=T,h}-\sph\lrangle{\rho^0_\ih,\psi_\ih}_{t=0,h} \\&+\intlrh{\delta\phi^{k+1}_{i-1,h}+\delta\phi^k_{i+1,h},\psi_\ih} -\lrangle{\textstyle\sum^{i-1}_{j=1}\delta\phi_{j,h}^{k+1}
        +\textstyle\sum^{N}_{j=i+1}\delta\phi_{j,h}^{k}
        ,\psi_\ih}_{t=T,h},
    \end{aligned}
\end{equation}
and 
\begin{equation}
    \phi_\ih^\kpo = \phi_\ih^k + \delta\phi_\ih.
\end{equation}
\end{subequations}

\subsubsection{Step 2: Extrapolation for $\tilde{\bs\phi}^{k+1}_h$}
$\\$
Having solved for $\bs\phi_h^\kpo$ in the previous step, the extrapolation in this step takes the following simple form
\begin{equation}
    \tilde{\bs\phi}^{k+1}_h = \bs\phi^\kpo_h + \delta\bs\phi_h = 2\bs\phi_h^\kpo - \bs\phi_h^k.
\end{equation}

\subsubsection{Step 3: Solving for $\underline{\bs u_h^{k+1}}$ and ${\varrho}_h^\kpo$}
$\\$
Equation \eqref{pdhgc} is equivalent to the following solve: find ${\varrho_h}$ such that for all $\nu_h$
\begin{equation}
    \lrangle{{\varrho}_h-{\varrho}^k_h,\nu_h}_{t=T,h} = -\su\lrangle{\tilde{\bs\phi}_h^\kpo\cdot\bs 1,\nu_h}_{t=T,h},
\end{equation}
which, by use of a Gauss-Legendre quadrature rule in conjunction with a nodal Gauss-Legendre basis for the space $M_h^{k-1}$, reduces to the following pointwise solve at each integration point:
\begin{equation}
    {\varrho}_h = {\varrho}^k_h - \su \sum_{i=1}^N\tilde{\bs\phi}_\ih^\kpo.
\end{equation}
Finally, to get the solver for $\underline{\bs u_h}$, we set 
\begin{equation}
    \bar{\bs u}_h = \underline{\bs u_h^k} + \su\mathcal D\tilde{\phi}^\kpo_h
\end{equation}
and rewrite \eqref{pdhgd} as 
\begin{equation}\label{minubar}
    \underline{\bs u_h^\kpo} = \underset{\underline{\bs u_h}}{\operatorname{argmin}} \enspace H_h(\underline{\bs u_h}) + \frac{1}{2\su}\intlrh{\underline{\bs u^k_h}-\bar{\bs u}_h,\underline{\bs u^k_h}-\bar{\bs u}_h} .
\end{equation}
It is clear that with the choice of the basis functions for $\underline{\bs u_h}$ and the quadrature rule for the numerical integration in \eqref{minubar}, the above optimization problem can be solved separately in parallel on each quadrature point. 
On each quadrature point, we have $5N$ unknowns from each component of $\underline{\bs u_h}$ ($3N$ for the fluxes $\underline{\bs m_h}$, $N$ for densities ${\bs\rho_h}$, and $N$ for sources $\bs s_h$). 
We further simplify this problem by applying optimization on the fluxes $\bs{\underline m_h}$ and sources $\bs s_h$ to get, for all $1\le i\le N$,
    \begin{equation}\label{optimalms}
    \begin{aligned}
        \bs m_\ih &= \frac{V_{1,i}(\rho_\ih)}{\su+V_{1,i}(\rho_\ih)}\bar{\bs m}_\ih,\\
        s_\ih &= \frac{V_{2,i}(\rho_\ih,\rho_{i+1,h})}{\su+V_{2,i}(\rho_\ih,\rho_{i+1,h})}\bar{s}_\ih.
    \end{aligned} 
    \end{equation}
    Replacing these expressions into \eqref{minubar}, we get the following optimization problem for the $N$-components of density on each quadrature point $(\tau_j, \chi_\ell)$
    for all $1\le j\le kN_T$, $1\le \ell\le k^dN_S$:
    \begin{equation}\label{step2nls}
    \begin{aligned}
       \bs \rho_{j\ell}^\kpo = \underset{\bs \rho_{j\ell}\ge 0}{\operatorname{argmin}} \enspace\sum_{i=1}^N&\frac{||\bar{\bs m}_{i,j\ell}||^2}{2[\su+V_{1,i}(\rho_{i,j\ell})]} + \frac{\bar{s}_{i,j\ell}^2}{2[\su+V_{2,i}(\rho_{i,j\ell},\rho_{i+1,j\ell})]} \\ 
        &\quad+\frac{1}{2\su}(\rho_{i,j\ell}-\bar{\rho}_{i,j\ell})^2 - F_i(\rho_{i,j\ell}).
    \end{aligned}
\end{equation}
Finally, we approximately solve the above problem \eqref{step2nls} sequentially for each component $i$, which results in a single variable minimization problem for $\rho_{i,j\ell}$. 
This minimization problem is solved using Brent's minimization algorithm \cite{brent1973}. We borrow its implementation from the C++ Boost libraries \cite{BoostLibrary}, the details of which can be found on the documentation page \cite{boost_brent_minima}. Once $\rho_\ih$ are obtained for all $i$, we  use the equations in \eqref{optimalms} to recover $
\bs m_\ih$ and $s_\ih$.

\subsection{PDHG parameters, initialization and stopping criteria}
In the simulation, we take the PDHG parameters $\sigma_u=\sigma_\phi = 1$, and 
set initial guess as {\it zero} except for the initial densities $\bs \rho_h$ and $\varrho_h$, where $\bs\rho_h$ is set to be the initial data $\bs\rho^0$, and $\varrho_h$ is taken as the arithmetic average of 
the initial data $\bs\rho^0$. 
We terminate the PDHG algorithm either after a fixed number of iterations or when the $L_1$-norm of the difference between two consecutive terminal density 
$err = \|\varrho_h^k-\varrho_h^{k+1}\|_{L_1(\Omega)} < tol$, where $tol$ is a user defined tolerance. 
The theoretical convergence study of this algorithm is out of the scope of the current work and will be investigated separately in our future work. 
% Our simulation results show a linear convergence for all test cases we investigated.

\section{Numerical Results}\label{sec4}
In this section, we present several numerical examples that demonstrate the effectiveness and applicability of our proposed approach \eqref{uncon3} for computing Wasserstein barycenters of 3D volume and 2D surface data with reaction-diffusion effects. 
% For all the numerical examples, the spatial domain is the unit cube $\Omega = [0,1]^3$. On the other hand, the terminal time $T$ is set to 1, giving $[0,1]$ as the time domain.
We take the terminal time $T=1$. The initial densities $\bs \rho^0 = (\rho_1, \ldots, \rho_N)$ are specified, which under the action of the scheme \eqref{uncon3}, all flow to the same unknown terminal density $\rho_T$. 
For all simulations, both PDHG parameters $\sigma_{\phi_h}$ and $\su$ are set to 1.
The specific choices of the mobility functions and the interaction functional of the model are given in \eqref{v1v2} and \eqref{pot}.
% , and the regularization parameter $\beta_i$ in \eqref{pot} is set to 0.001 for the purpose of smoothing, unless stated otherwise. 
The linear systems arising in \eqref{step1ls} are solved using the preconditioned conjugate gradient method with a geometric multigrid preconditioner, while the minimization in \eqref{step2nls} is performed using Brent's algorithm in the range $10^{-6}\leq \rho_{i,h} \leq 40$. The computations are performed using the high-performance finite element C++ library MFEM \cite{MFEM}. The code for these numerical examples can be found in the git repository: \href{https://github.com/avj-jpg/WassersteinBarycenter}{https://github.com/avj-jpg/WassersteinBarycenter}.

\begin{figure}[h!]
    \centering
    \begin{subfigure}{0.32\textwidth}
        \centering
        \includegraphics[width=\textwidth,trim=20mm 20mm 40mm 20mm, clip]{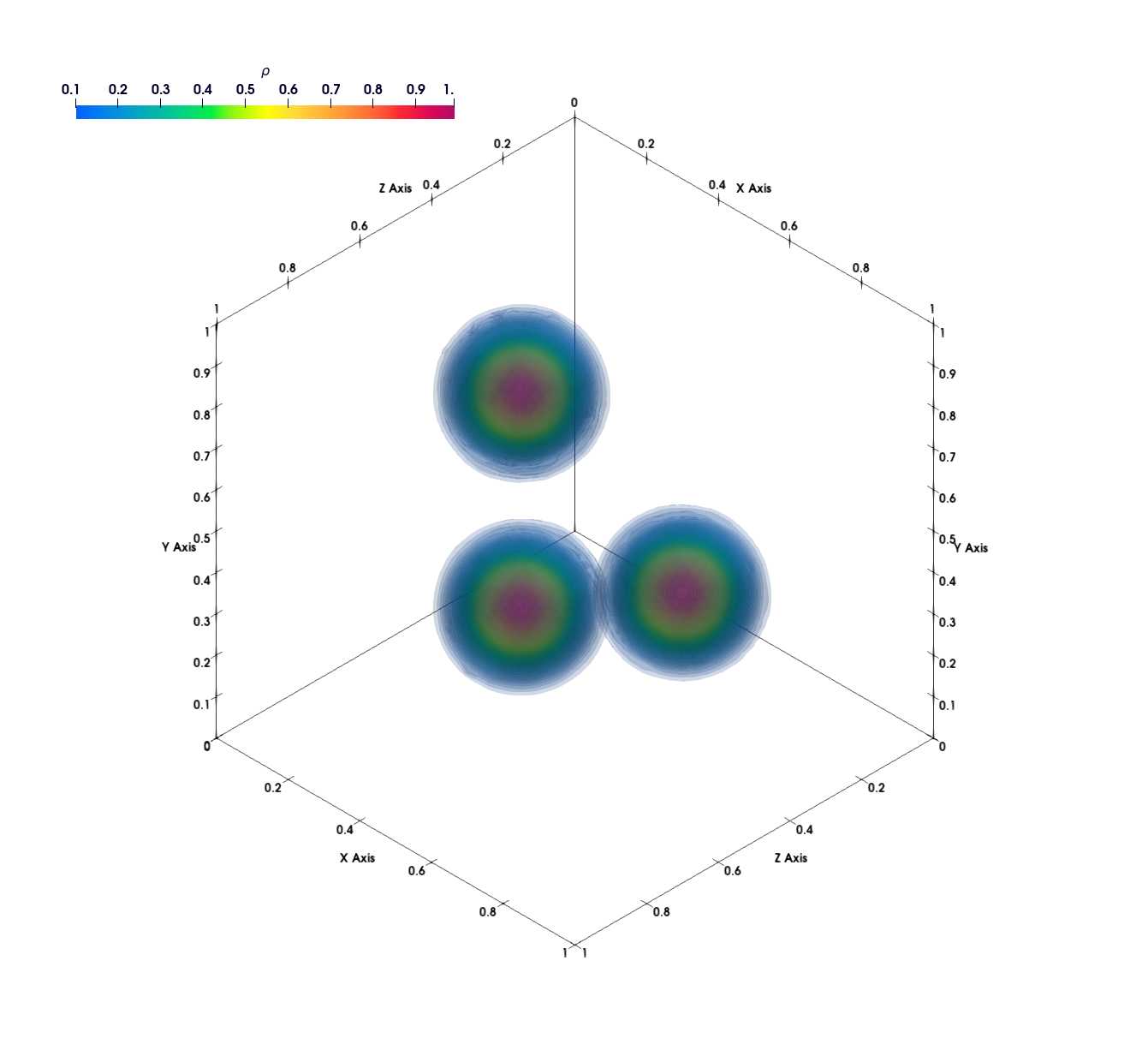}
        \caption{$t=0$}
    \end{subfigure}
    \begin{subfigure}{0.32\textwidth}
        \centering
        \includegraphics[width=\textwidth,trim=20mm 20mm 40mm 20mm, clip]{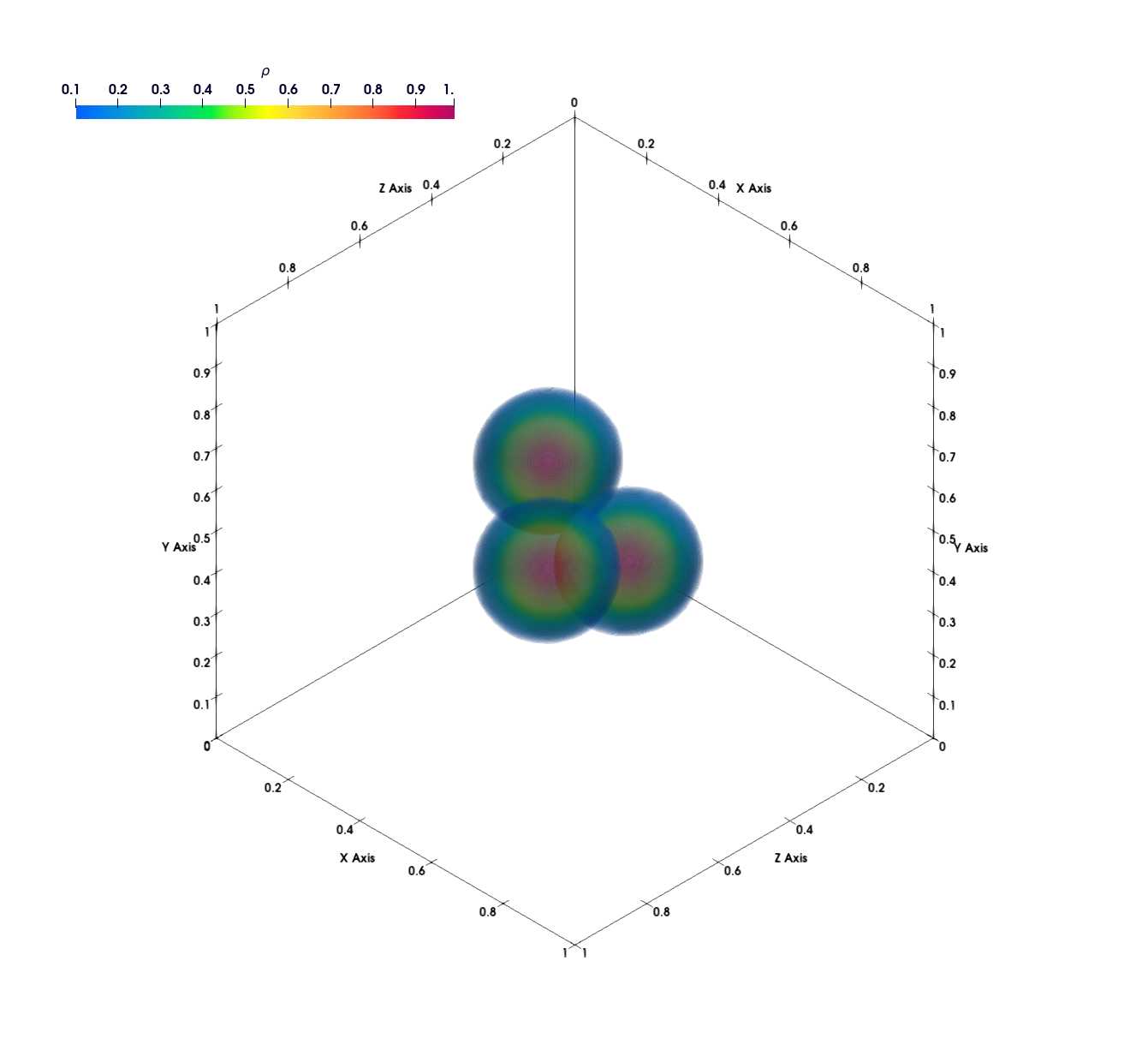}
        \caption{$t=0.5$}
    \end{subfigure}
    \begin{subfigure}{0.32\textwidth}
        \centering
        \includegraphics[width=\textwidth,trim=20mm 20mm 40mm 20mm, clip]{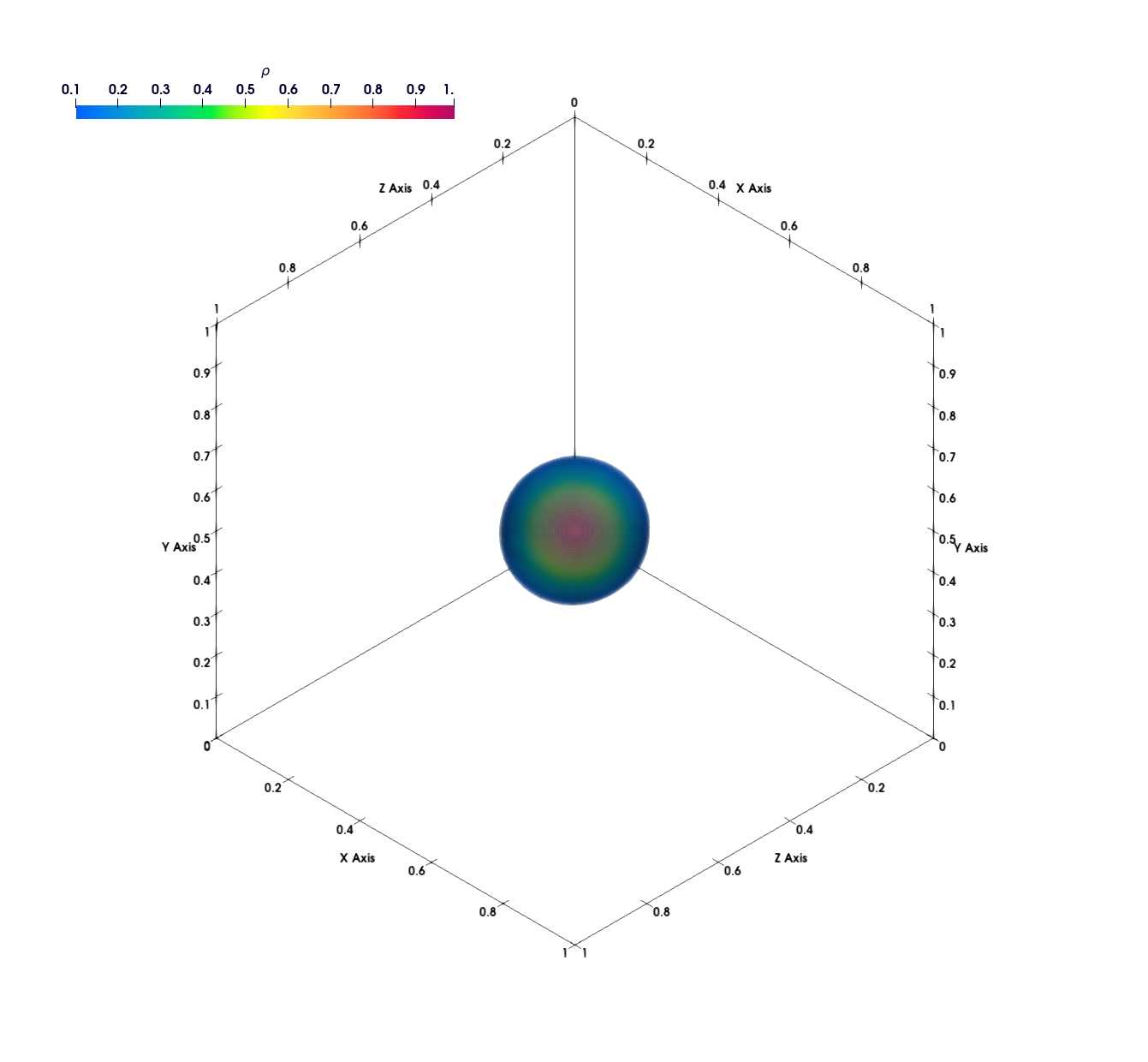}
        \caption{$t=1.0$}
    \end{subfigure}
      
    \caption{The Wasserstein barycenter of 3 Gaussian distributions without any reaction effects.}
    \label{f:gaussian}
\end{figure}

\begin{figure}[h!]
    \centering
\includegraphics[width=0.5\textwidth]{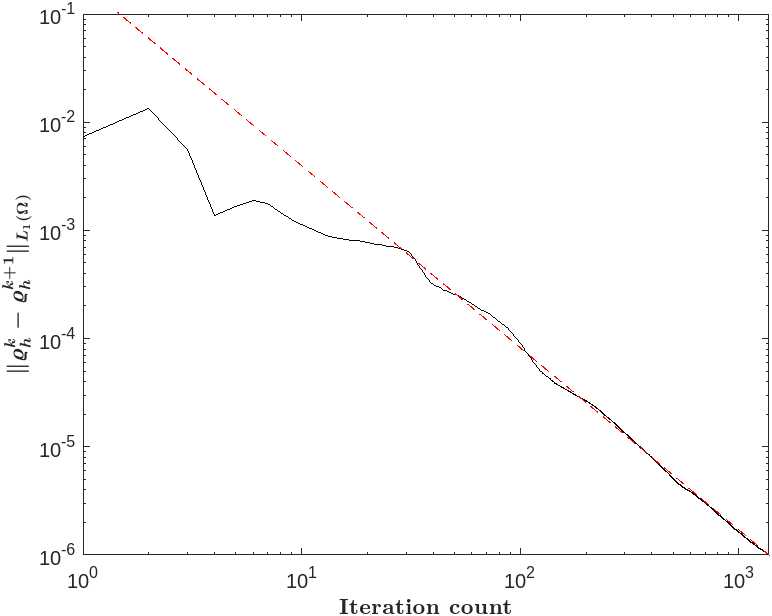}
    \caption{Evolution of the $L_1$-error 
    $err= \|\varrho_h^k-\varrho_h^{k+1}\|_{L_1(\Omega)}$
    against the number of PDHG iterations $k$.
}
    \label{f:conv}
\end{figure}

\subsection{Barycenter of three Gaussian distributions in 3D}
Our first example concerns the computation of the Wasserstein barycenter of three Gaussian initial densities given by
\begin{subequations}
    \begin{equation}
        \rho_1 = e^{-50\left( (x-0.8)^2 + (y-0.5)^2 + (z-0.5)^2 \right)}
    \end{equation}
    \begin{equation}
        \rho_2 = e^{-50\left( (x-0.35)^2 + (y-0.5-0.15\sqrt{3})^2 + (z-0.5)^2 \right)}. 
    \end{equation}
    \begin{equation}
        \rho_2 = e^{-50\left( (x-0.35)^2 + (y-0.5+0.15\sqrt{3})^2 + (z-0.5)^2 \right)}. 
    \end{equation}
\end{subequations}
The spatial domain is a unit cube $\Omega = [0,1]^3$.
We take the interaction strengths $\beta_i$ as well as the reaction strength $\alpha$ to be 0, which results in the 
classical Wasserstein barycenter problem with no reaction effects.
The Wasserstein barycenter is also a Gaussian density with a center located at the Euclidean barycenter of the centers of three Gaussian initial densities:
\begin{equation}
    \rho_T = e^{-50\left( (x-0.5)^2 + (y-0.5)^2 + (z-0.5)^2 \right)}.
\end{equation}
We perform our computations with a polynomial degree $k=4$ on a spacetime grid with $16\times16\times16$ uniform cubical elements for the spatial variable and eight uniform line elements for the time variable. 
The results are visualized in Figure \ref{f:gaussian}, which shows the contours of the three initial densities that evolve toward the terminal density over time. As foreseen, the terminal density is a Gaussian with a center located at $(0.5,0.5,0.5)$. 
We further plot the convergence 
of error $err= \|\varrho_h^k-\varrho_h^{k+1}\|_{L_1(\Omega)}$
against the number of PDHG iterations $k$ in Figure \ref{f:conv}. Clearly, we observe an asymptotic linear convergence rate. 
\begin{figure}[h!]
    \centering
\begin{minipage}[b]{\textwidth}
    \begin{minipage}[b]{\textwidth}
    \hfill
        \begin{subfigure}{\textwidth}
            \centering
            \includegraphics[width=\textwidth,trim=0mm 0mm 0mm 400mm, clip]{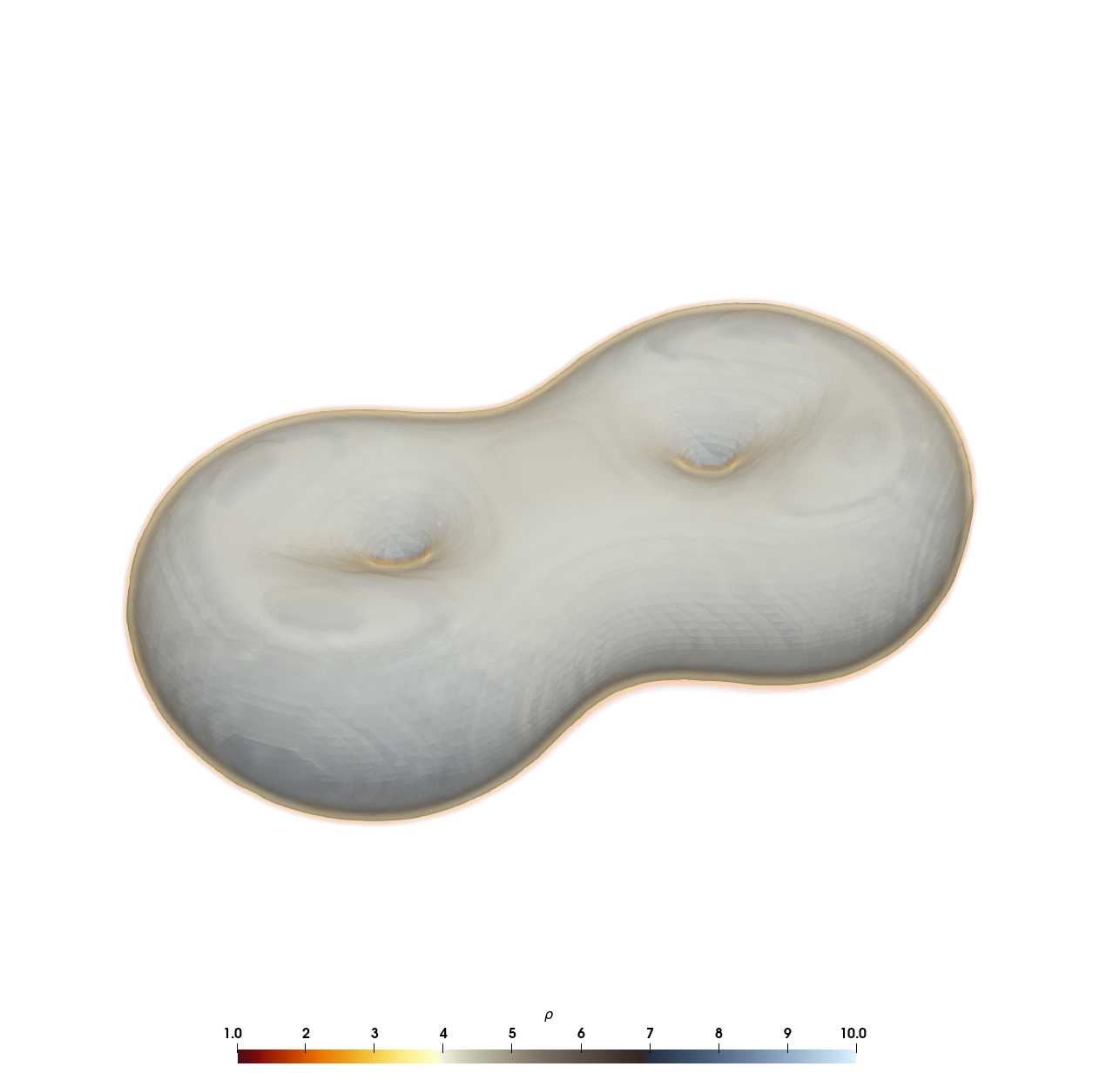}
        \end{subfigure}
    \end{minipage}
    
    \begin{subfigure}{0.16\textwidth}
        \centering
        \includegraphics[width=\textwidth,trim=30mm 40mm 30mm 70mm, clip]{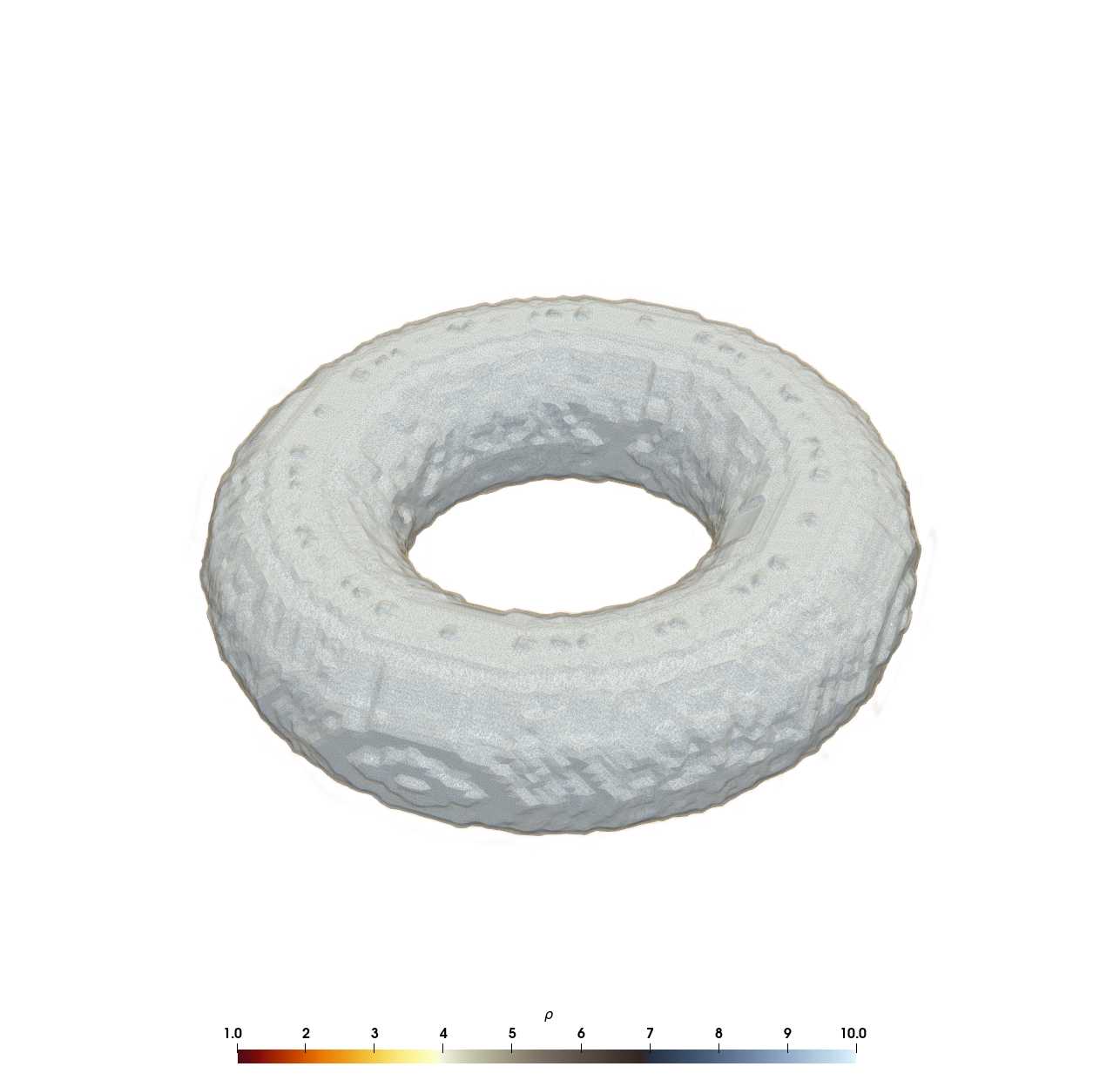}
    \end{subfigure}
    \begin{subfigure}{0.16\textwidth}
        \centering
        \includegraphics[width=\textwidth,trim=30mm 40mm 30mm 70mm, clip]{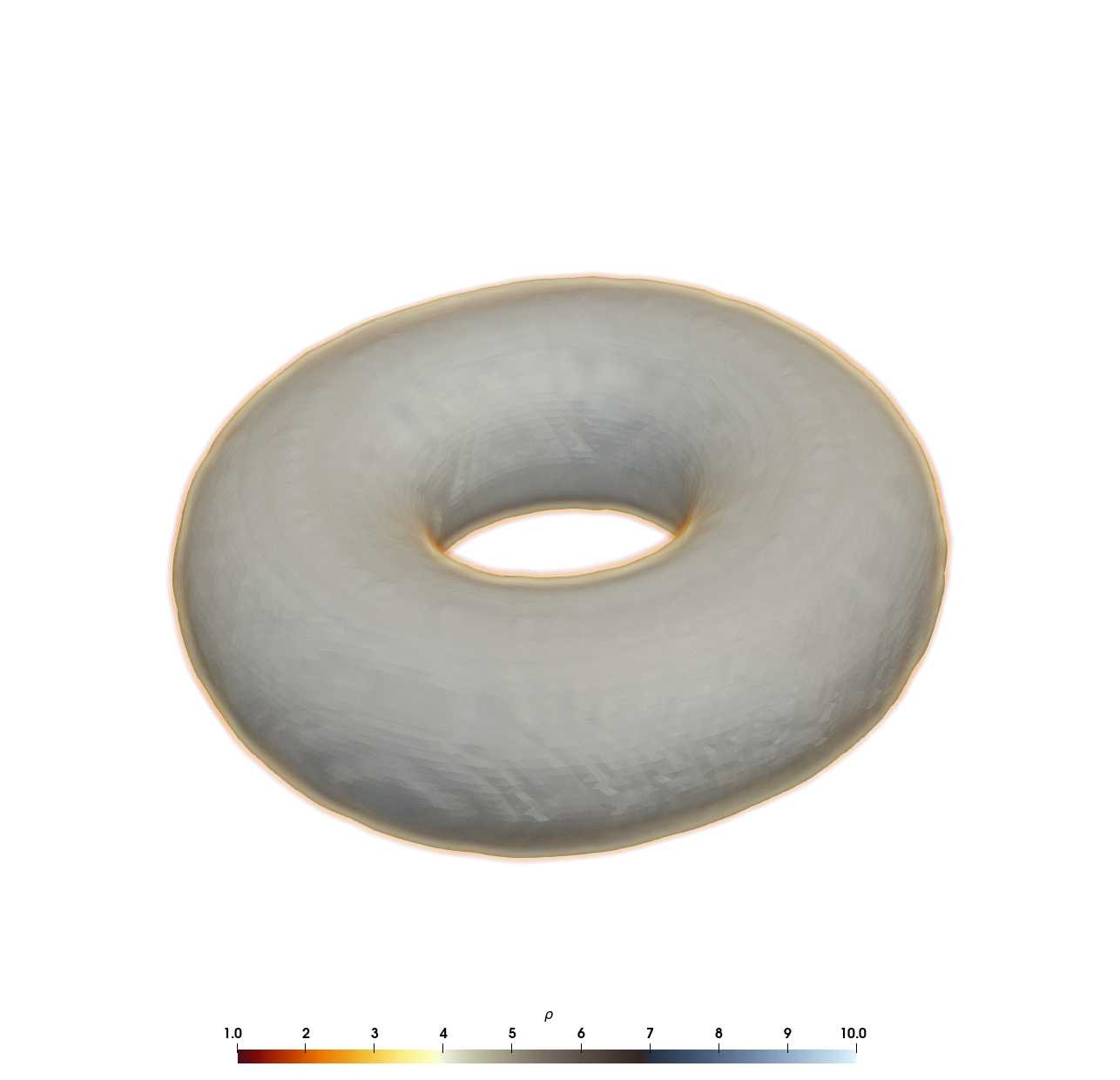}
    \end{subfigure}
    \begin{subfigure}{0.16\textwidth}
        \centering
        \includegraphics[width=\textwidth,trim=30mm 40mm 30mm 70mm, clip]{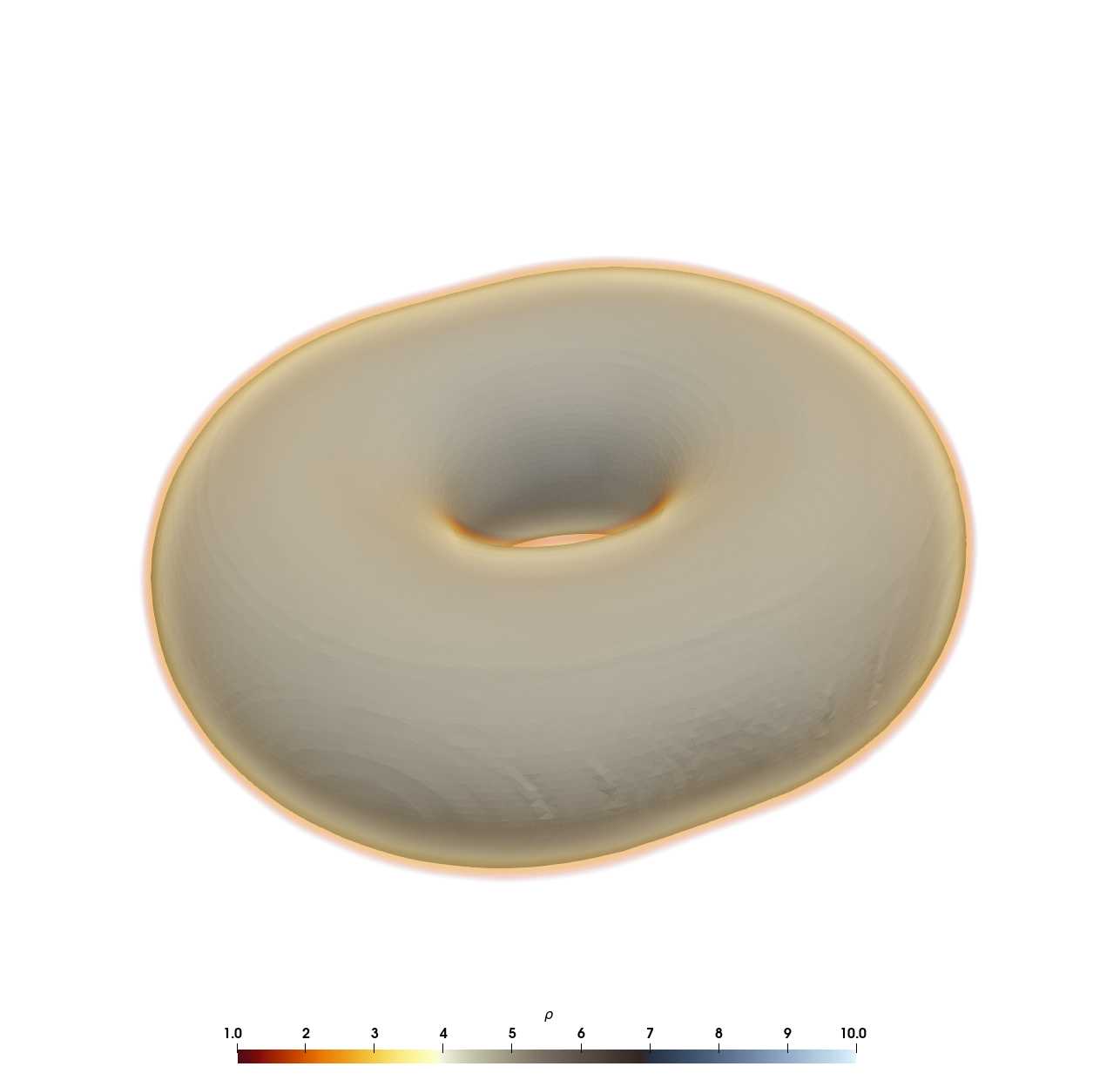}
    \end{subfigure}
    \begin{subfigure}{0.16\textwidth}
        \centering
        \includegraphics[width=\textwidth,trim=30mm 40mm 30mm 70mm, clip]{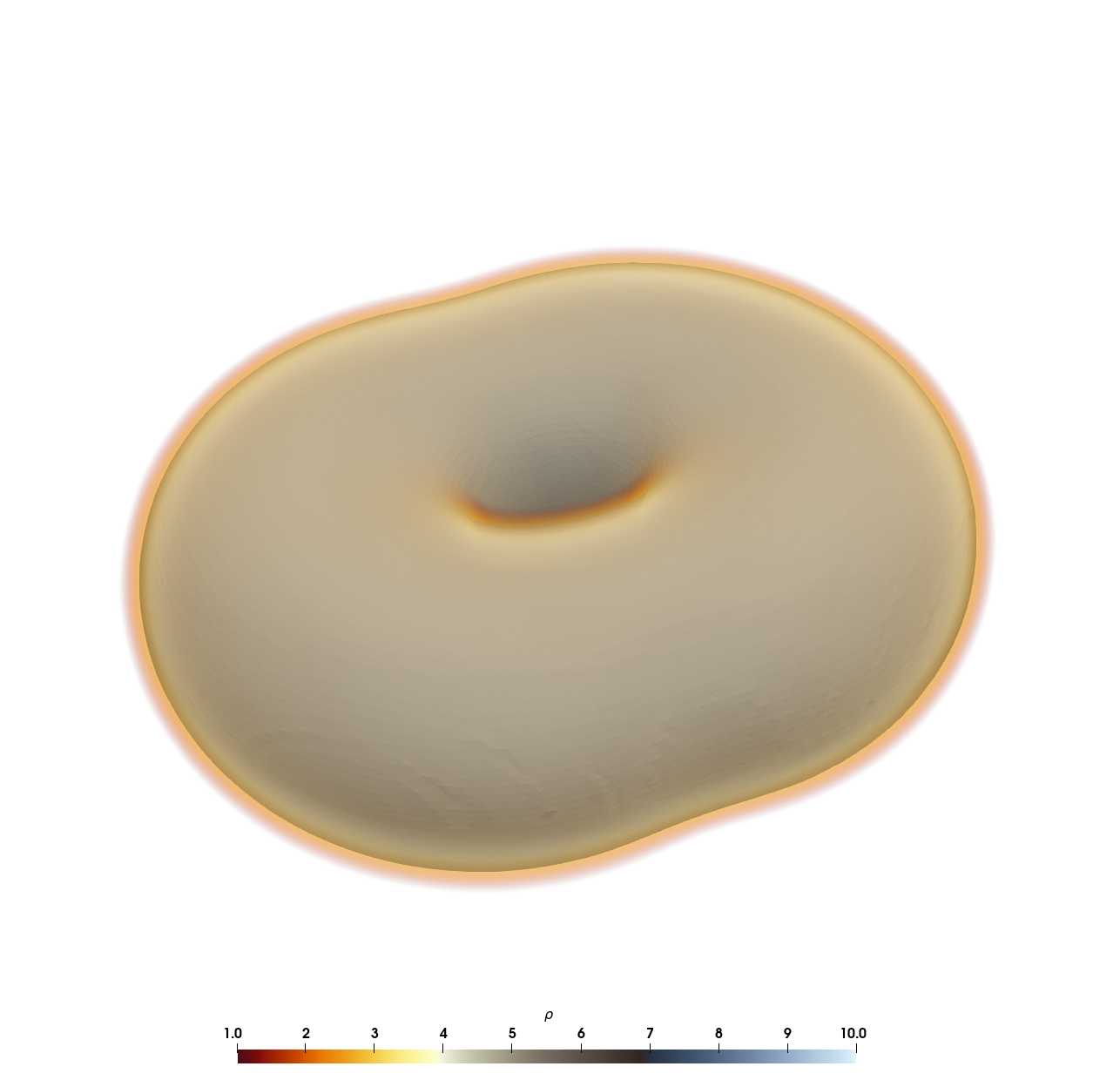}
    \end{subfigure}
    \begin{subfigure}{0.16\textwidth}
        \centering
        \includegraphics[width=\textwidth,trim=30mm 40mm 30mm 70mm, clip]{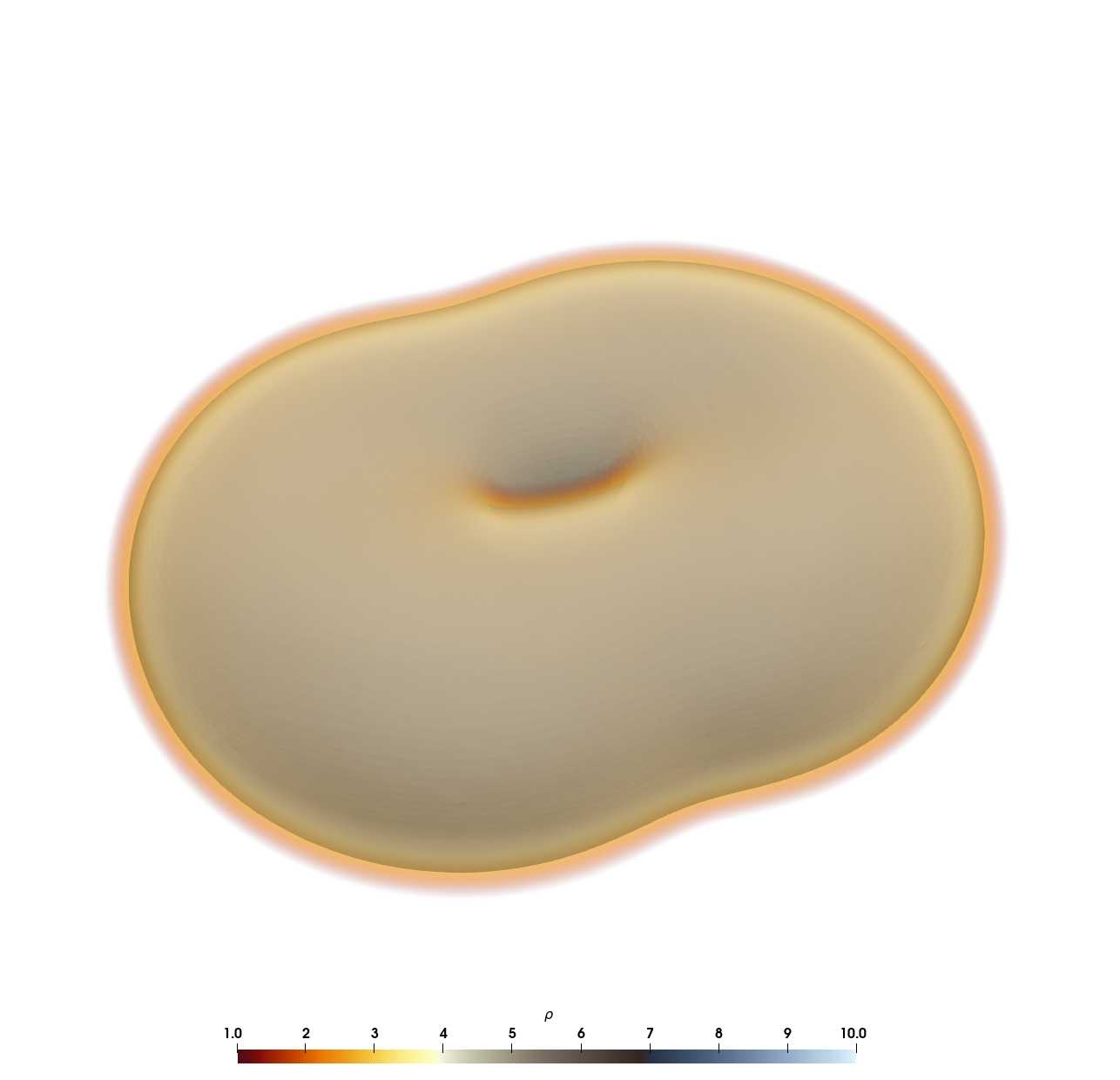}
    \end{subfigure}
    \begin{subfigure}{0.16\textwidth}
        \centering
        \includegraphics[width=\textwidth,trim=30mm 40mm 30mm 70mm, clip]{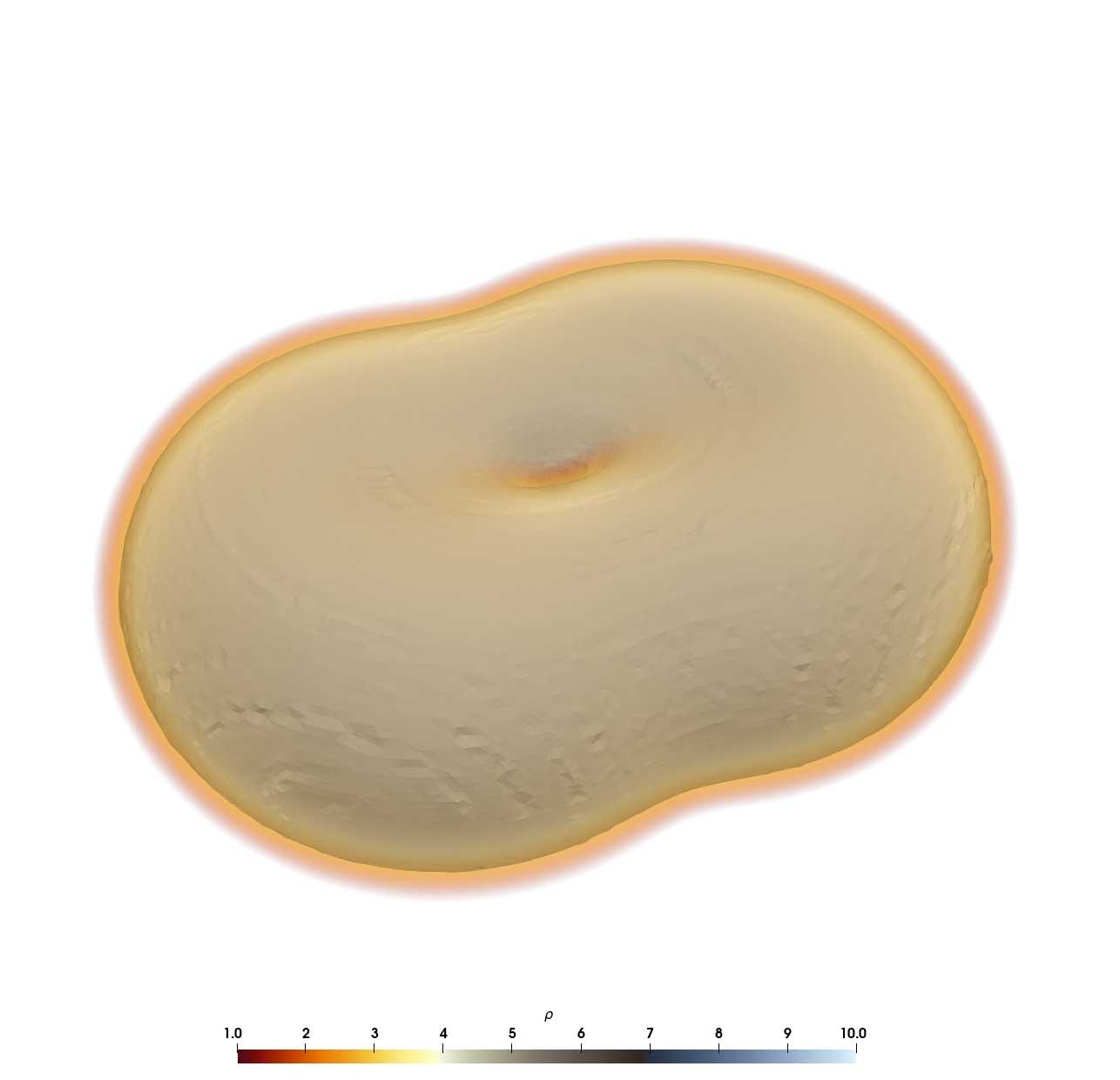}
    \end{subfigure}
    
    \begin{subfigure}{0.16\textwidth}
        \centering
        \includegraphics[width=\textwidth,trim=30mm 40mm 30mm 70mm, clip]{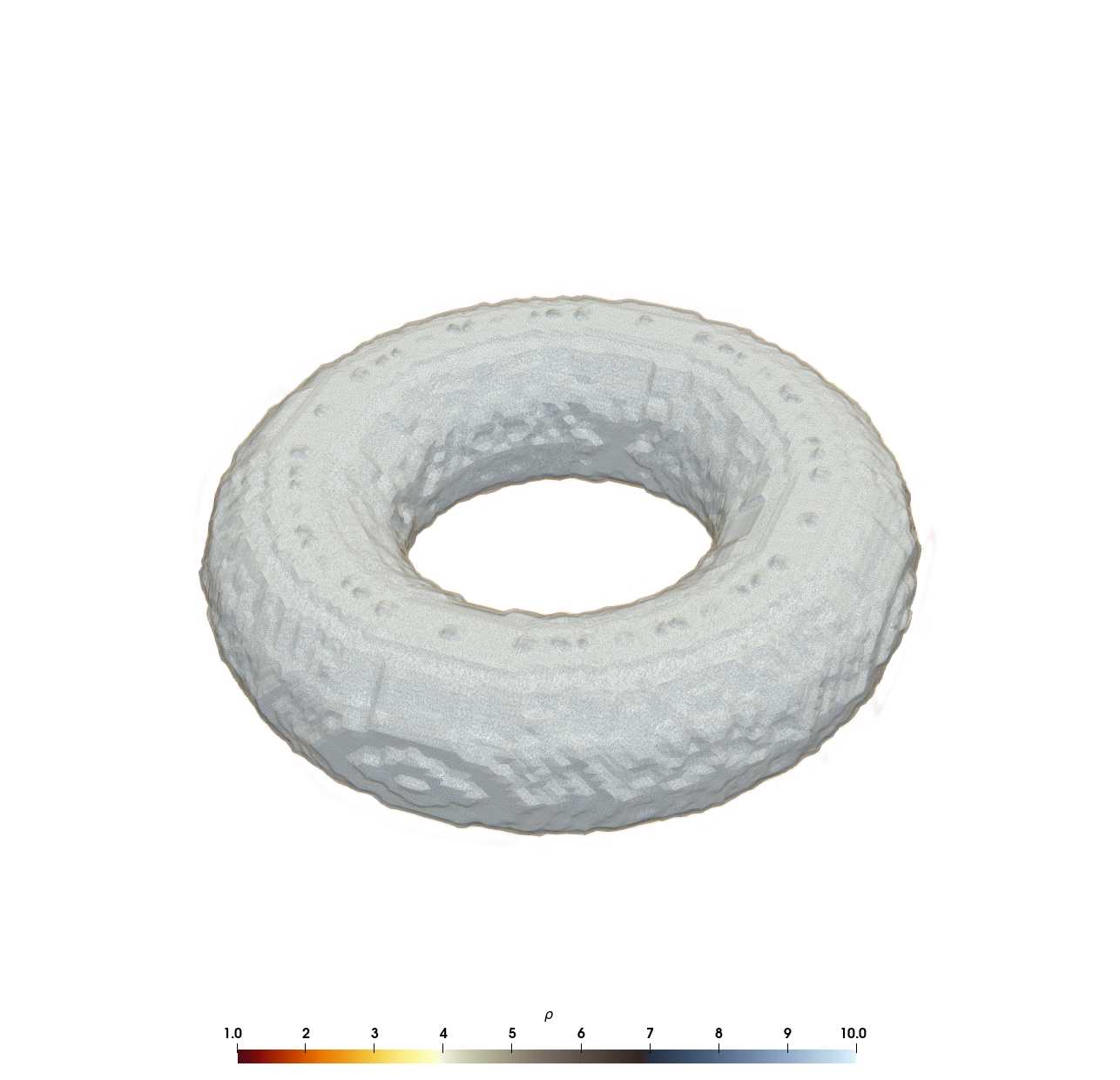}
    \end{subfigure}
    \begin{subfigure}{0.16\textwidth}
        \centering
        \includegraphics[width=\textwidth,trim=30mm 40mm 30mm 70mm, clip]{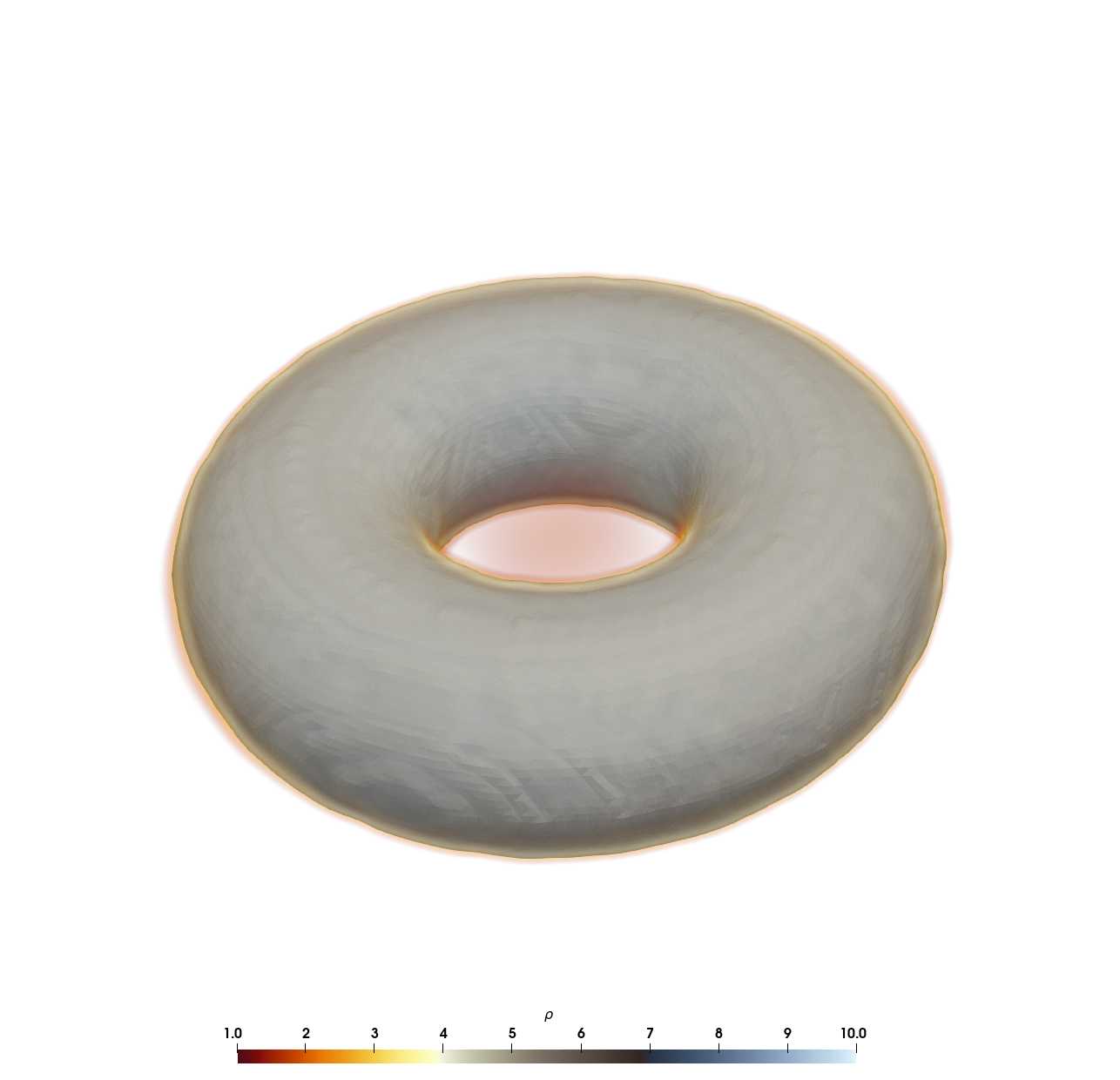}
    \end{subfigure}
    \begin{subfigure}{0.16\textwidth}
        \centering
        \includegraphics[width=\textwidth,trim=30mm 40mm 30mm 70mm, clip]{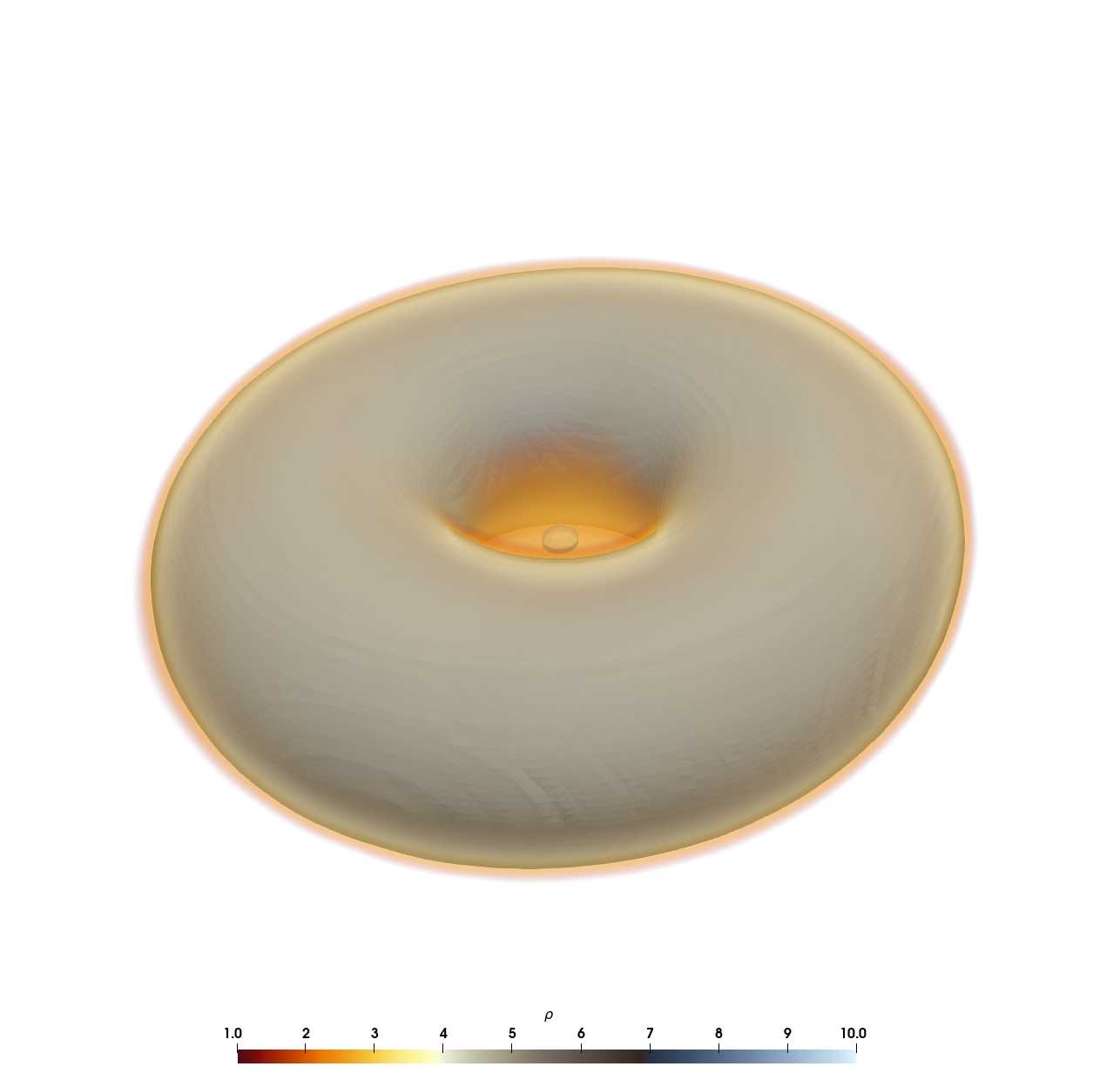}
    \end{subfigure}
    \begin{subfigure}{0.16\textwidth}
        \centering
        \includegraphics[width=\textwidth,trim=30mm 40mm 30mm 70mm, clip]{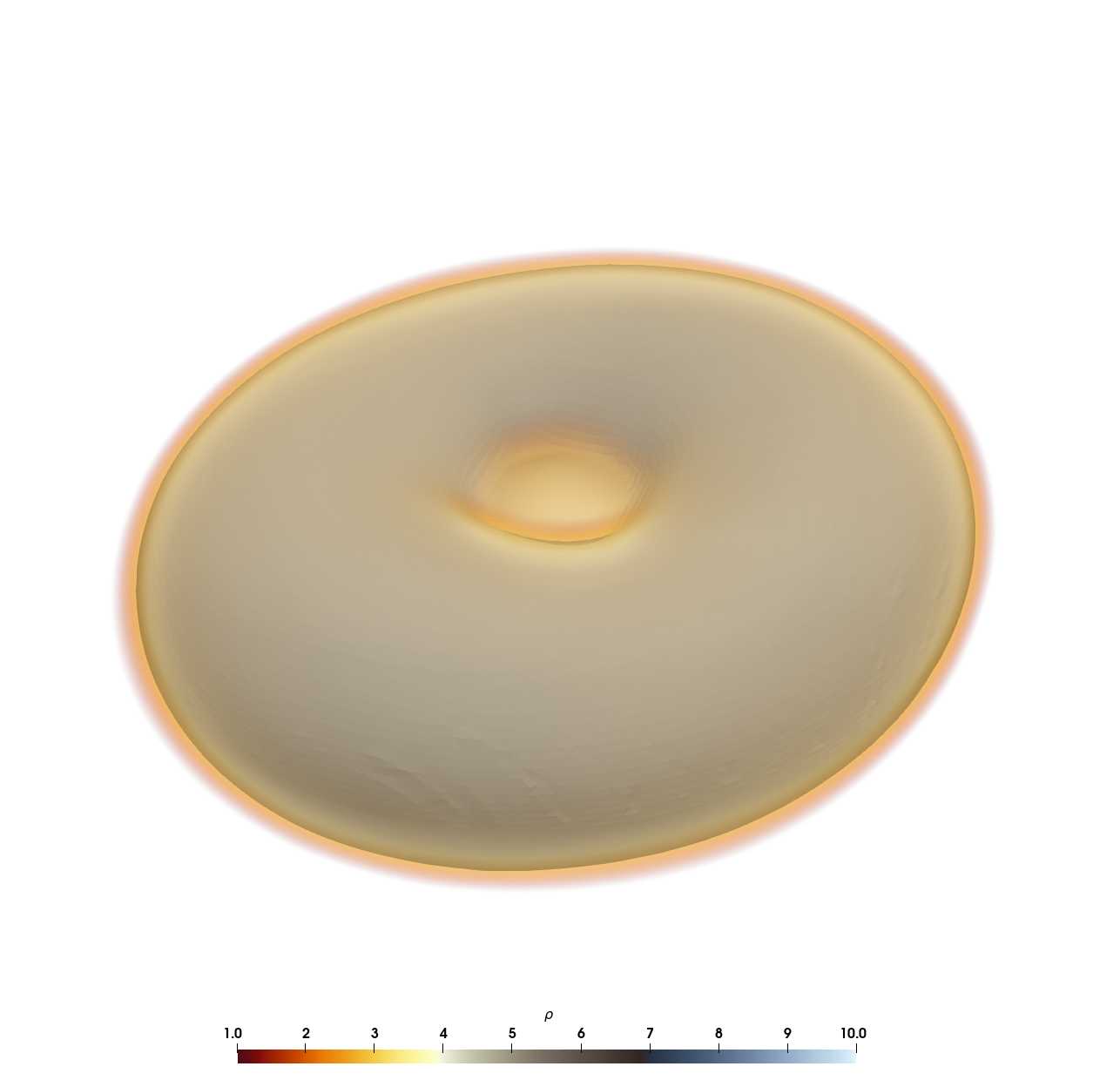}
    \end{subfigure}
    \begin{subfigure}{0.16\textwidth}
        \centering
        \includegraphics[width=\textwidth,trim=30mm 40mm 30mm 70mm, clip]{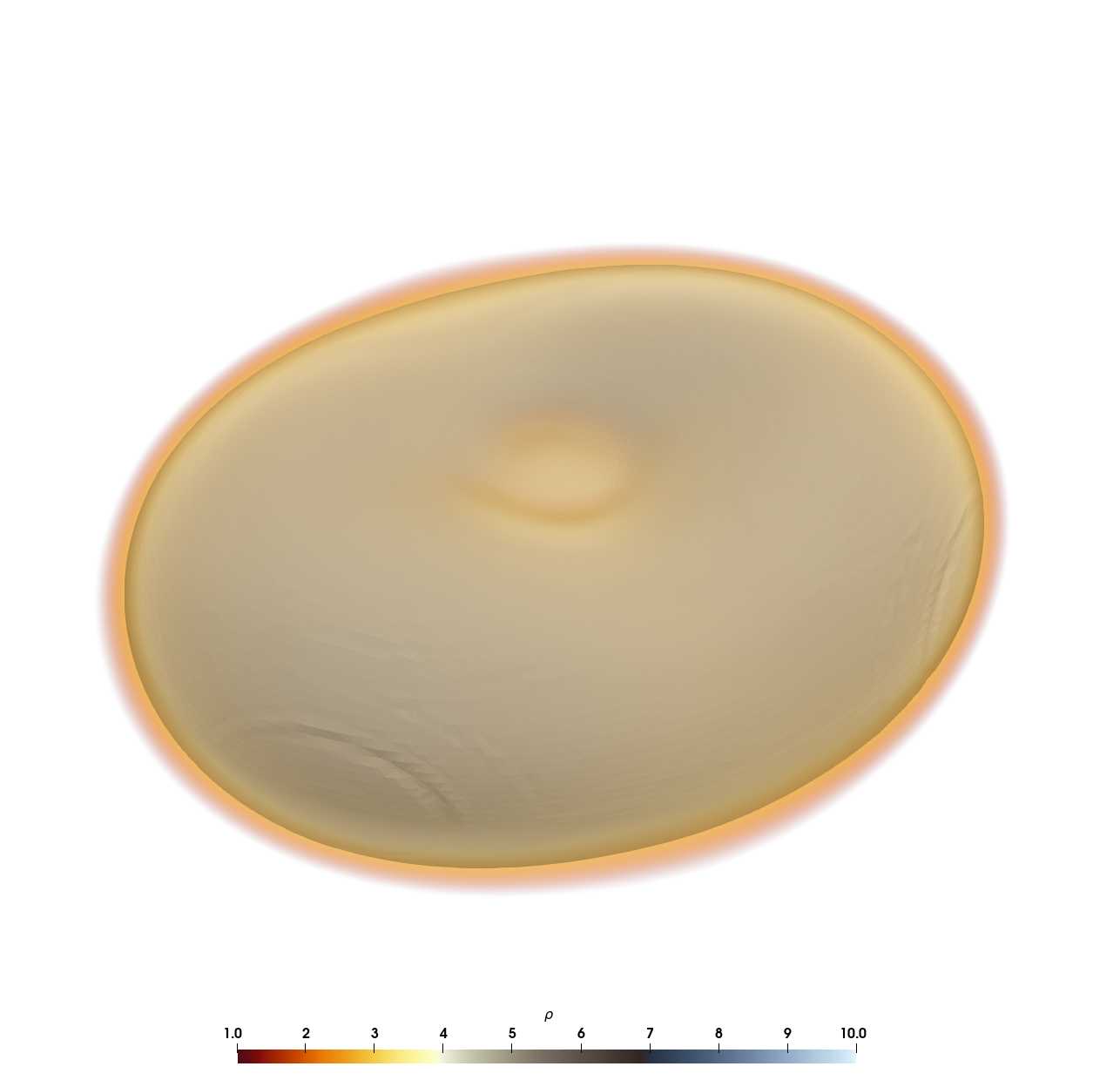}
    \end{subfigure}
    \begin{subfigure}{0.16\textwidth}
        \centering
        \includegraphics[width=\textwidth,trim=30mm 40mm 30mm 70mm, clip]{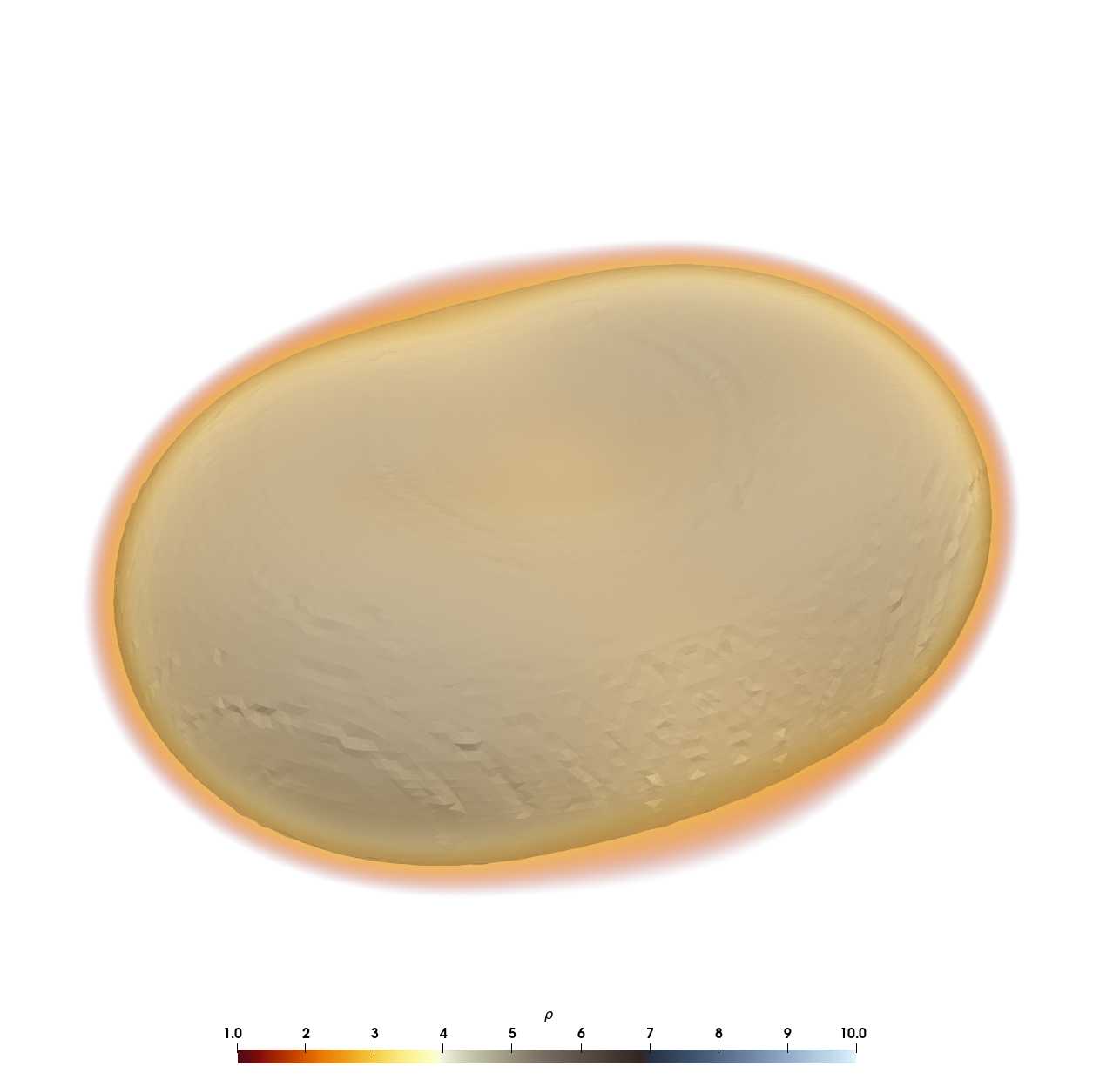}
    \end{subfigure}

    \begin{subfigure}{0.16\textwidth}
        \centering
        \includegraphics[width=\textwidth,trim=30mm 40mm 30mm 70mm, clip]{figures/reaction/torusDoubleTorus/pdhg0.0000..jpg}
        \caption*{$t=0$}
    \end{subfigure}
    \begin{subfigure}{0.16\textwidth}
        \centering
        \includegraphics[width=\textwidth,trim=30mm 40mm 30mm 70mm, clip]{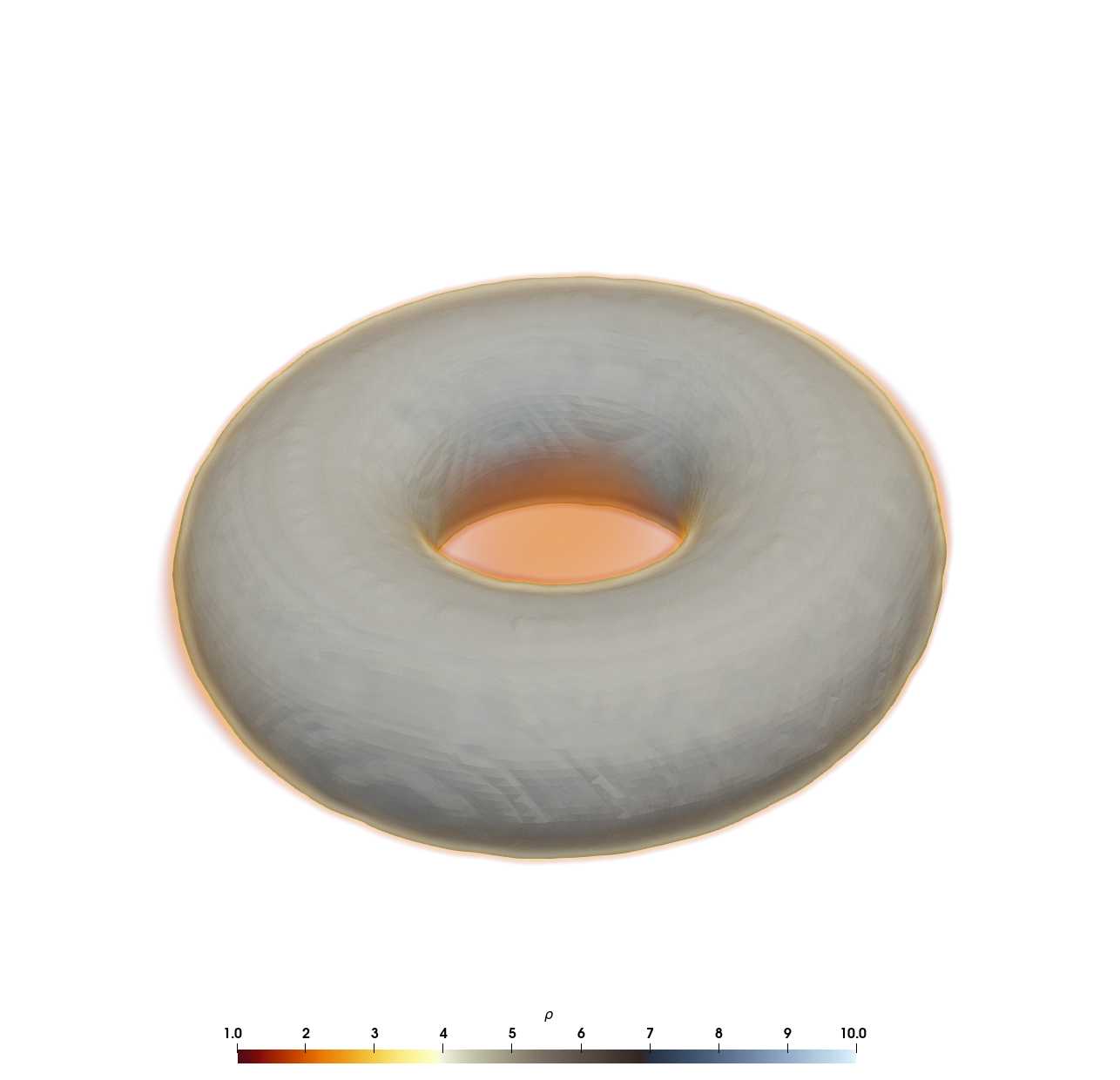}
        \caption*{$t=0.2$}
    \end{subfigure}
    \begin{subfigure}{0.16\textwidth}
        \centering
        \includegraphics[width=\textwidth,trim=30mm 40mm 30mm 70mm, clip]{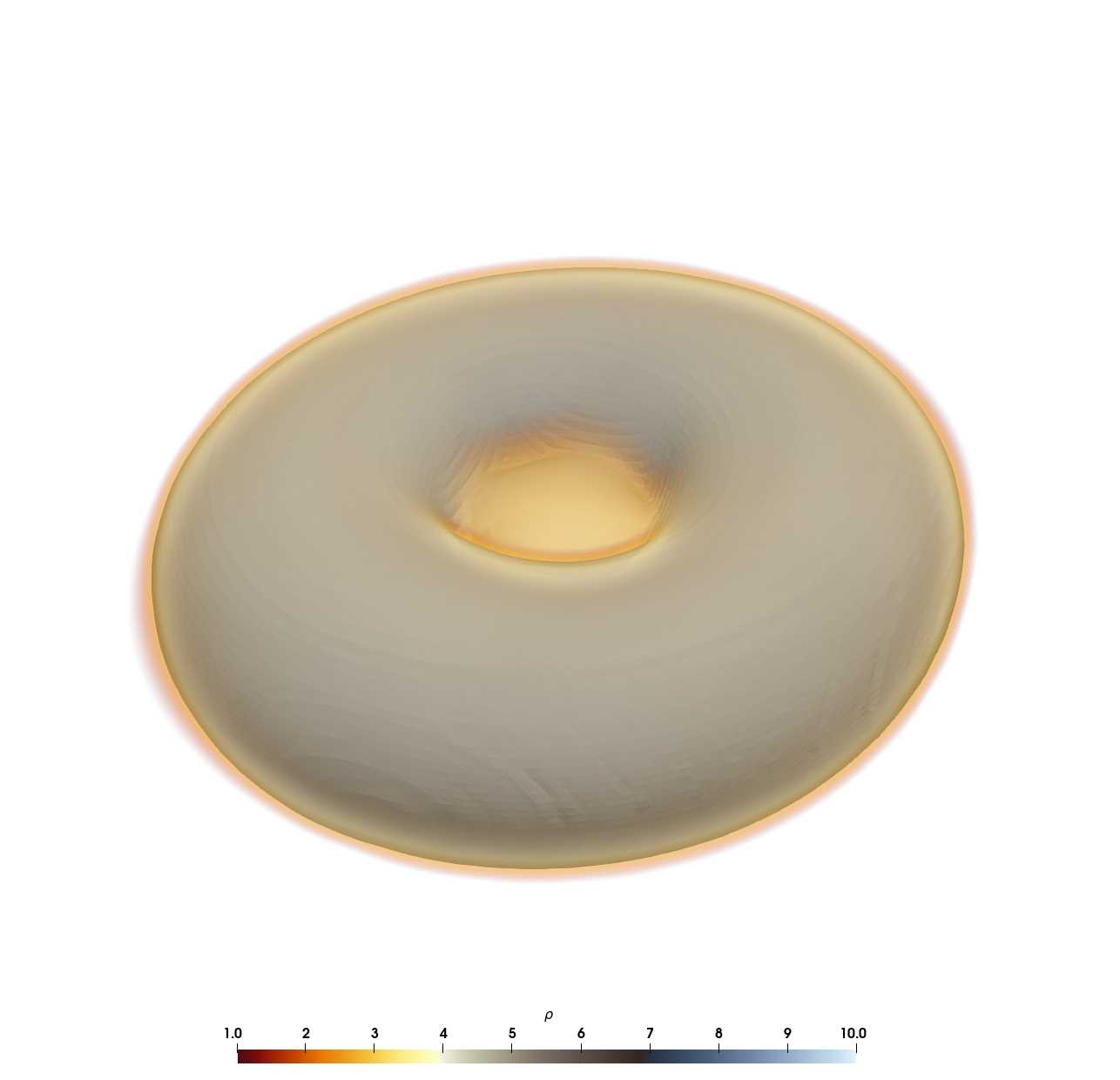}
        \caption*{$t=0.4$}
    \end{subfigure}
    \begin{subfigure}{0.16\textwidth}
        \centering
        \includegraphics[width=\textwidth,trim=30mm 40mm 30mm 70mm, clip]{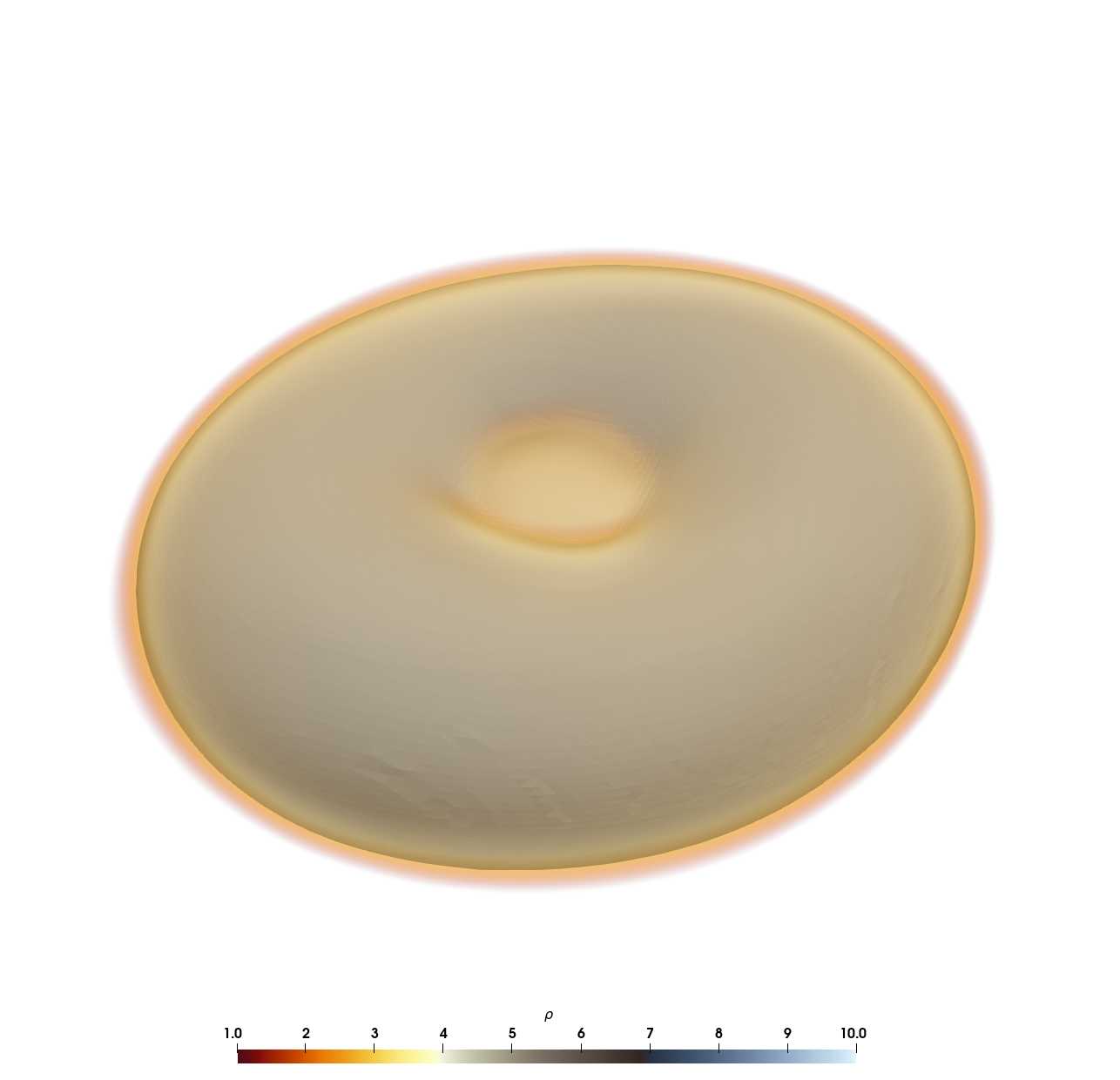}
        \caption*{$t=0.6$}
    \end{subfigure}
    \begin{subfigure}{0.16\textwidth}
        \centering
        \includegraphics[width=\textwidth,trim=30mm 40mm 30mm 70mm, clip]{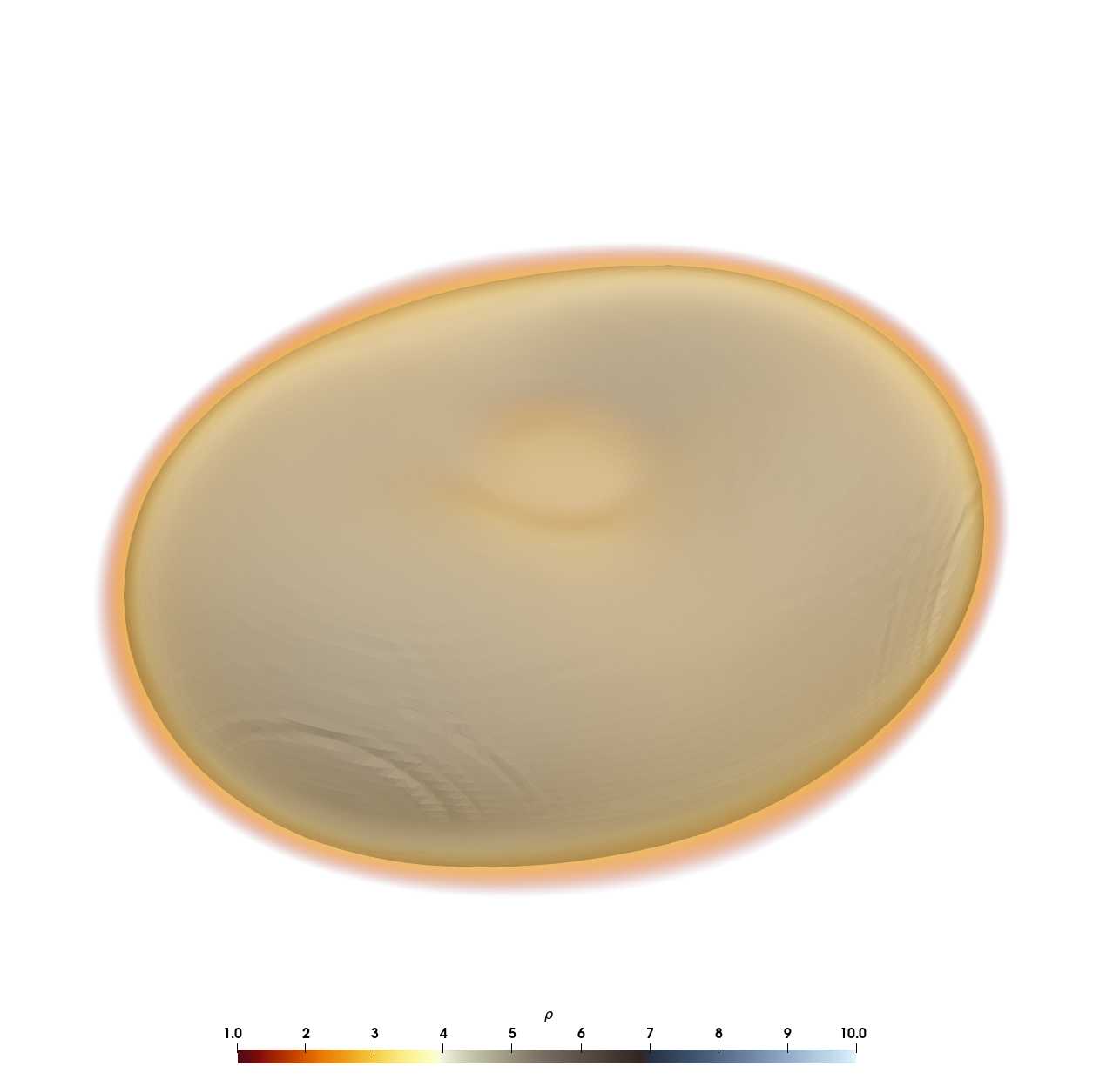}
        \caption*{$t=0.8$}
    \end{subfigure}
    \begin{subfigure}{0.16\textwidth}
        \centering
        \includegraphics[width=\textwidth,trim=30mm 40mm 30mm 70mm, clip]{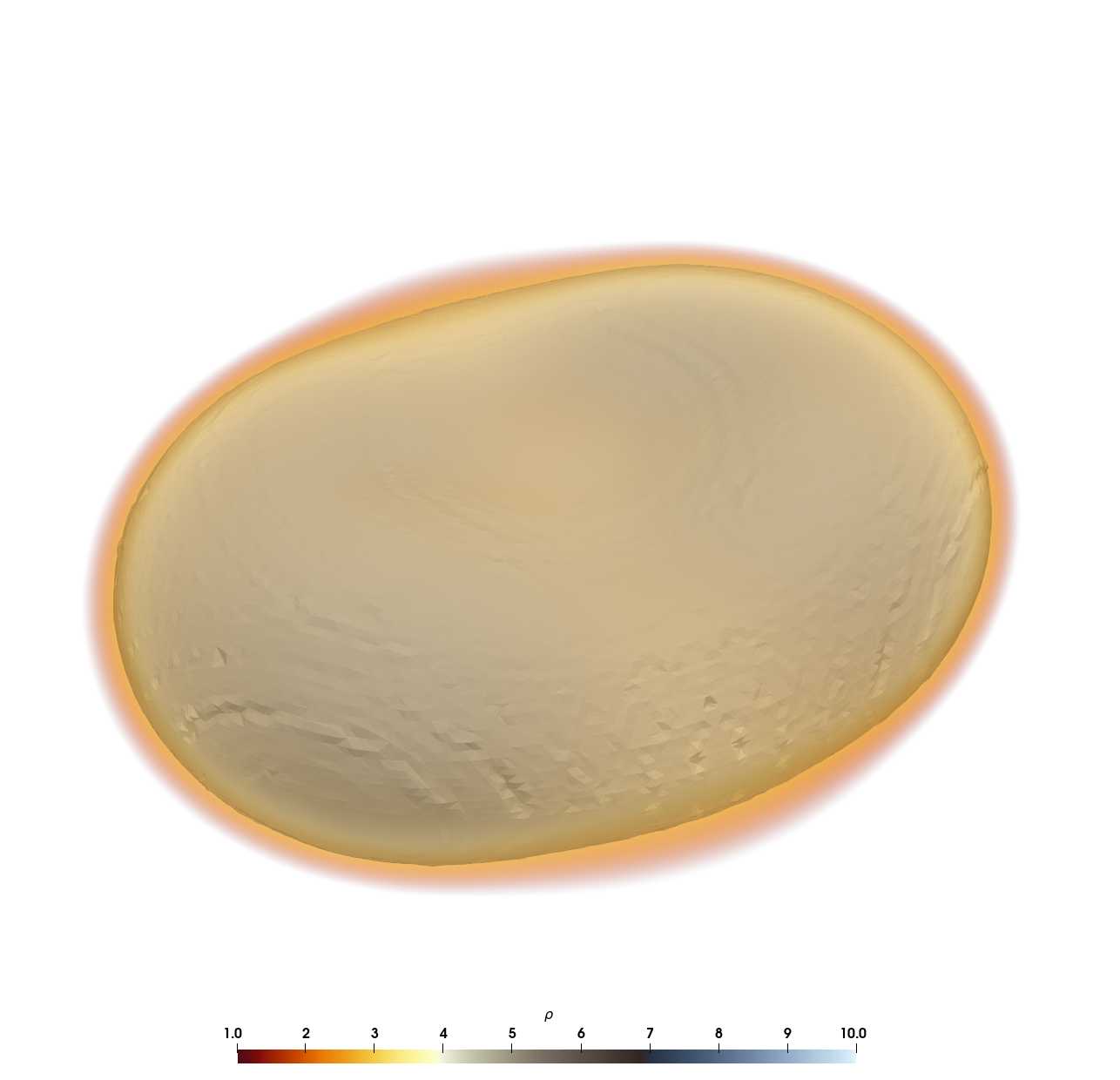}
        \caption*{$t=1.0$}
    \end{subfigure}
\subcaption{$\rho_1$}
\label{f:td1}
\end{minipage}
\begin{minipage}[b]{\textwidth}

    \begin{minipage}[b]{\textwidth}
    \hfill
        \begin{subfigure}{\textwidth}
            \centering
            \includegraphics[width=\textwidth,trim=0mm 0mm 0mm 400mm, clip]{figures/reaction/doubleTorusBunny/pdhg0.0000..jpg}
        \end{subfigure}
    \end{minipage}

    \begin{subfigure}{0.16\textwidth}
        \centering
        \includegraphics[width=\textwidth,trim=30mm 40mm 30mm 70mm, clip]{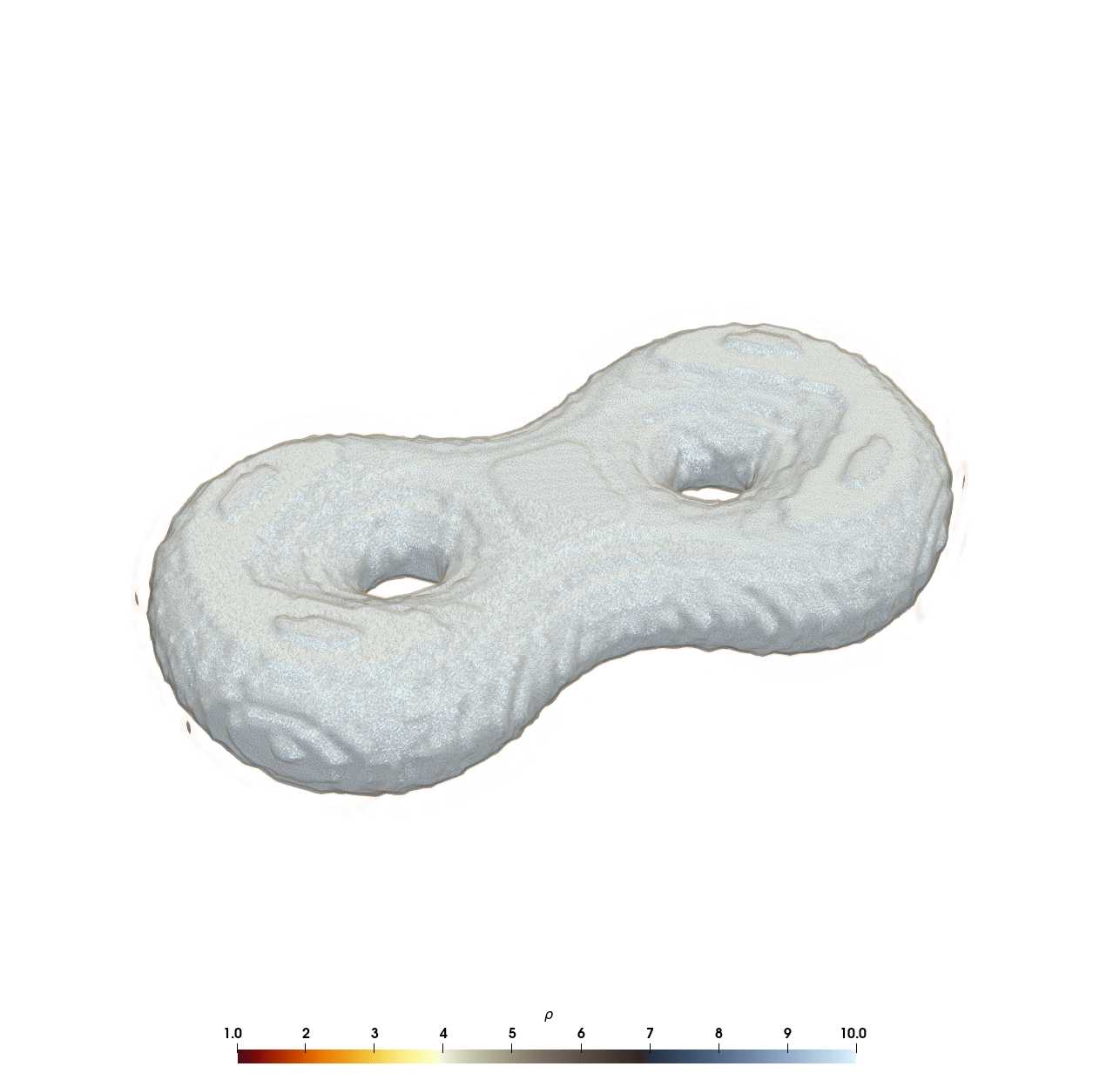}
    \end{subfigure}
    \begin{subfigure}{0.16\textwidth}
        \centering
        \includegraphics[width=\textwidth,trim=30mm 40mm 30mm 70mm, clip]{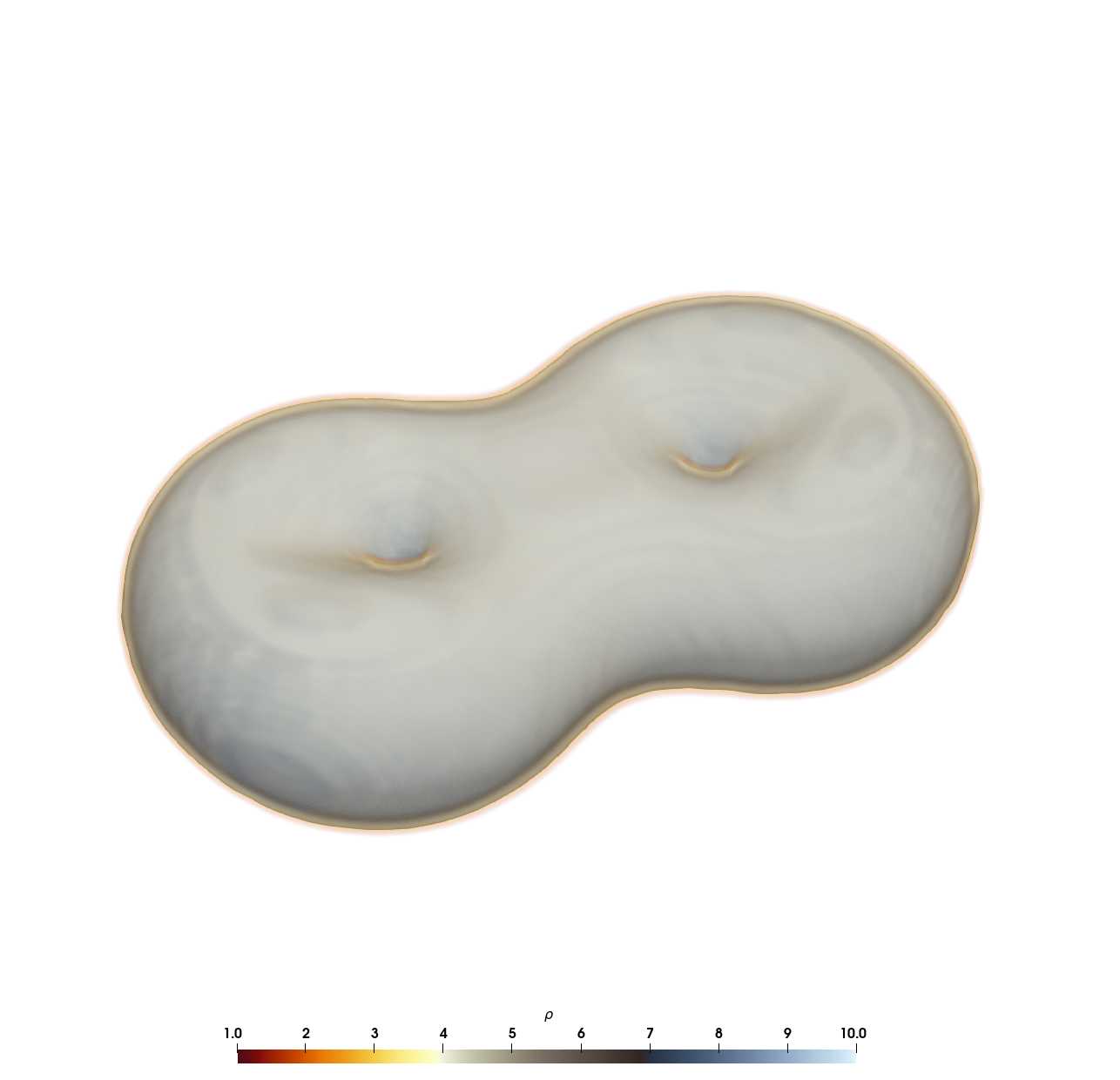}
    \end{subfigure}
    \begin{subfigure}{0.16\textwidth}
        \centering
        \includegraphics[width=\textwidth,trim=30mm 40mm 30mm 70mm, clip]{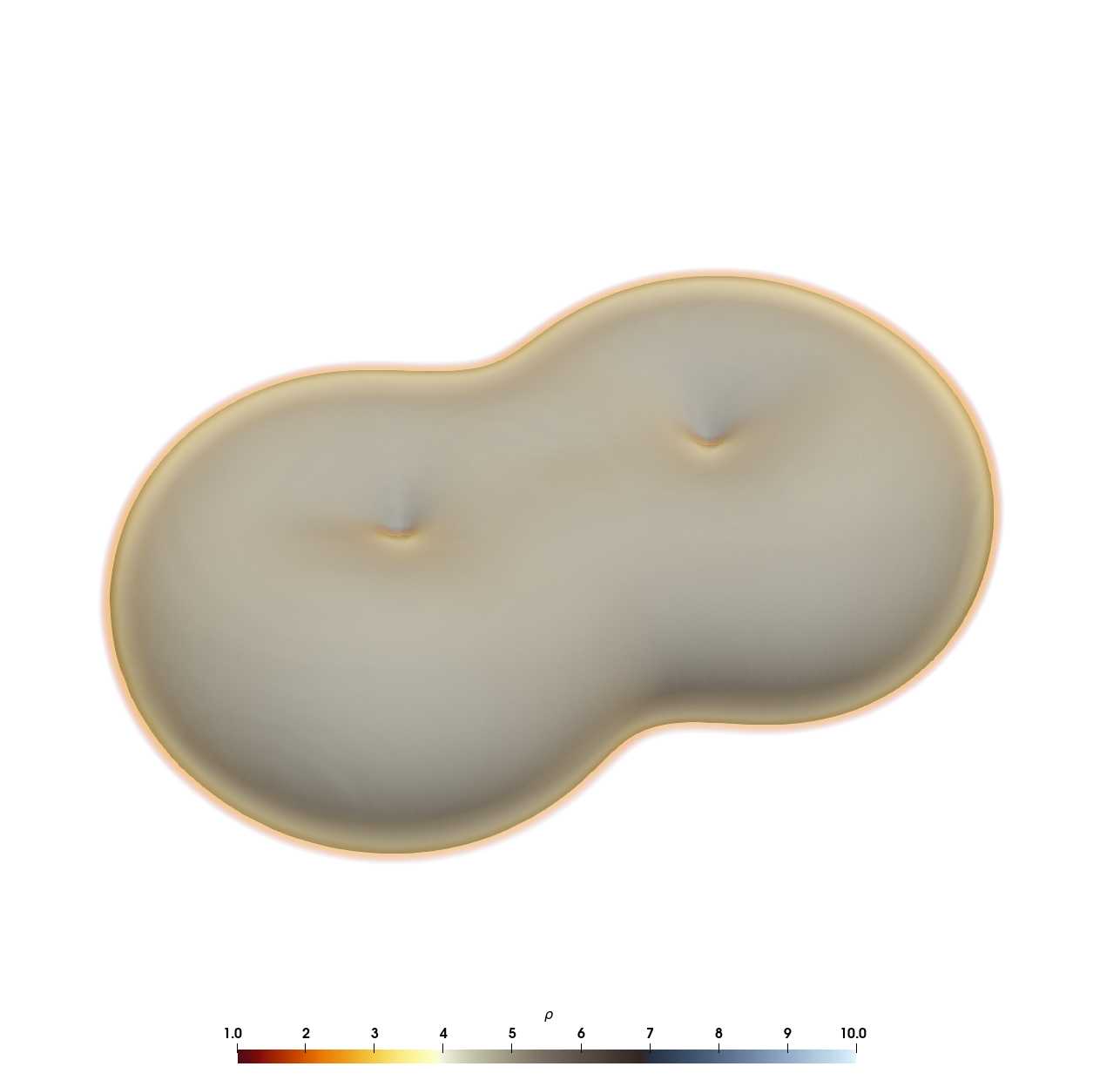}
    \end{subfigure}
    \begin{subfigure}{0.16\textwidth}
        \centering
        \includegraphics[width=\textwidth,trim=30mm 40mm 30mm 70mm, clip]{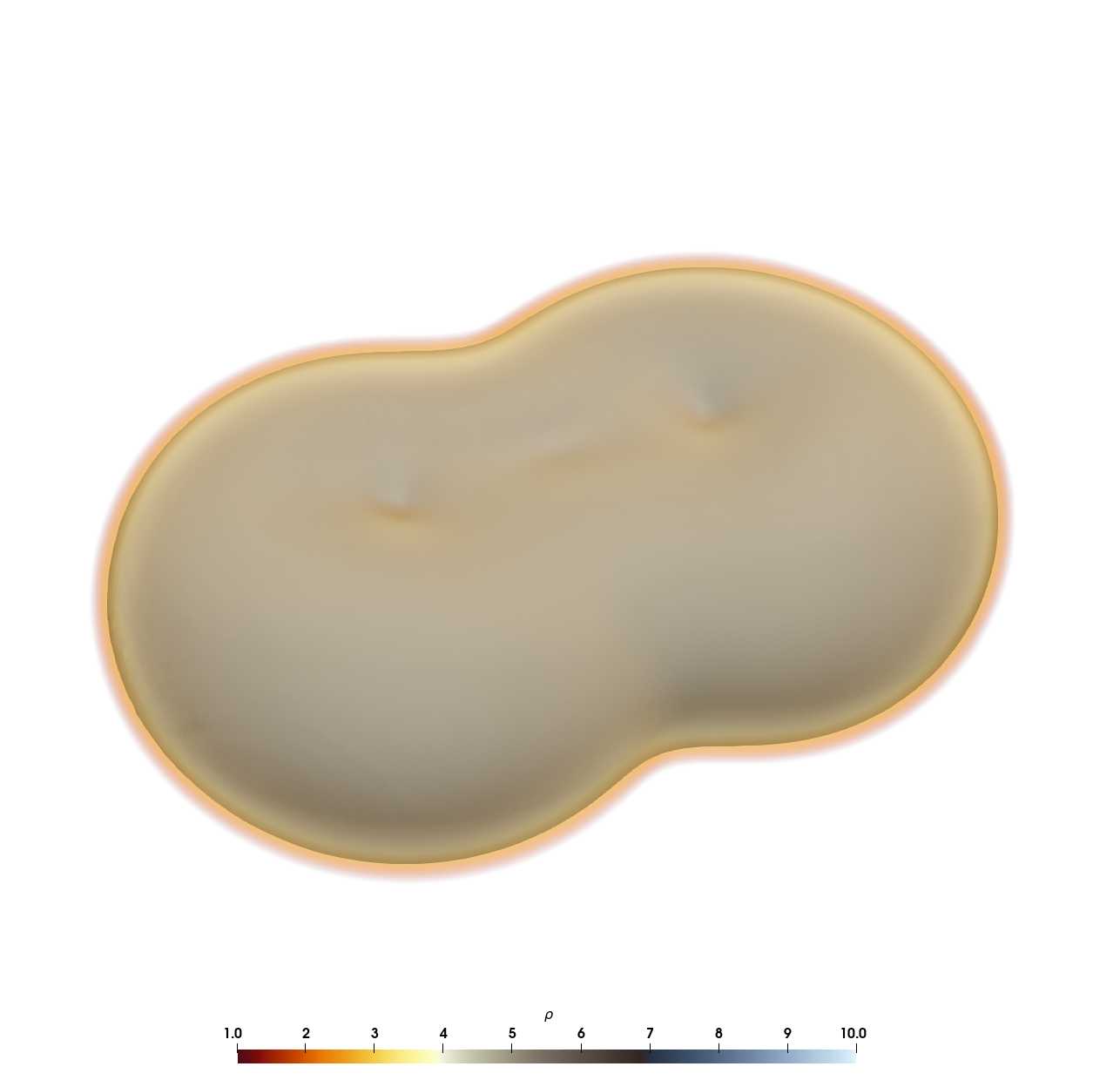}
    \end{subfigure}
    \begin{subfigure}{0.16\textwidth}
        \centering
        \includegraphics[width=\textwidth,trim=30mm 40mm 30mm 70mm, clip]{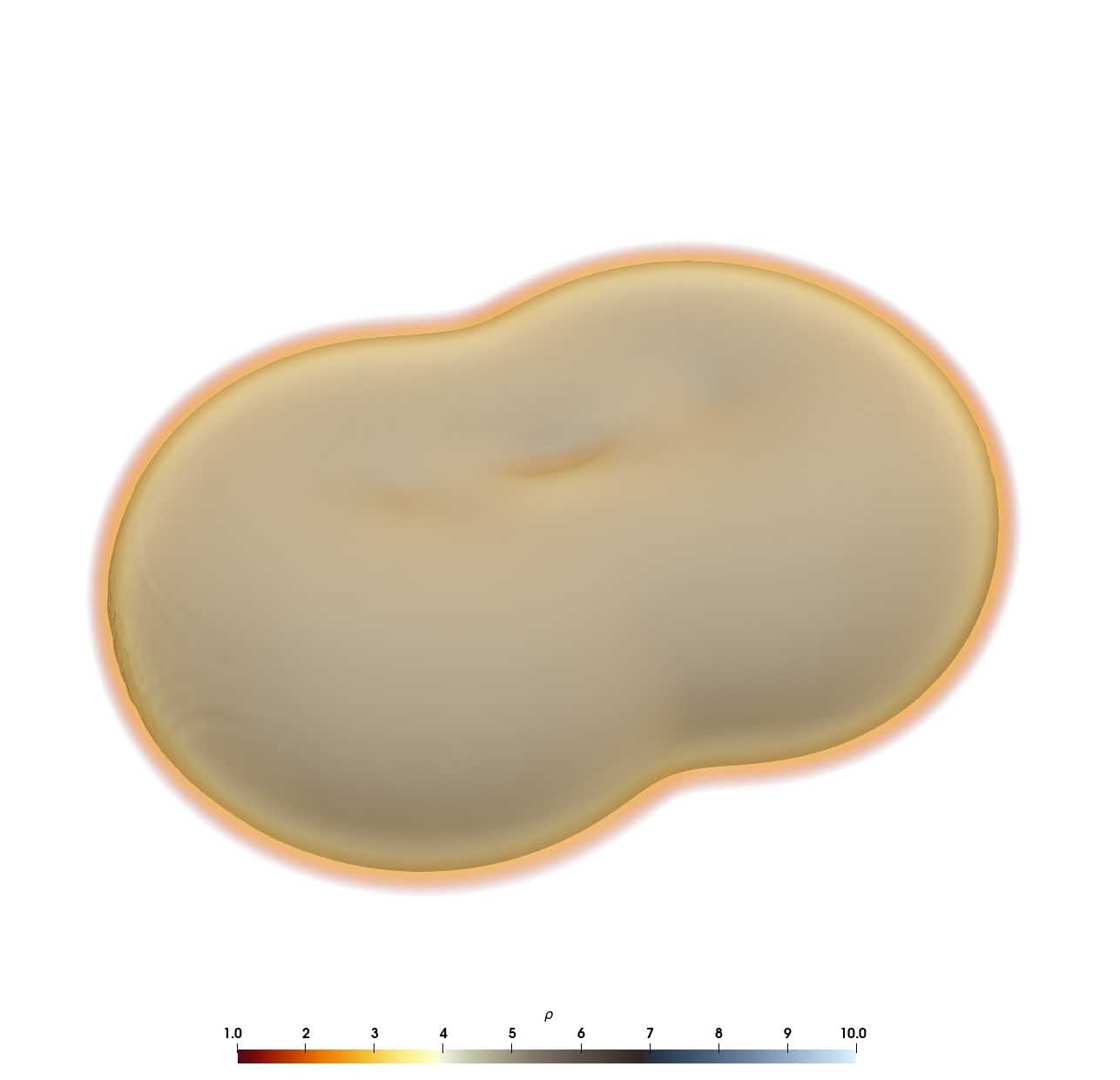}
    \end{subfigure}
    \begin{subfigure}{0.16\textwidth}
        \centering
        \includegraphics[width=\textwidth,trim=30mm 40mm 30mm 70mm, clip]{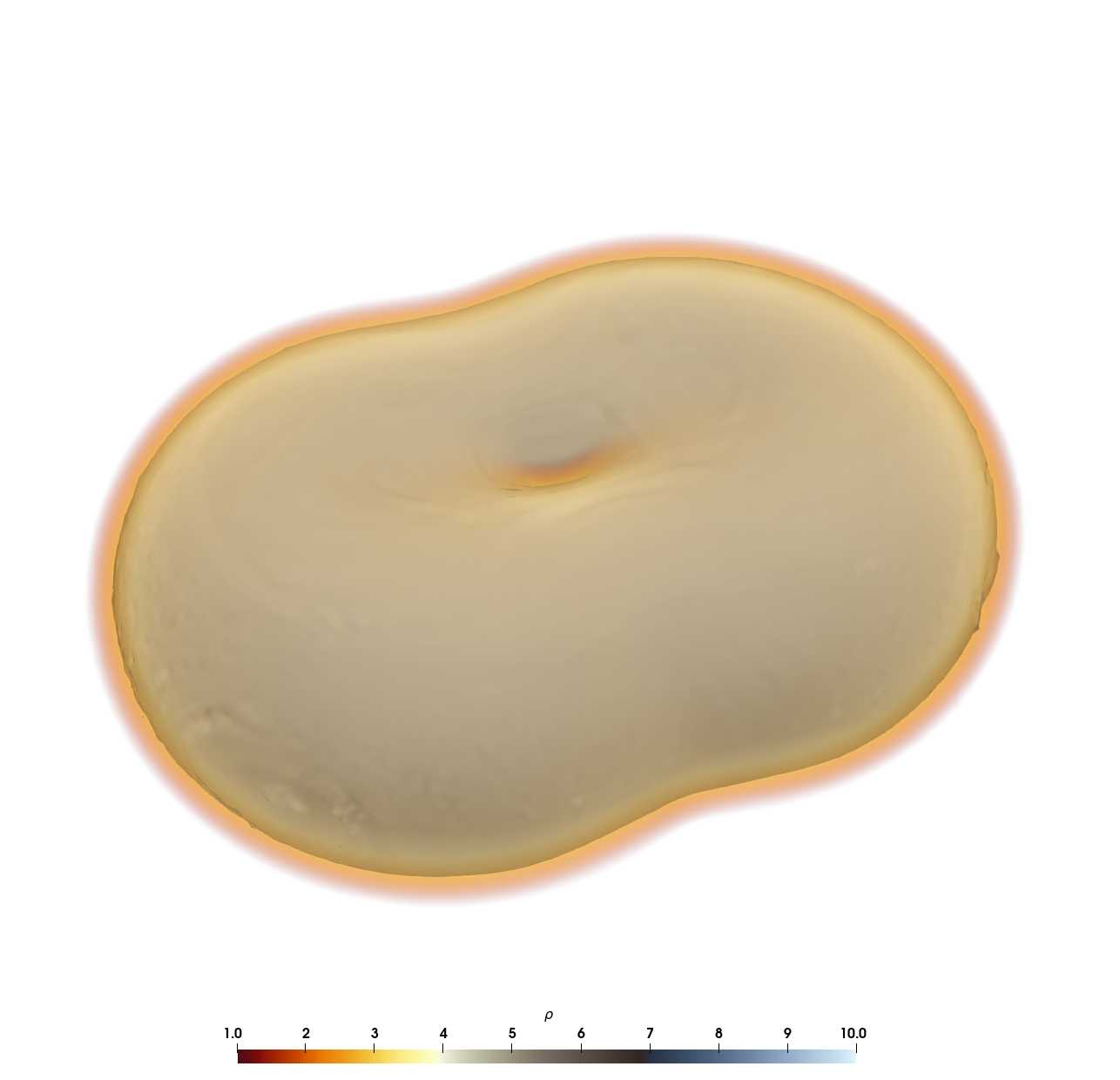}
    \end{subfigure}
    
    \begin{subfigure}{0.16\textwidth}
        \centering
        \includegraphics[width=\textwidth,trim=30mm 40mm 30mm 70mm, clip]{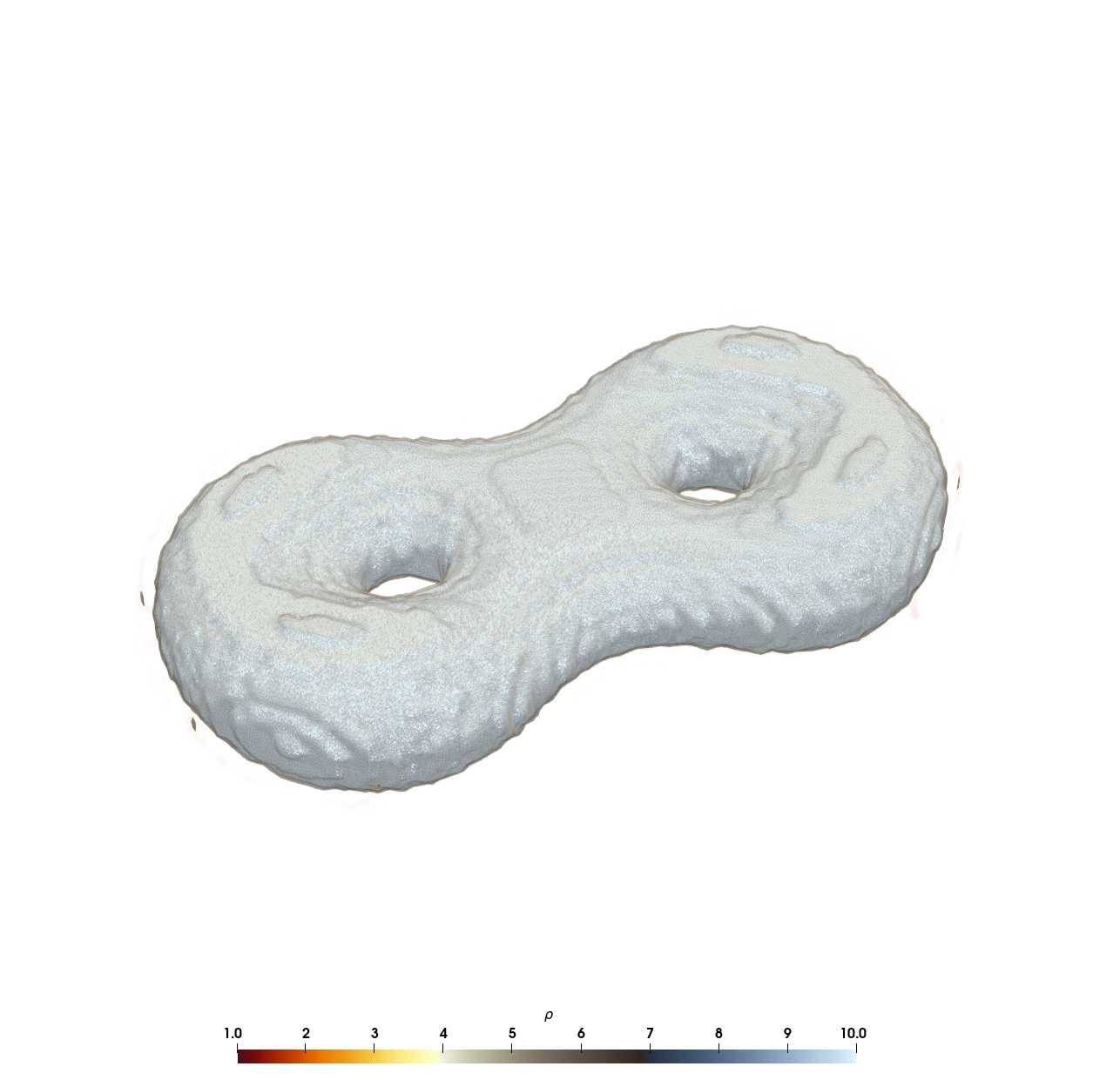}
    \end{subfigure}
    \begin{subfigure}{0.16\textwidth}
        \centering
        \includegraphics[width=\textwidth,trim=30mm 40mm 30mm 70mm, clip]{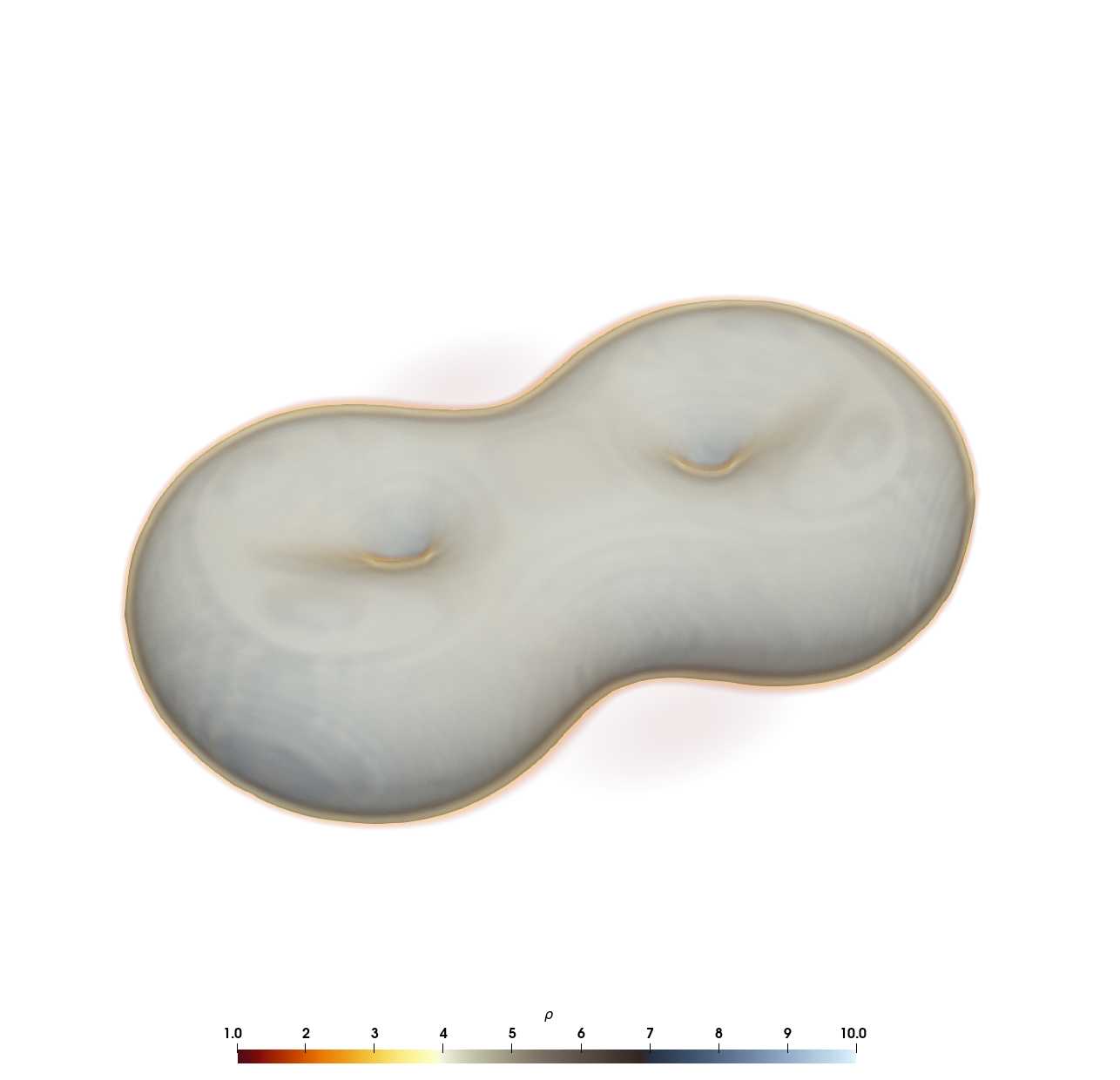}
    \end{subfigure}
    \begin{subfigure}{0.16\textwidth}
        \centering
        \includegraphics[width=\textwidth,trim=30mm 40mm 30mm 70mm, clip]{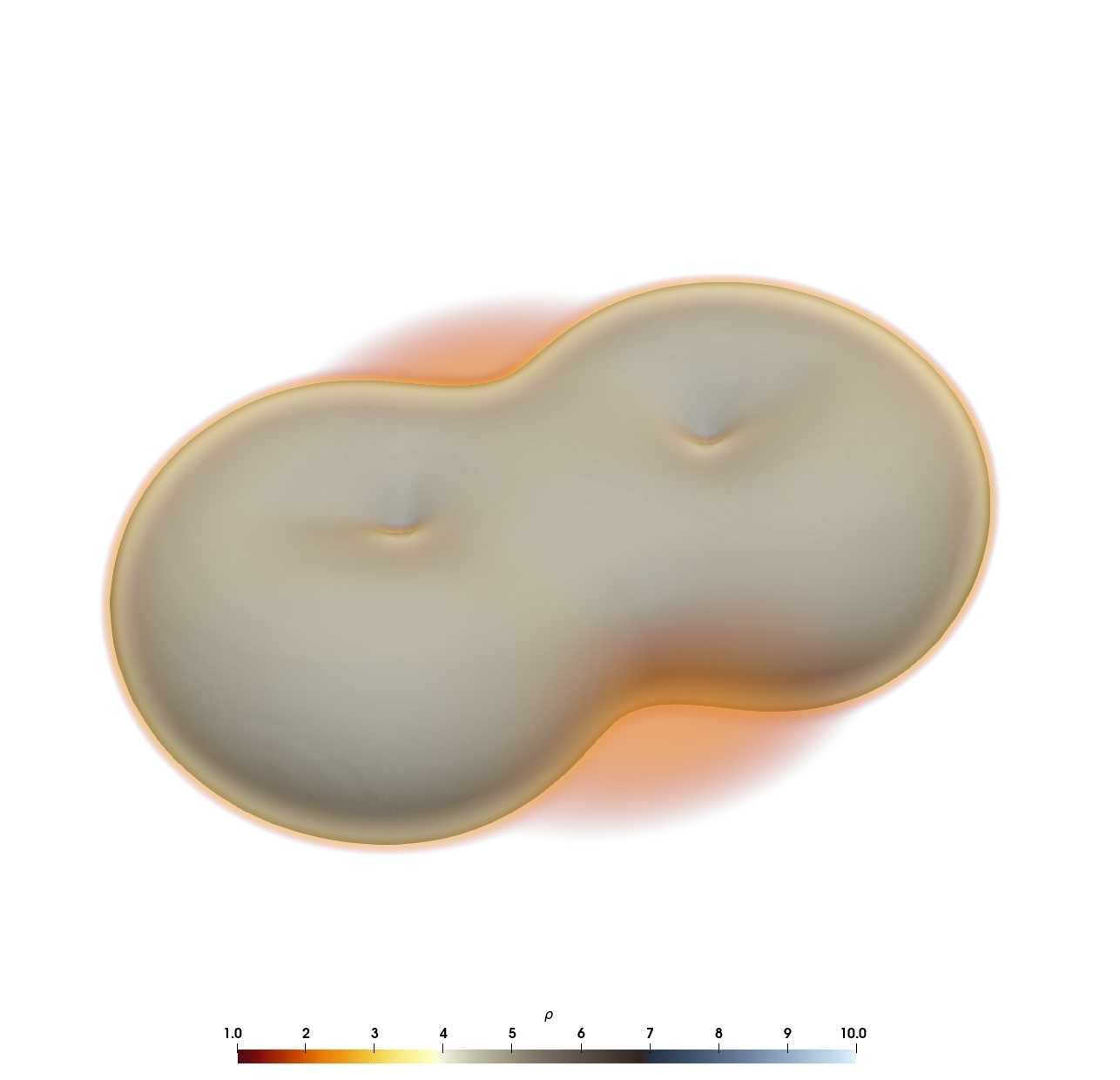}
    \end{subfigure}
    \begin{subfigure}{0.16\textwidth}
        \centering
        \includegraphics[width=\textwidth,trim=30mm 40mm 30mm 70mm, clip]{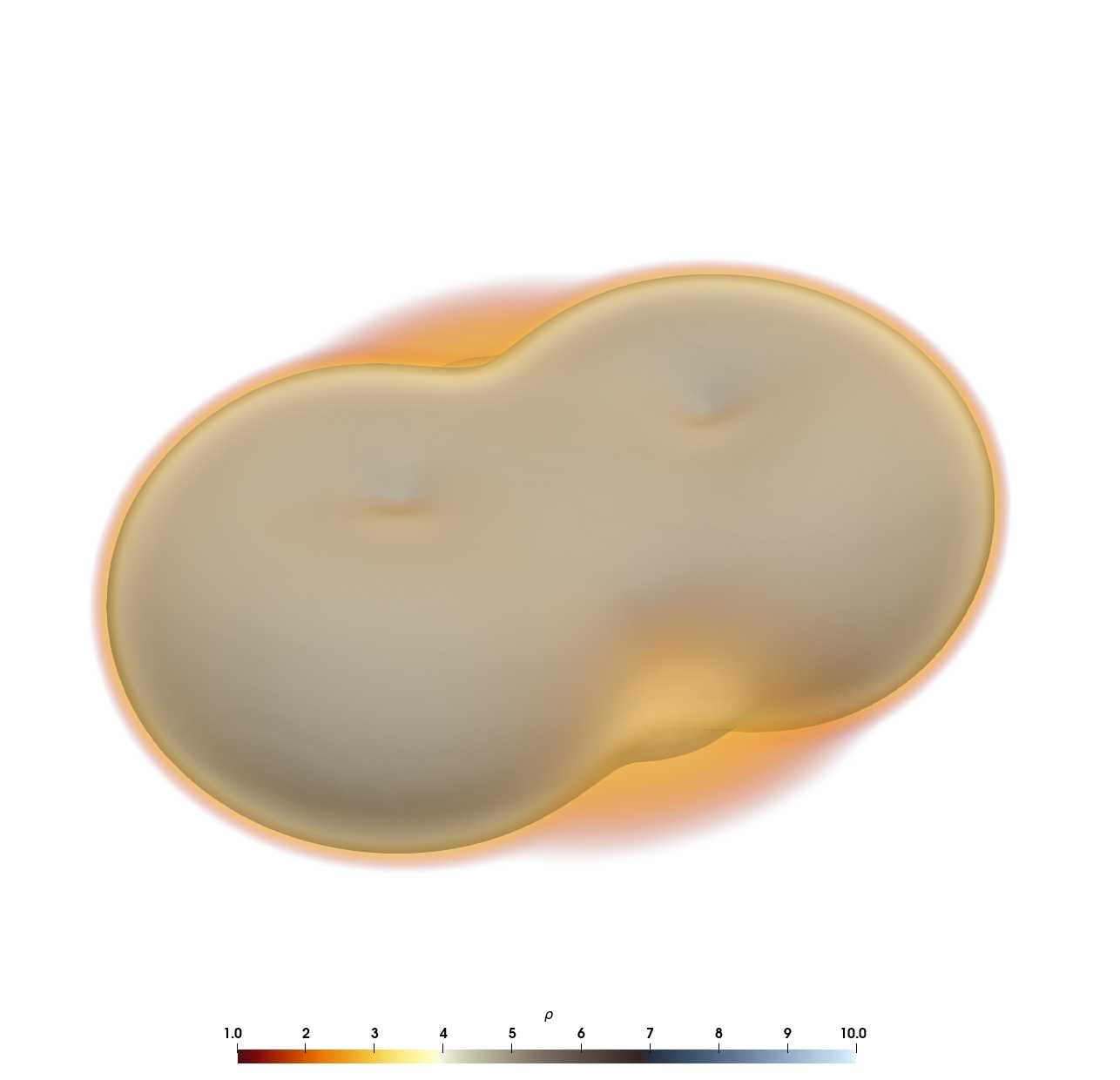}
    \end{subfigure}
    \begin{subfigure}{0.16\textwidth}
        \centering
        \includegraphics[width=\textwidth,trim=30mm 40mm 30mm 70mm, clip]{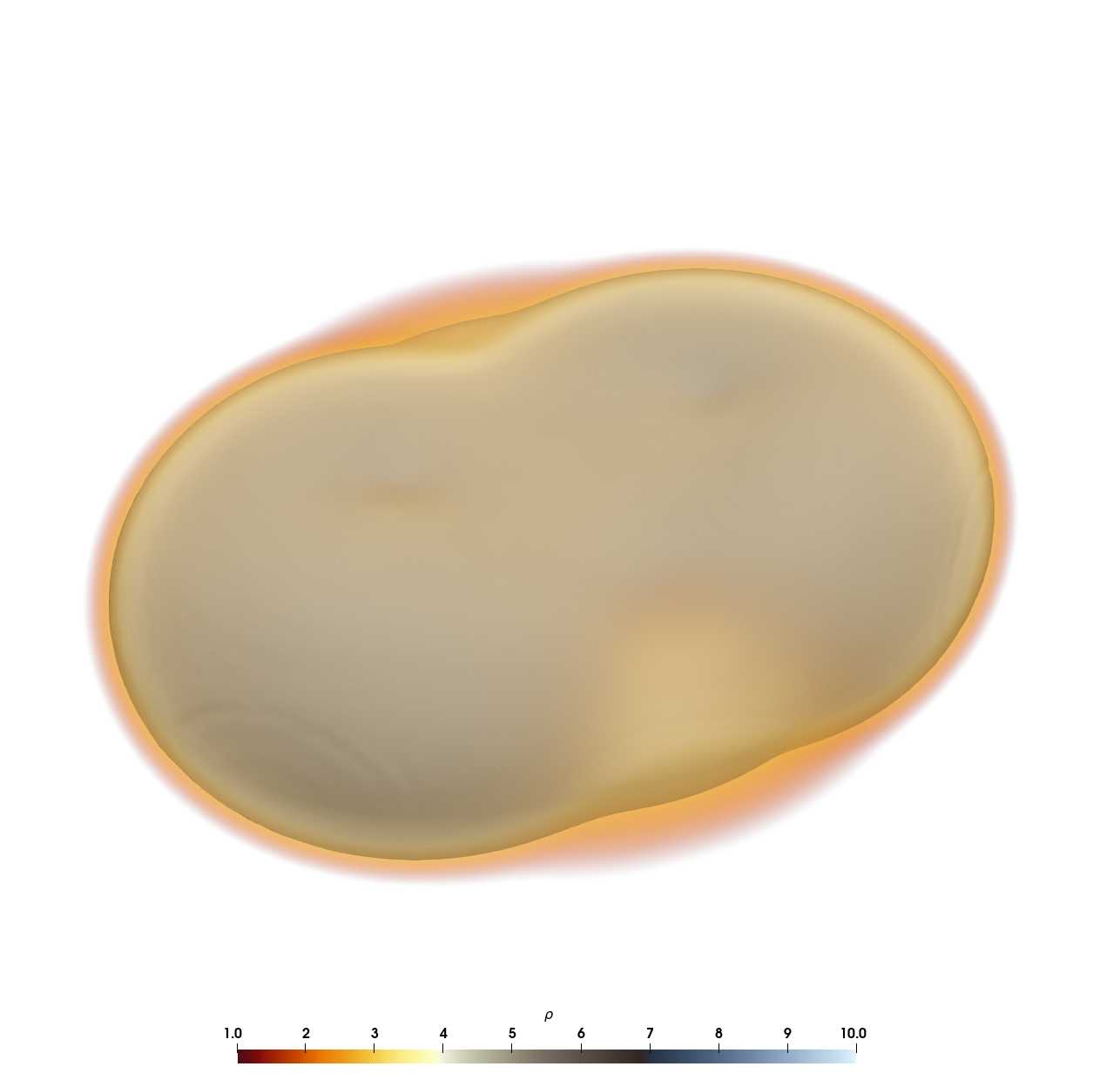}
    \end{subfigure}
    \begin{subfigure}{0.16\textwidth}
        \centering
        \includegraphics[width=\textwidth,trim=30mm 40mm 30mm 70mm, clip]{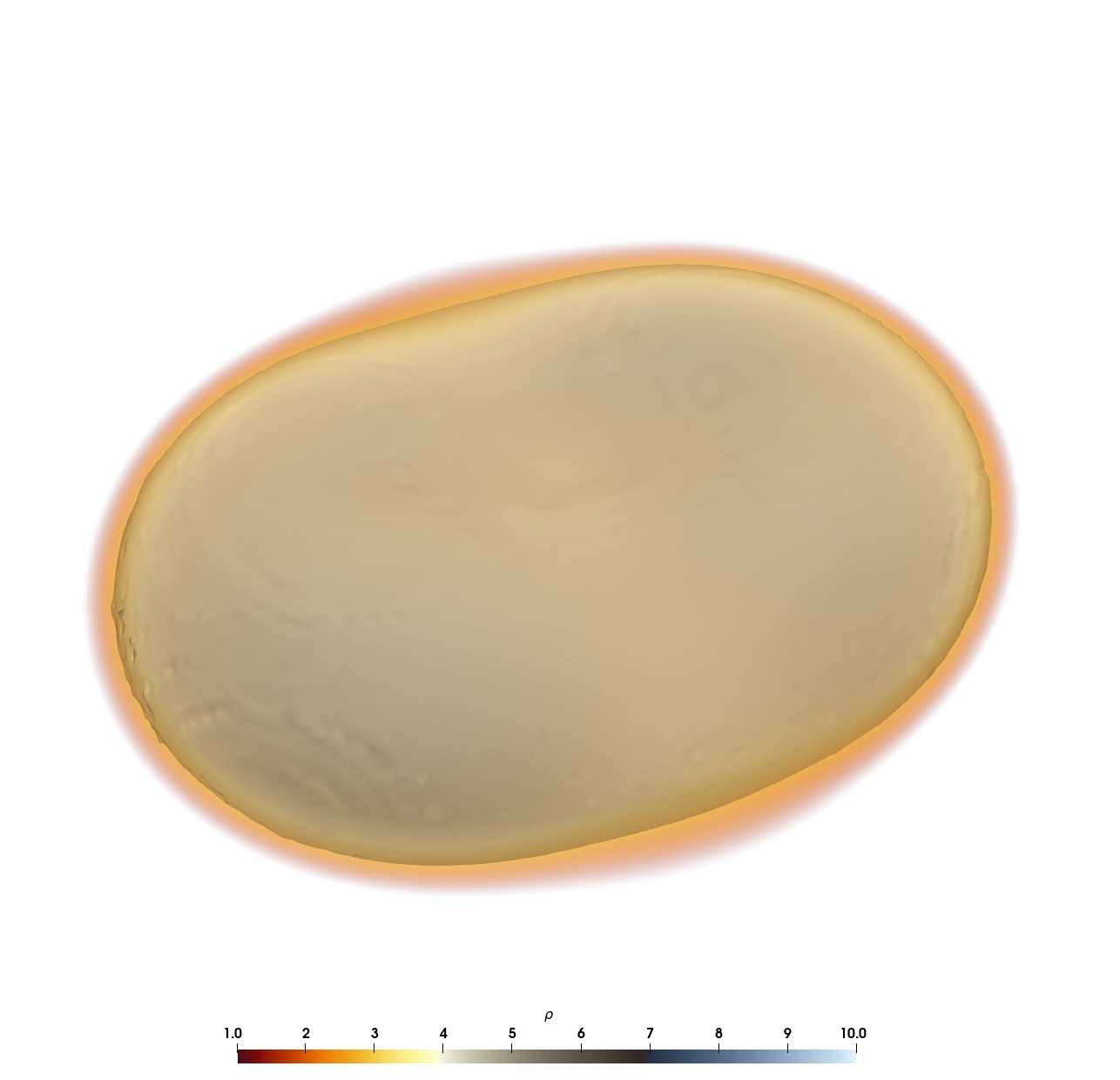}
    \end{subfigure}

    \begin{subfigure}{0.16\textwidth}
        \centering
        \includegraphics[width=\textwidth,trim=30mm 40mm 30mm 70mm, clip]{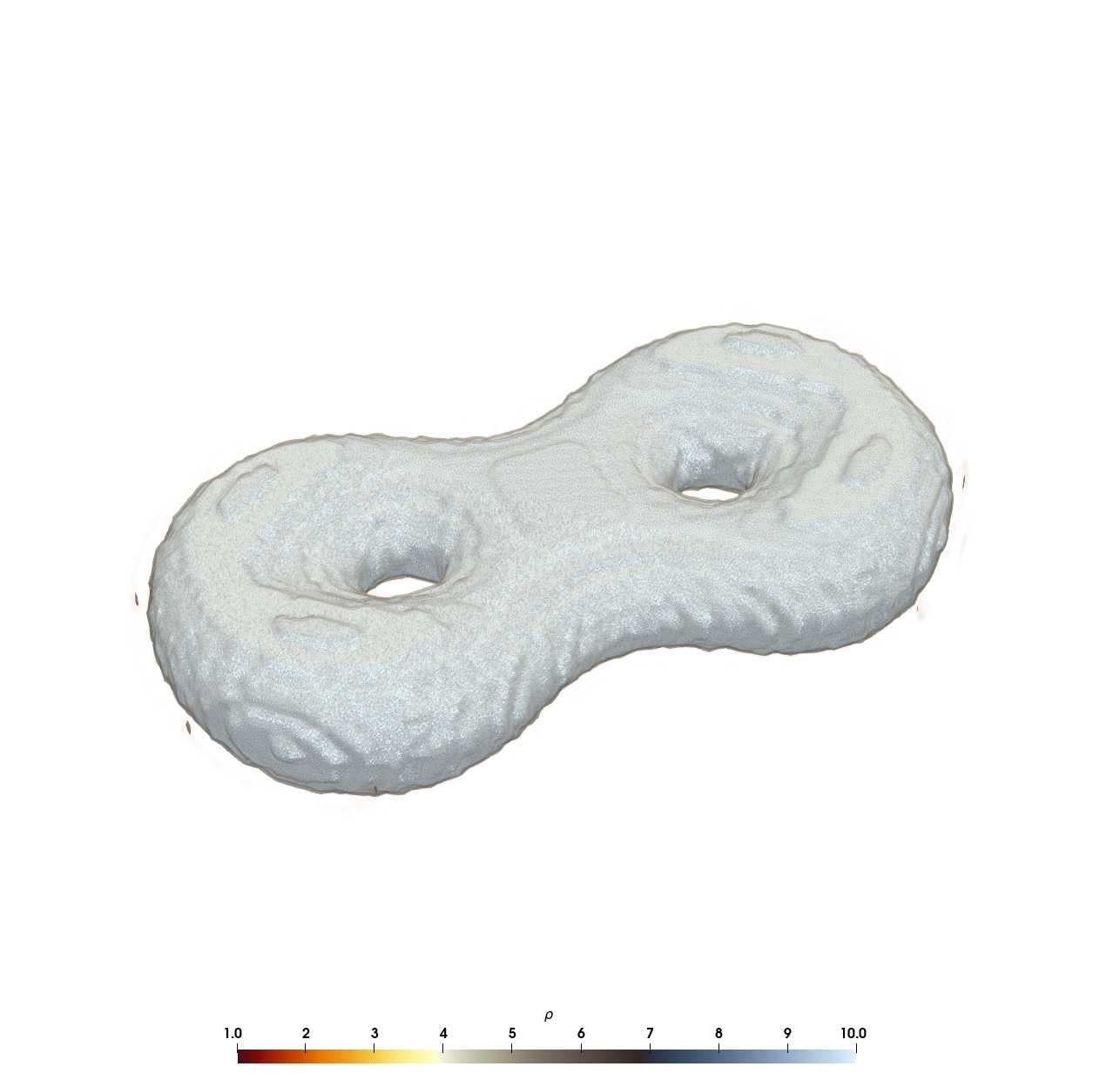}
        \caption*{$t=0$}
    \end{subfigure}
    \begin{subfigure}{0.16\textwidth}
        \centering
        \includegraphics[width=\textwidth,trim=30mm 40mm 30mm 70mm, clip]{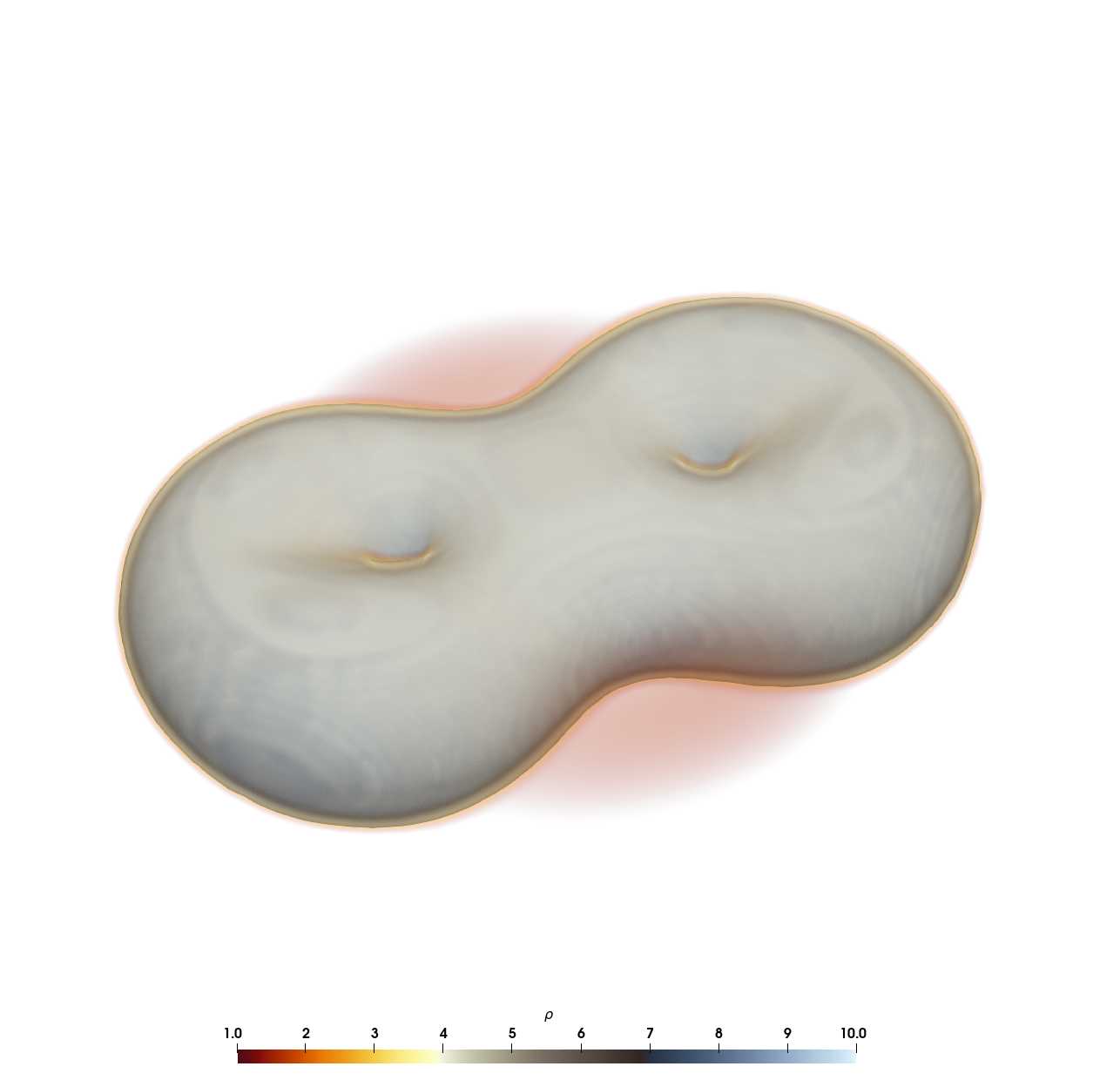}
        \caption*{$t=0.2$}
    \end{subfigure}
    \begin{subfigure}{0.16\textwidth}
        \centering
        \includegraphics[width=\textwidth,trim=30mm 40mm 30mm 70mm, clip]{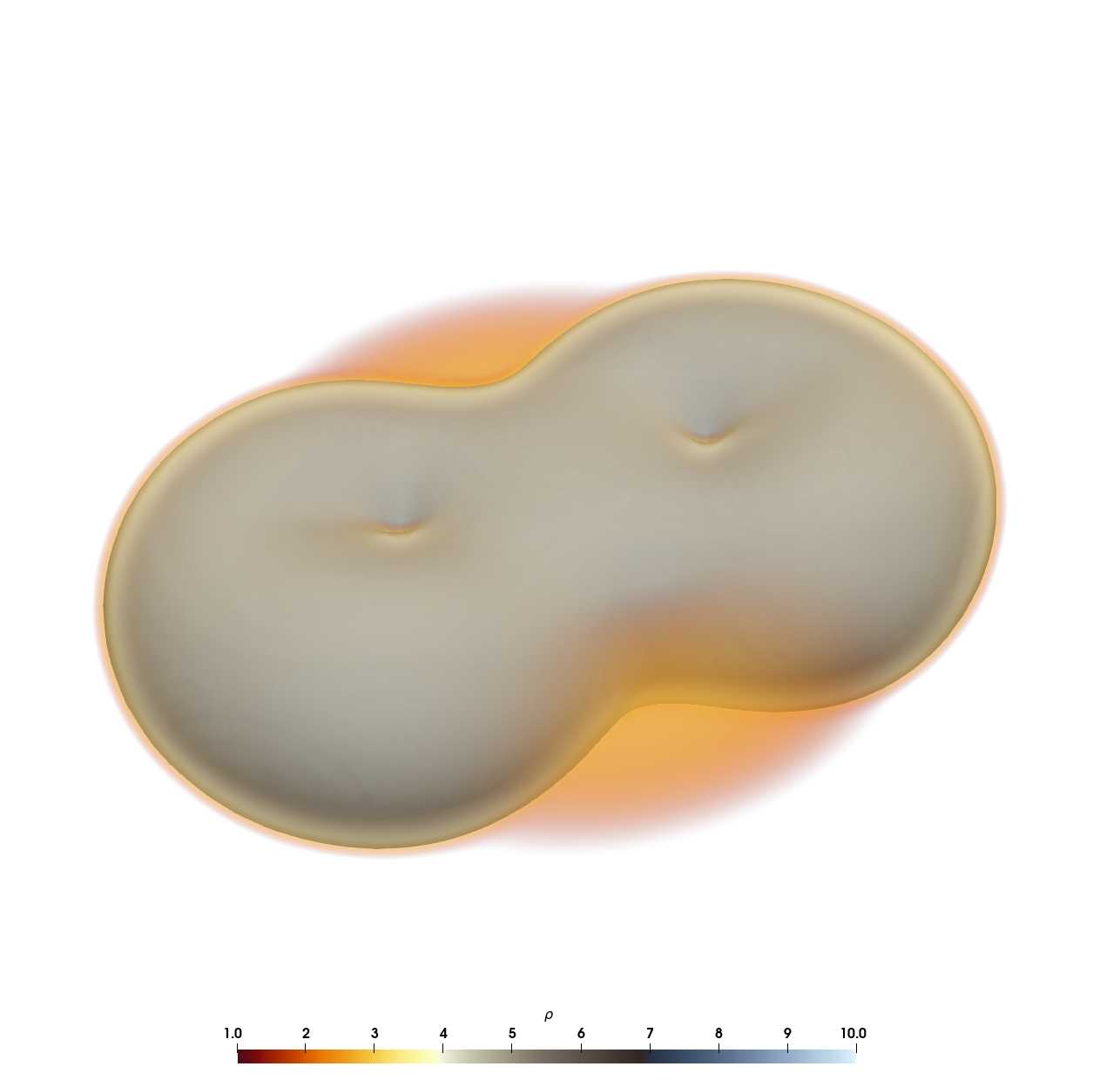}
        \caption*{$t=0.4$}
    \end{subfigure}
    \begin{subfigure}{0.16\textwidth}
        \centering
        \includegraphics[width=\textwidth,trim=30mm 40mm 30mm 70mm, clip]{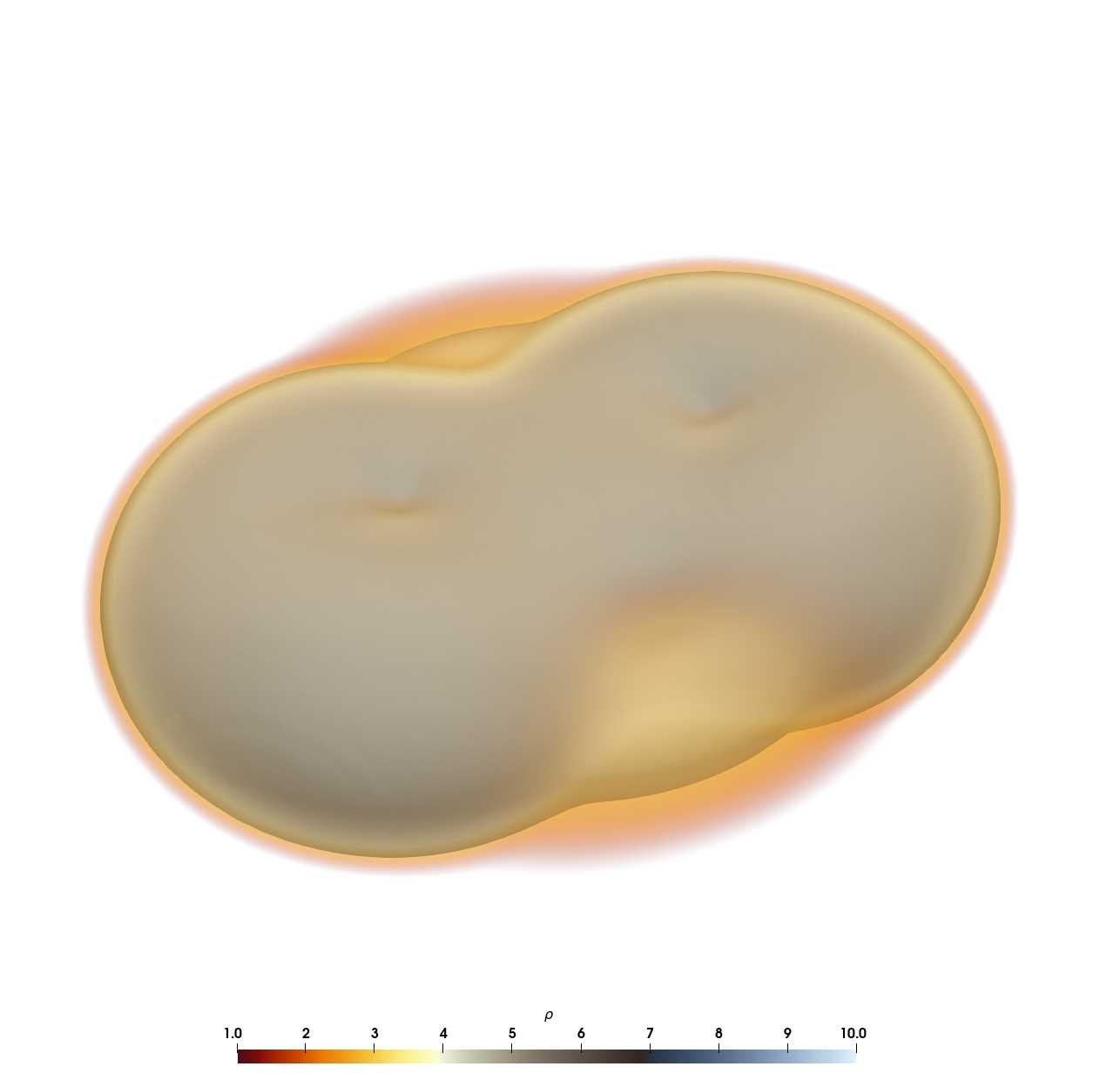}
        \caption*{$t=0.6$}
    \end{subfigure}
    \begin{subfigure}{0.16\textwidth}
        \centering
        \includegraphics[width=\textwidth,trim=30mm 40mm 30mm 70mm, clip]{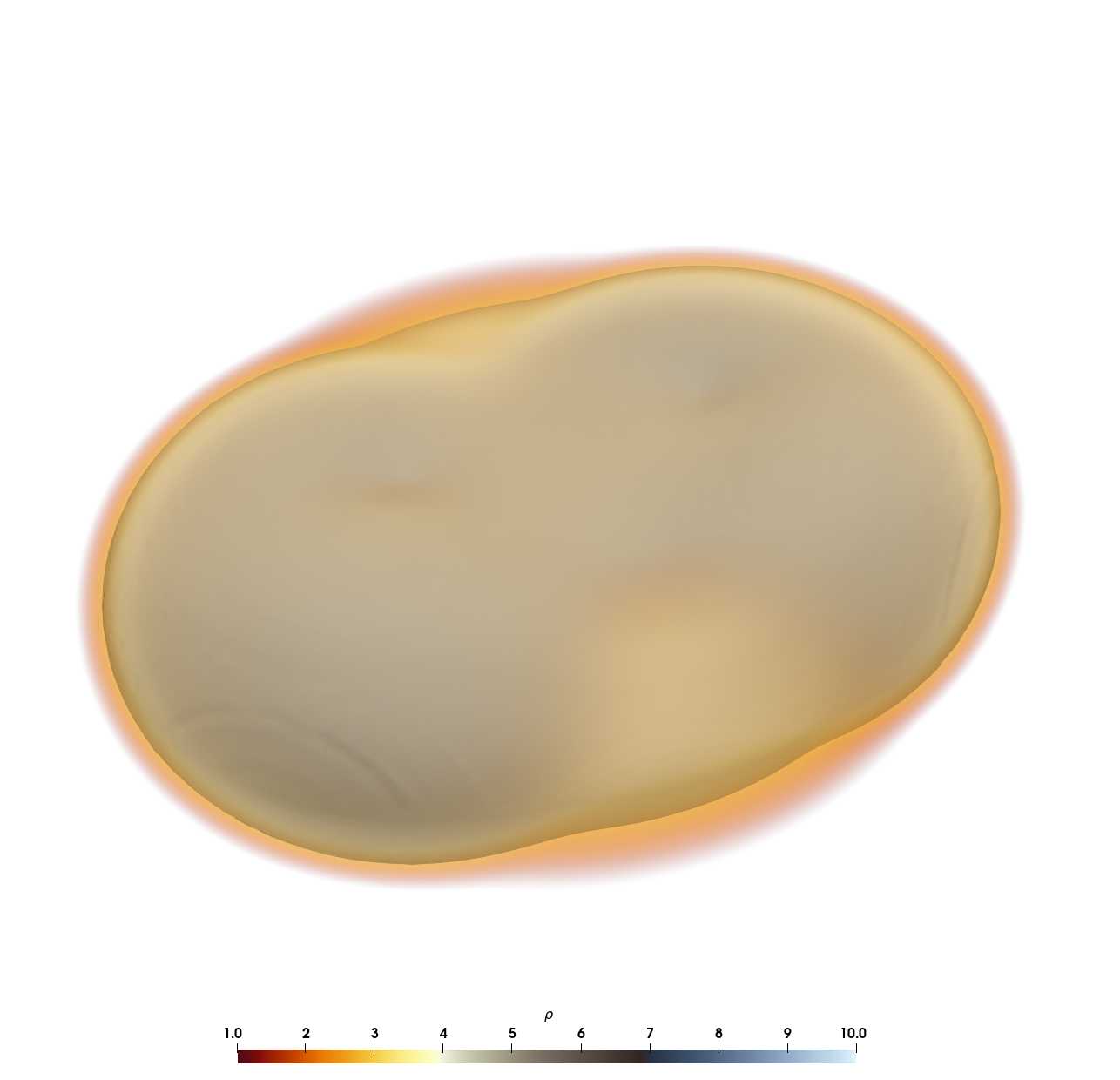}
        \caption*{$t=0.8$}
    \end{subfigure}
    \begin{subfigure}{0.16\textwidth}
        \centering
        \includegraphics[width=\textwidth,trim=30mm 40mm 30mm 70mm, clip]{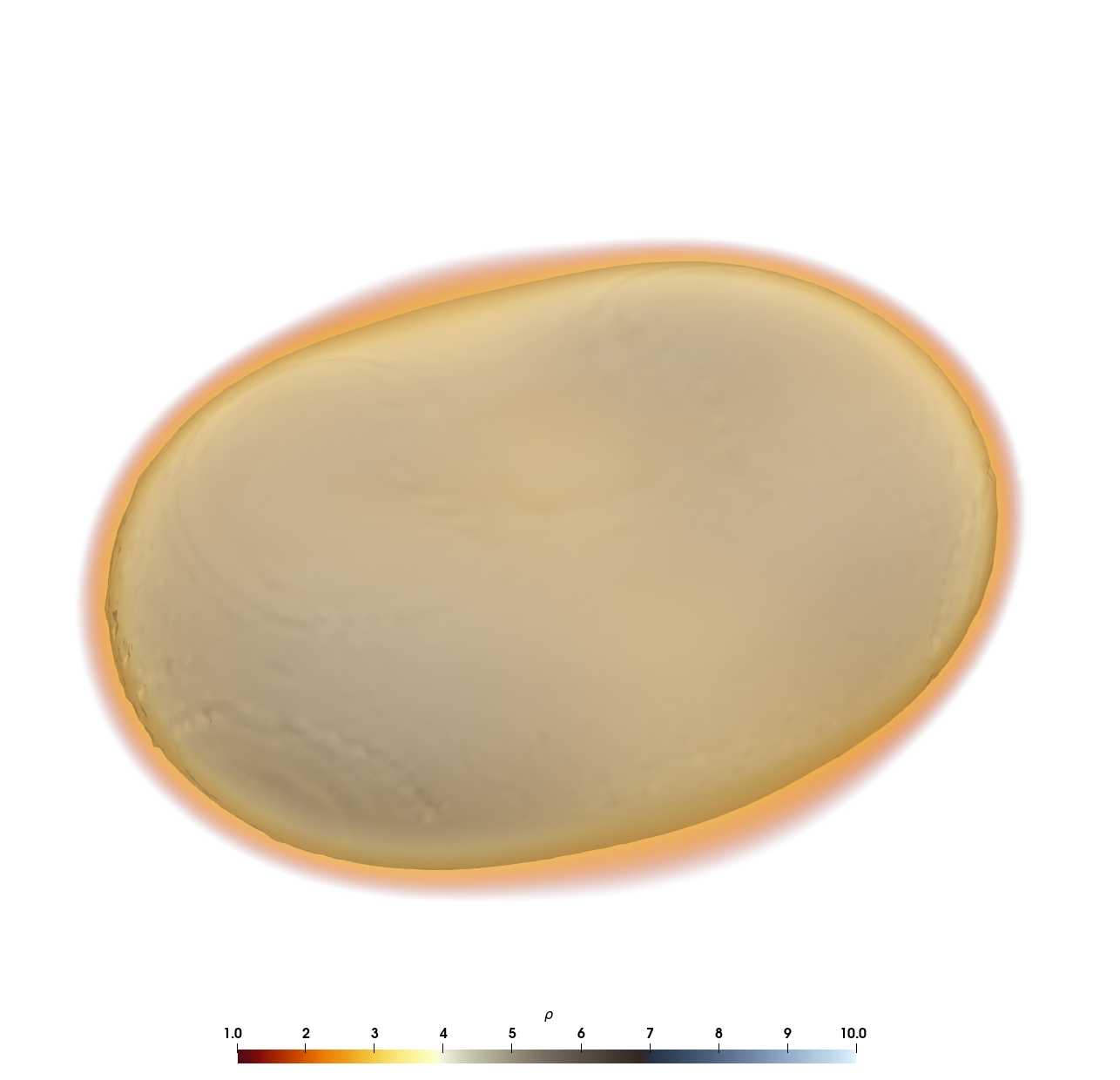}
        \caption*{$t=1.0$}
    \end{subfigure}

\subcaption{$\rho_2$}
\label{f:tdt2}
\end{minipage}
\caption{Shape interpolation in a \textit{torus}-\textit{double torus} system. Left-right shows the time evolution of density. Top-down shows plots at different reaction strengths: $\alpha=0$, $\alpha=50,$ and $\alpha=100$.}
\label{f:tdt}
\end{figure}

\begin{figure}[h!]
    \centering
\begin{minipage}[b]{\textwidth}

    \begin{minipage}[b]{\textwidth}
    \hfill
        \begin{subfigure}{\textwidth}
            \centering
            \includegraphics[width=\textwidth,trim=0mm 0mm 0mm 400mm, clip]{figures/reaction/doubleTorusBunny/pdhg0.0000..jpg}
        \end{subfigure}
    \end{minipage}
    
    \begin{subfigure}{0.16\textwidth}
        \centering
        \includegraphics[width=\textwidth,trim=40mm 40mm 50mm 70mm, clip]{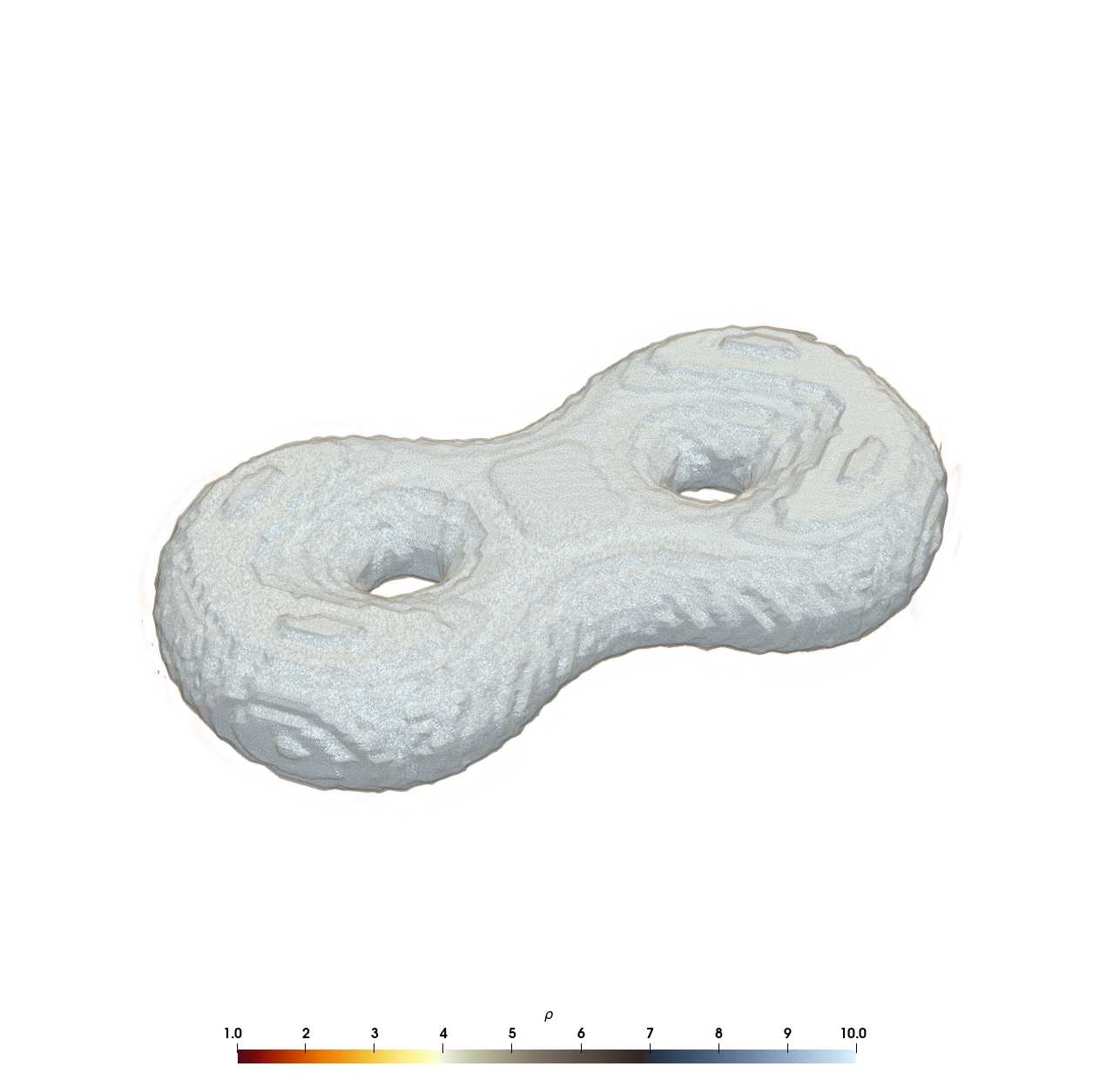}
    \end{subfigure}
    \begin{subfigure}{0.16\textwidth}
        \centering
        \includegraphics[width=\textwidth,trim=40mm 40mm 50mm 70mm, clip]{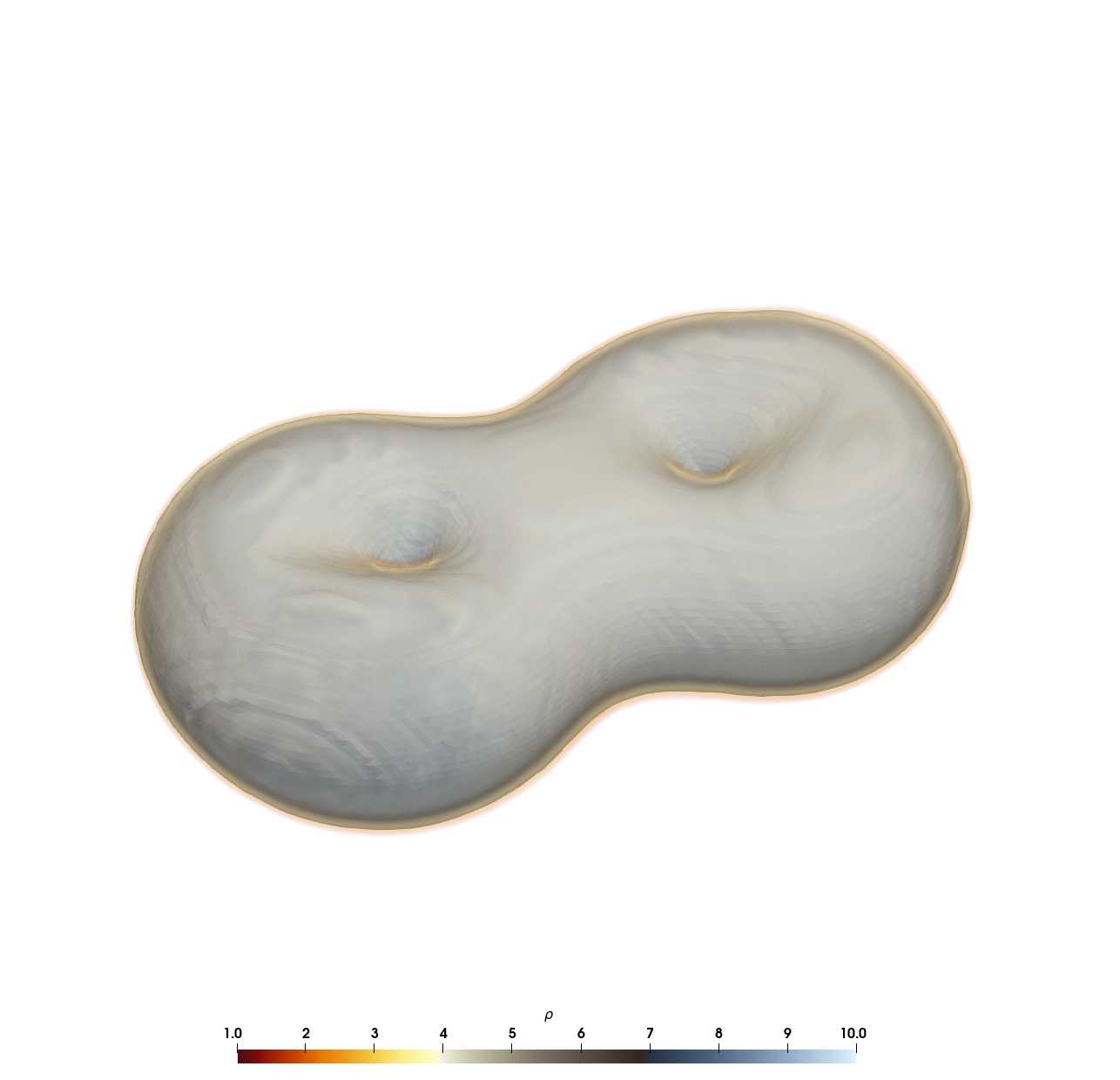}
    \end{subfigure}
    \begin{subfigure}{0.16\textwidth}
        \centering
        \includegraphics[width=\textwidth,trim=40mm 40mm 50mm 70mm, clip]{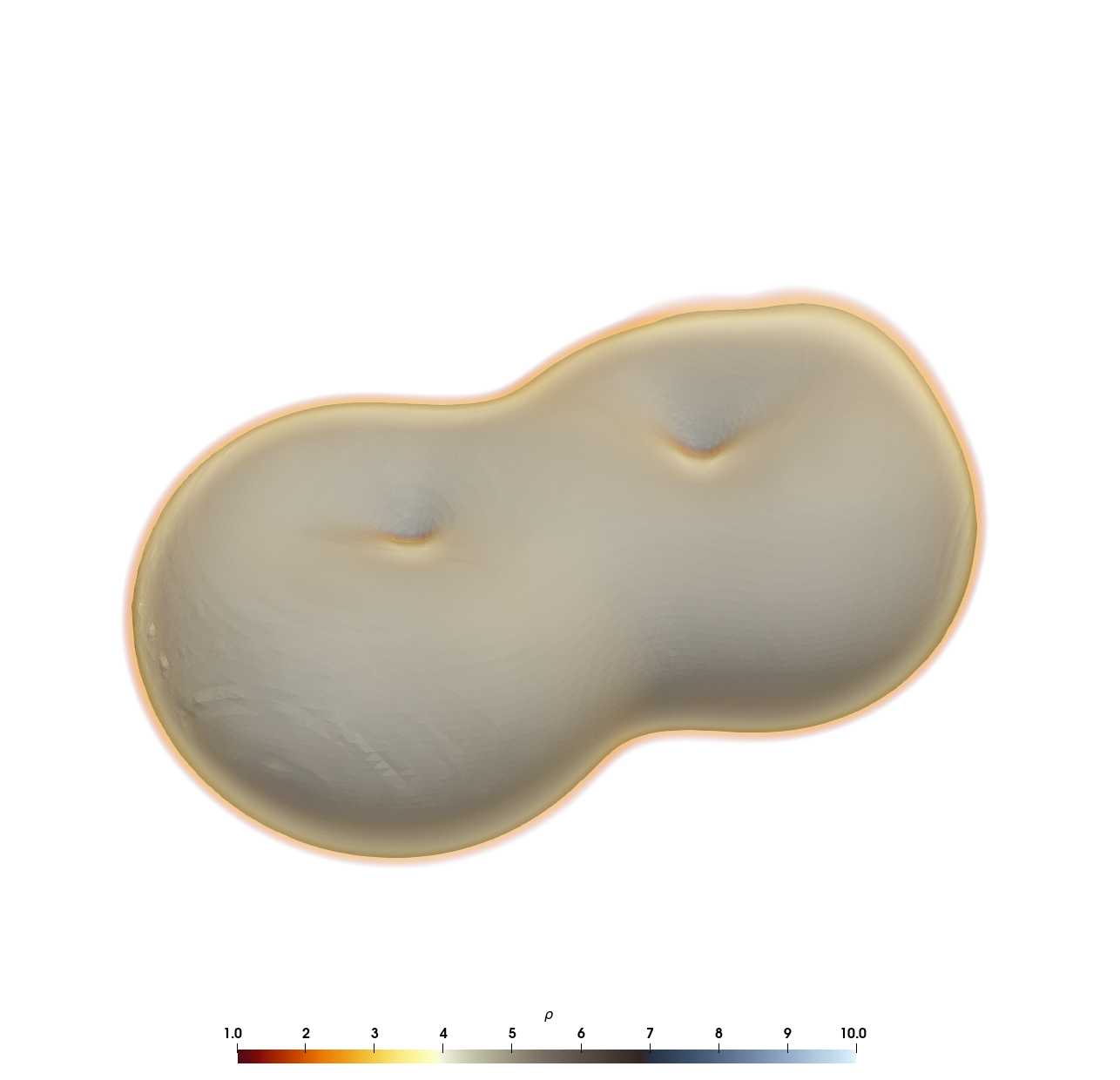}
    \end{subfigure}
    \begin{subfigure}{0.16\textwidth}
        \centering
        \includegraphics[width=\textwidth,trim=40mm 40mm 50mm 70mm, clip]{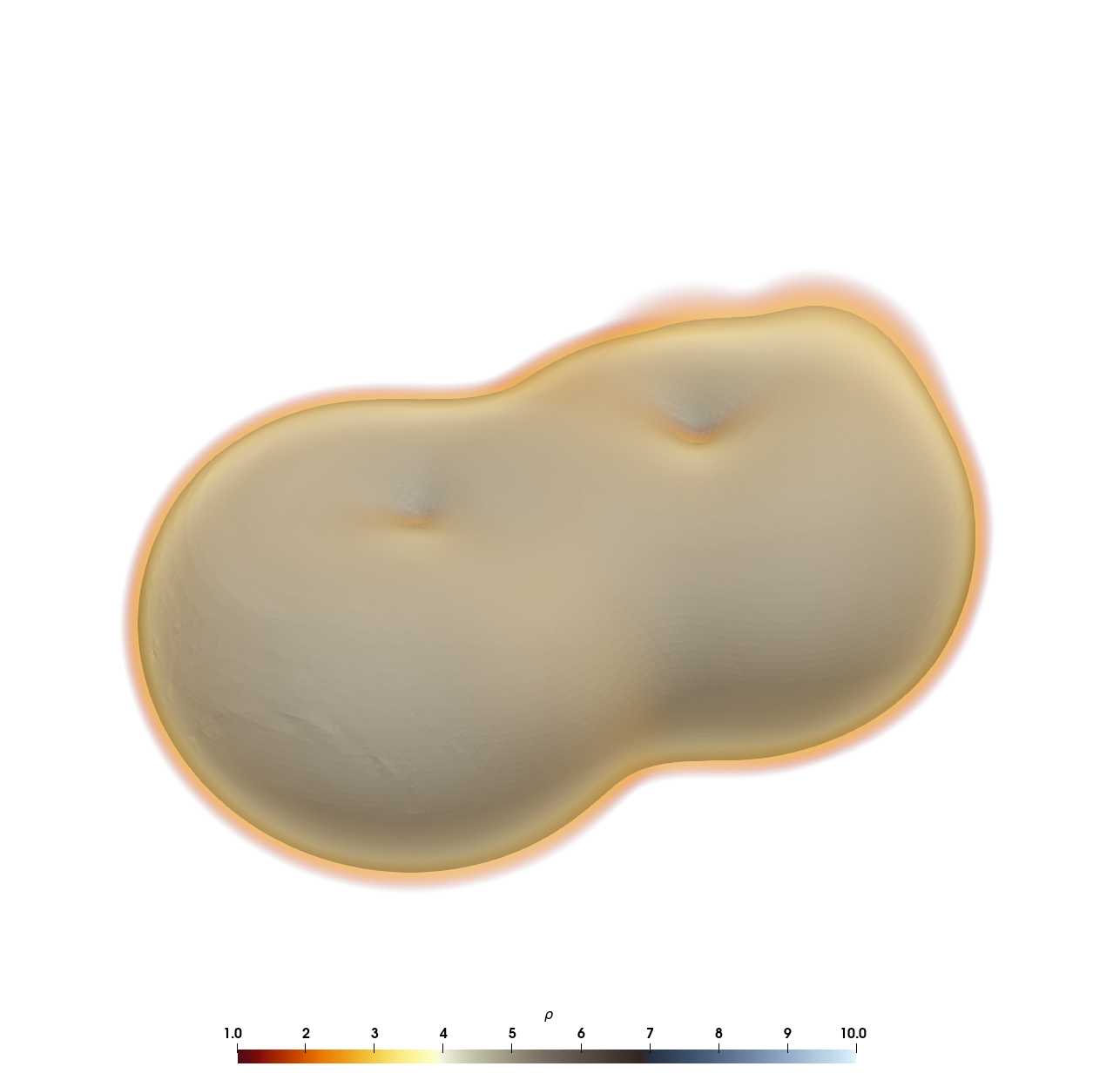}
    \end{subfigure}
    \begin{subfigure}{0.16\textwidth}
        \centering
        \includegraphics[width=\textwidth,trim=40mm 40mm 50mm 70mm, clip]{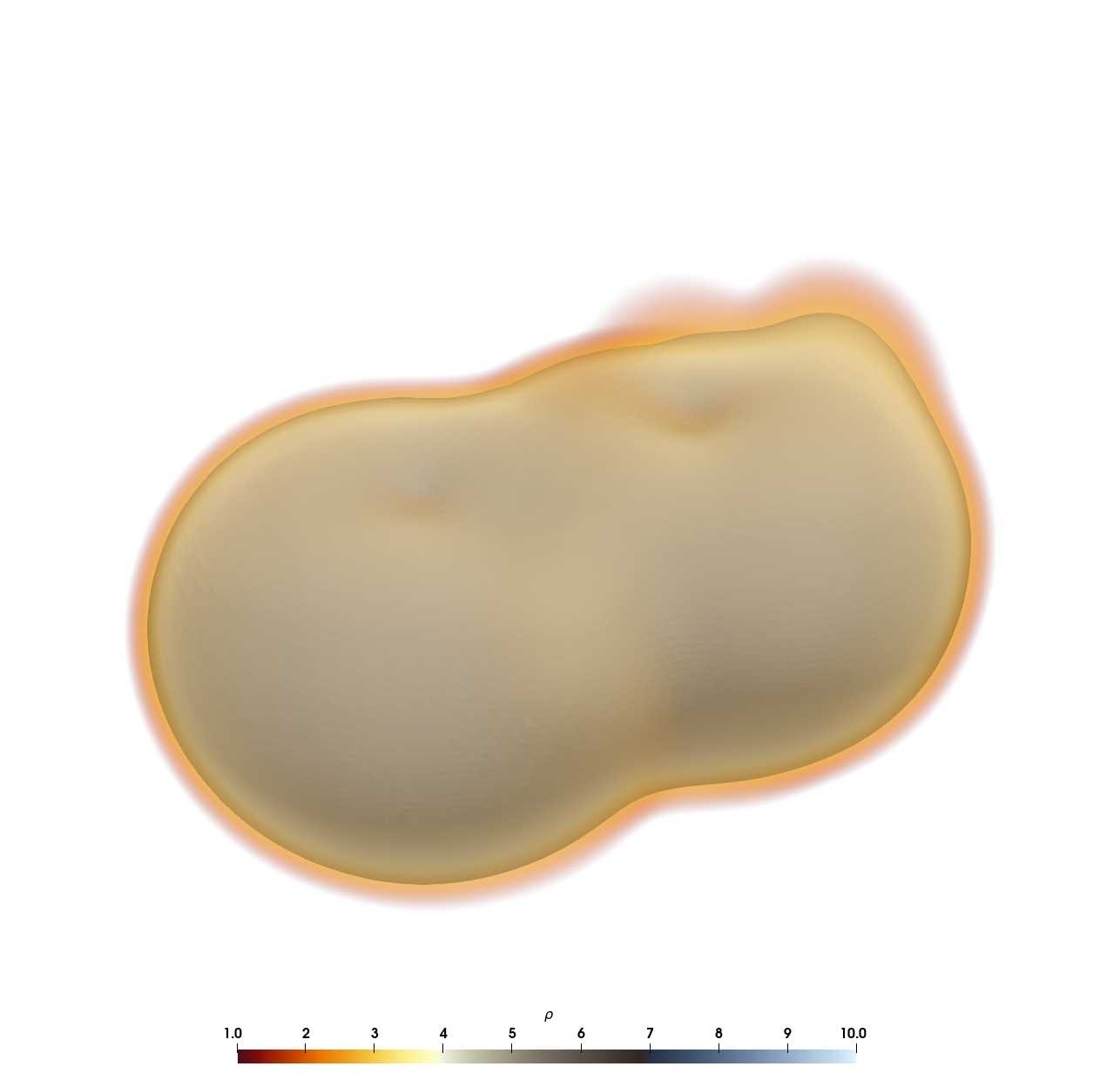}
    \end{subfigure}
    \begin{subfigure}{0.16\textwidth}
        \centering
        \includegraphics[width=\textwidth,trim=40mm 40mm 50mm 70mm, clip]{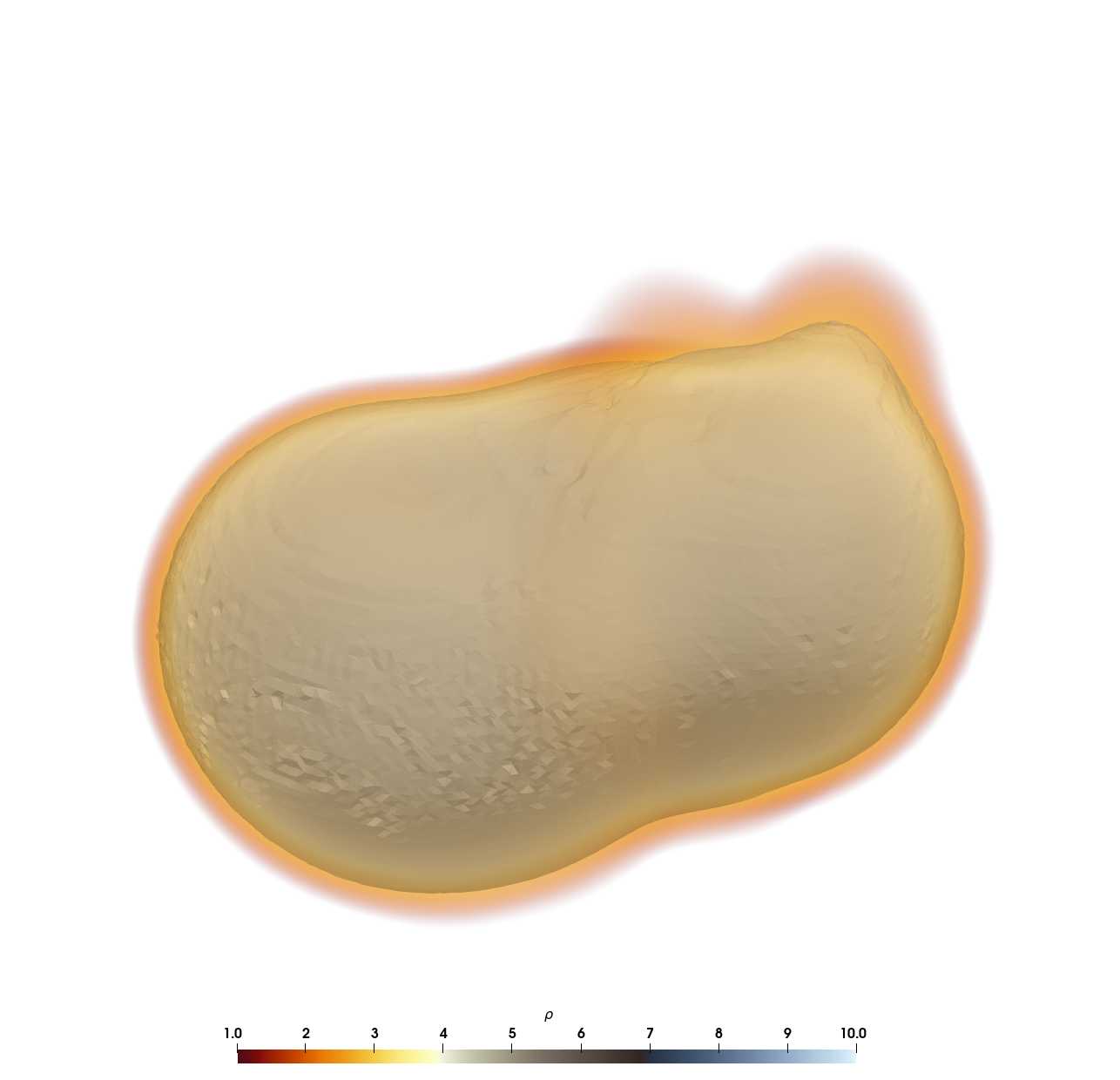}
    \end{subfigure}
    
    \begin{subfigure}{0.16\textwidth}
        \centering
        \includegraphics[width=\textwidth,trim=40mm 40mm 50mm 70mm, clip]{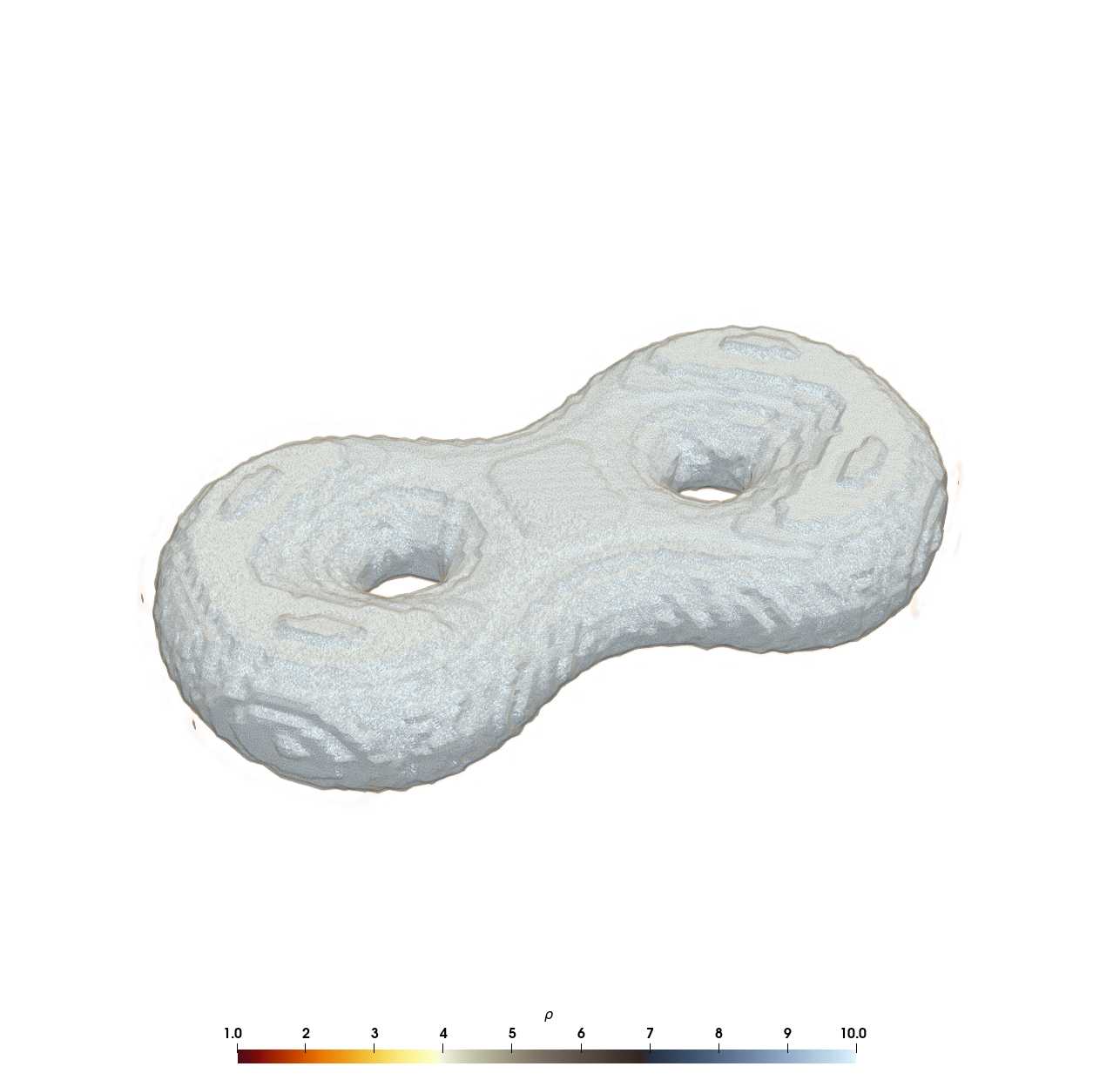}
    \end{subfigure}
    \begin{subfigure}{0.16\textwidth}
        \centering
        \includegraphics[width=\textwidth,trim=40mm 40mm 50mm 70mm, clip]{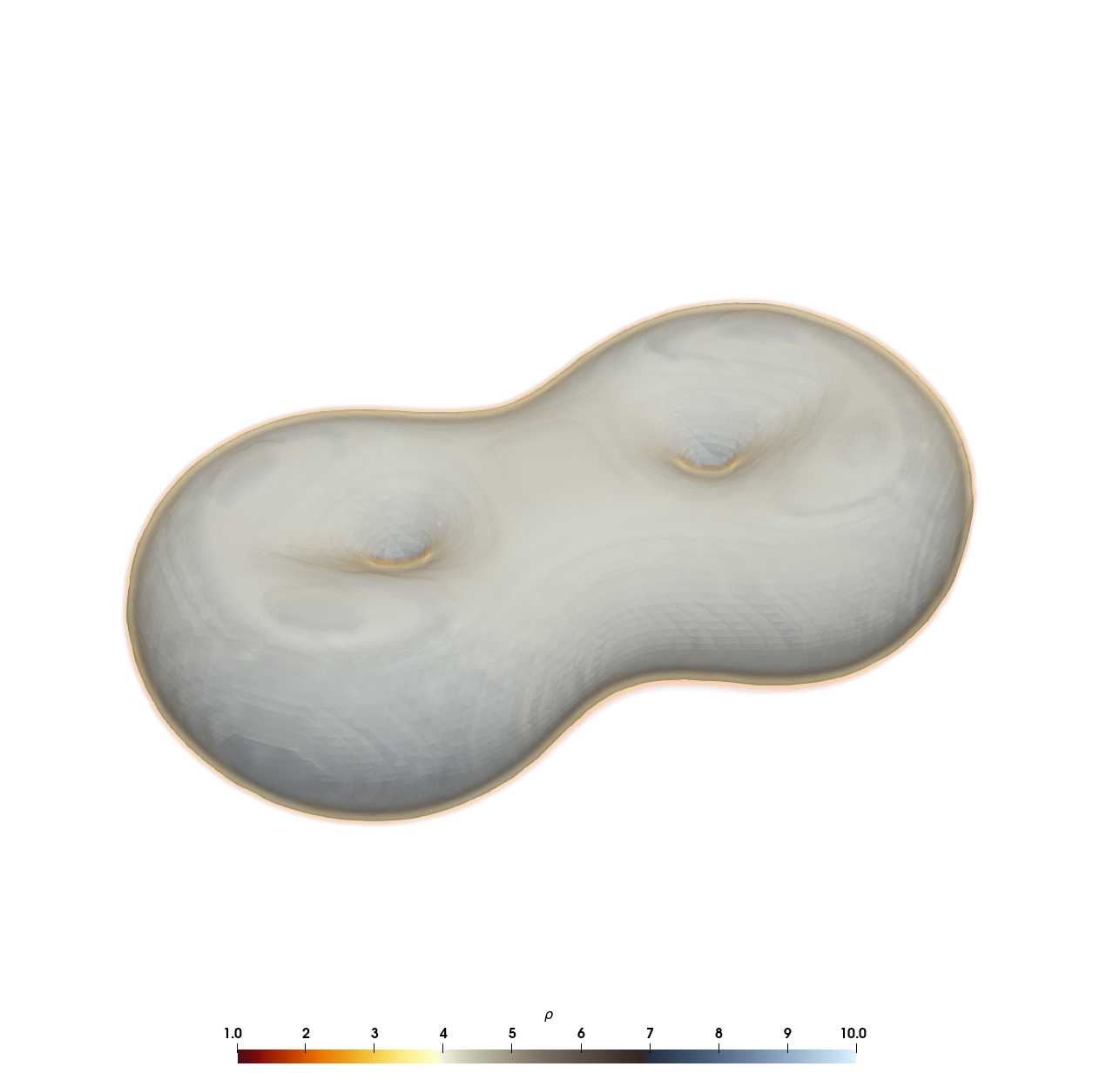}
    \end{subfigure}
    \begin{subfigure}{0.16\textwidth}
        \centering
        \includegraphics[width=\textwidth,trim=40mm 40mm 50mm 70mm, clip]{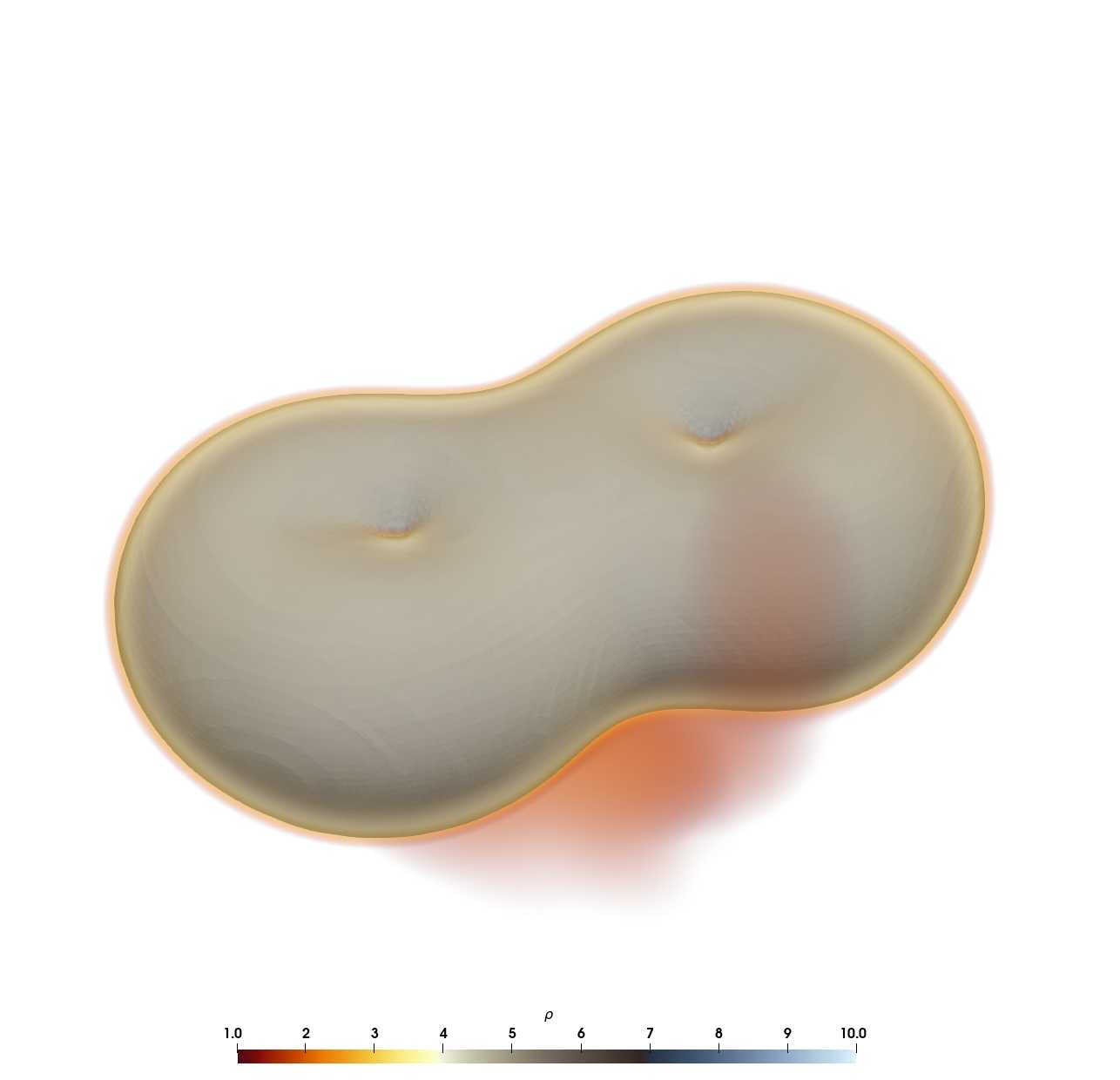}
    \end{subfigure}
    \begin{subfigure}{0.16\textwidth}
        \centering
        \includegraphics[width=\textwidth,trim=40mm 40mm 50mm 70mm, clip]{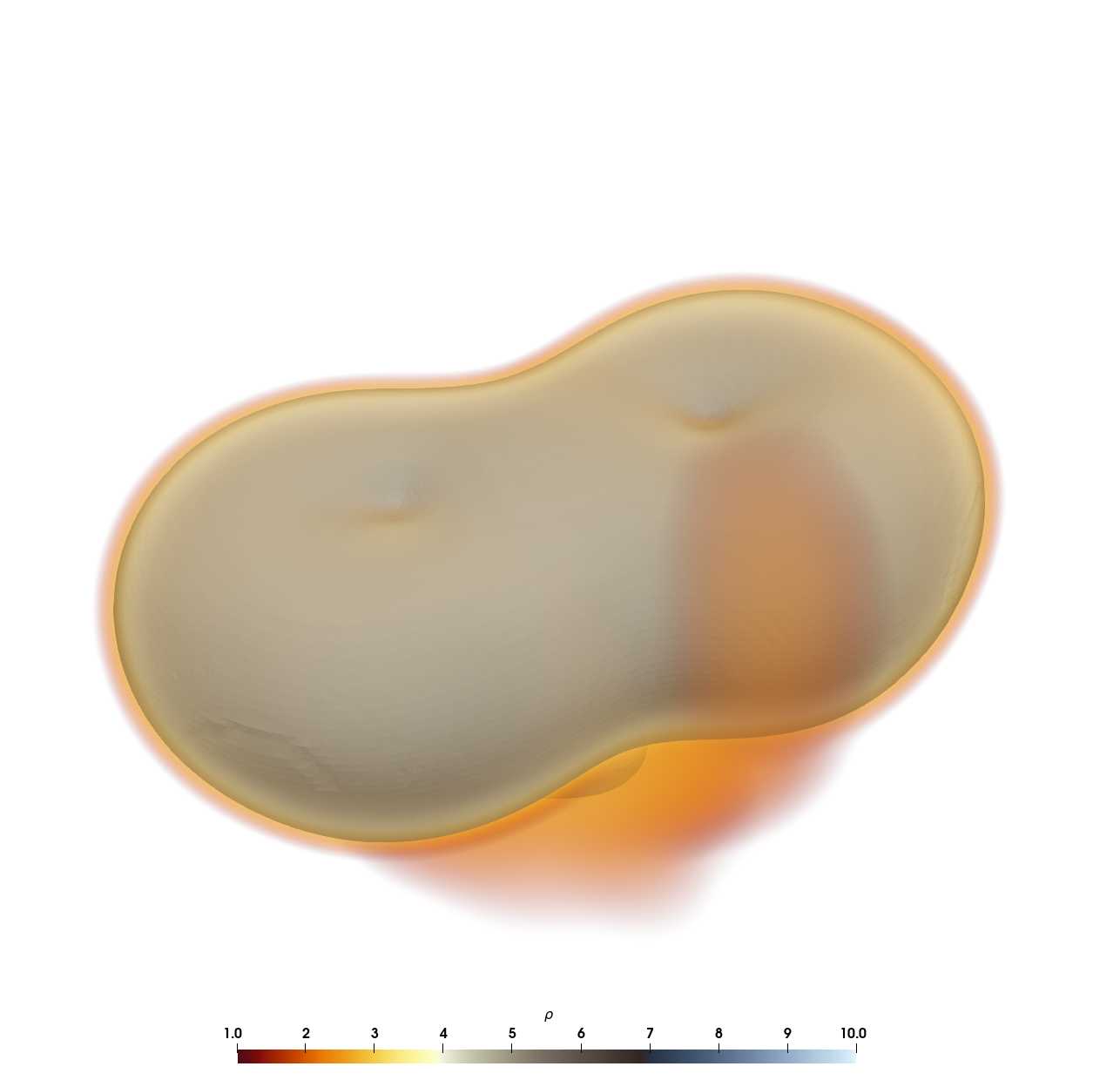}
    \end{subfigure}
    \begin{subfigure}{0.16\textwidth}
        \centering
        \includegraphics[width=\textwidth,trim=40mm 40mm 50mm 70mm, clip]{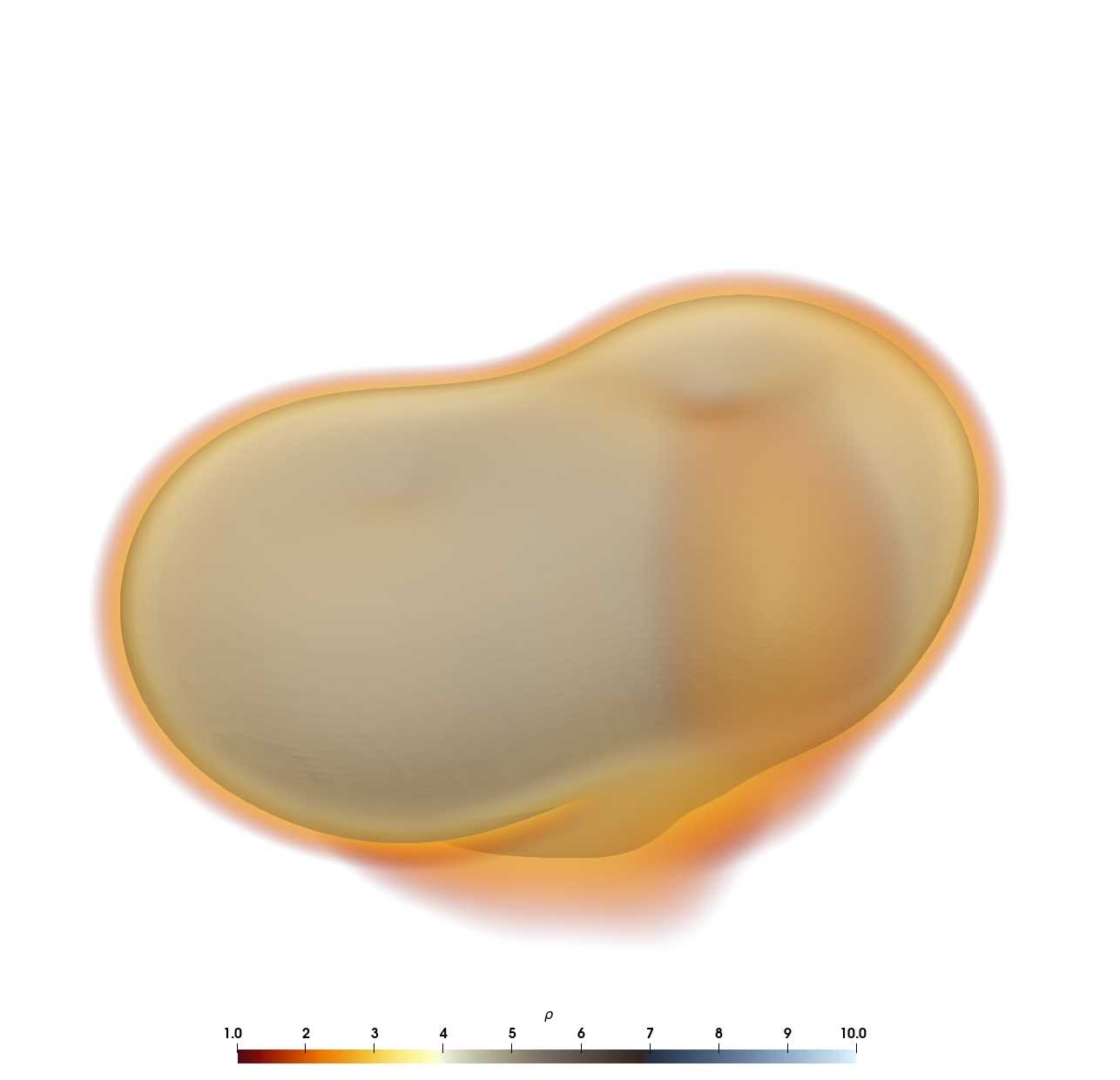}
    \end{subfigure}
    \begin{subfigure}{0.16\textwidth}
        \centering
        \includegraphics[width=\textwidth,trim=40mm 40mm 50mm 70mm, clip]{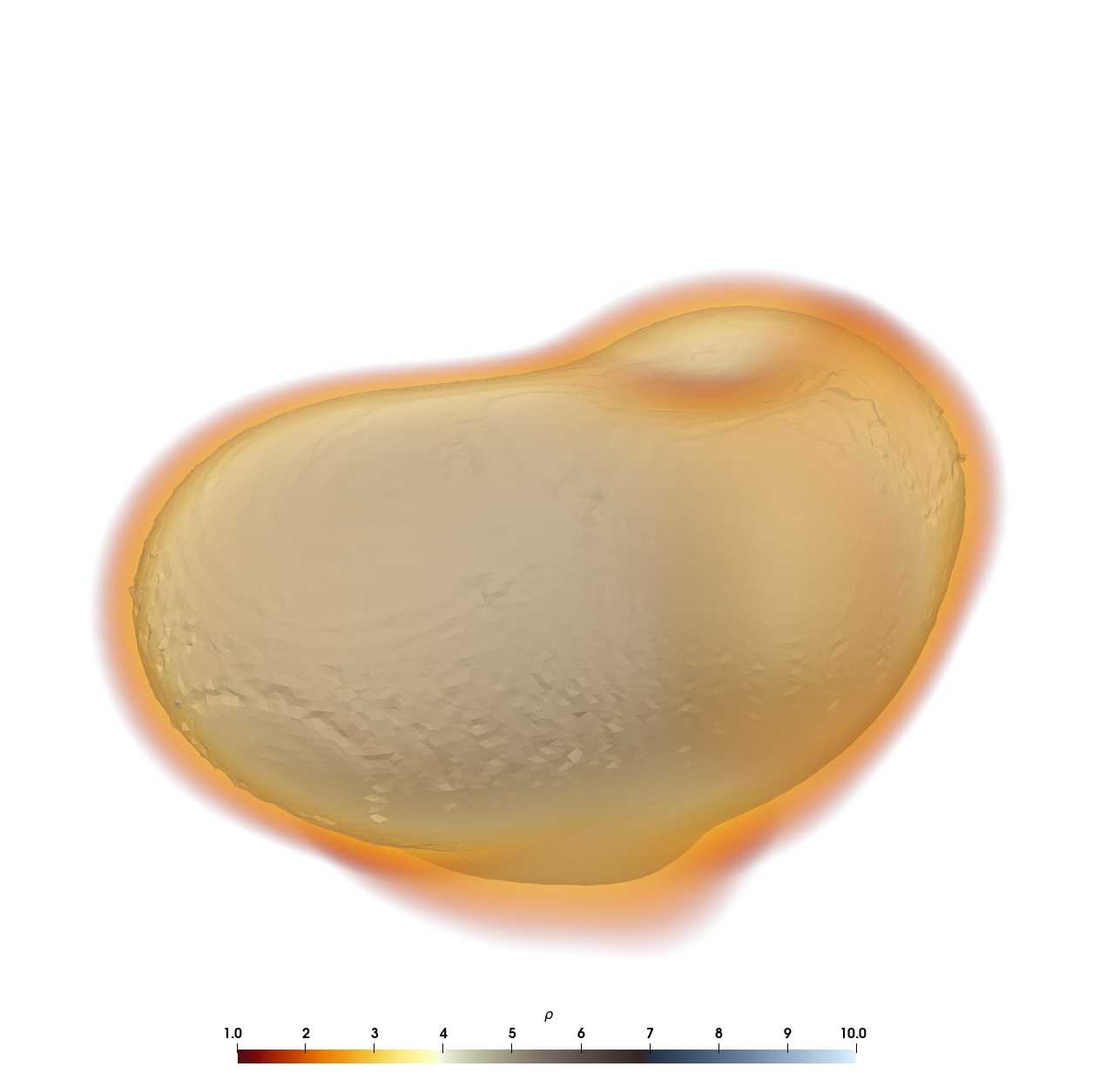}
    \end{subfigure}

    \begin{subfigure}{0.16\textwidth}
        \centering
        \includegraphics[width=\textwidth,trim=40mm 40mm 50mm 70mm, clip]{figures/reaction/100/doubleTorusBunny/pdhg0.0000..jpg}
        \caption*{$t=0.0$}
    \end{subfigure}
    \begin{subfigure}{0.16\textwidth}
        \centering
        \includegraphics[width=\textwidth,trim=40mm 40mm 50mm 70mm, clip]{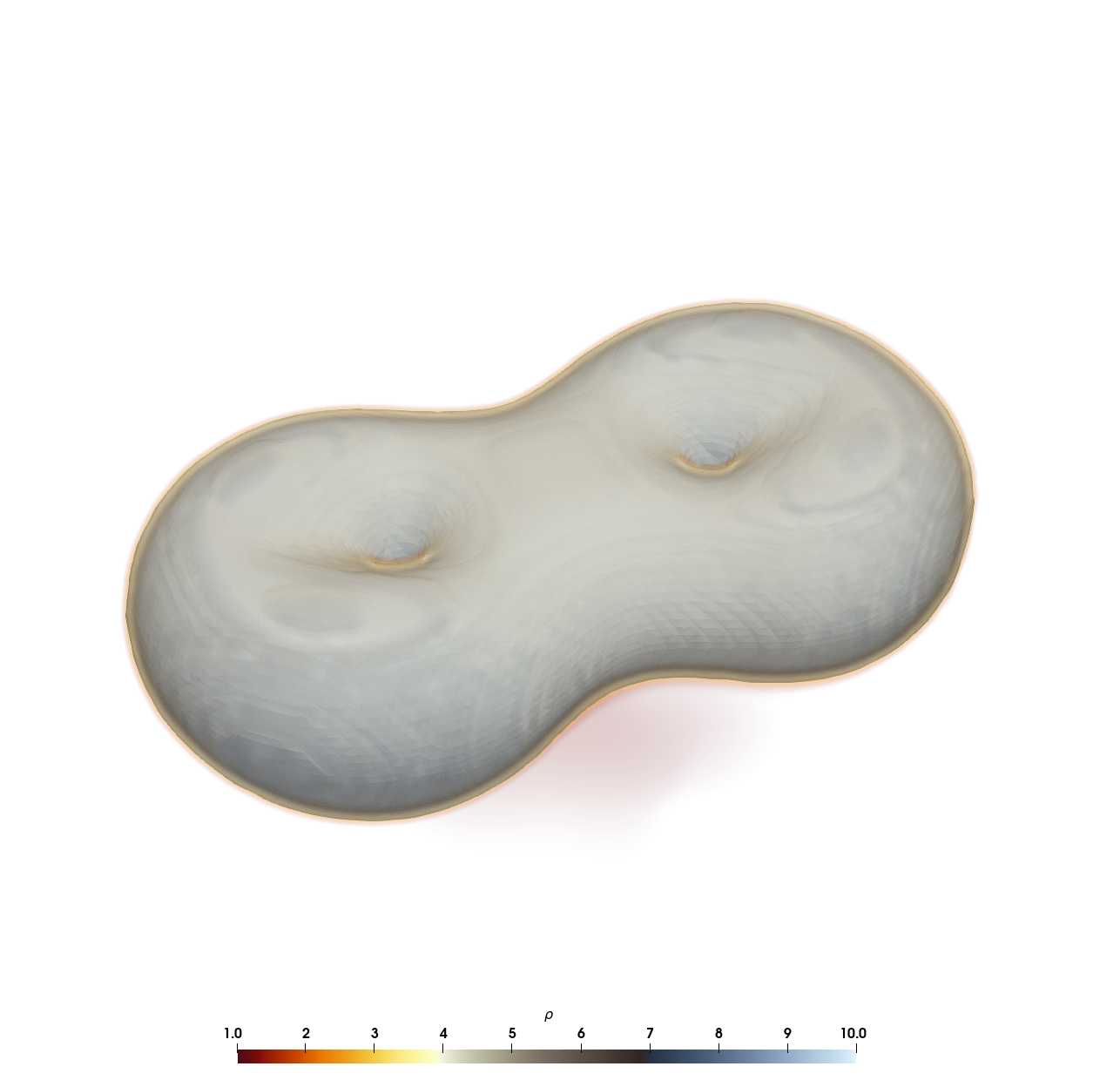}
        \caption*{$t=0.2$}
    \end{subfigure}
    \begin{subfigure}{0.16\textwidth}
        \centering
        \includegraphics[width=\textwidth,trim=40mm 40mm 50mm 70mm, clip]{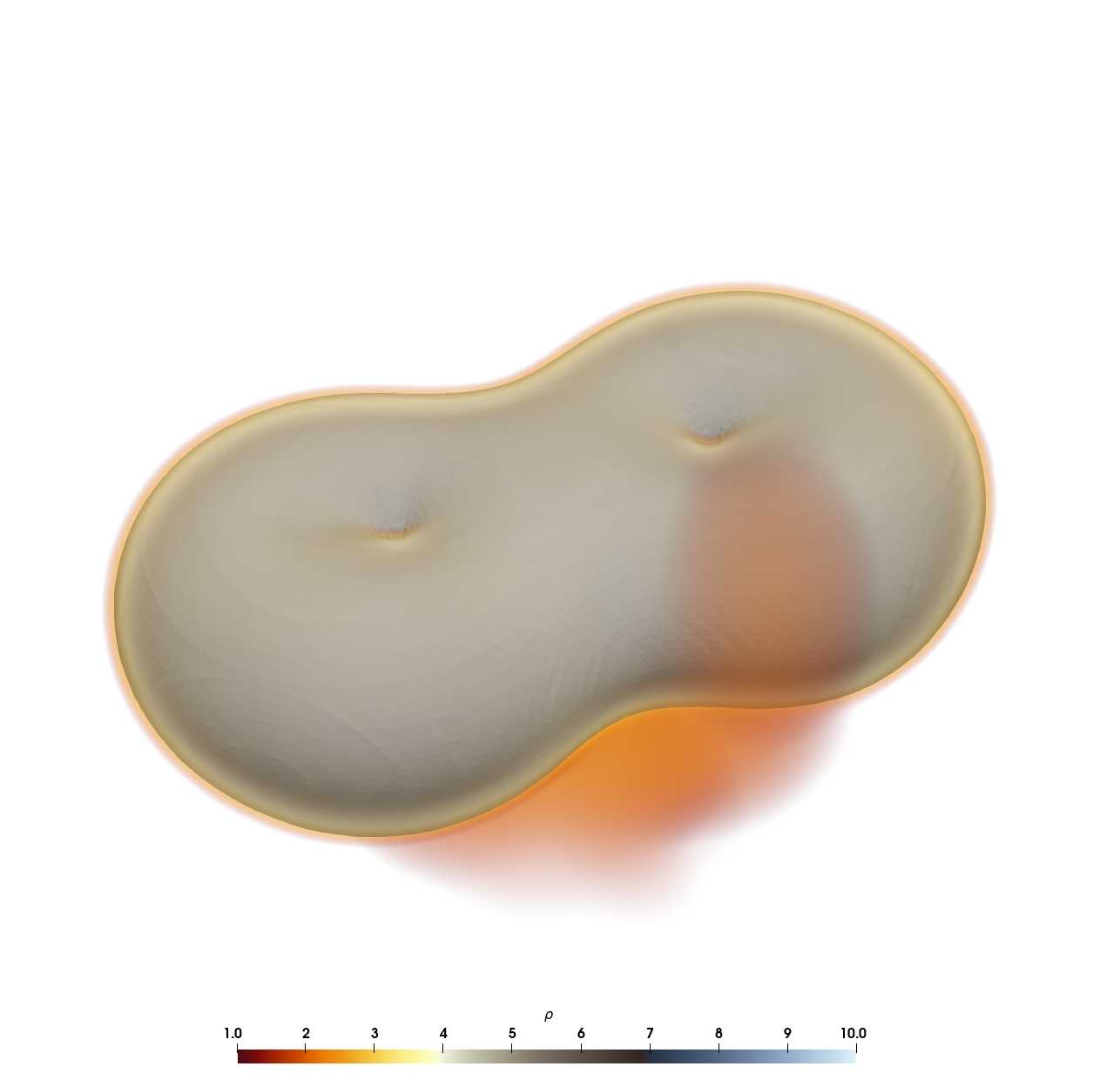}
        \caption*{$t=0.4$}
    \end{subfigure}
    \begin{subfigure}{0.16\textwidth}
        \centering
        \includegraphics[width=\textwidth,trim=40mm 40mm 50mm 70mm, clip]{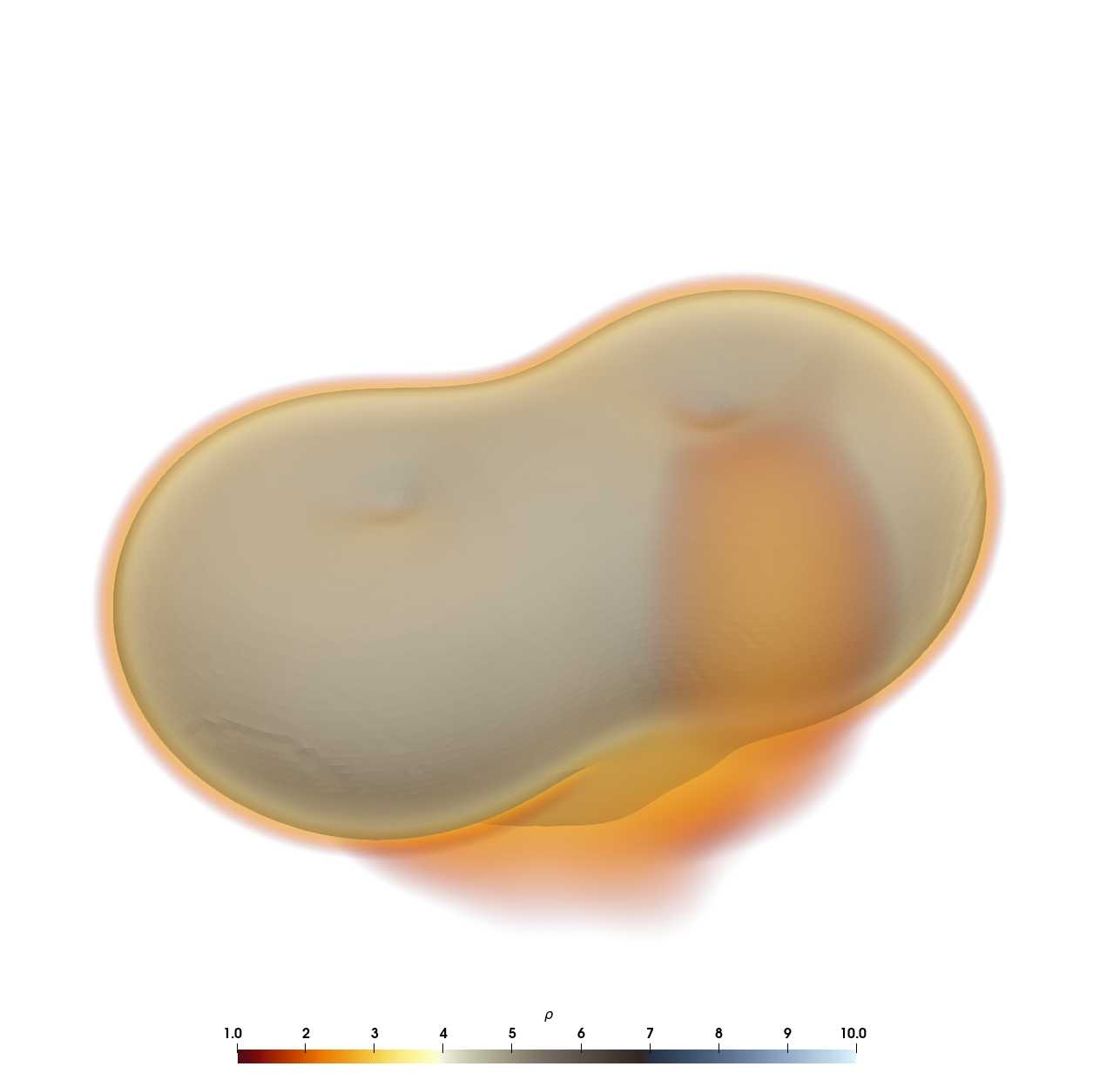}
        \caption*{$t=0.6$}
    \end{subfigure}
    \begin{subfigure}{0.16\textwidth}
        \centering
        \includegraphics[width=\textwidth,trim=40mm 40mm 50mm 70mm, clip]{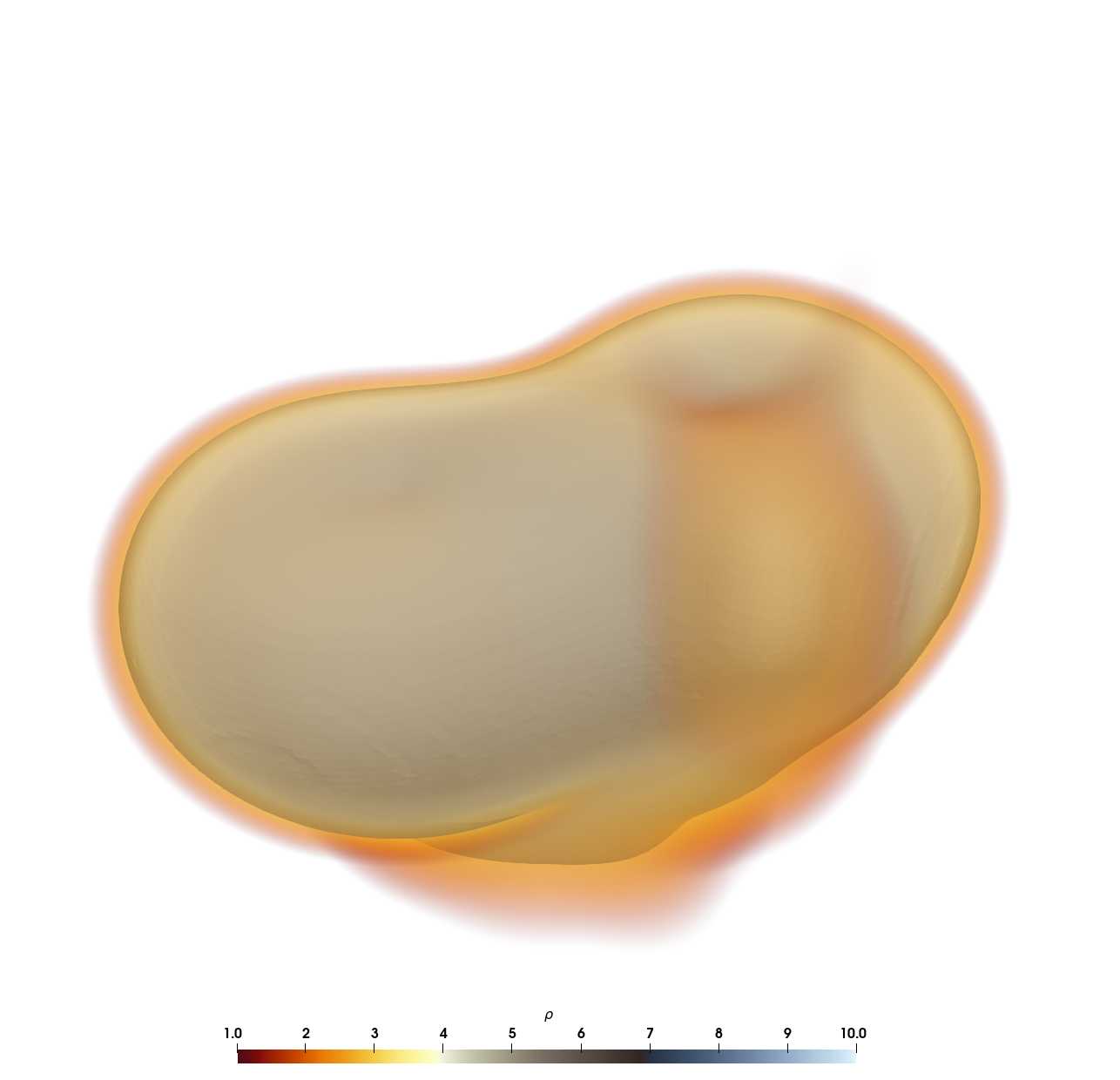}
        \caption*{$t=0.8$}
    \end{subfigure}
    \begin{subfigure}{0.16\textwidth}
        \centering
        \includegraphics[width=\textwidth,trim=40mm 40mm 50mm 70mm, clip]{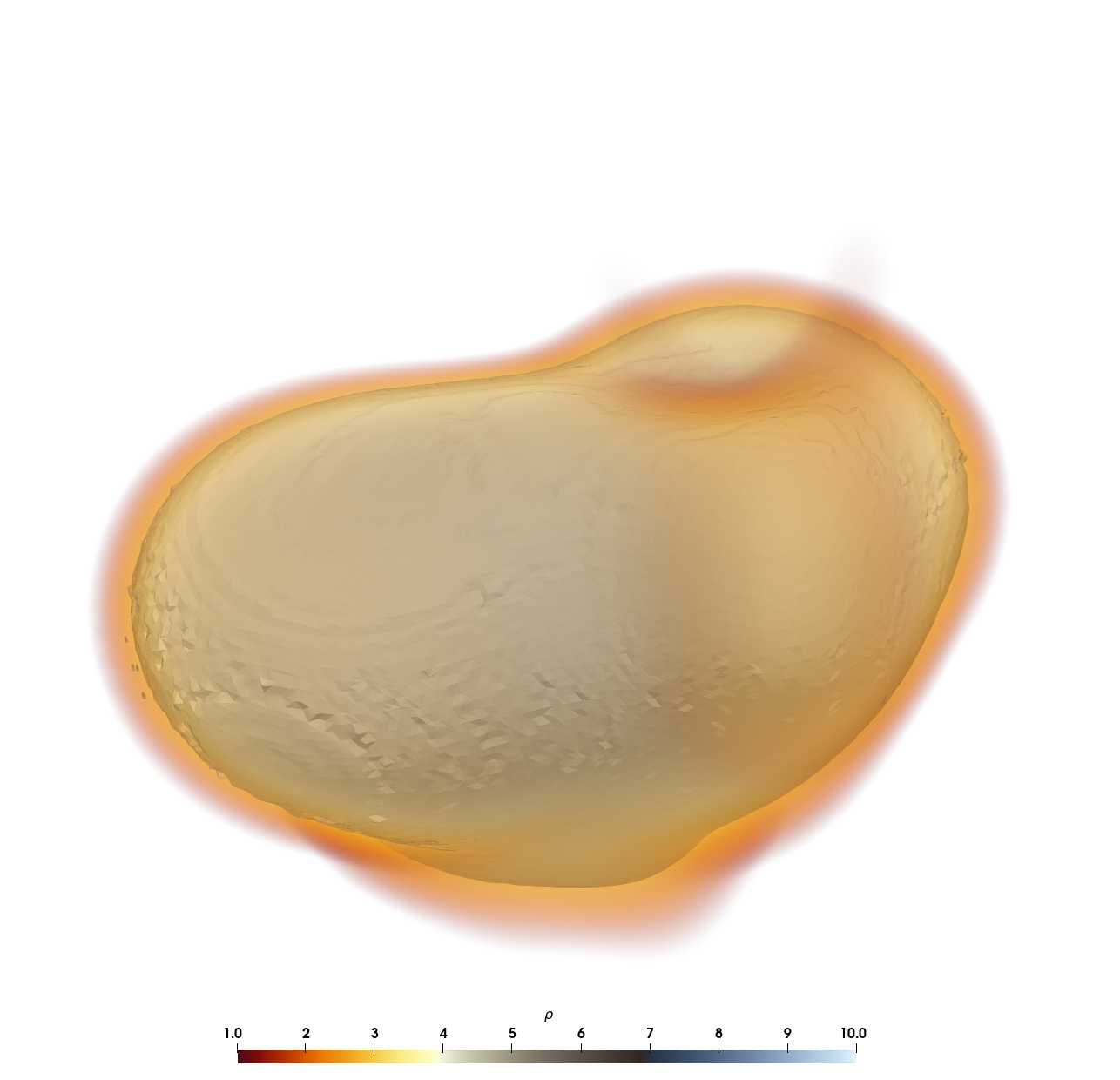}
        \caption*{$t=1.0.$}
    \end{subfigure}

\subcaption{$\rho_1$}
\label{f:dtb1}
\end{minipage}
\begin{minipage}[b]{\textwidth}

    \begin{minipage}[b]{\textwidth}
    \hfill
        \begin{subfigure}{\textwidth}
            \centering
            \includegraphics[width=\textwidth,trim=0mm 0mm 0mm 400mm, clip]{figures/reaction/doubleTorusBunny/pdhg0.0000..jpg}
        \end{subfigure}
    \end{minipage}
    
    \begin{subfigure}{0.16\textwidth}
        \centering
        \includegraphics[width=\textwidth,trim=40mm 40mm 50mm 70mm, clip]{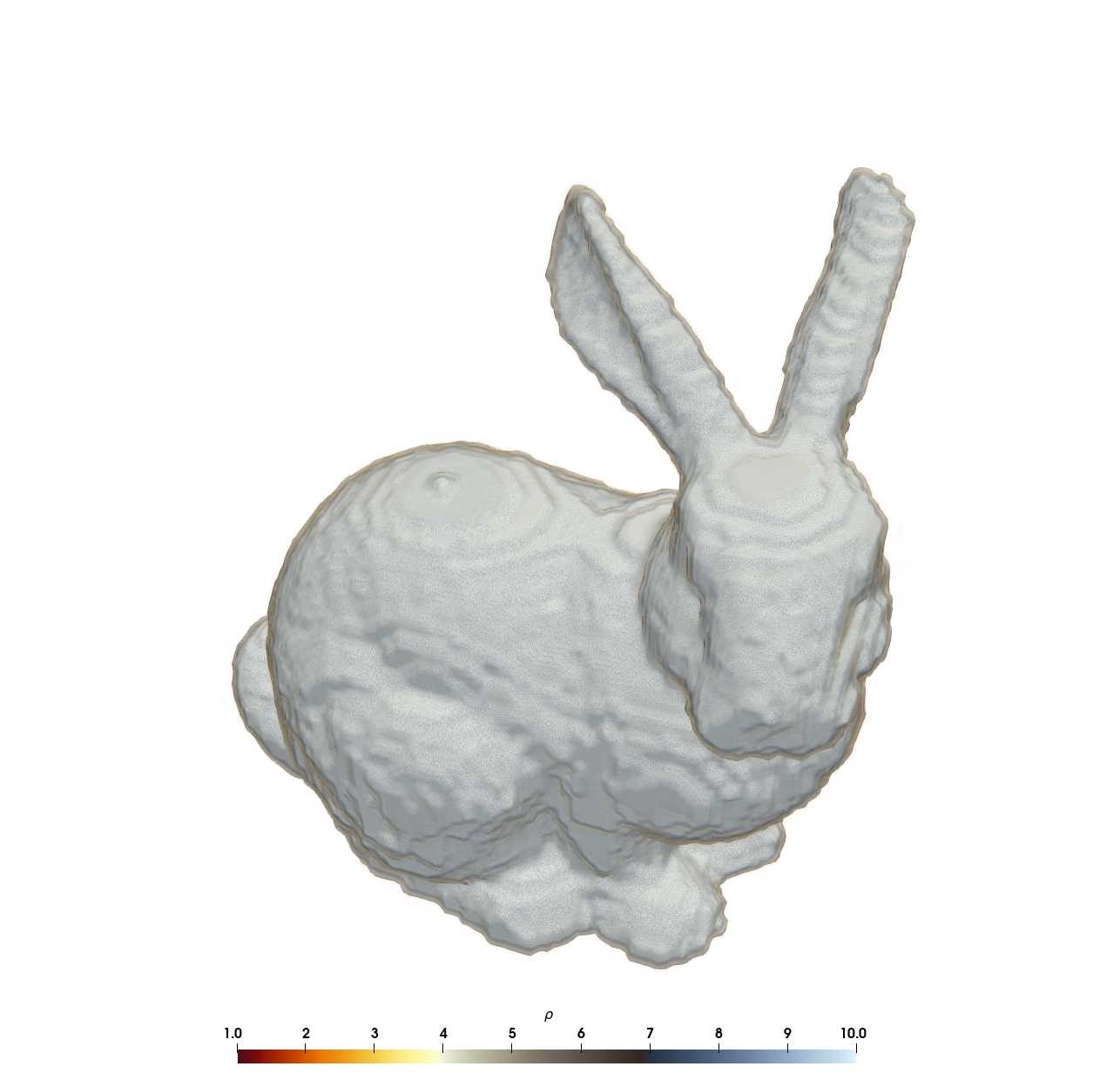}
    \end{subfigure}
    \begin{subfigure}{0.16\textwidth}
        \centering
        \includegraphics[width=\textwidth,trim=40mm 40mm 50mm 70mm, clip]{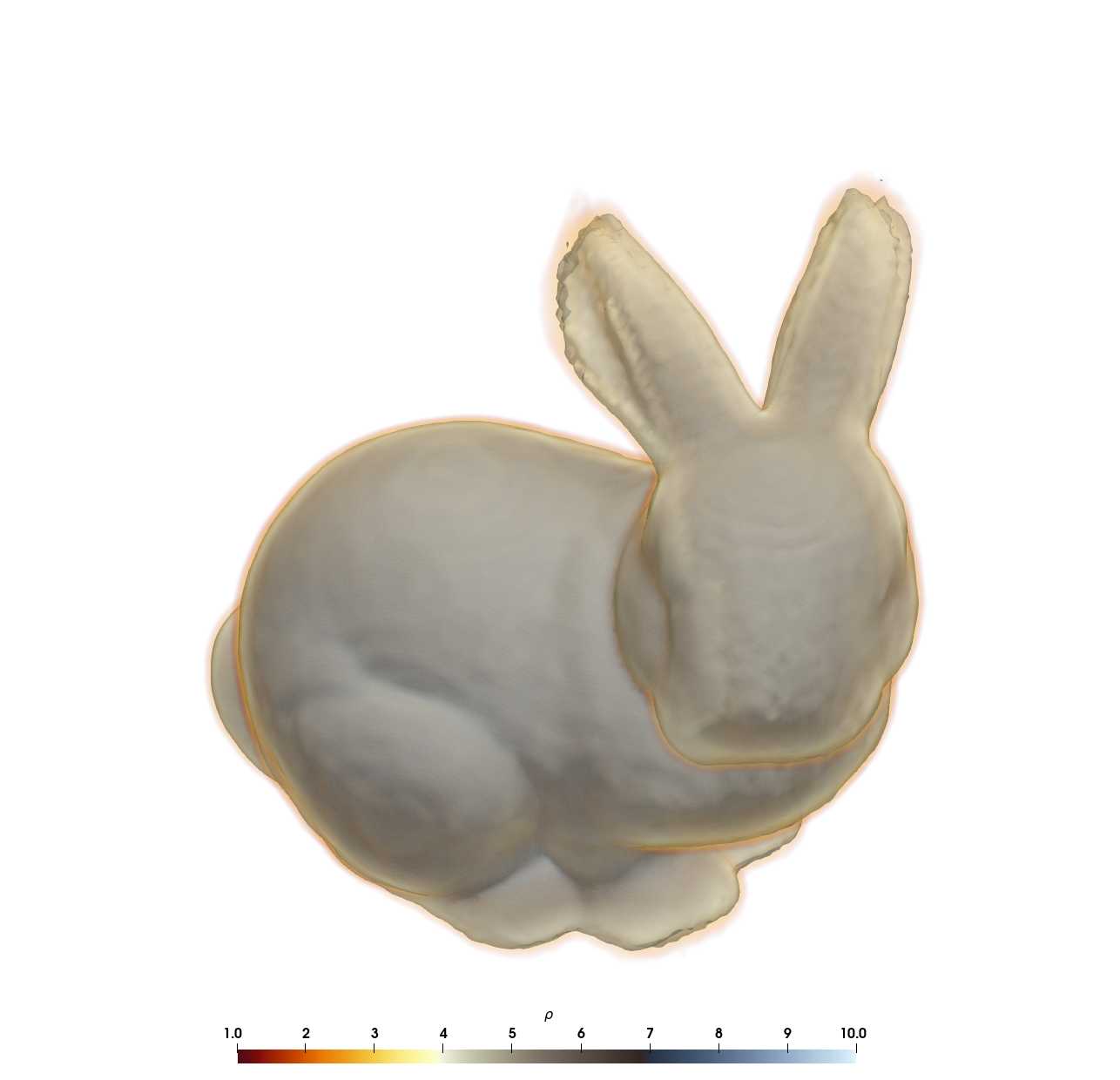}
    \end{subfigure}
    \begin{subfigure}{0.16\textwidth}
        \centering
        \includegraphics[width=\textwidth,trim=40mm 40mm 50mm 70mm, clip]{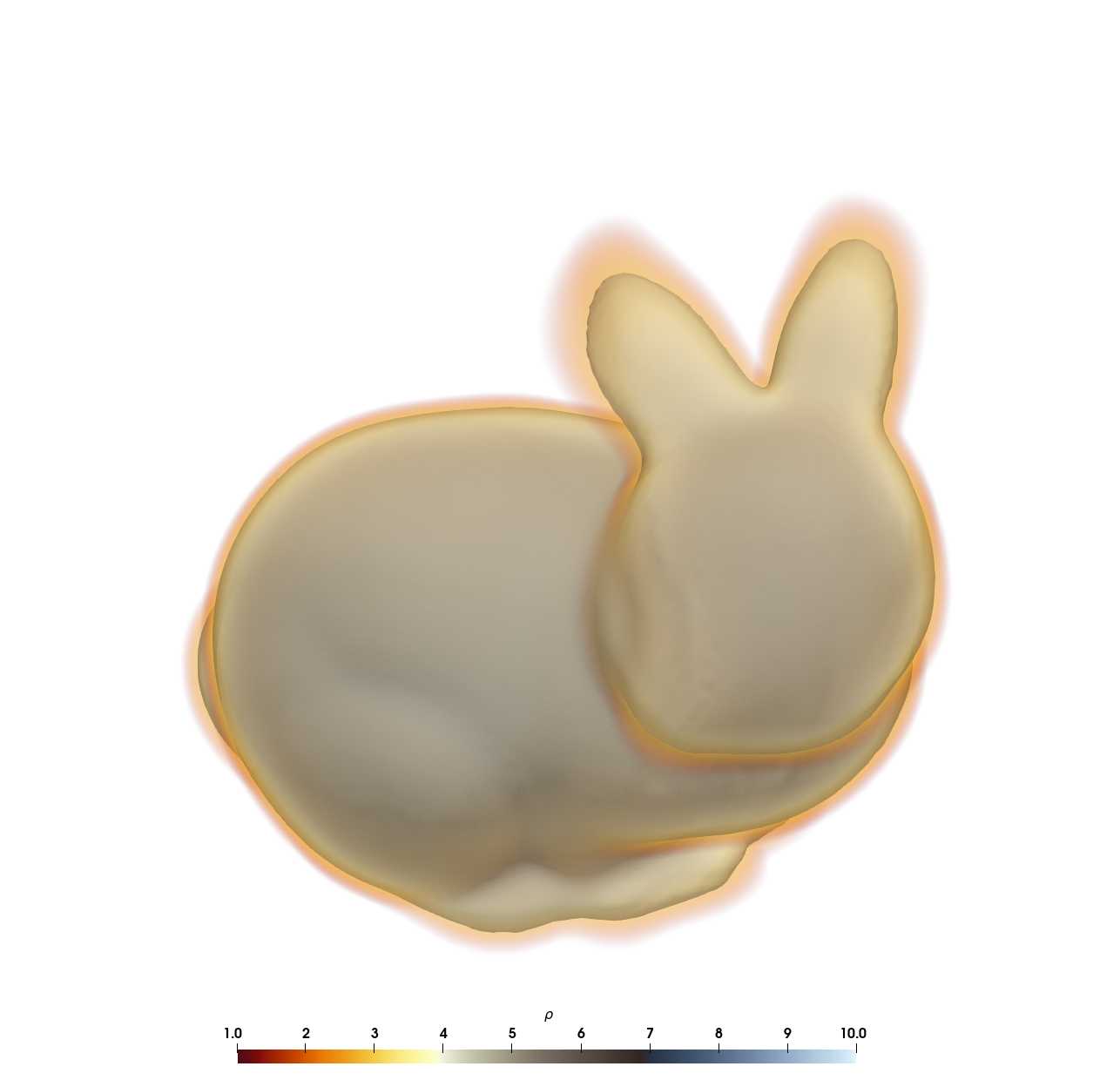}
    \end{subfigure}
    \begin{subfigure}{0.16\textwidth}
        \centering
        \includegraphics[width=\textwidth,trim=40mm 40mm 50mm 70mm, clip]{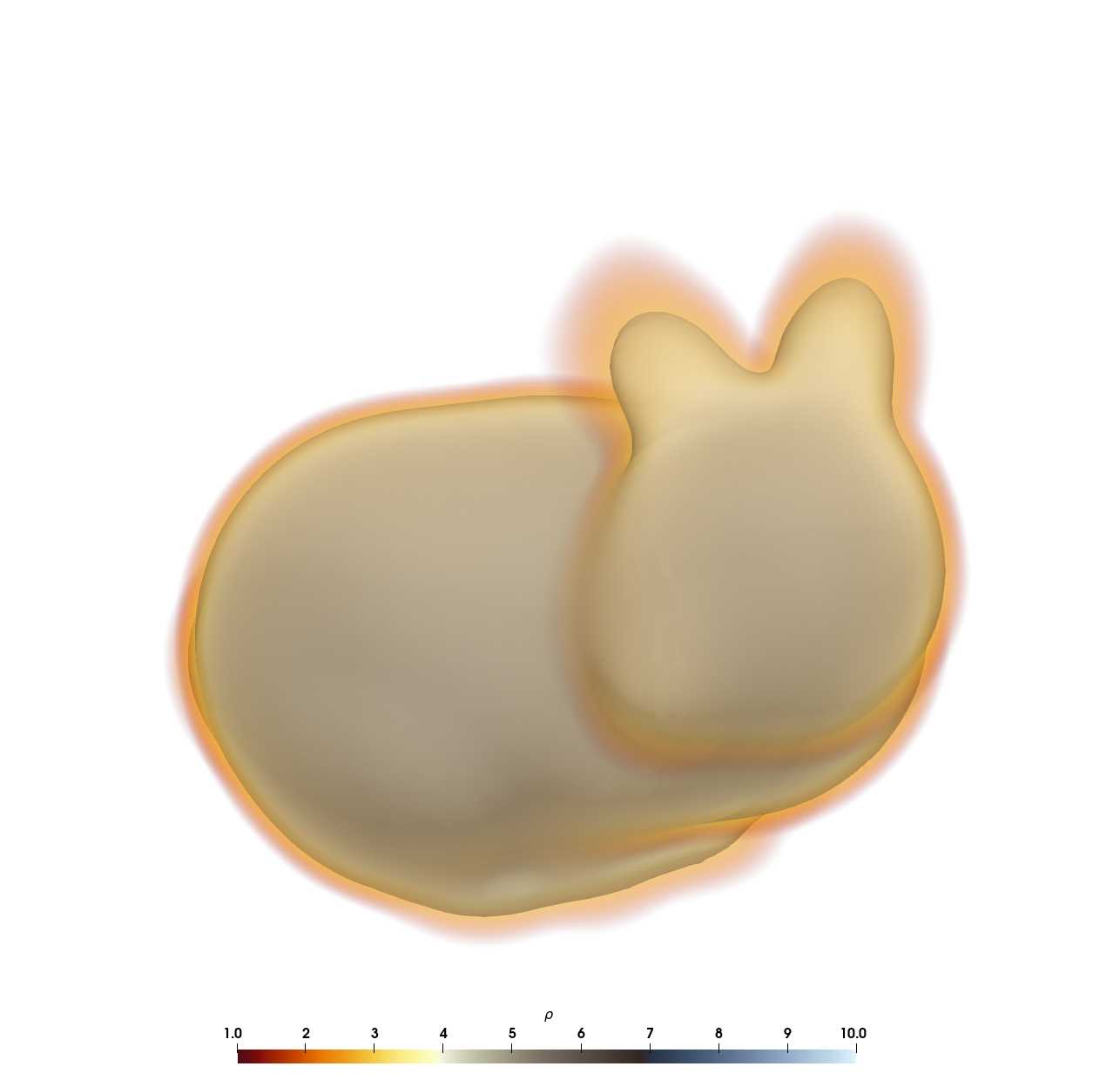}
    \end{subfigure}
    \begin{subfigure}{0.16\textwidth}
        \centering
        \includegraphics[width=\textwidth,trim=40mm 40mm 50mm 70mm, clip]{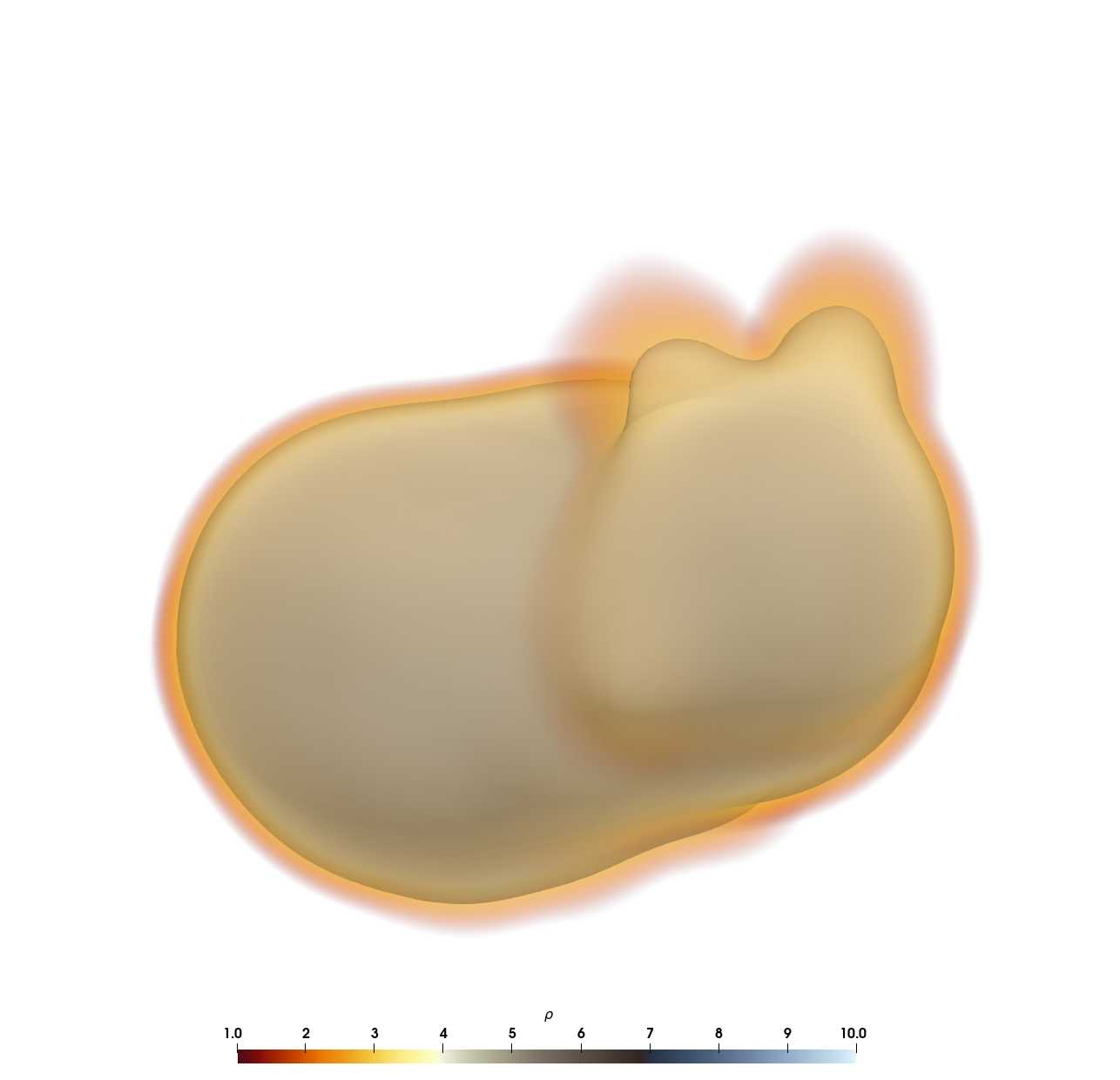}
    \end{subfigure}
    \begin{subfigure}{0.16\textwidth}
        \centering
        \includegraphics[width=\textwidth,trim=40mm 40mm 50mm 70mm, clip]{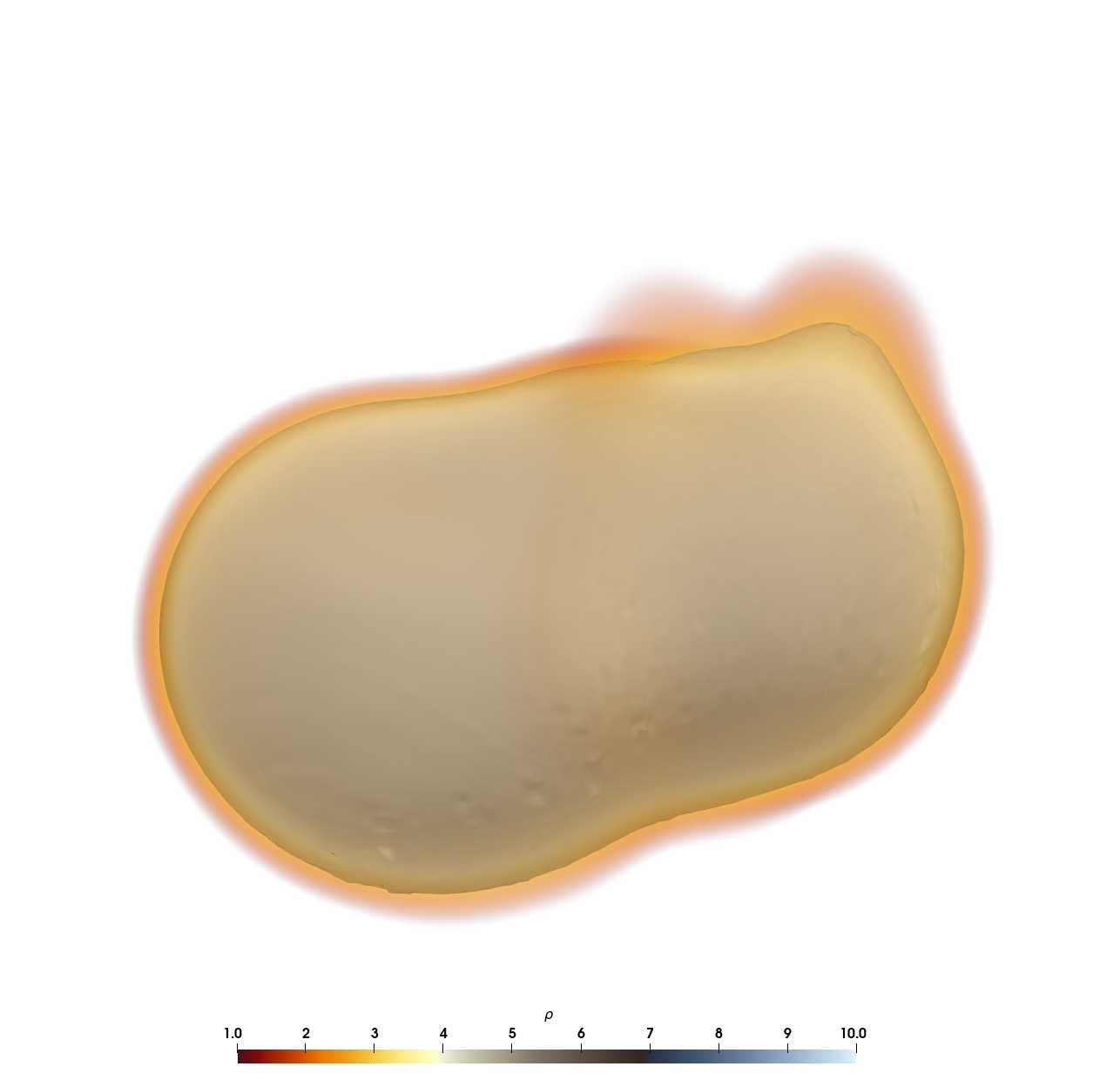}
    \end{subfigure}
    
    \begin{subfigure}{0.16\textwidth}
        \centering
        \includegraphics[width=\textwidth,trim=40mm 40mm 50mm 70mm, clip]{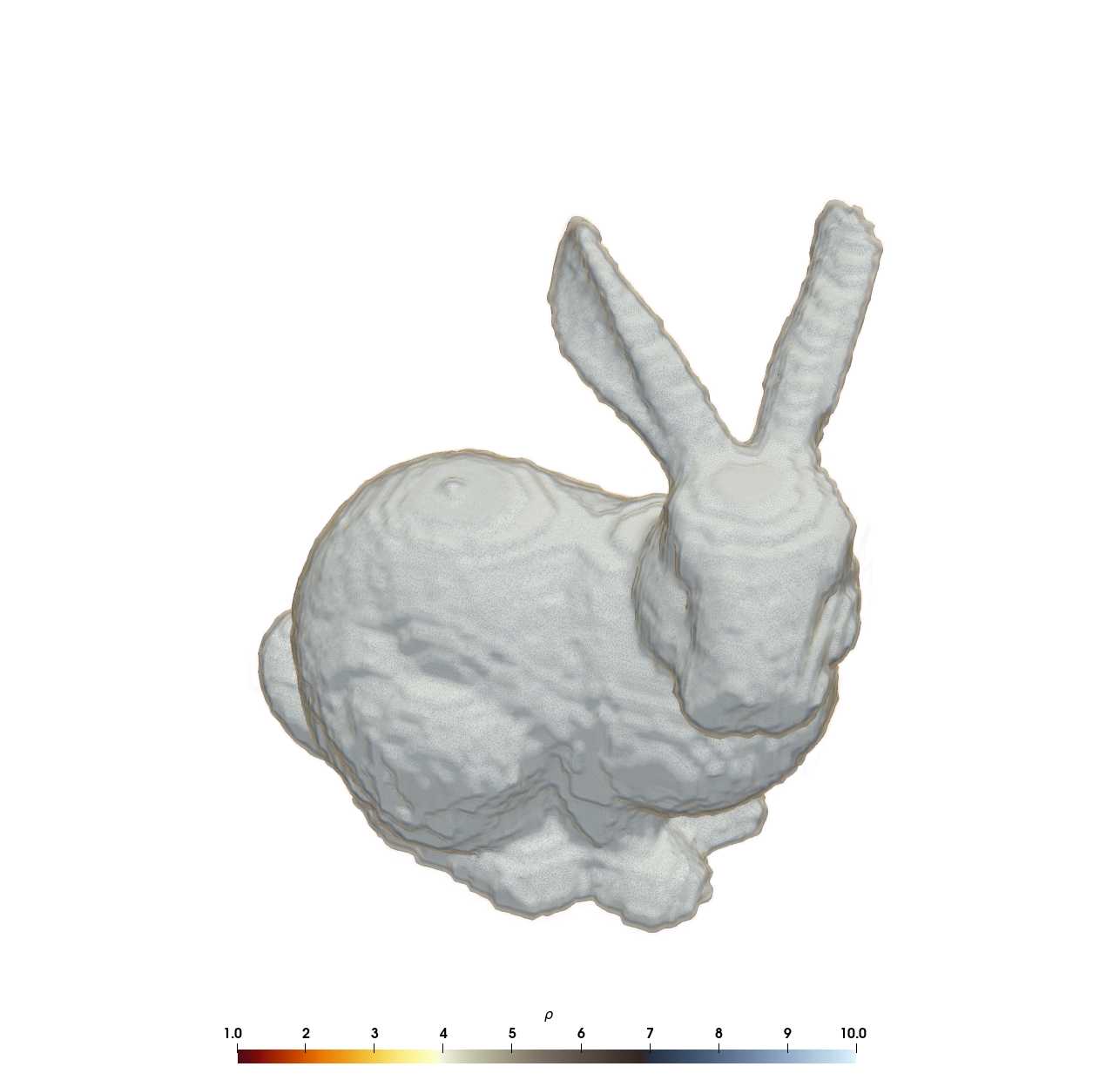}
    \end{subfigure}
    \begin{subfigure}{0.16\textwidth}
        \centering
        \includegraphics[width=\textwidth,trim=40mm 40mm 50mm 70mm, clip]{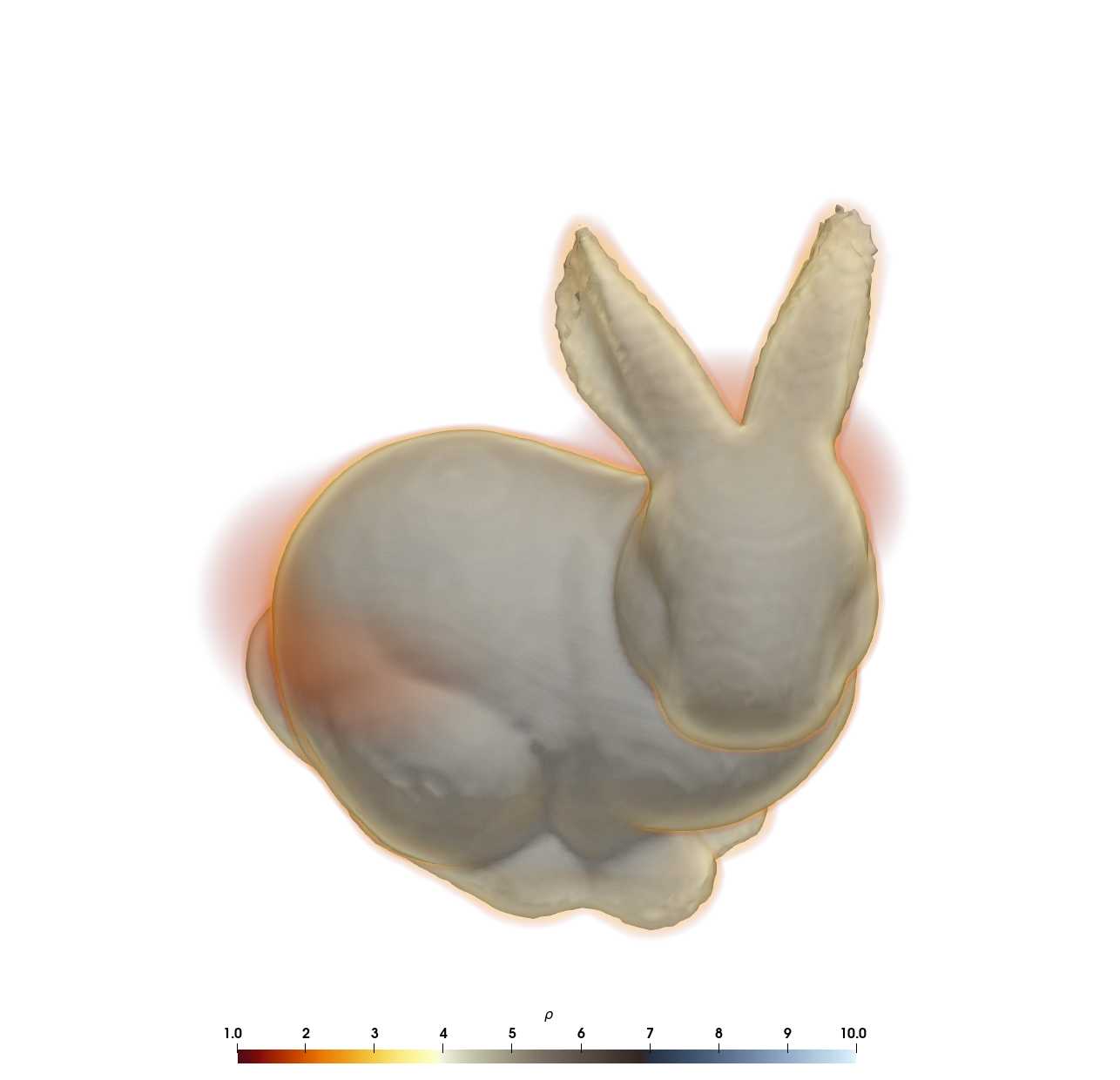}
    \end{subfigure}
    \begin{subfigure}{0.16\textwidth}
        \centering
        \includegraphics[width=\textwidth,trim=40mm 40mm 50mm 70mm, clip]{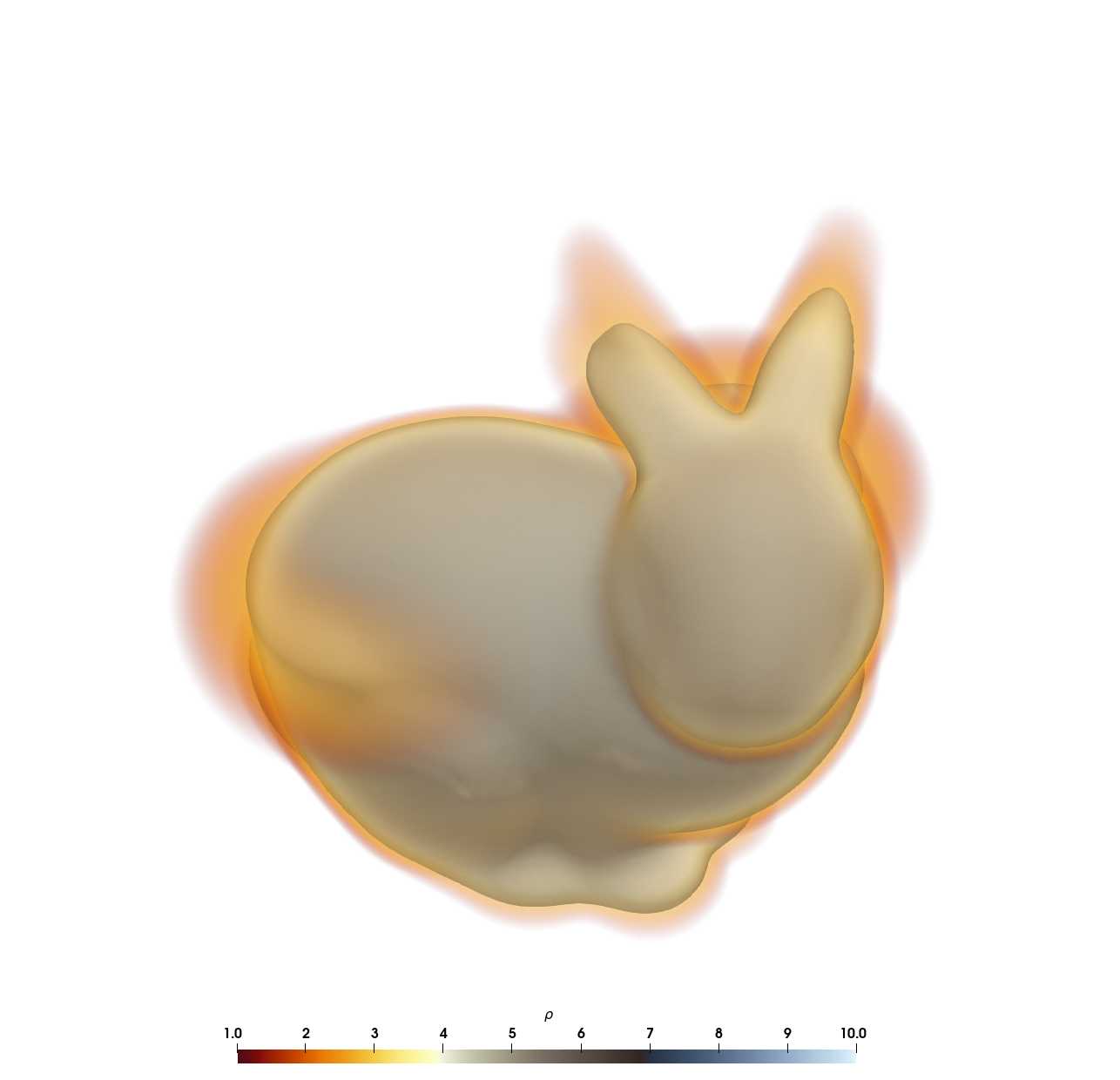}
    \end{subfigure}
    \begin{subfigure}{0.16\textwidth}
        \centering
        \includegraphics[width=\textwidth,trim=40mm 40mm 50mm 70mm, clip]{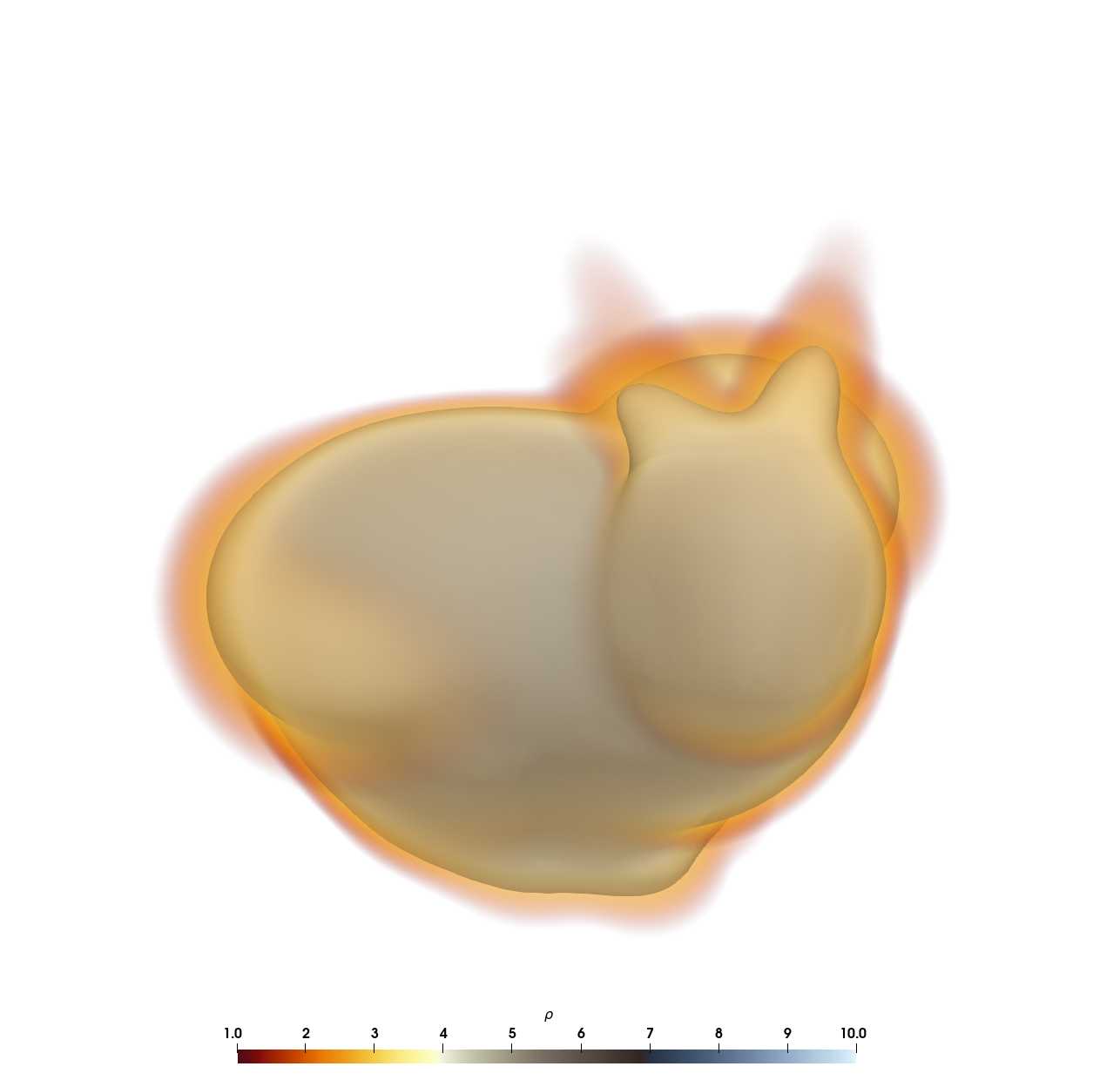}
    \end{subfigure}
    \begin{subfigure}{0.16\textwidth}
        \centering
        \includegraphics[width=\textwidth,trim=40mm 40mm 50mm 70mm, clip]{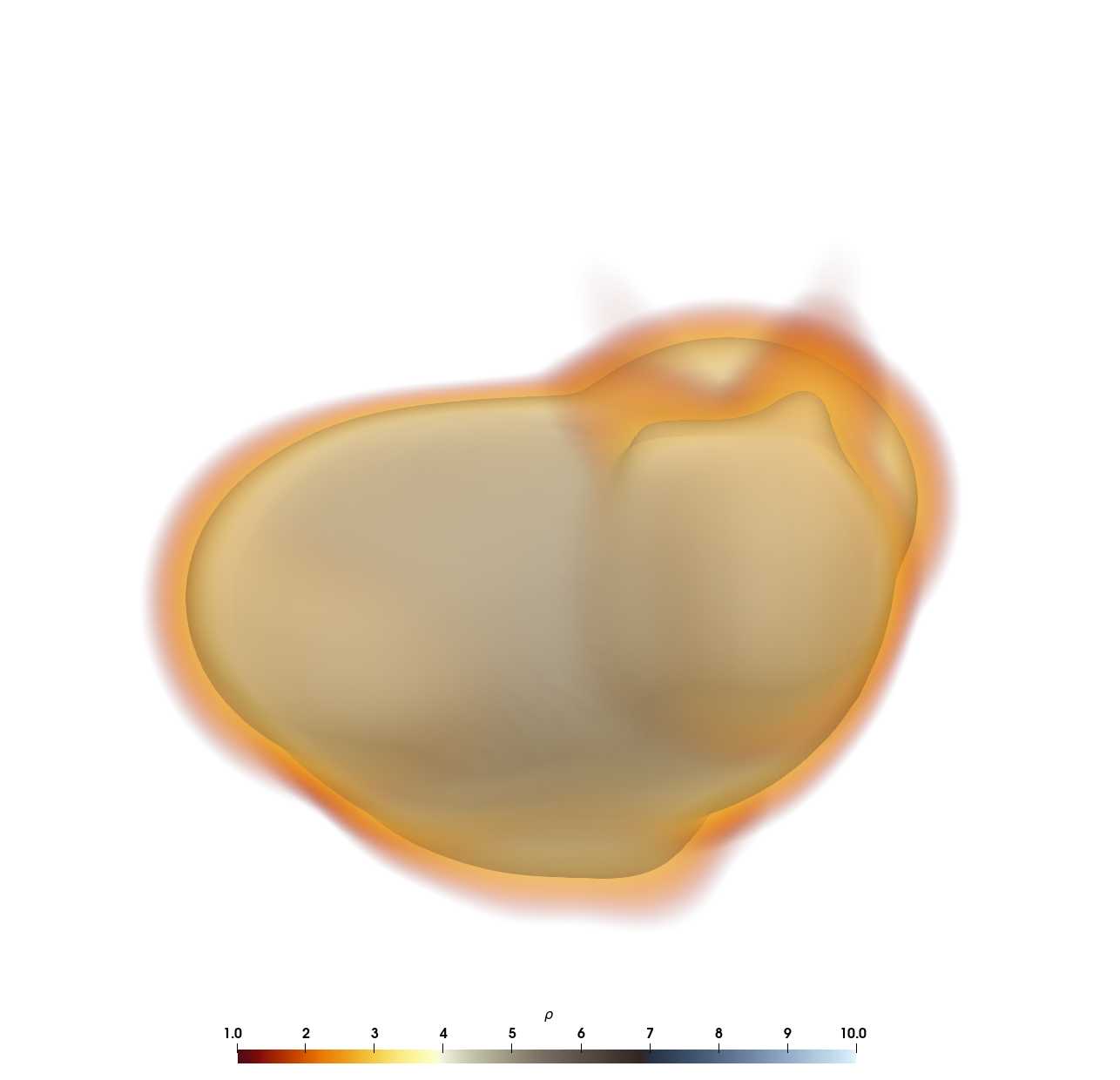}
    \end{subfigure}
    \begin{subfigure}{0.16\textwidth}
        \centering
        \includegraphics[width=\textwidth,trim=40mm 40mm 50mm 70mm, clip]{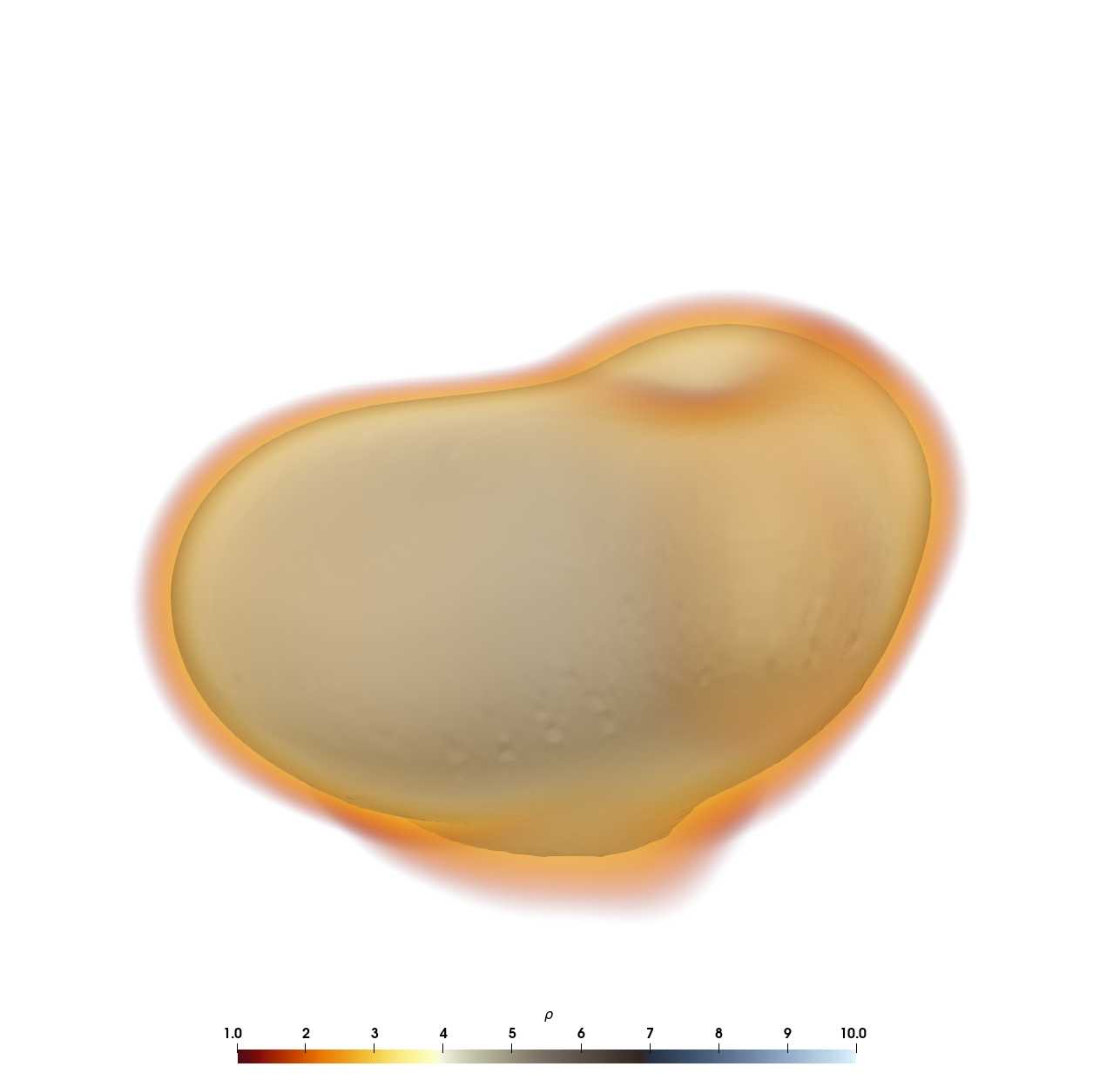}
    \end{subfigure}

    \begin{subfigure}{0.16\textwidth}
        \centering
        \includegraphics[width=\textwidth,trim=40mm 40mm 50mm 70mm, clip]{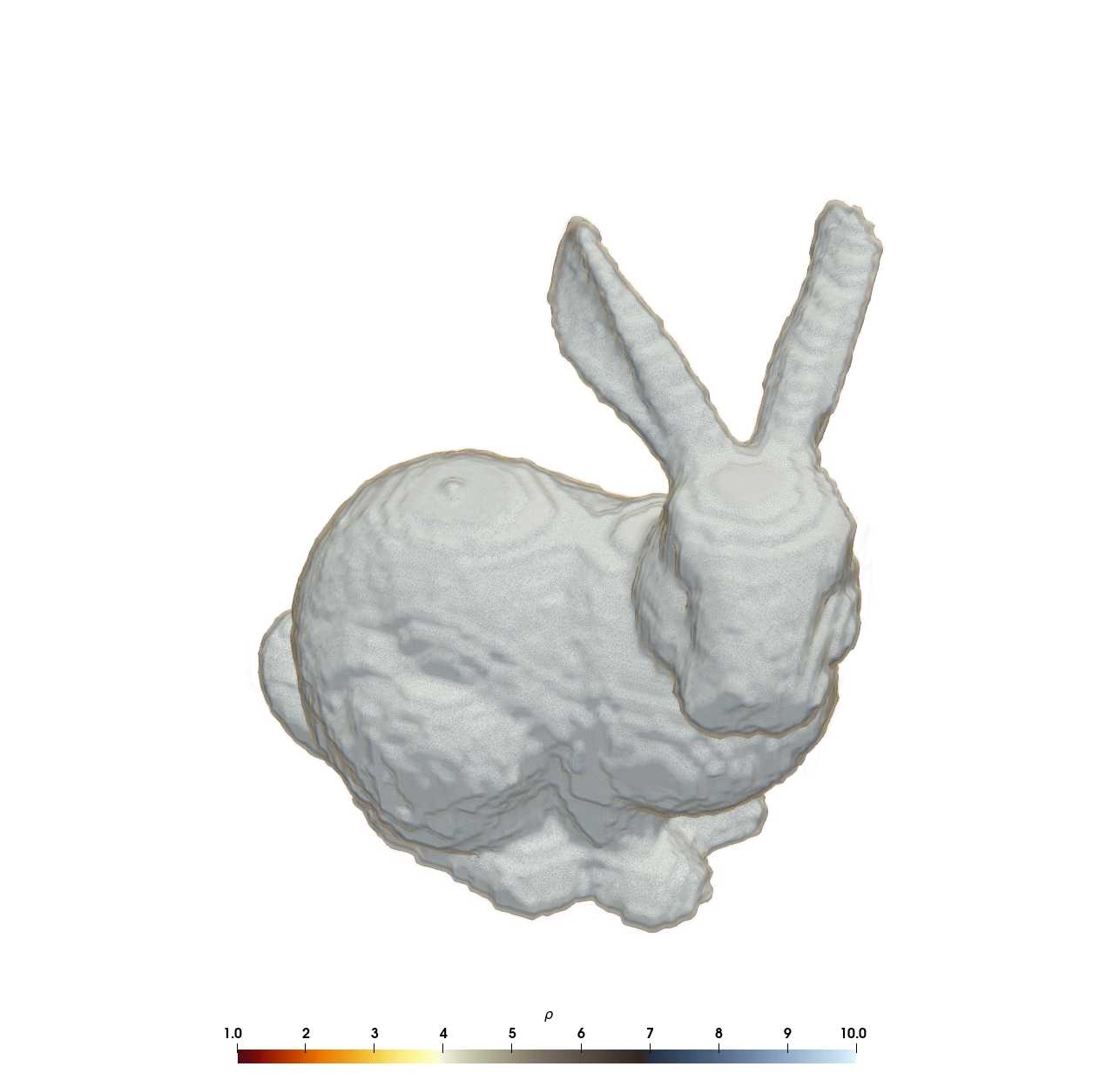}
        \caption*{$t=0$}
    \end{subfigure}
    \begin{subfigure}{0.16\textwidth}
        \centering
        \includegraphics[width=\textwidth,trim=40mm 40mm 50mm 70mm, clip]{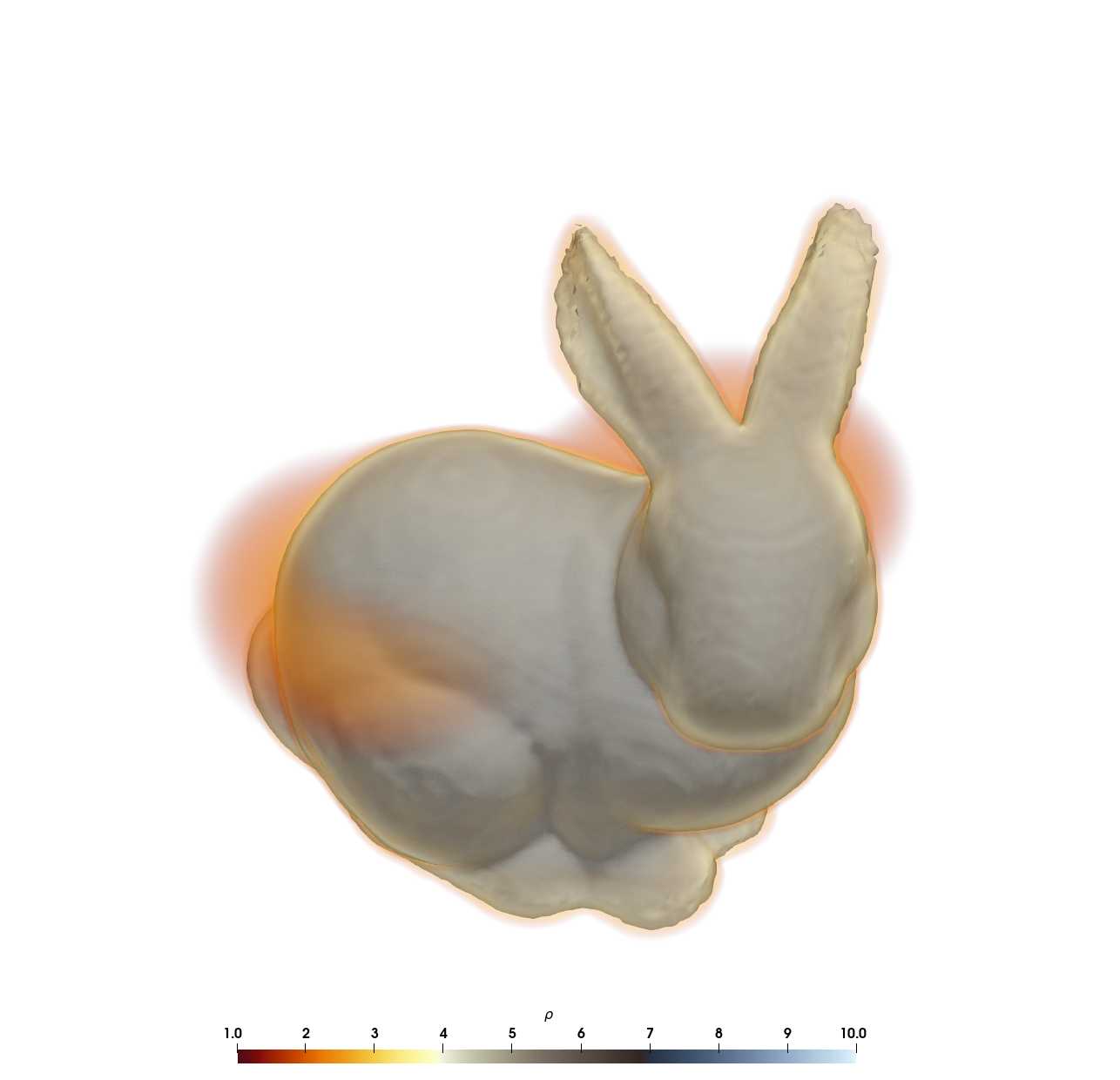}
        \caption*{$t=0.2$}
    \end{subfigure}
    \begin{subfigure}{0.16\textwidth}
        \centering
        \includegraphics[width=\textwidth,trim=40mm 40mm 50mm 70mm, clip]{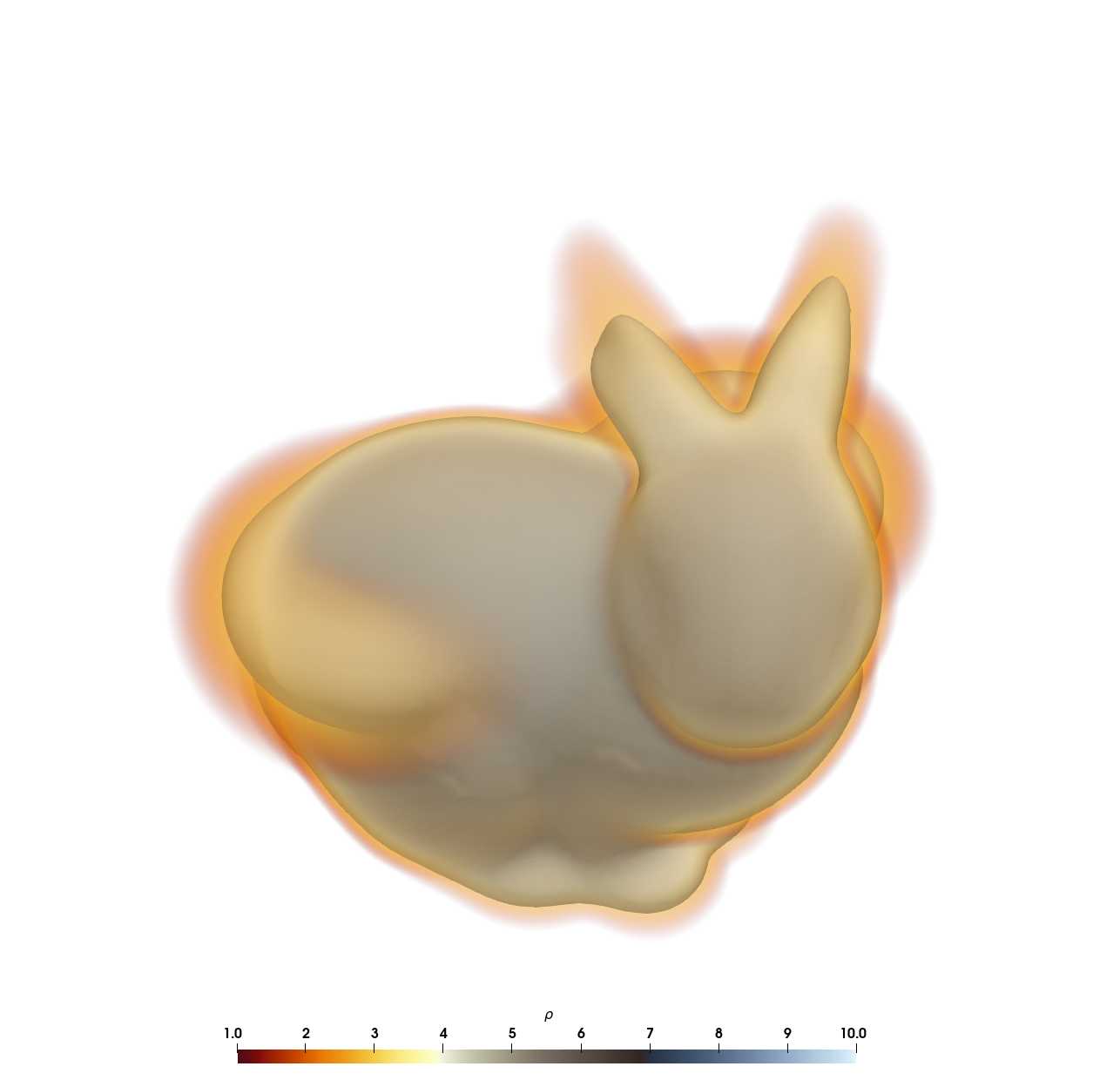}
        \caption*{$t=0.4$}
    \end{subfigure}
    \begin{subfigure}{0.16\textwidth}
        \centering
        \includegraphics[width=\textwidth,trim=40mm 40mm 50mm 70mm, clip]{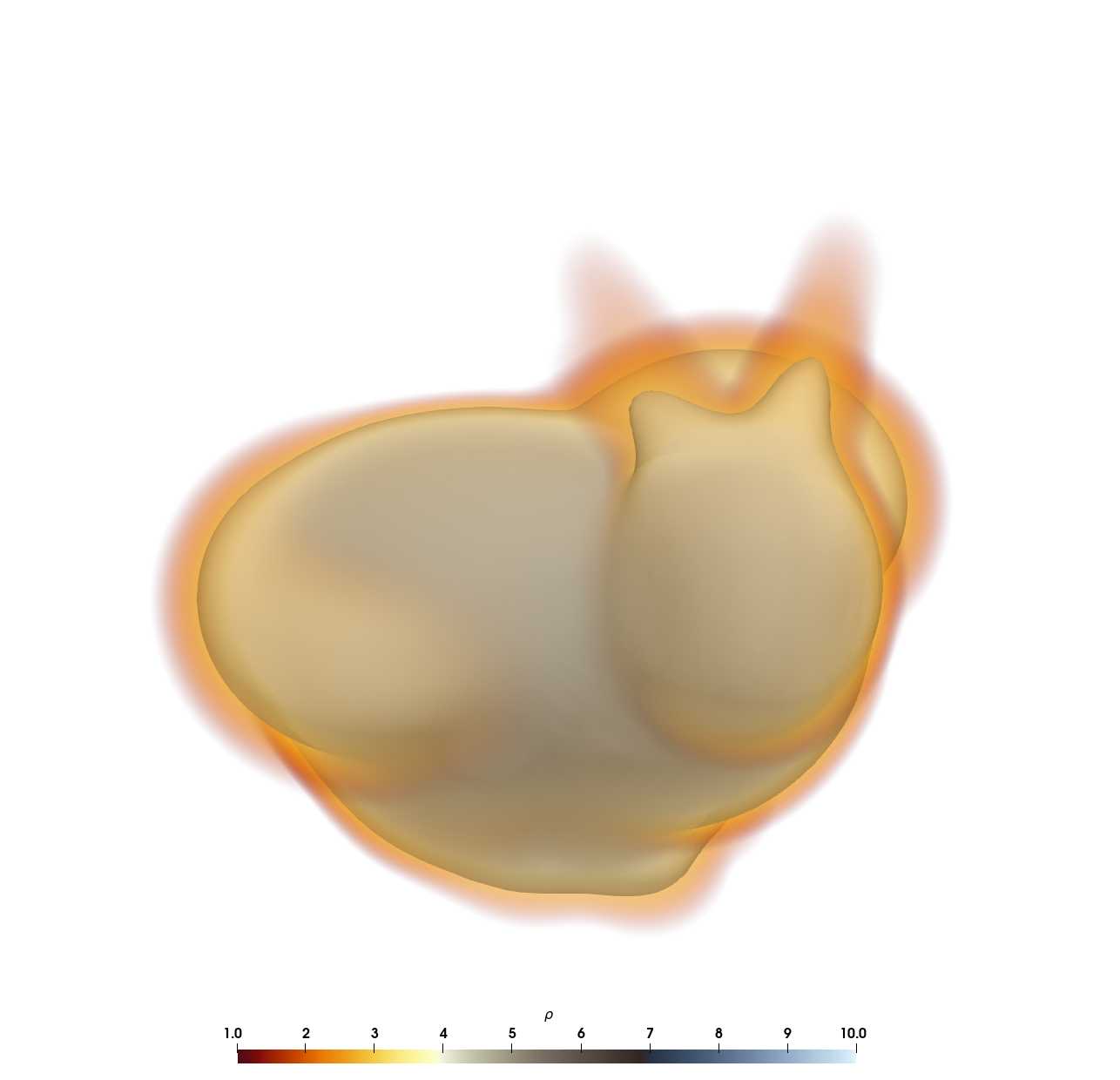}
        \caption*{$t=0.6$}
    \end{subfigure}
    \begin{subfigure}{0.16\textwidth}
        \centering
        \includegraphics[width=\textwidth,trim=40mm 40mm 50mm 70mm, clip]{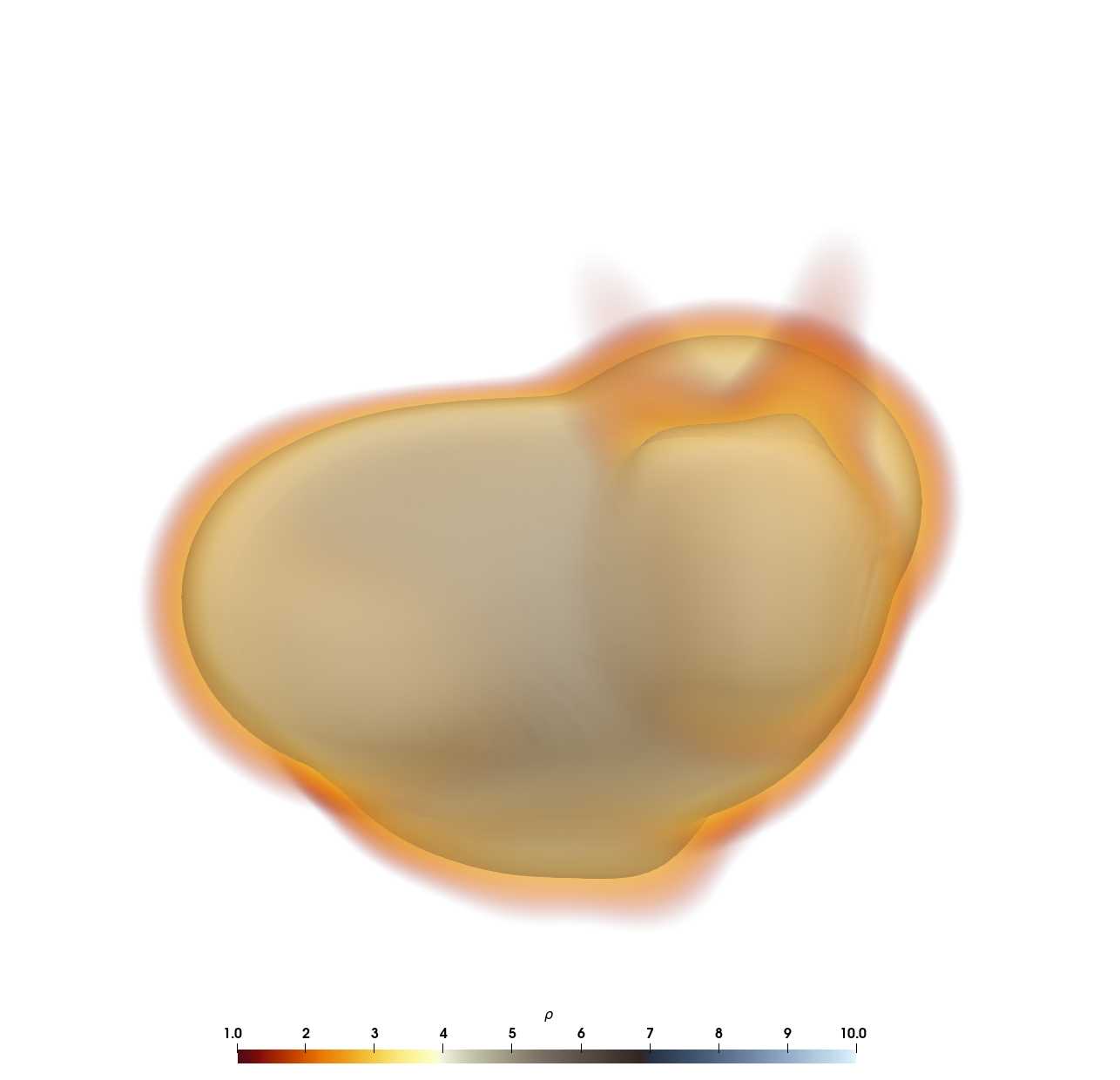}
        \caption*{$t=0.8$}
    \end{subfigure}
    \begin{subfigure}{0.16\textwidth}
        \centering
        \includegraphics[width=\textwidth,trim=40mm 40mm 50mm 70mm, clip]{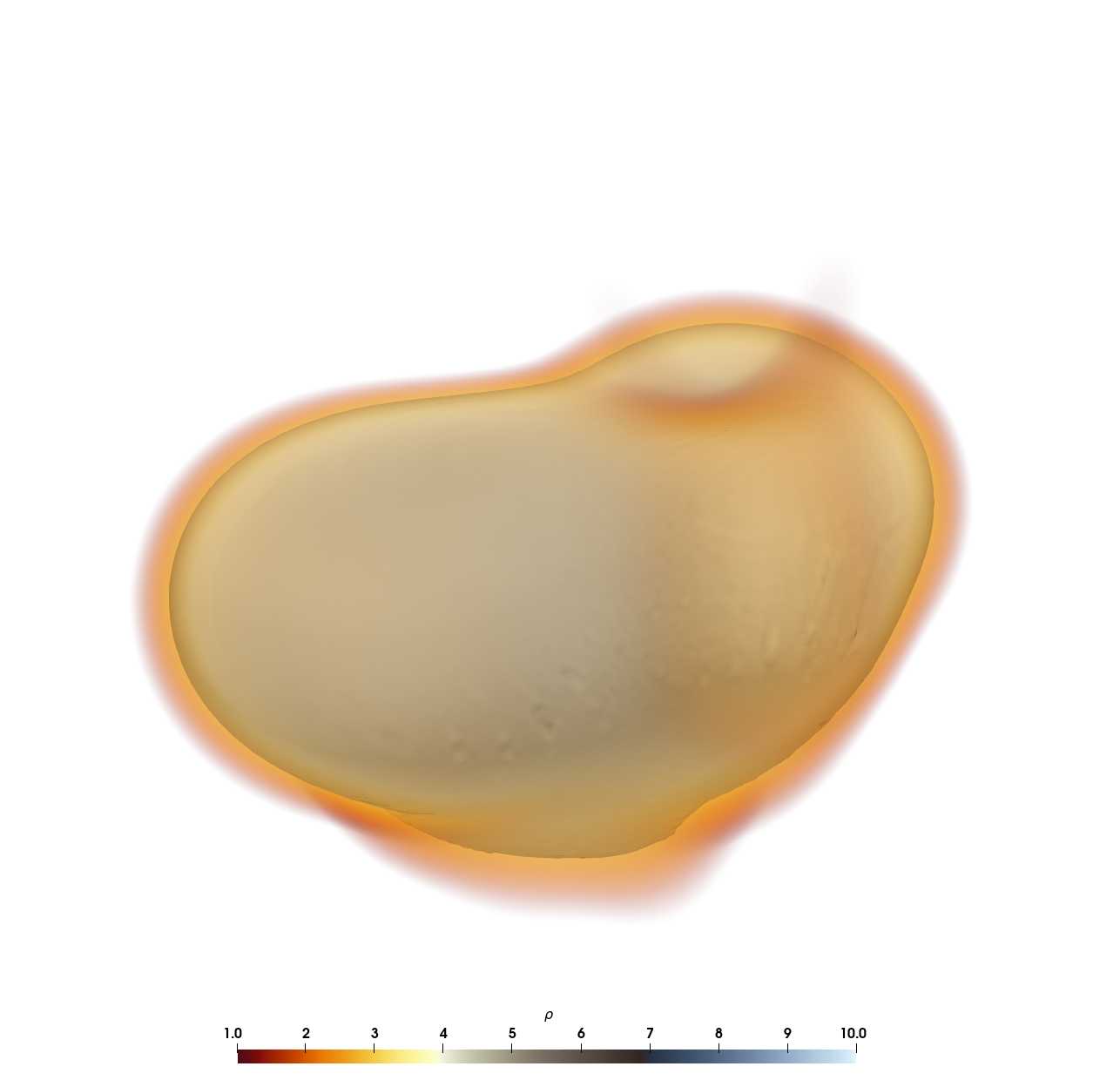}
        \caption*{$t=1.0$}
    \end{subfigure}
\subcaption{$\rho_2$}
\label{f:dtb2}
\end{minipage}
\caption{Shape interpolation in a \textit{double torus}-\textit{bunny} system. Left-right shows the time evolution of density. Top-down shows plots at different reaction strengths: $\alpha=0$, $\alpha=50,$ and $\alpha=100$.}
\label{f:dtb}
\end{figure}

\begin{figure}[h!]
    \centering
\begin{minipage}[b]{\textwidth}

    \begin{minipage}[b]{\textwidth}
    \hfill
        \begin{subfigure}{\textwidth}
            \centering
            \includegraphics[width=\textwidth,trim=0mm 0mm 0mm 400mm, clip]{figures/reaction/doubleTorusBunny/pdhg0.0000..jpg}
        \end{subfigure}
    \end{minipage}
    
    \begin{subfigure}{0.16\textwidth}
        \centering
        \includegraphics[width=\textwidth,trim=30mm 40mm 30mm 70mm, clip]{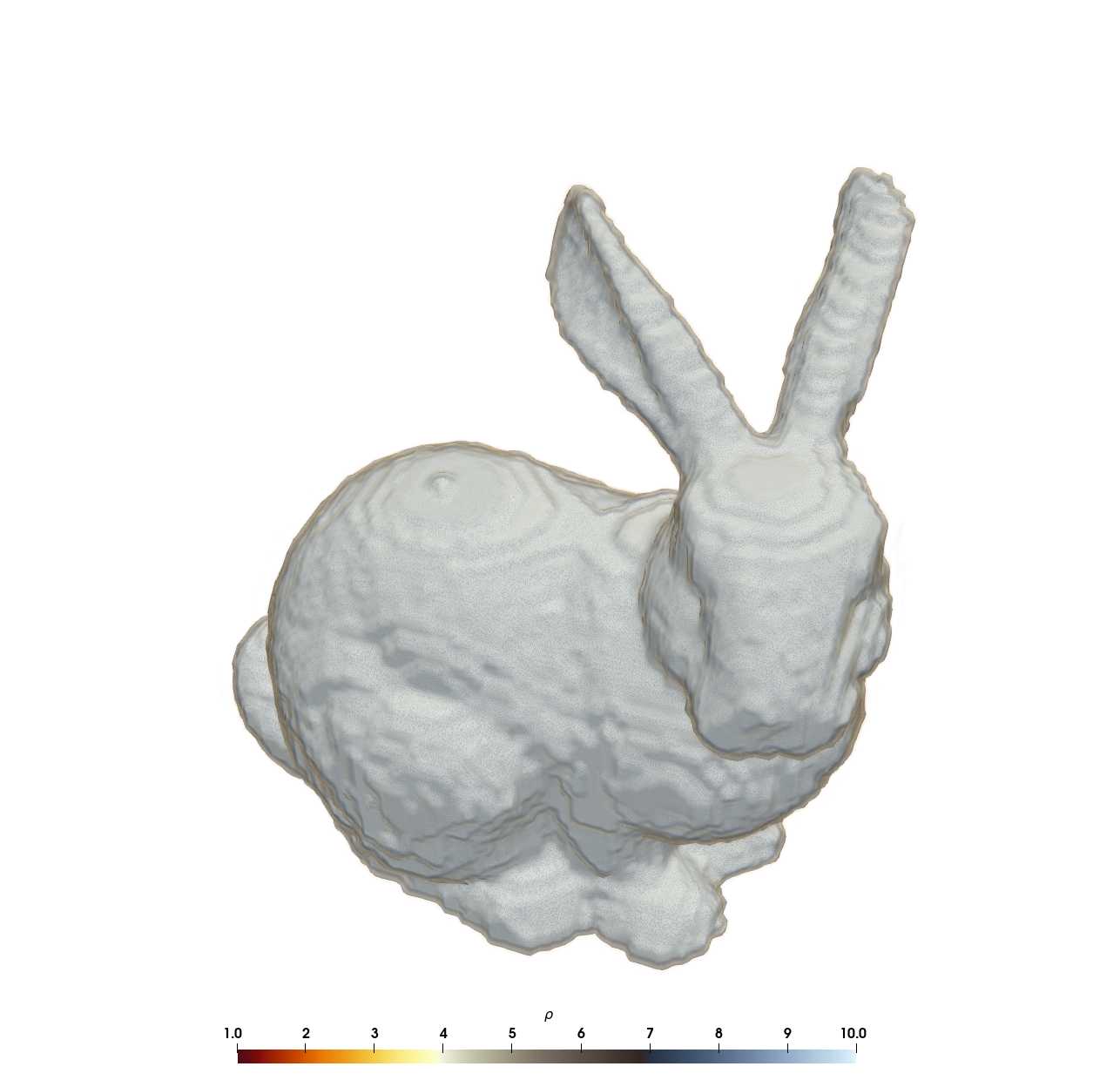}
    \end{subfigure}
    \begin{subfigure}{0.16\textwidth}
        \centering
        \includegraphics[width=\textwidth,trim=30mm 40mm 30mm 70mm, clip]{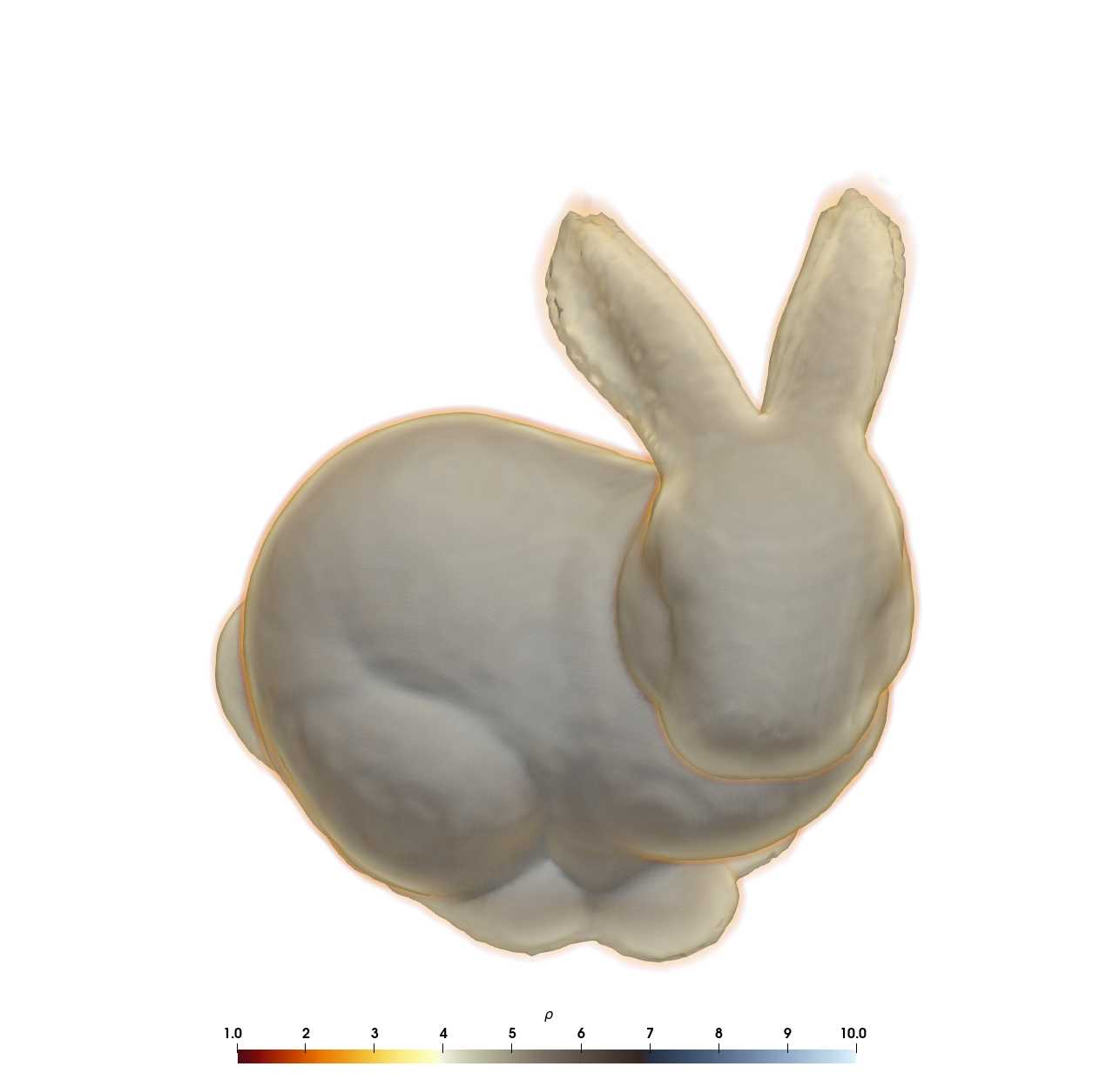}
    \end{subfigure}
    \begin{subfigure}{0.16\textwidth}
        \centering
        \includegraphics[width=\textwidth,trim=30mm 40mm 30mm 70mm, clip]{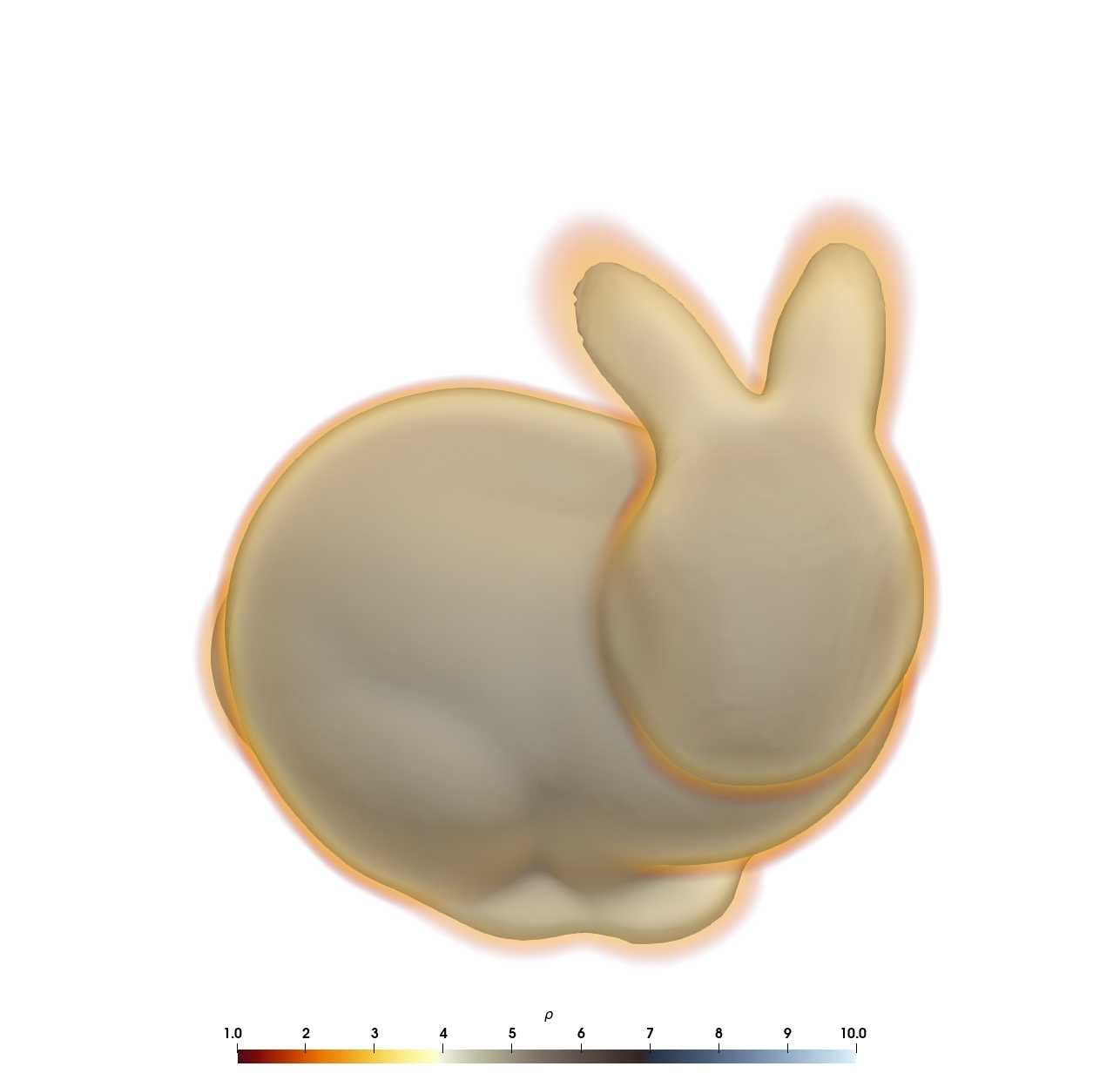}
    \end{subfigure}
    \begin{subfigure}{0.16\textwidth}
        \centering
        \includegraphics[width=\textwidth,trim=30mm 40mm 30mm 70mm, clip]{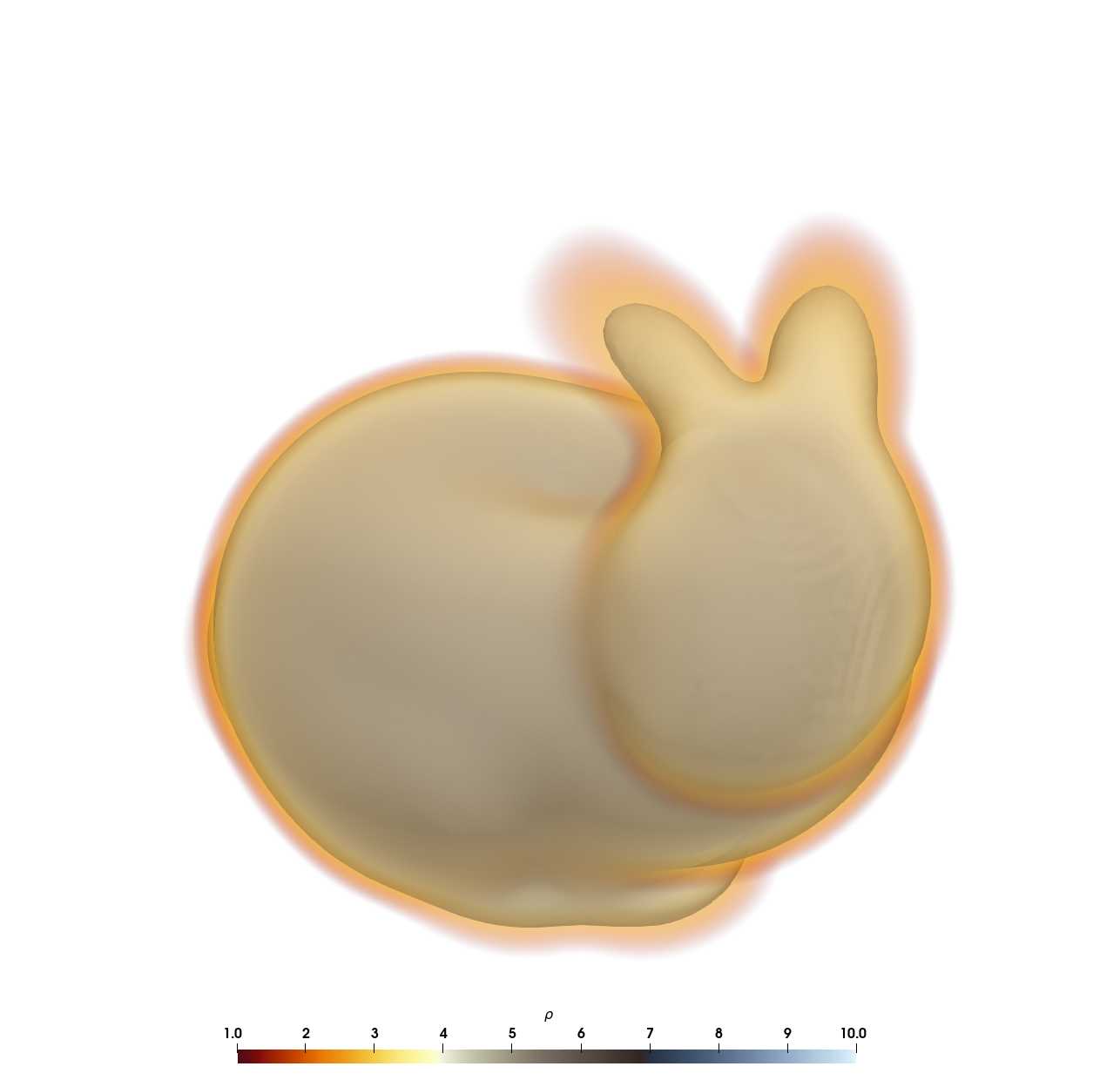}
    \end{subfigure}
    \begin{subfigure}{0.16\textwidth}
        \centering
        \includegraphics[width=\textwidth,trim=30mm 40mm 30mm 70mm, clip]{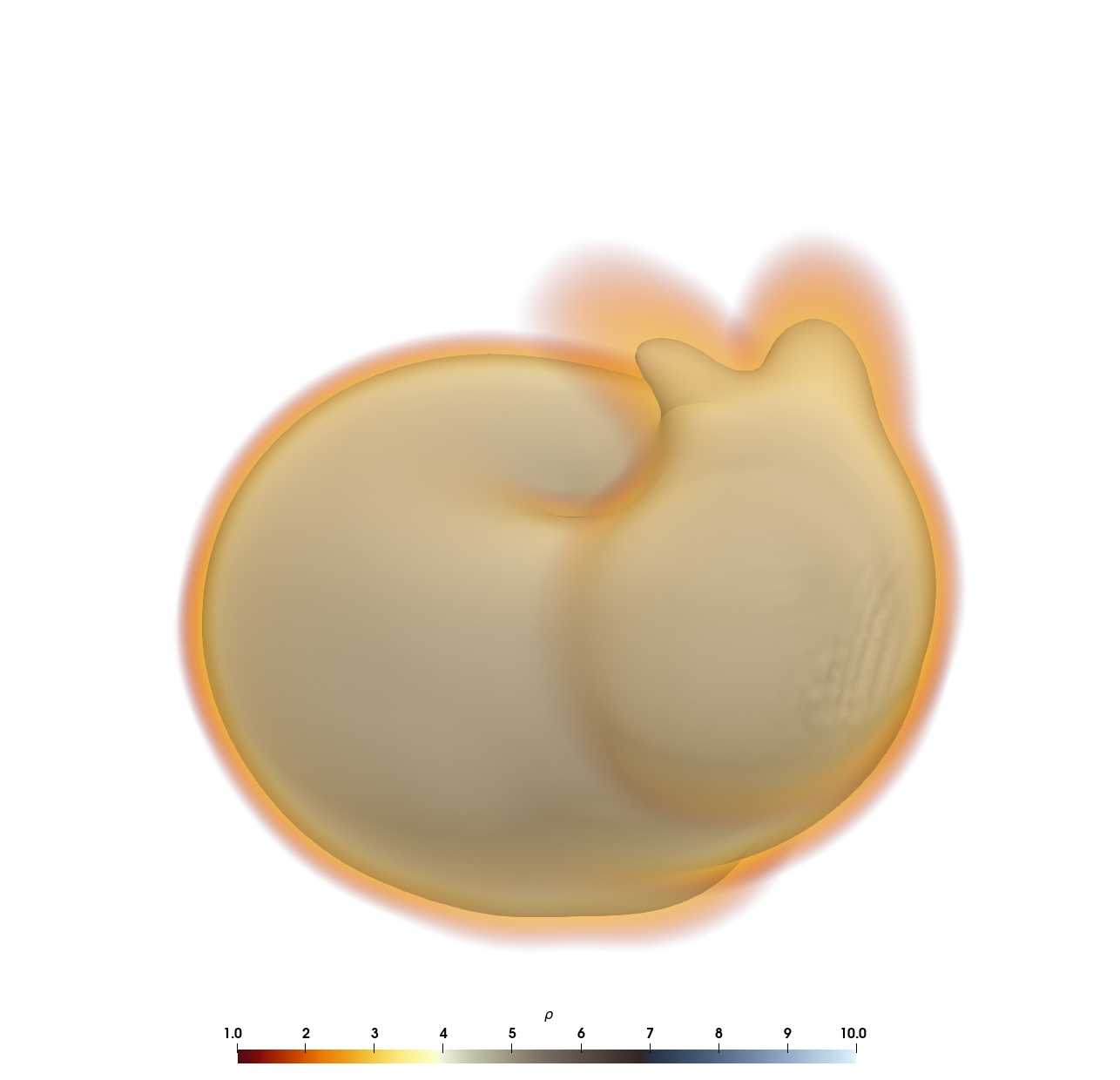}
    \end{subfigure}
    \begin{subfigure}{0.16\textwidth}
        \centering
        \includegraphics[width=\textwidth,trim=30mm 40mm 30mm 70mm, clip]{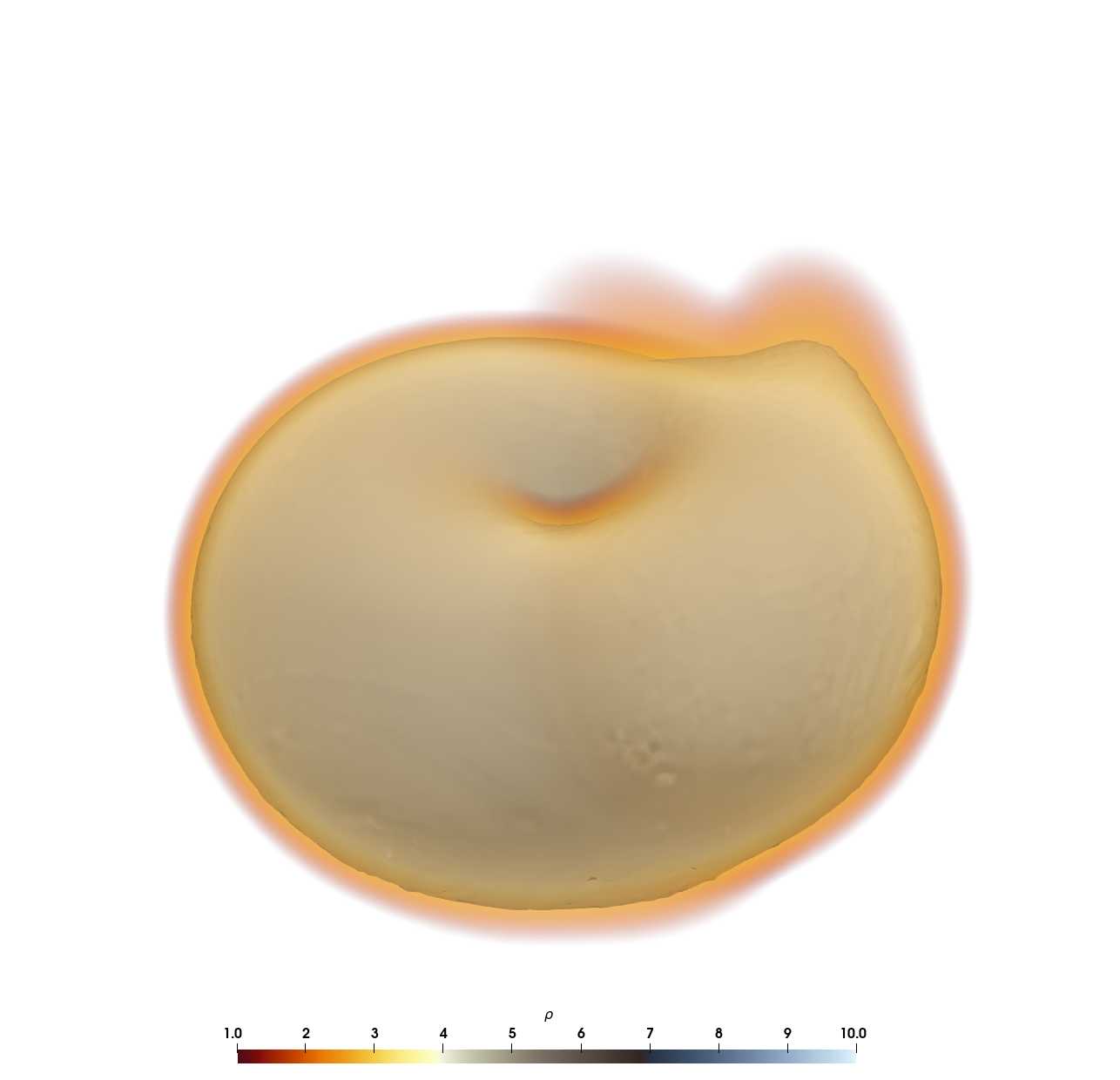}
    \end{subfigure}
    
    \begin{subfigure}{0.16\textwidth}
        \centering
        \includegraphics[width=\textwidth,trim=30mm 40mm 30mm 70mm, clip]{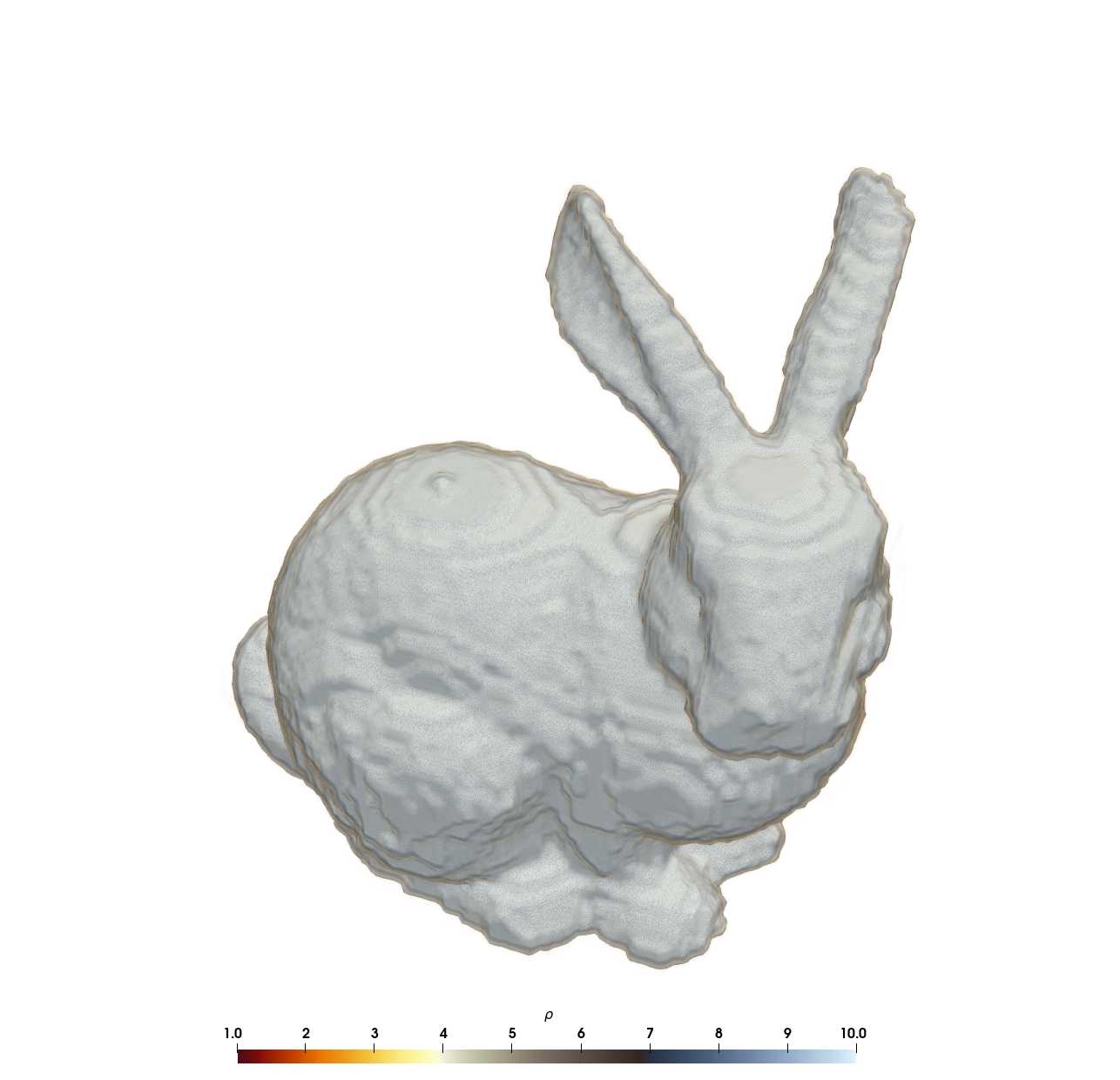}
    \end{subfigure}
    \begin{subfigure}{0.16\textwidth}
        \centering
        \includegraphics[width=\textwidth,trim=30mm 40mm 30mm 70mm, clip]{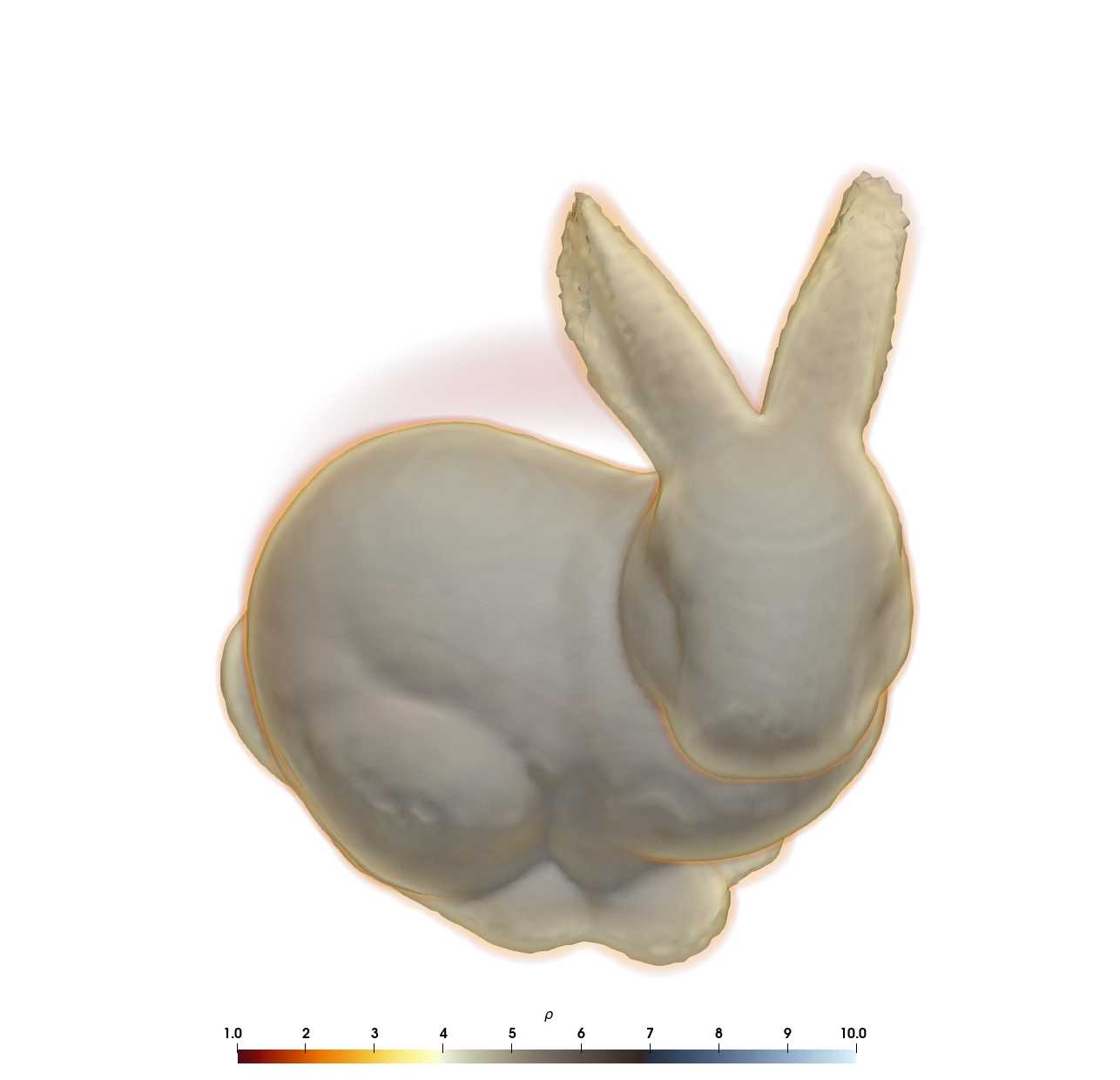}
    \end{subfigure}
    \begin{subfigure}{0.16\textwidth}
        \centering
        \includegraphics[width=\textwidth,trim=30mm 40mm 30mm 70mm, clip]{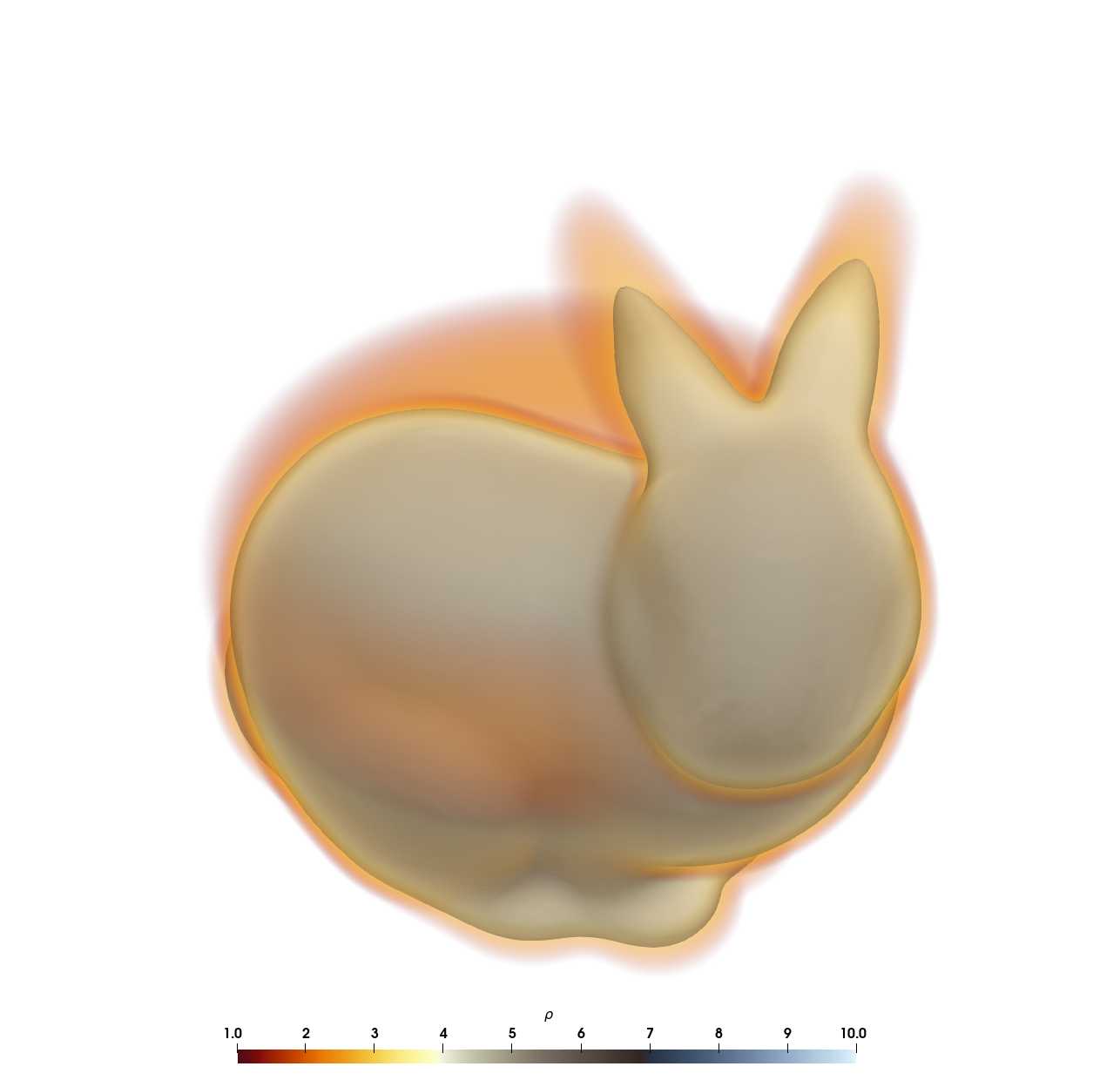}
    \end{subfigure}
    \begin{subfigure}{0.16\textwidth}
        \centering
        \includegraphics[width=\textwidth,trim=30mm 40mm 30mm 70mm, clip]{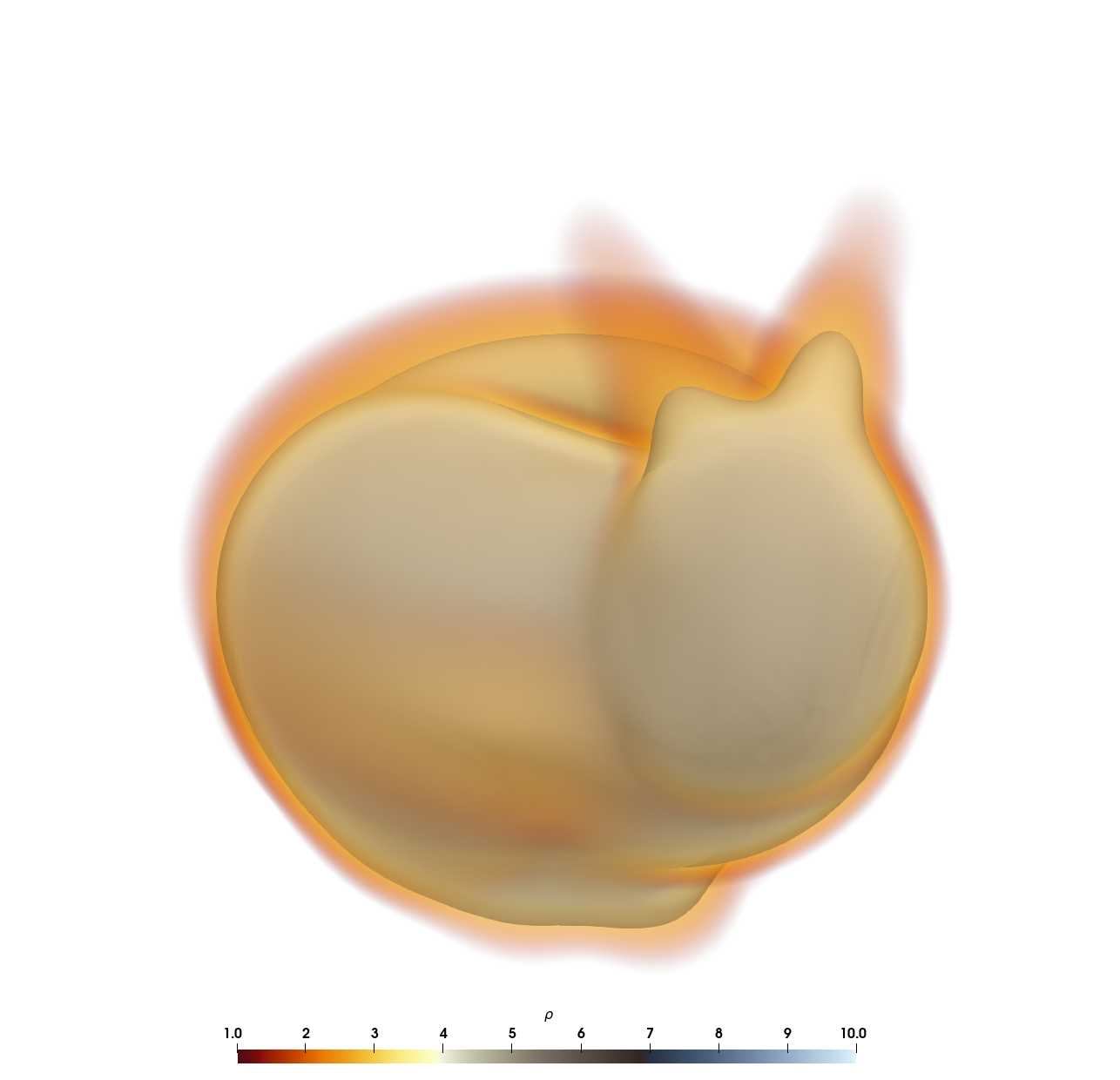}
    \end{subfigure}
    \begin{subfigure}{0.16\textwidth}
        \centering
        \includegraphics[width=\textwidth,trim=30mm 40mm 30mm 70mm, clip]{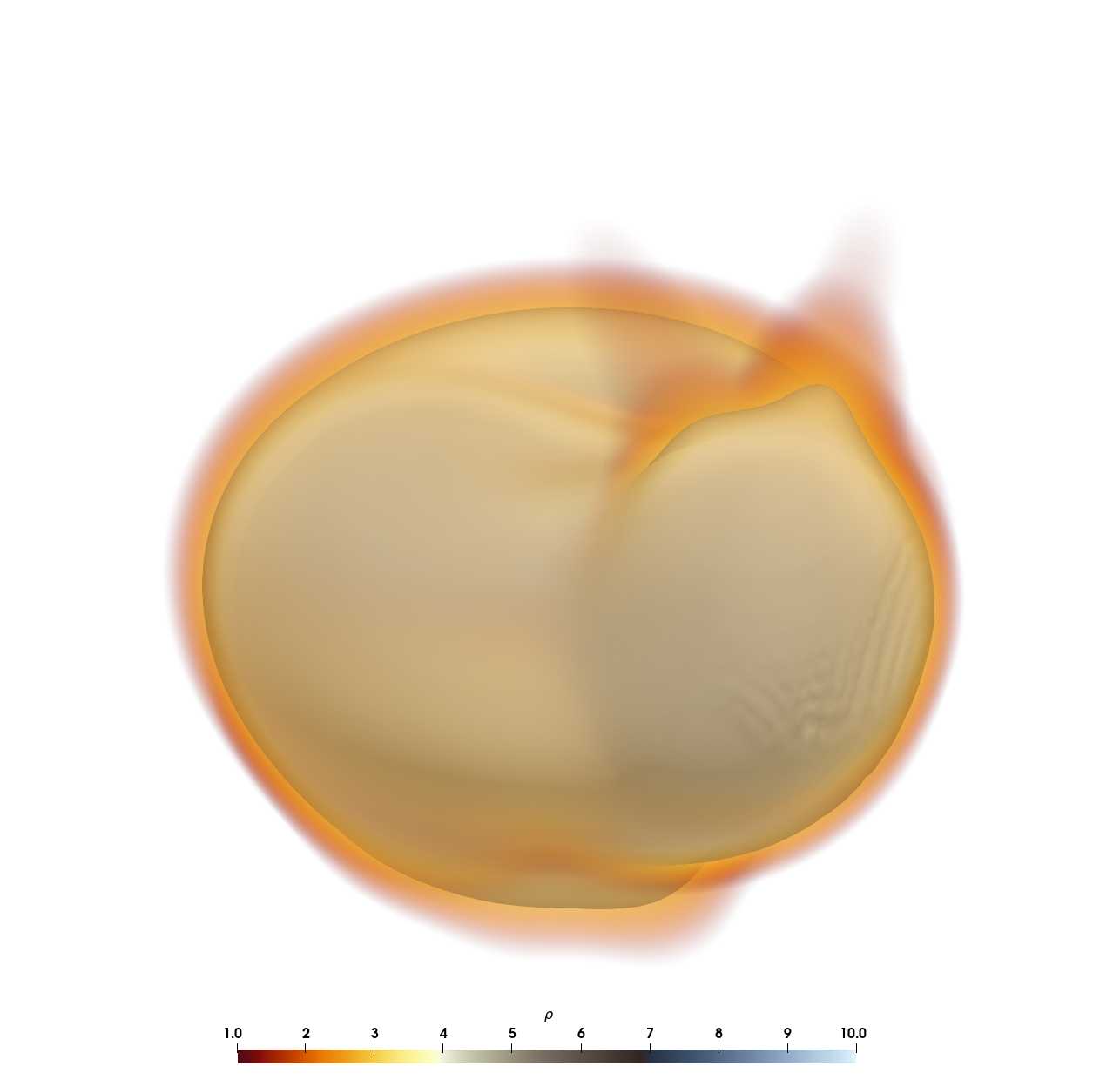}
    \end{subfigure}
    \begin{subfigure}{0.16\textwidth}
        \centering
        \includegraphics[width=\textwidth,trim=30mm 40mm 30mm 70mm, clip]{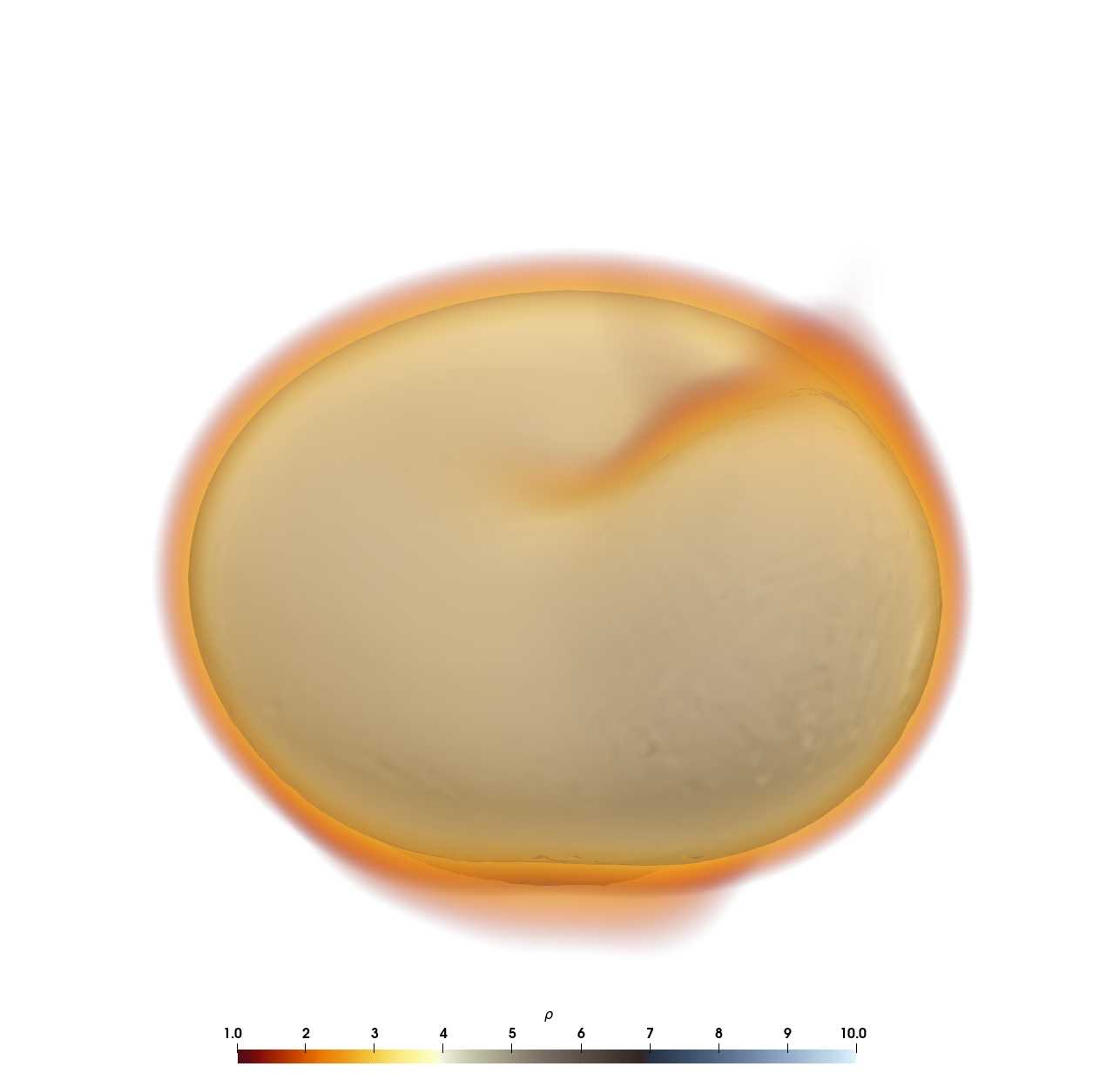}
    \end{subfigure}

    \begin{subfigure}{0.16\textwidth}
        \centering
        \includegraphics[width=\textwidth,trim=30mm 40mm 30mm 70mm, clip]{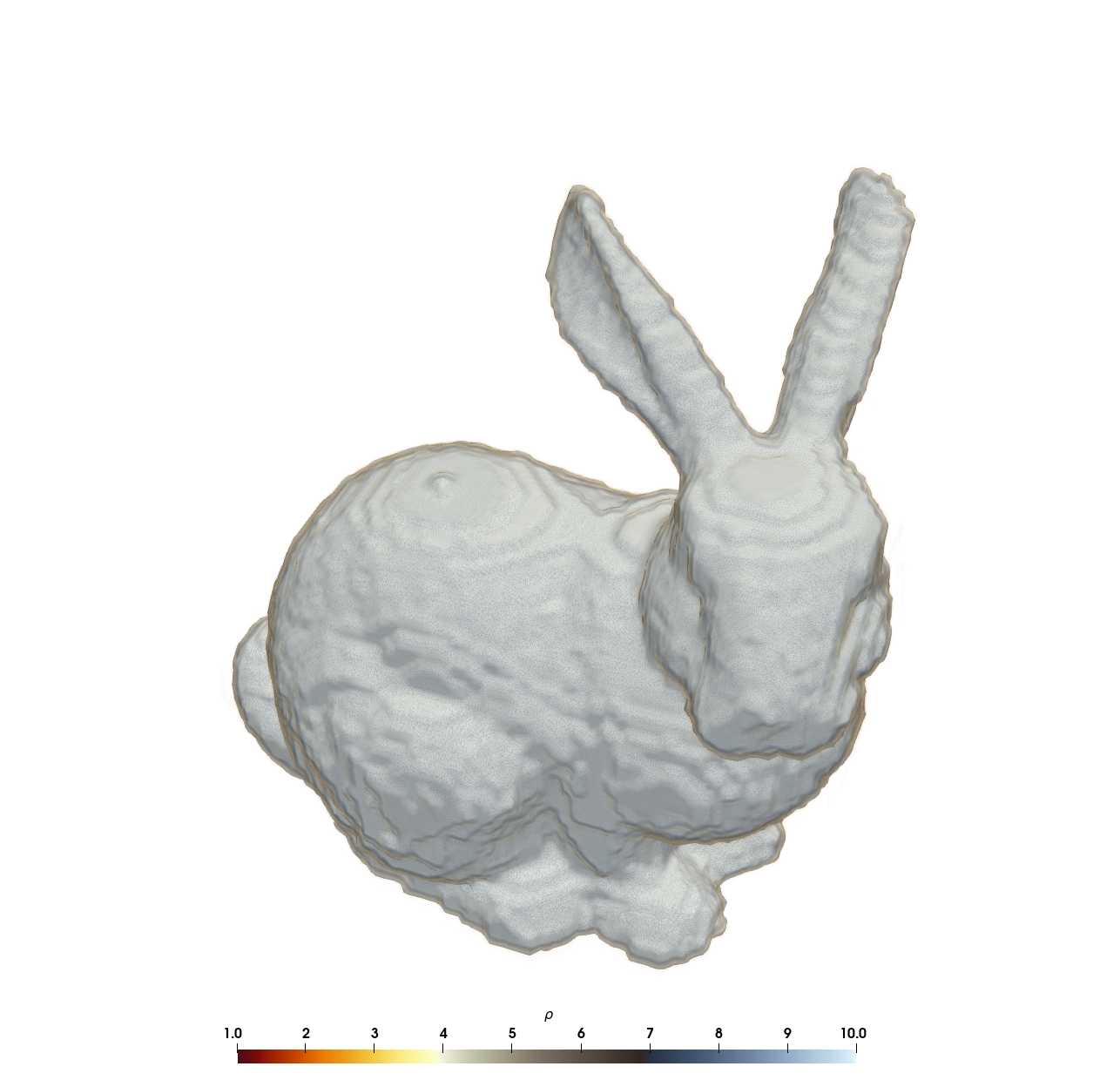}
        \caption*{$t=0$}
    \end{subfigure}
    \begin{subfigure}{0.16\textwidth}
        \centering
        \includegraphics[width=\textwidth,trim=30mm 40mm 30mm 70mm, clip]{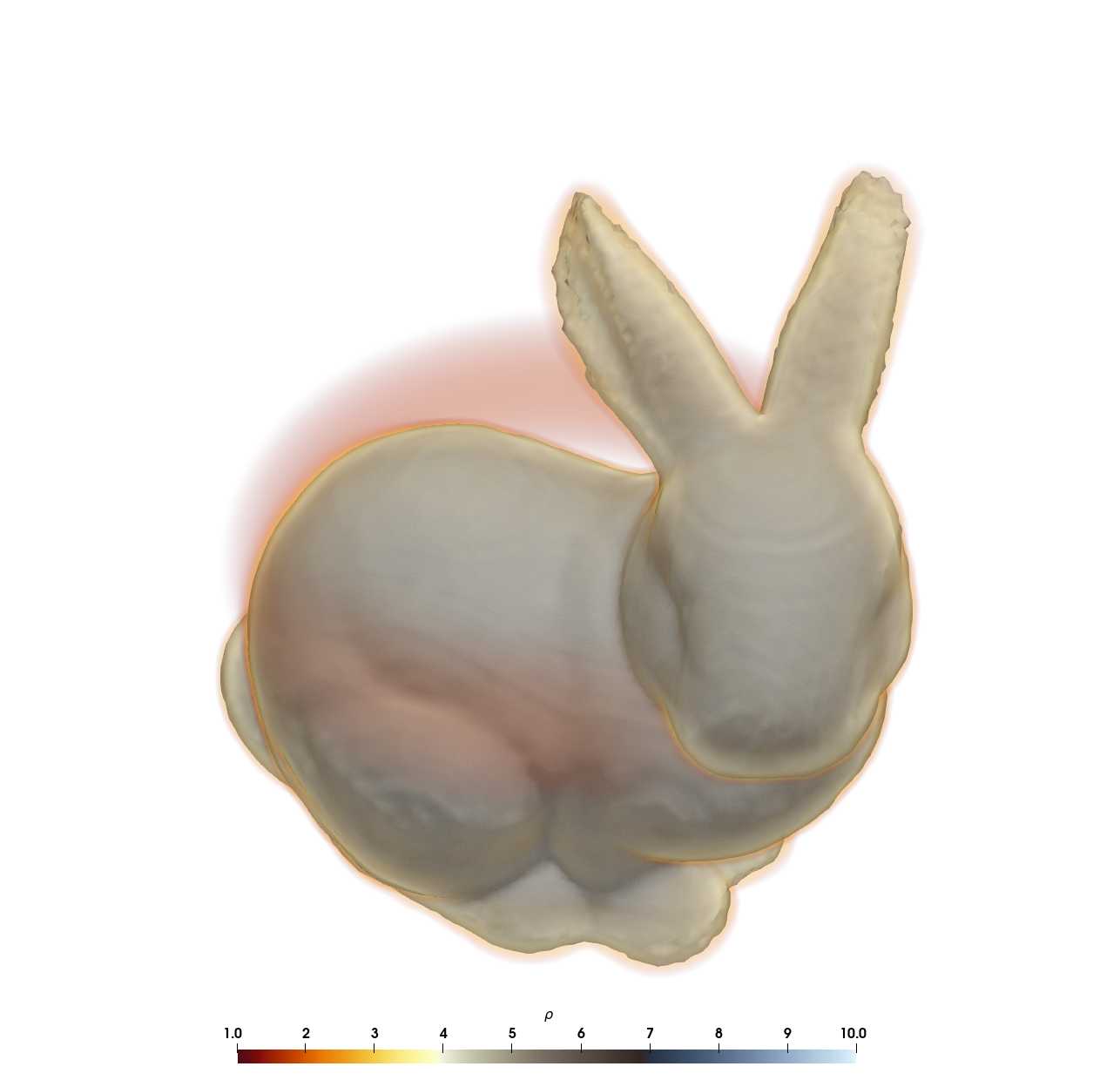}
        \caption*{$t=0.2$}
    \end{subfigure}
    \begin{subfigure}{0.16\textwidth}
        \centering
        \includegraphics[width=\textwidth,trim=30mm 40mm 30mm 70mm, clip]{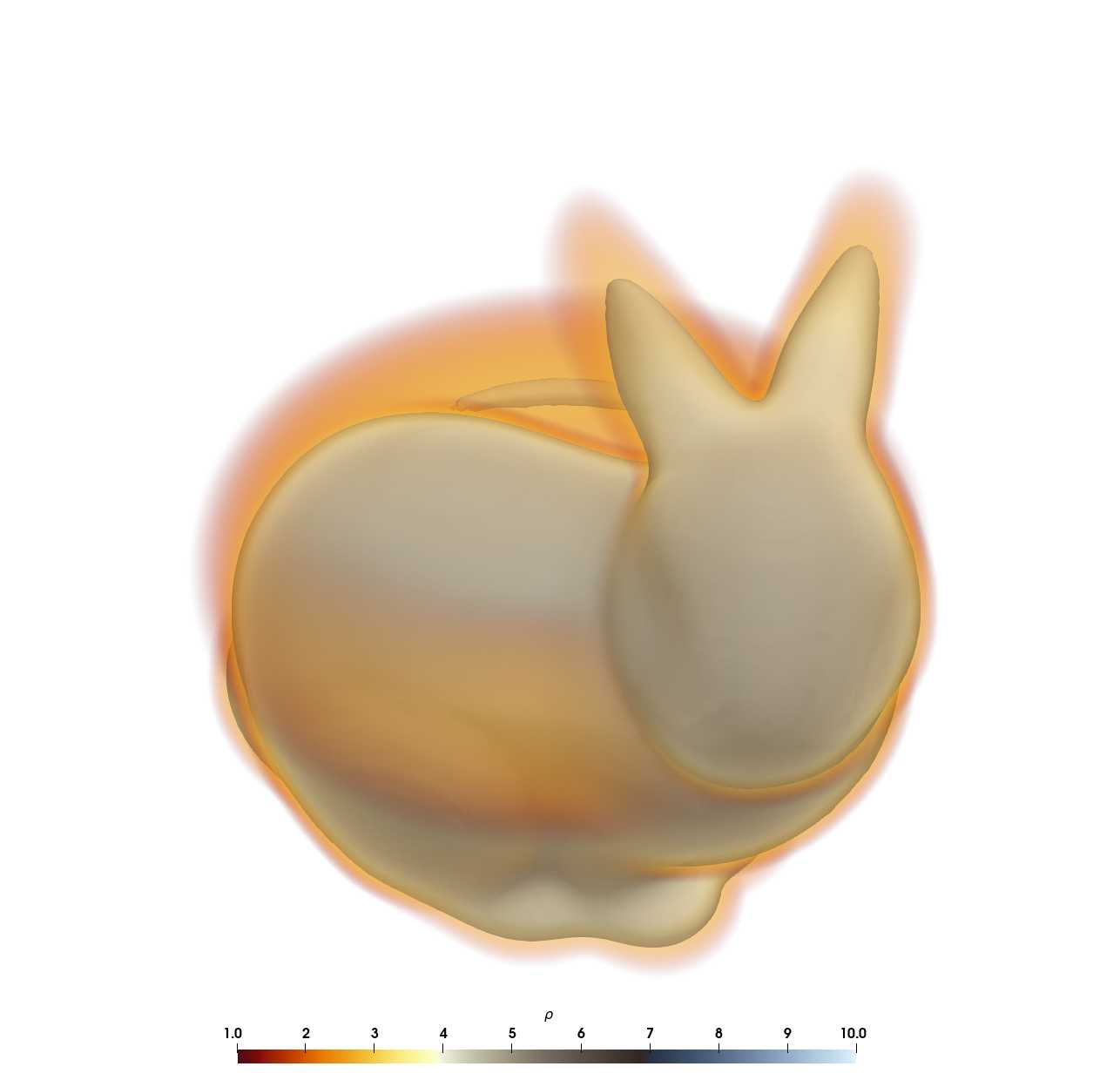}
        \caption*{$t=0.4$}
    \end{subfigure}
    \begin{subfigure}{0.16\textwidth}
        \centering
        \includegraphics[width=\textwidth,trim=30mm 40mm 30mm 70mm, clip]{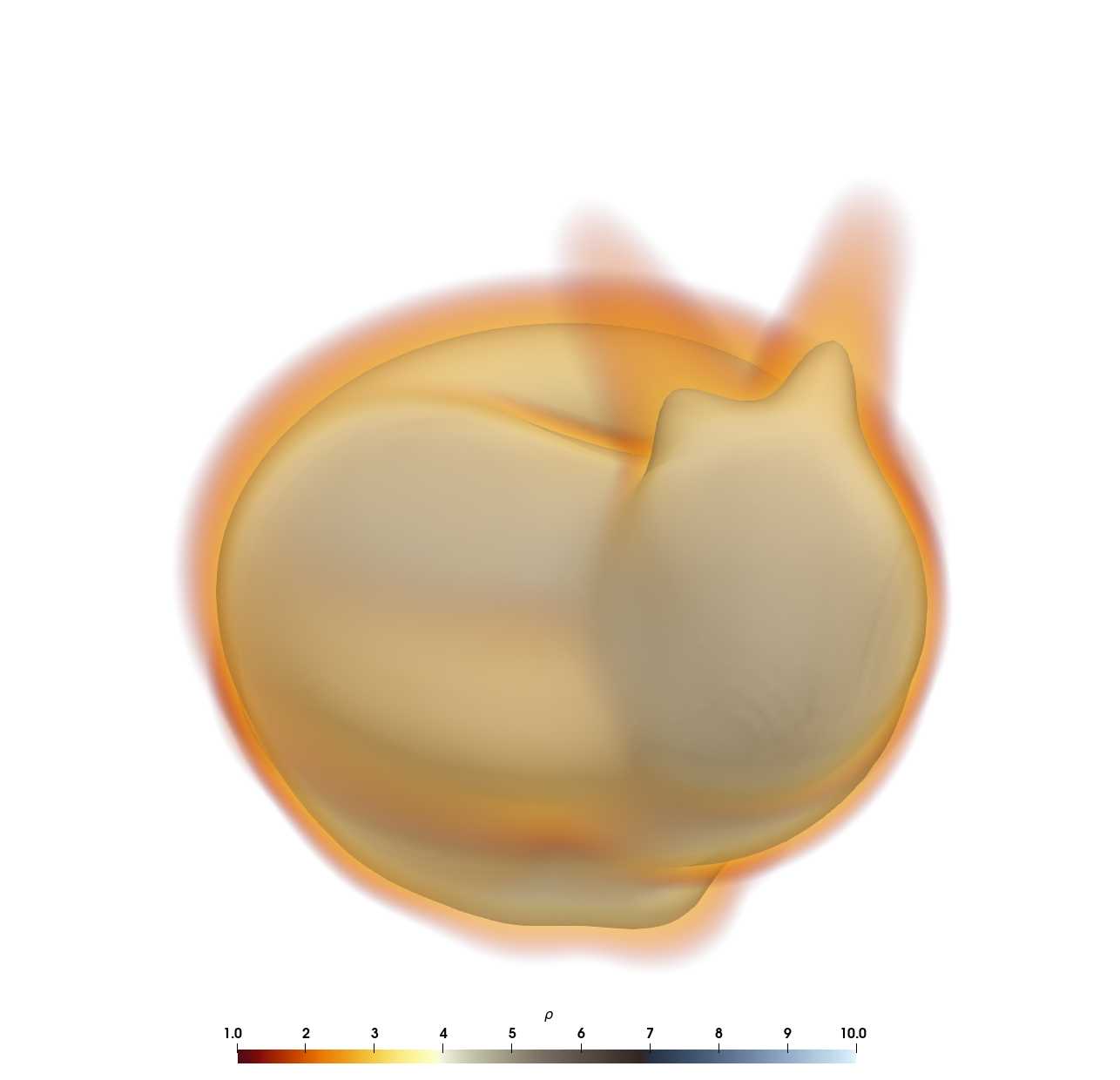}
        \caption*{$t=0.6$}
    \end{subfigure}
    \begin{subfigure}{0.16\textwidth}
        \centering
        \includegraphics[width=\textwidth,trim=30mm 40mm 30mm 70mm, clip]{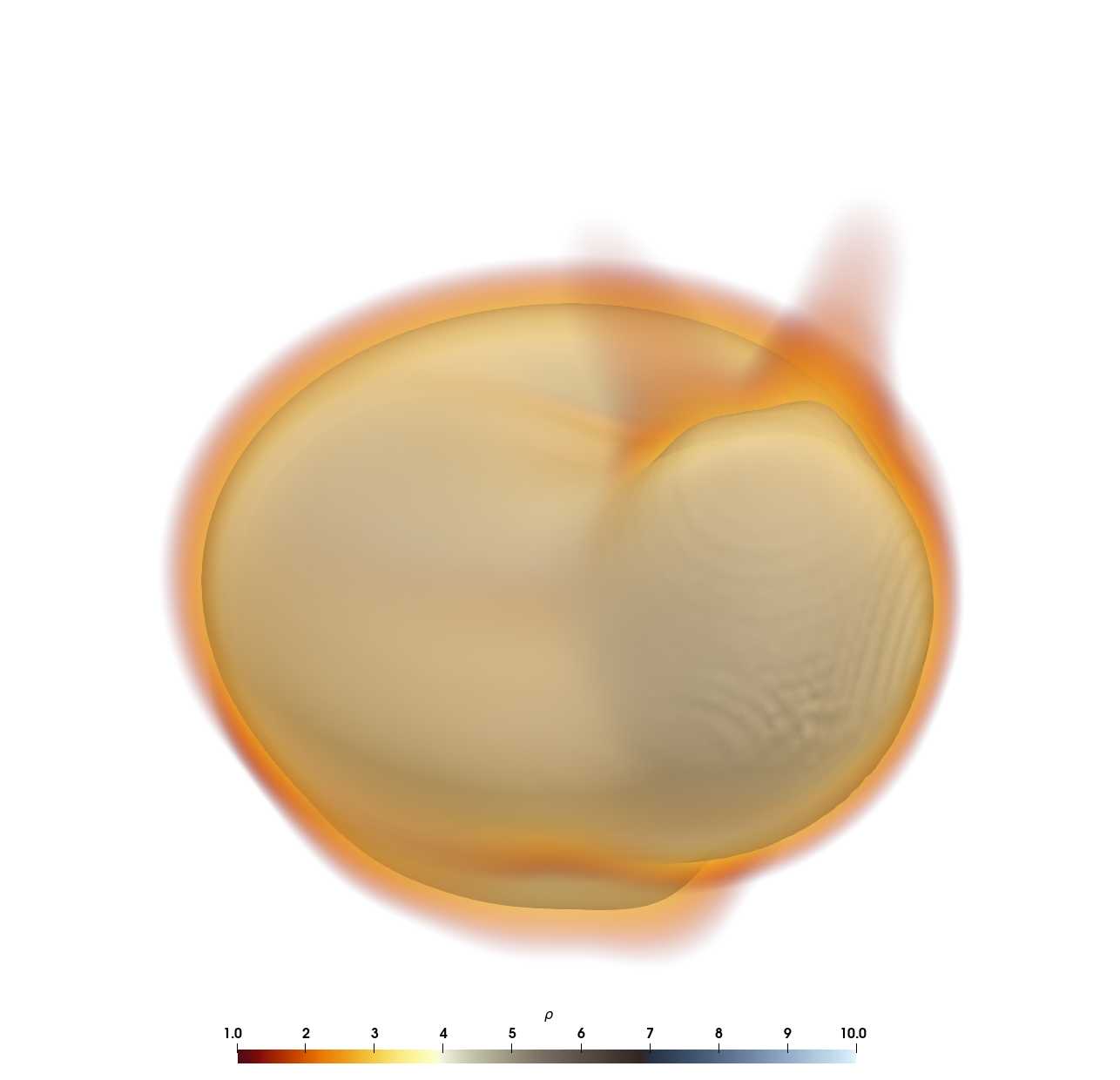}
        \caption*{$t=0.8$}
    \end{subfigure}
    \begin{subfigure}{0.16\textwidth}
        \centering
        \includegraphics[width=\textwidth,trim=30mm 40mm 30mm 70mm, clip]{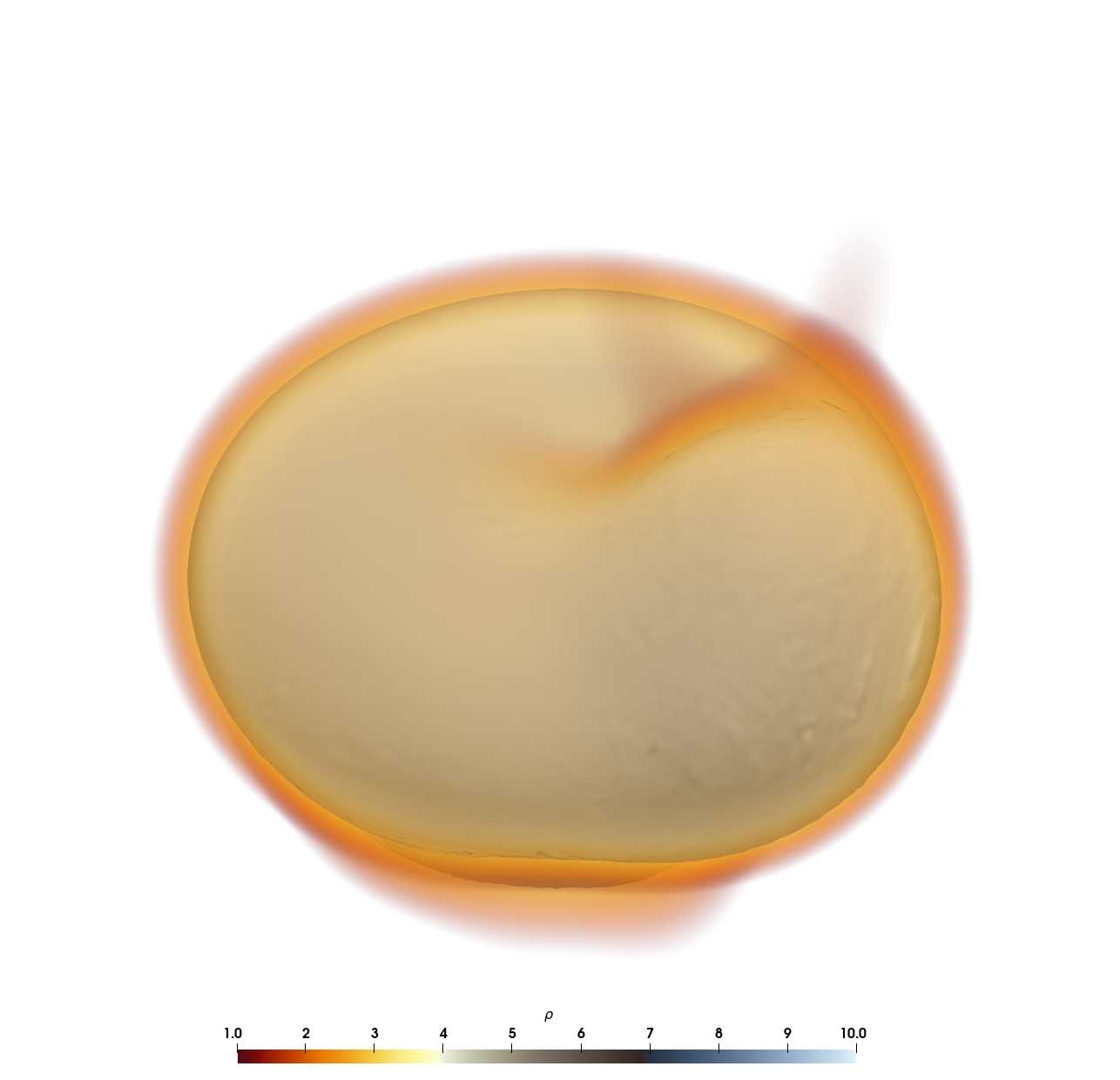}
        \caption*{$t=1.0$}
    \end{subfigure}

\subcaption{$\rho_1$}
\label{f:tb1}
\end{minipage}
\begin{minipage}[b]{\textwidth}

    \begin{minipage}[b]{\textwidth}
    \hfill
        \begin{subfigure}{\textwidth}
            \centering
            \includegraphics[width=\textwidth,trim=0mm 0mm 0mm 400mm, clip]{figures/reaction/doubleTorusBunny/pdhg0.0000..jpg}
        \end{subfigure}
    \end{minipage}
    
    \begin{subfigure}{0.16\textwidth}
        \centering
        \includegraphics[width=\textwidth,trim=30mm 40mm 30mm 70mm, clip]{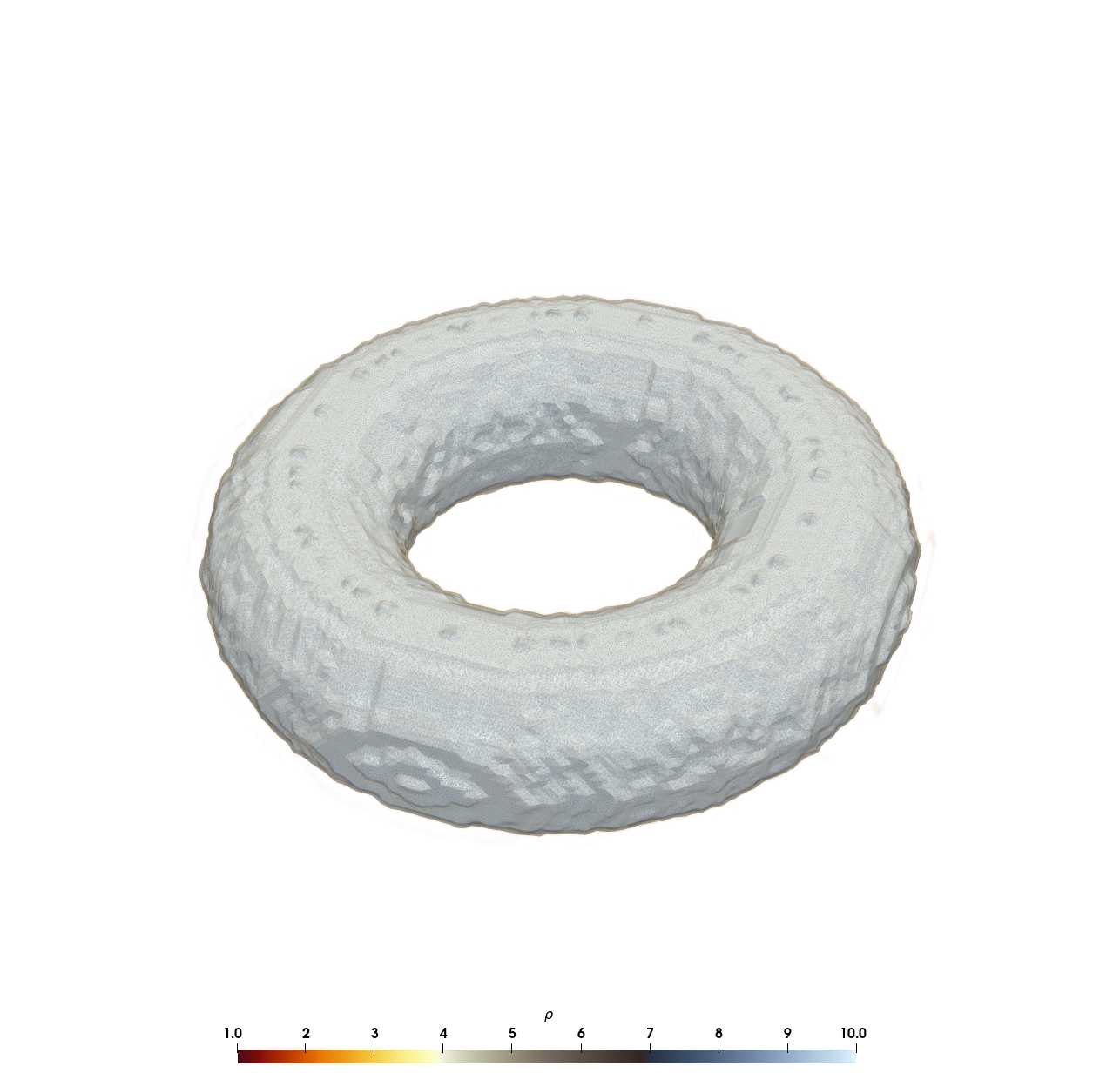}
    \end{subfigure}
    \begin{subfigure}{0.16\textwidth}
        \centering
        \includegraphics[width=\textwidth,trim=30mm 40mm 30mm 70mm, clip]{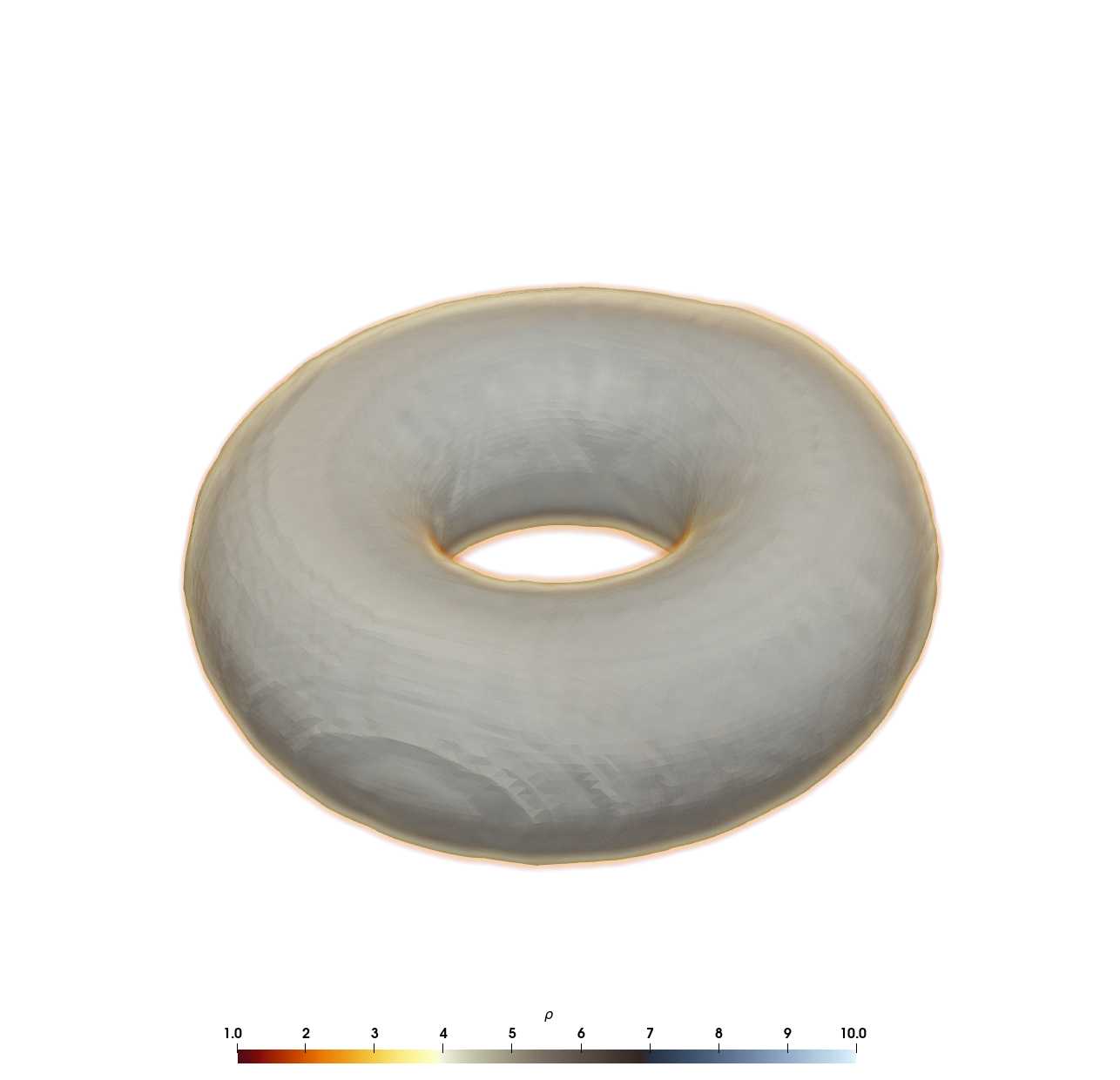}
    \end{subfigure}
    \begin{subfigure}{0.16\textwidth}
        \centering
        \includegraphics[width=\textwidth,trim=30mm 40mm 30mm 70mm, clip]{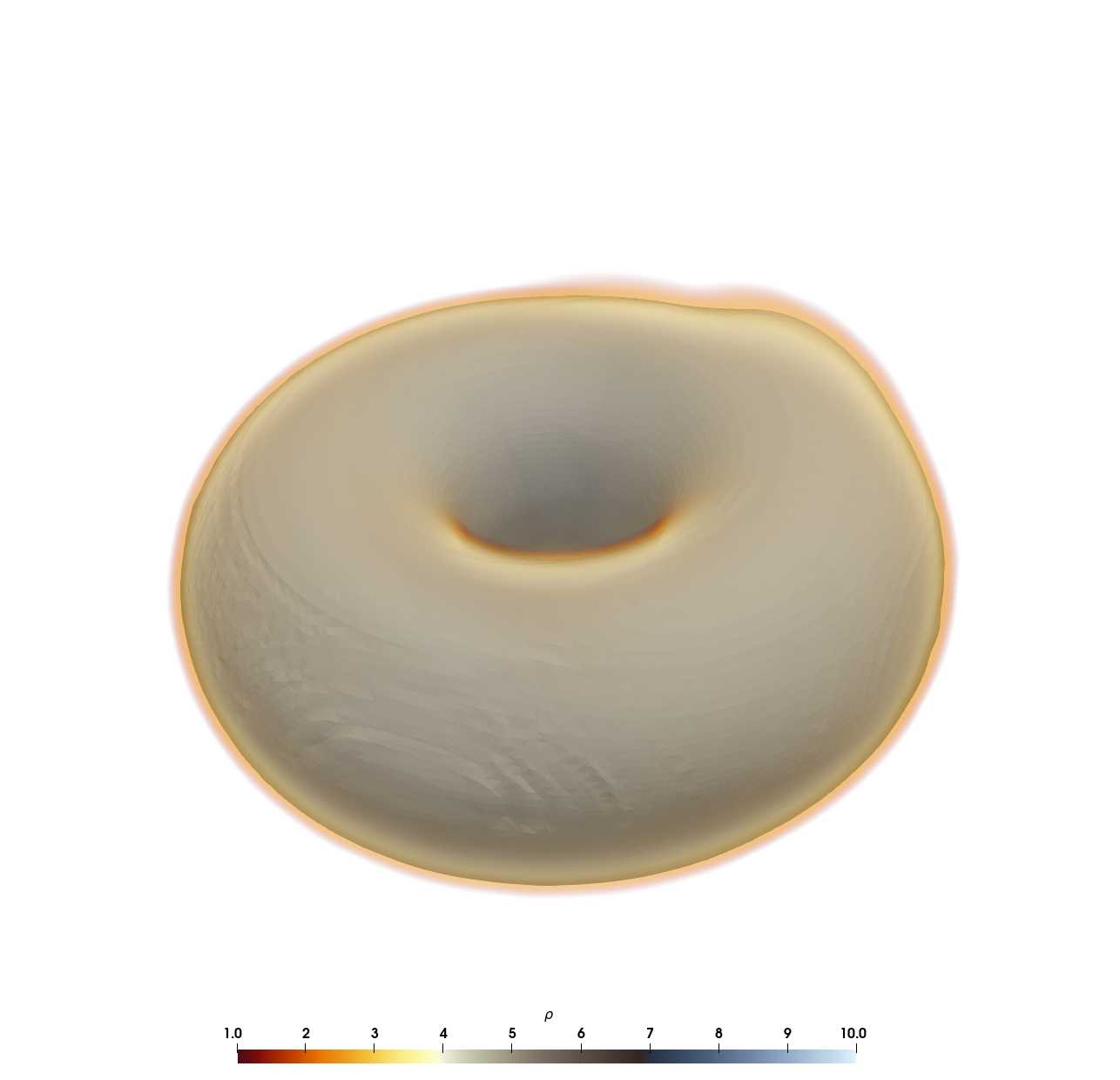}
    \end{subfigure}
    \begin{subfigure}{0.16\textwidth}
        \centering
        \includegraphics[width=\textwidth,trim=30mm 40mm 30mm 70mm, clip]{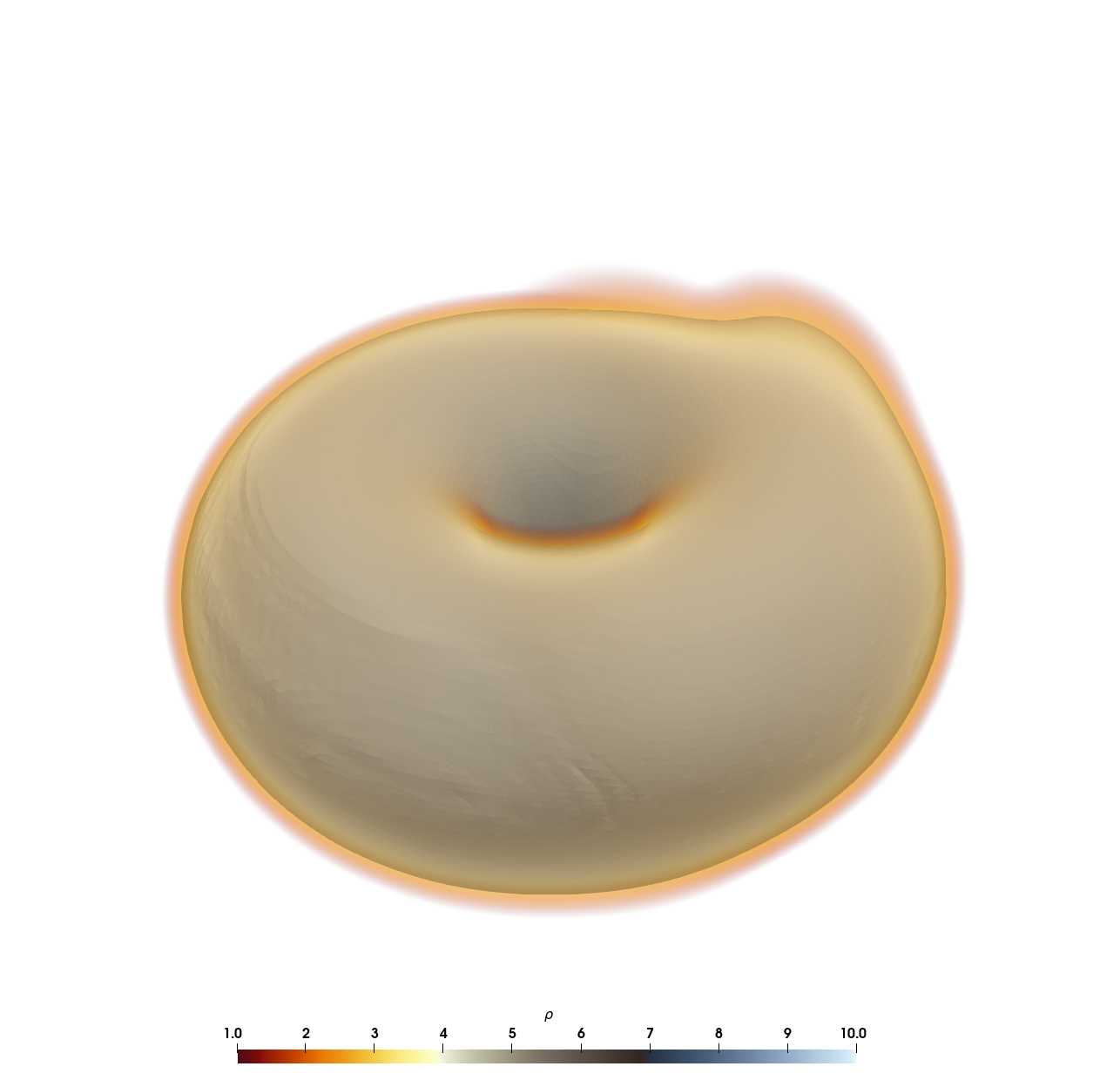}
    \end{subfigure}
    \begin{subfigure}{0.16\textwidth}
        \centering
        \includegraphics[width=\textwidth,trim=30mm 40mm 30mm 70mm, clip]{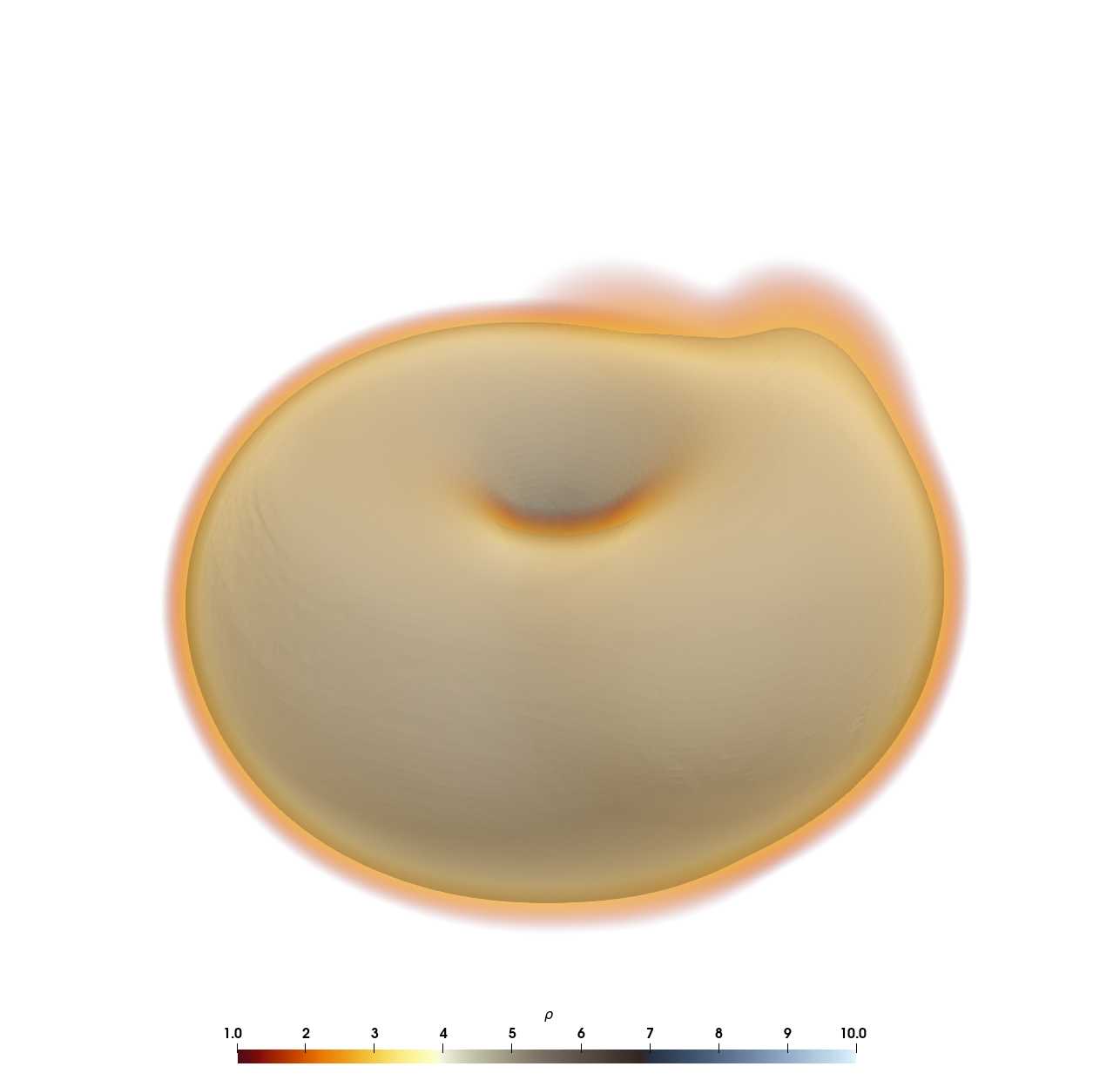}
    \end{subfigure}
    \begin{subfigure}{0.16\textwidth}
        \centering
        \includegraphics[width=\textwidth,trim=30mm 40mm 30mm 70mm, clip]{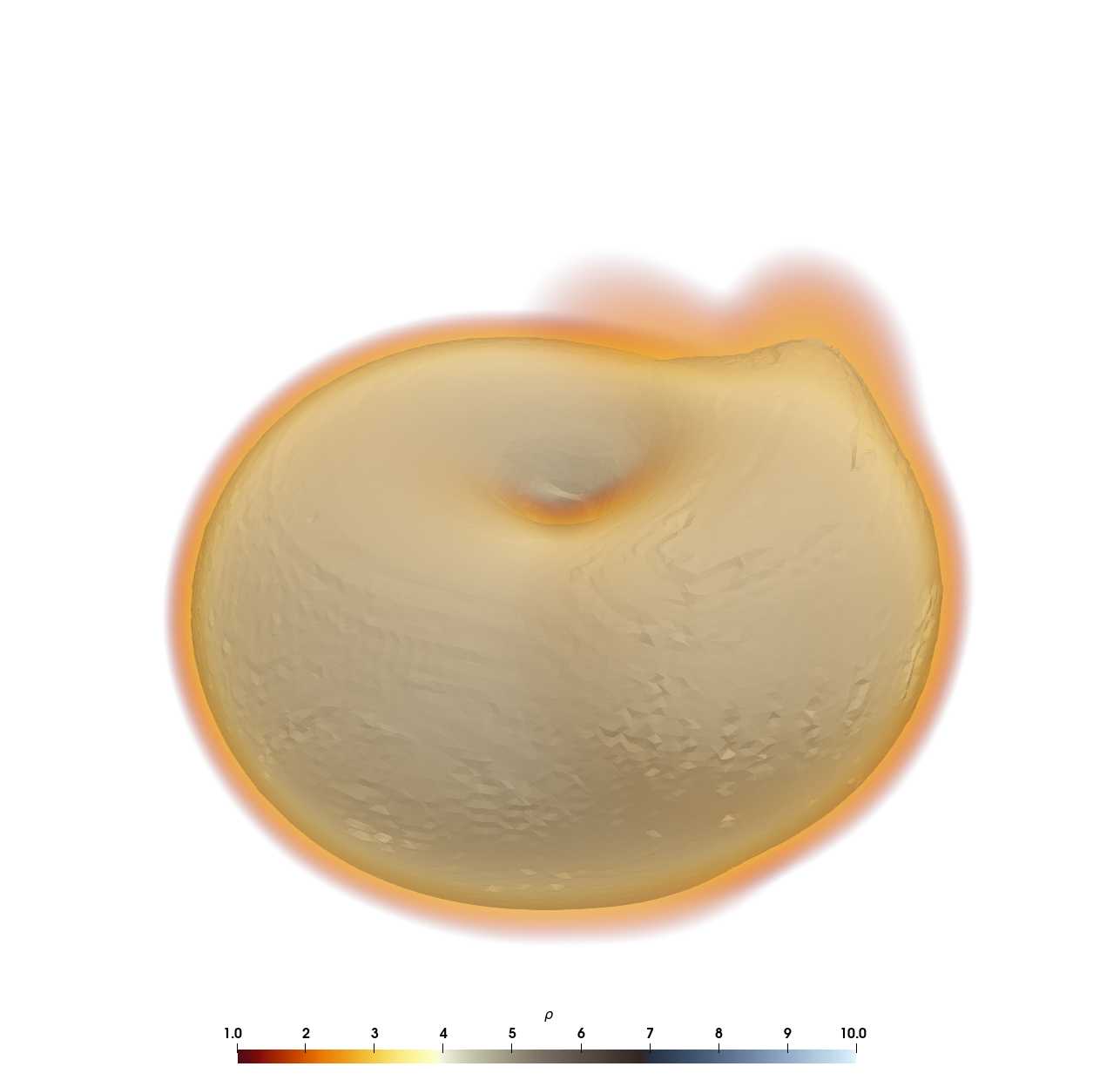}
    \end{subfigure}
    
    \begin{subfigure}{0.16\textwidth}
        \centering
        \includegraphics[width=\textwidth,trim=30mm 40mm 30mm 70mm, clip]{figures/reaction/torusBunny/pdhg0.0000..jpg}
    \end{subfigure}
    \begin{subfigure}{0.16\textwidth}
        \centering
        \includegraphics[width=\textwidth,trim=30mm 40mm 30mm 70mm, clip]{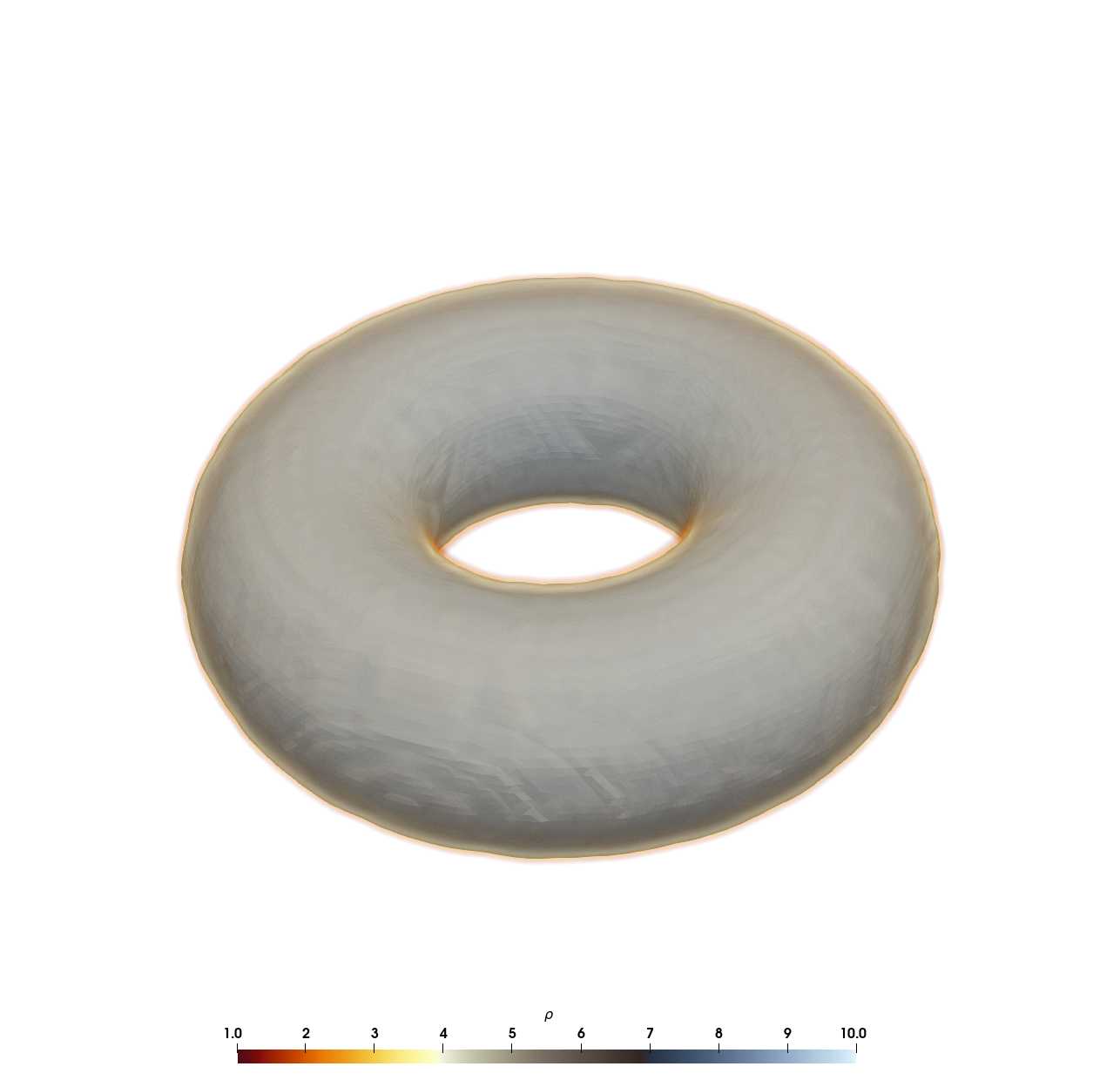}
    \end{subfigure}
    \begin{subfigure}{0.16\textwidth}
        \centering
        \includegraphics[width=\textwidth,trim=30mm 40mm 30mm 70mm, clip]{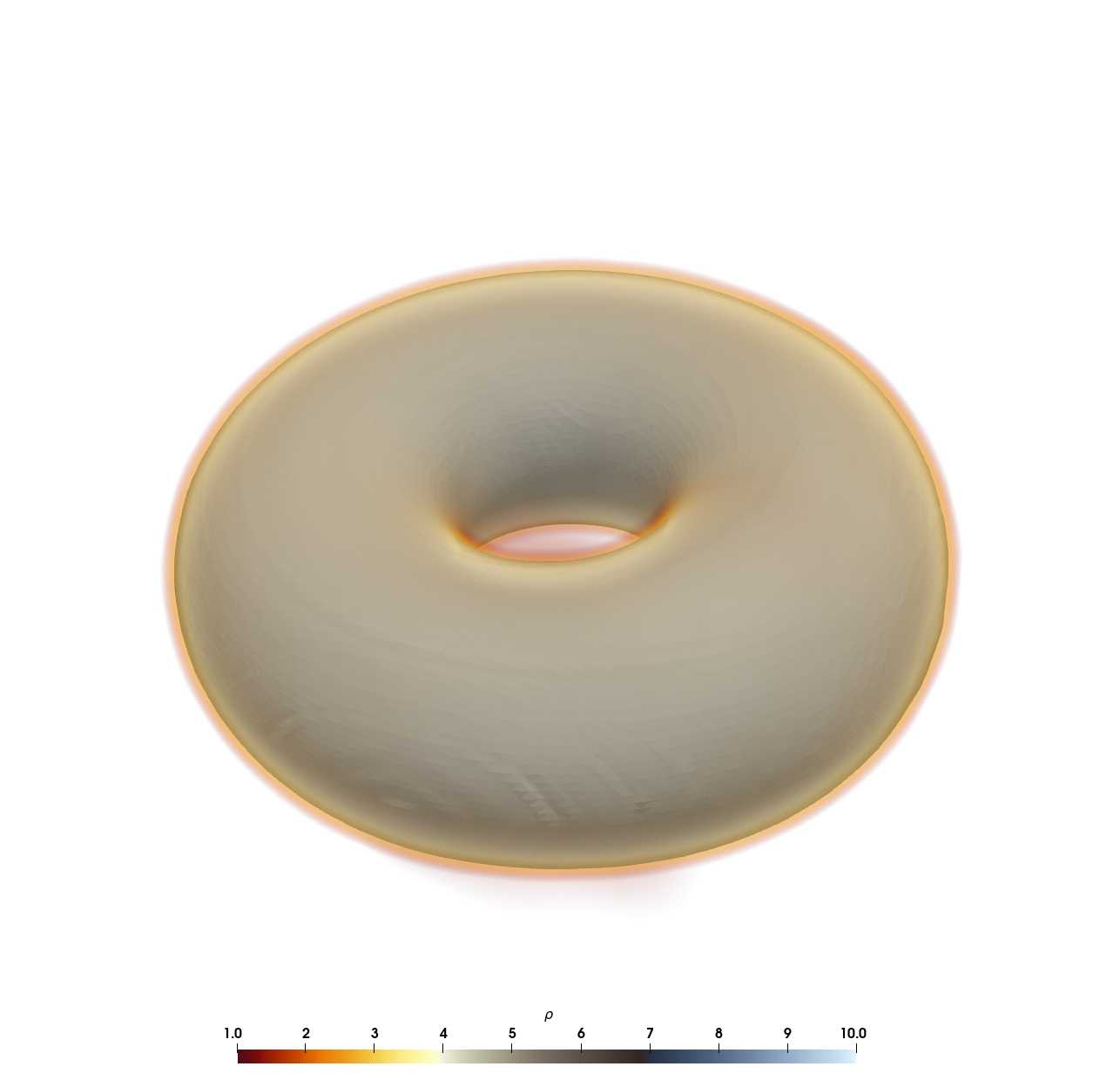}
    \end{subfigure}
    \begin{subfigure}{0.16\textwidth}
        \centering
        \includegraphics[width=\textwidth,trim=30mm 40mm 30mm 70mm, clip]{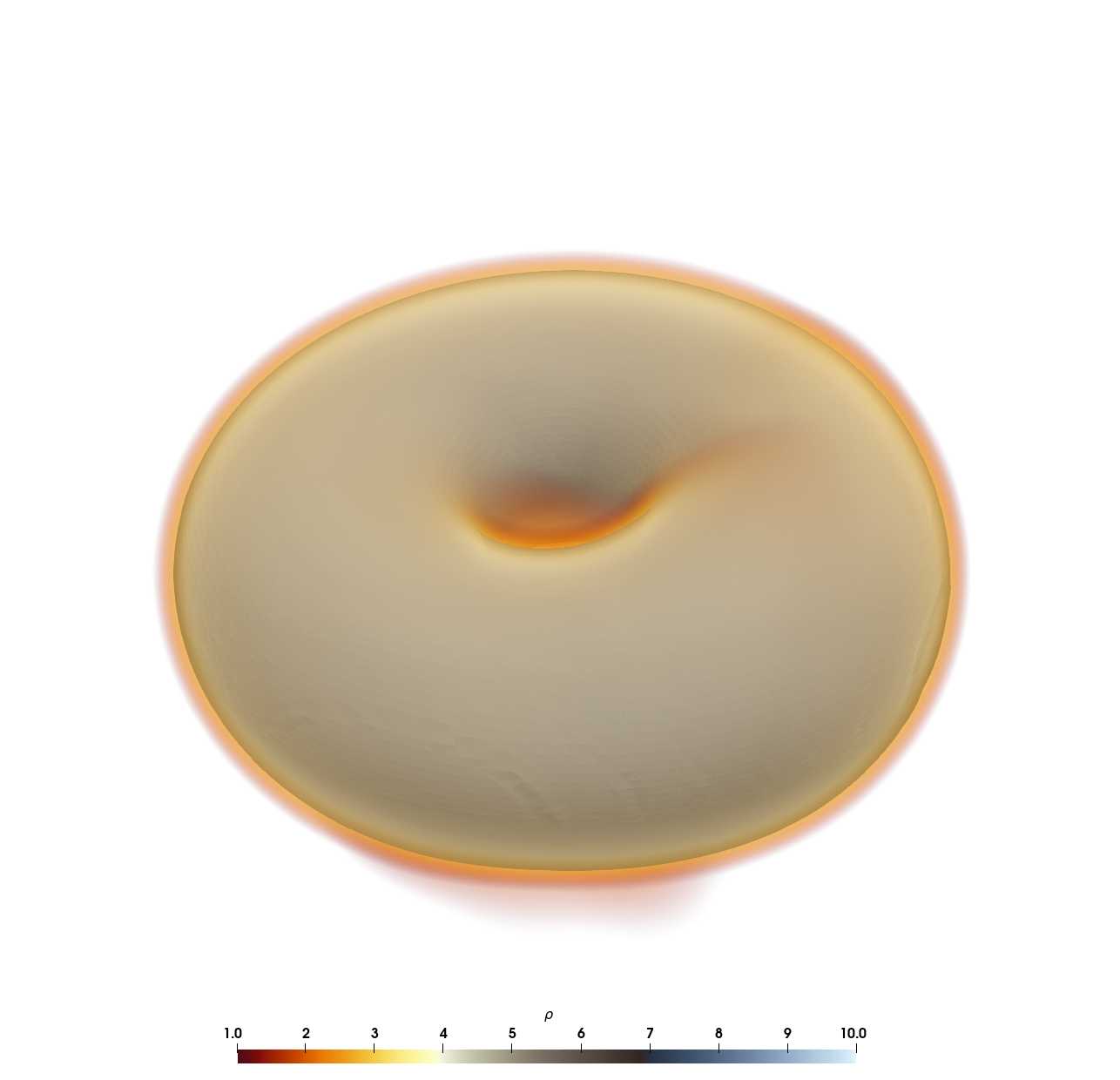}
    \end{subfigure}
    \begin{subfigure}{0.16\textwidth}
        \centering
        \includegraphics[width=\textwidth,trim=30mm 40mm 30mm 70mm, clip]{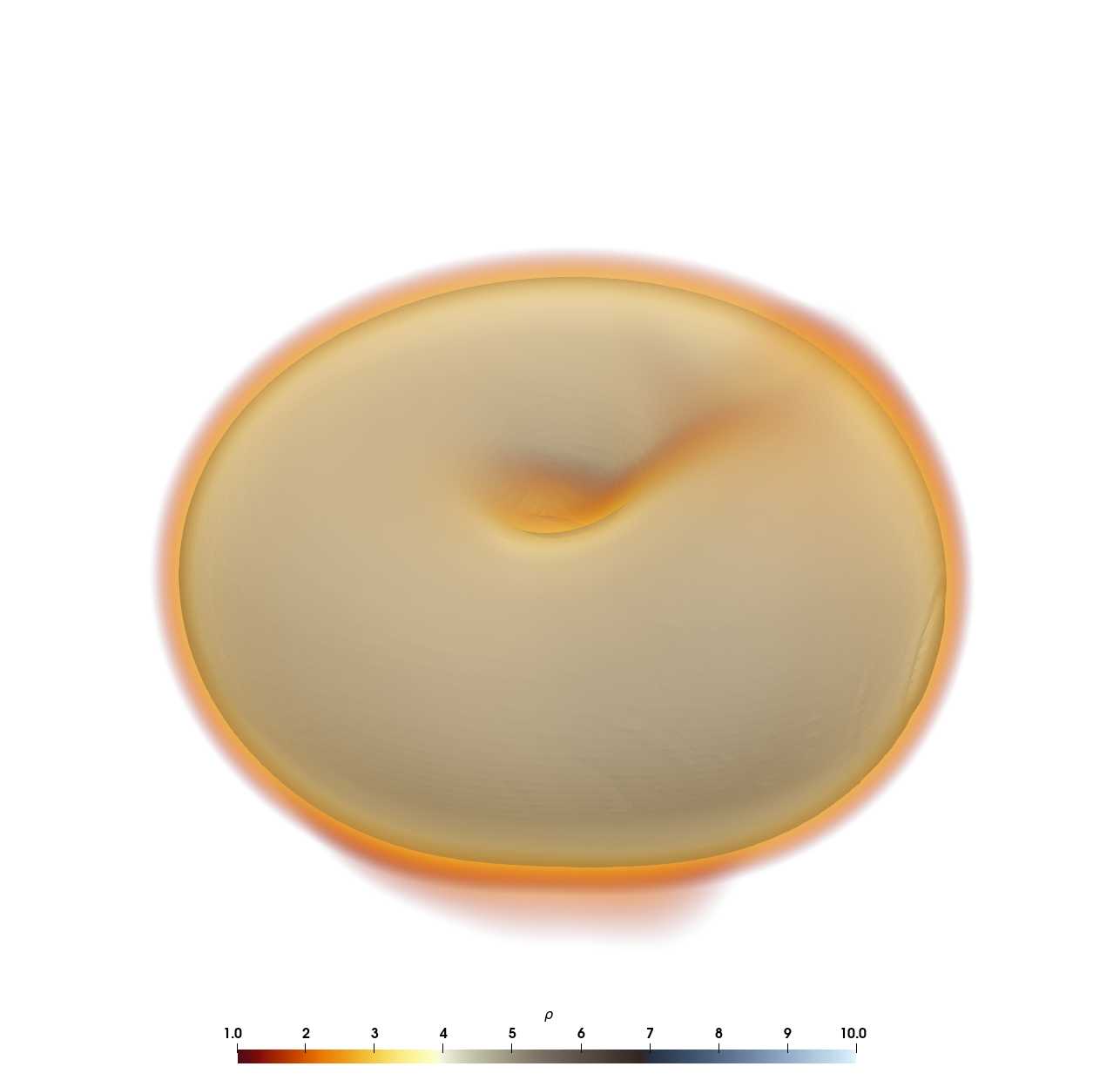}
    \end{subfigure}
    \begin{subfigure}{0.16\textwidth}
        \centering
        \includegraphics[width=\textwidth,trim=30mm 40mm 30mm 70mm, clip]{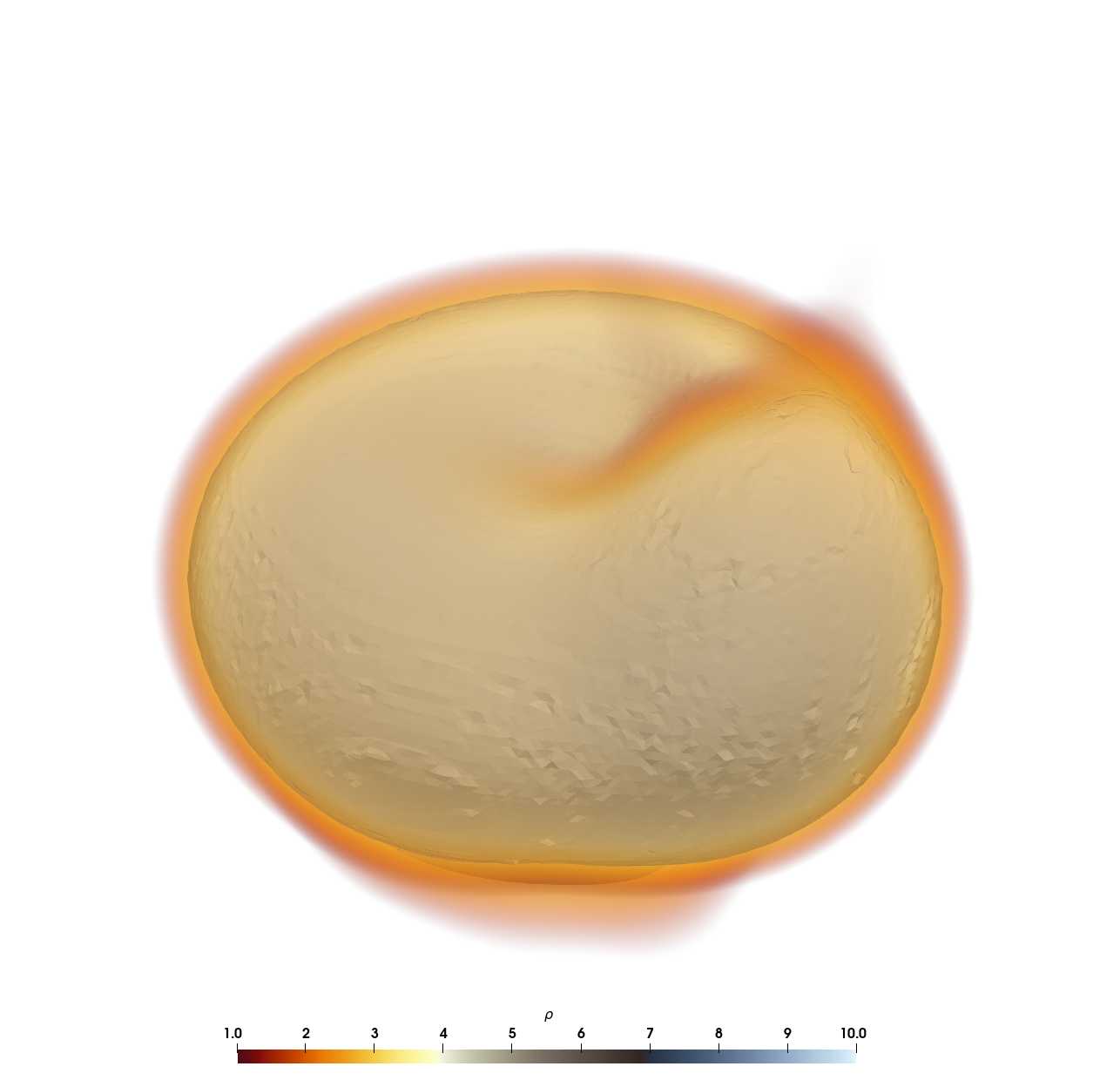}
    \end{subfigure}

    \begin{subfigure}{0.16\textwidth}
        \centering
        \includegraphics[width=\textwidth,trim=30mm 40mm 30mm 70mm, clip]{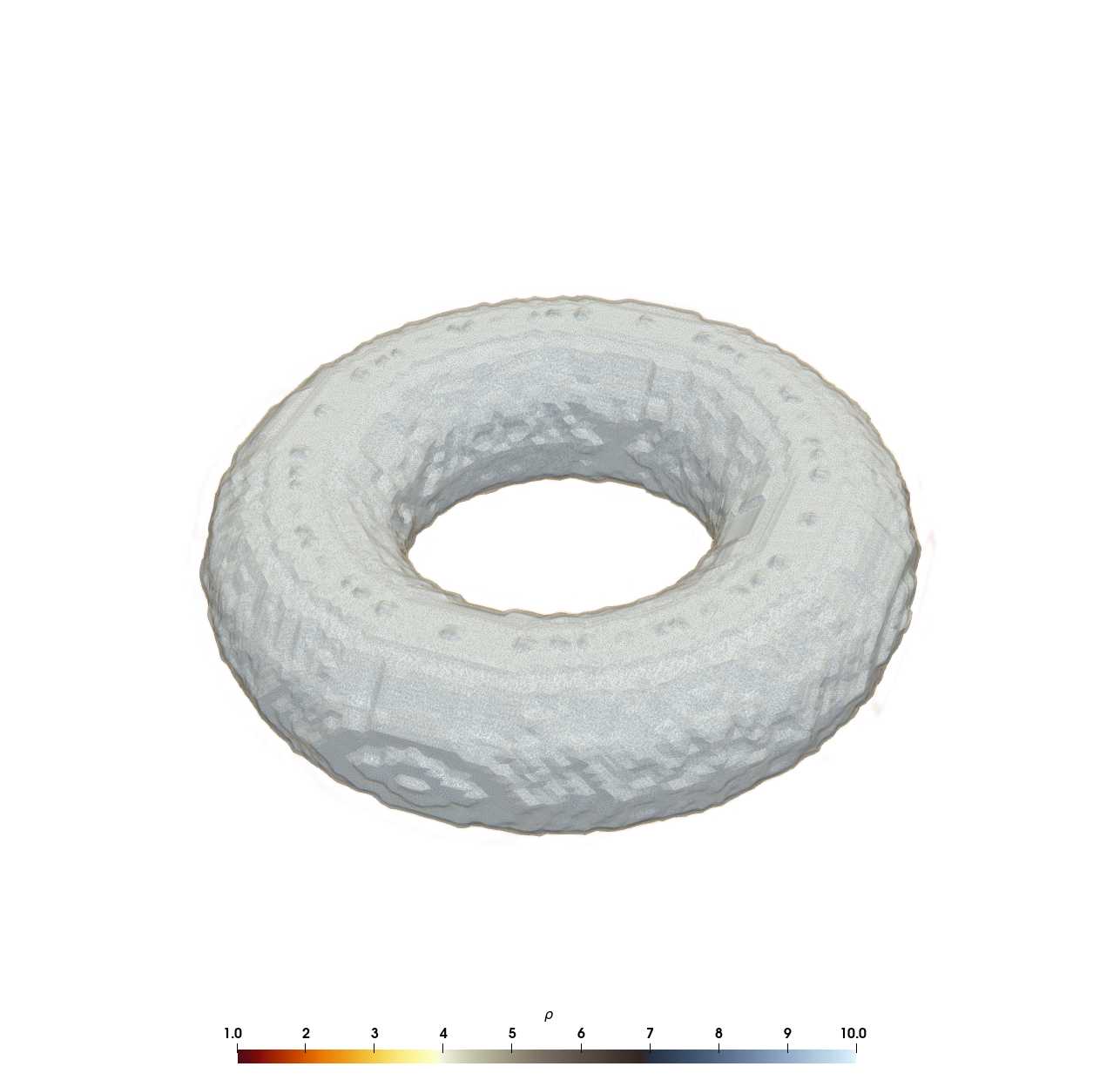}
        \caption*{$t=0$}
    \end{subfigure}
    \begin{subfigure}{0.16\textwidth}
        \centering
        \includegraphics[width=\textwidth,trim=30mm 40mm 30mm 70mm, clip]{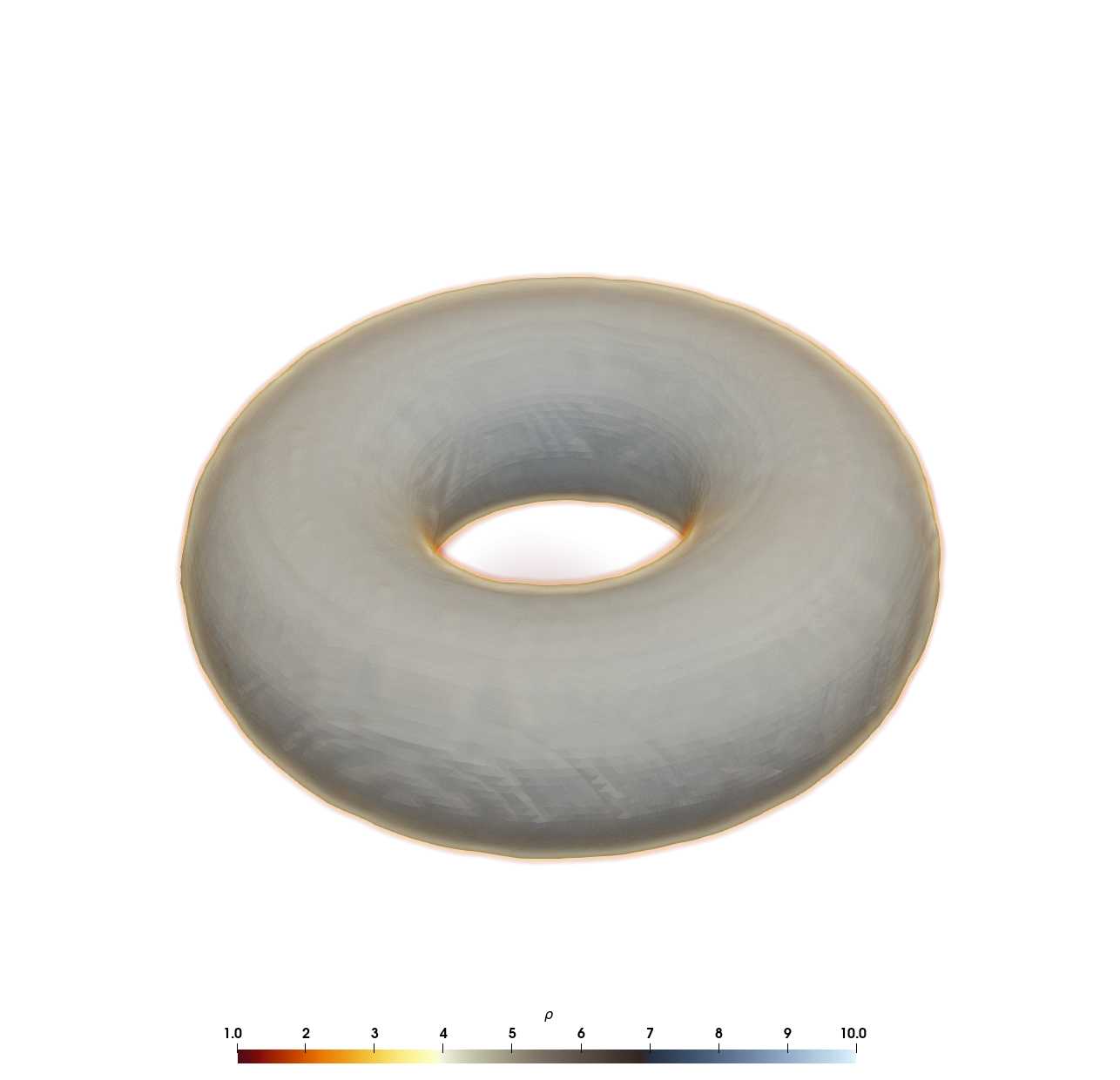}
        \caption*{$t=0.2$}
    \end{subfigure}
    \begin{subfigure}{0.16\textwidth}
        \centering
        \includegraphics[width=\textwidth,trim=30mm 40mm 30mm 70mm, clip]{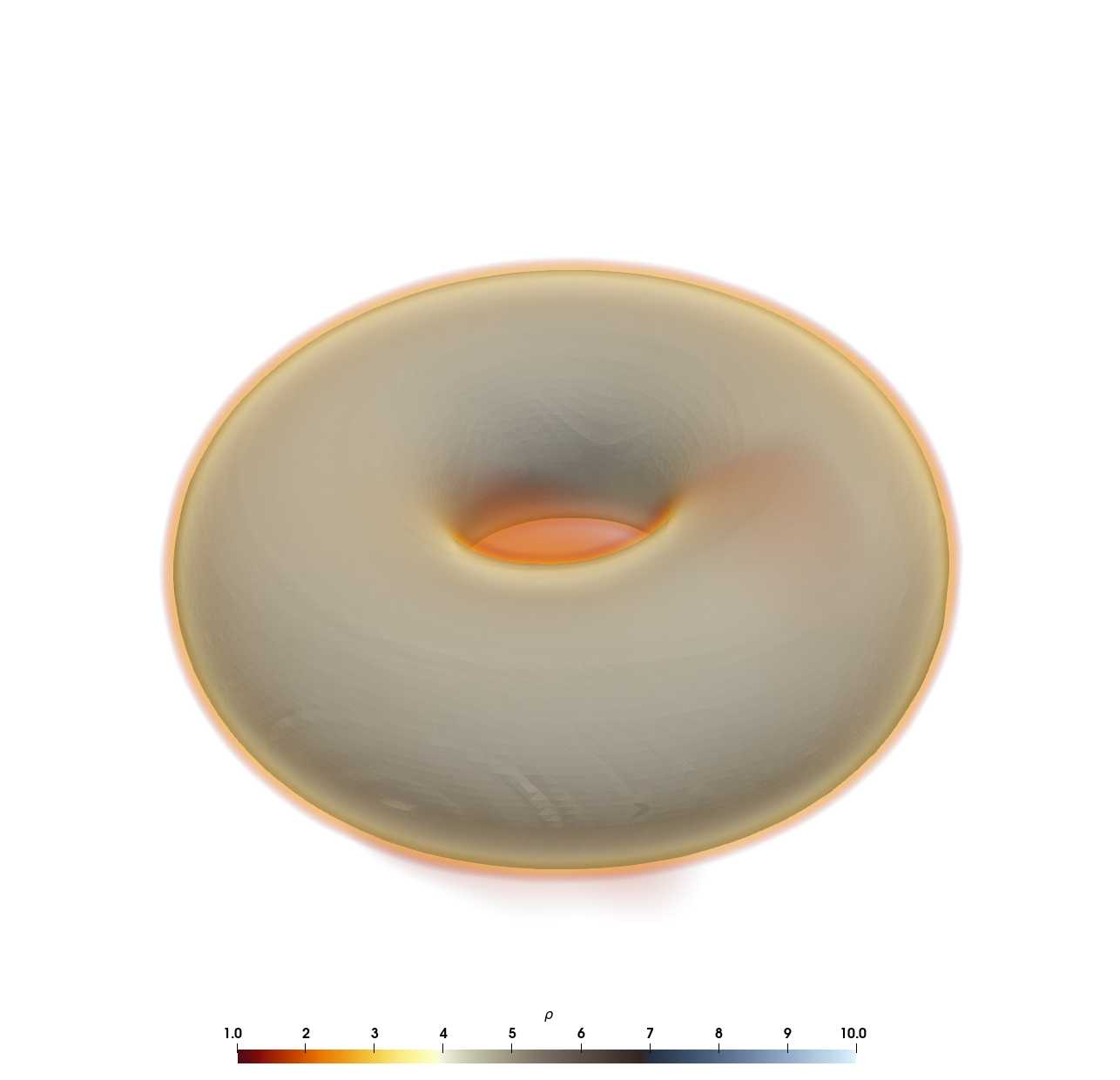}
        \caption*{$t=0.4$}
    \end{subfigure}
    \begin{subfigure}{0.16\textwidth}
        \centering
        \includegraphics[width=\textwidth,trim=30mm 40mm 30mm 70mm, clip]{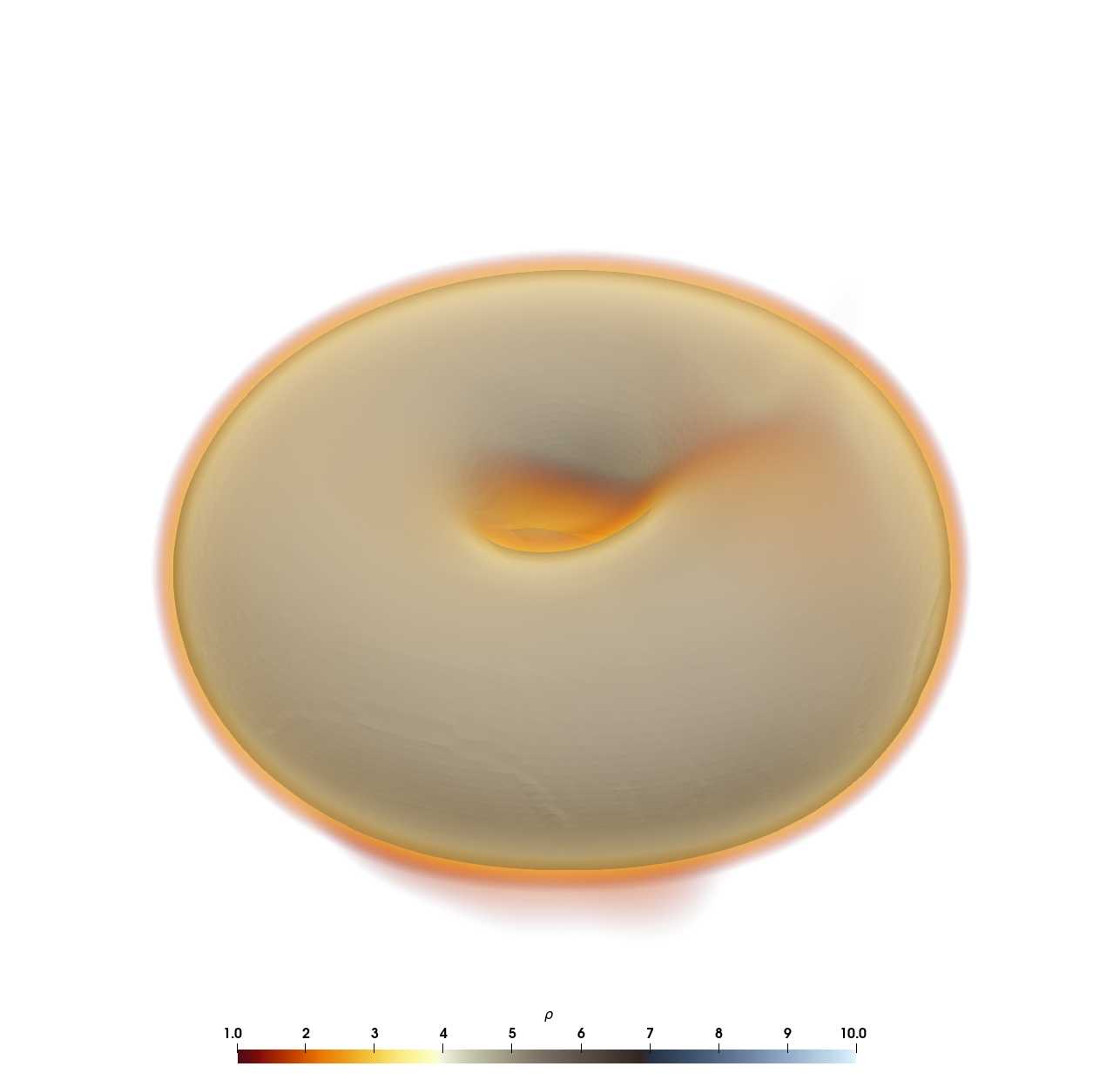}
        \caption*{$t=0.6$}
    \end{subfigure}
    \begin{subfigure}{0.16\textwidth}
        \centering
        \includegraphics[width=\textwidth,trim=30mm 40mm 30mm 70mm, clip]{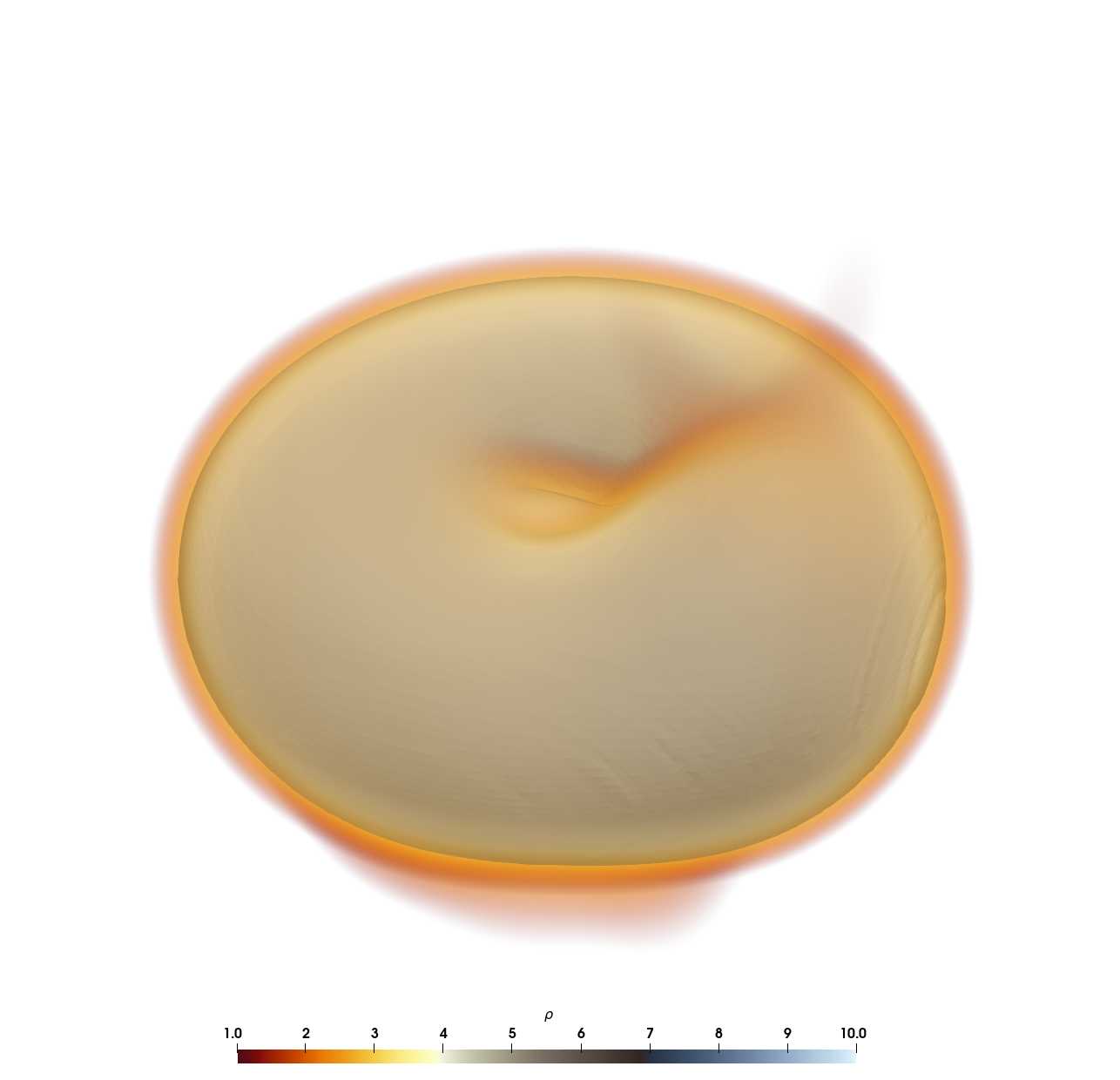}
        \caption*{$t=0.8$}
    \end{subfigure}
    \begin{subfigure}{0.16\textwidth}
        \centering
        \includegraphics[width=\textwidth,trim=30mm 40mm 30mm 70mm, clip]{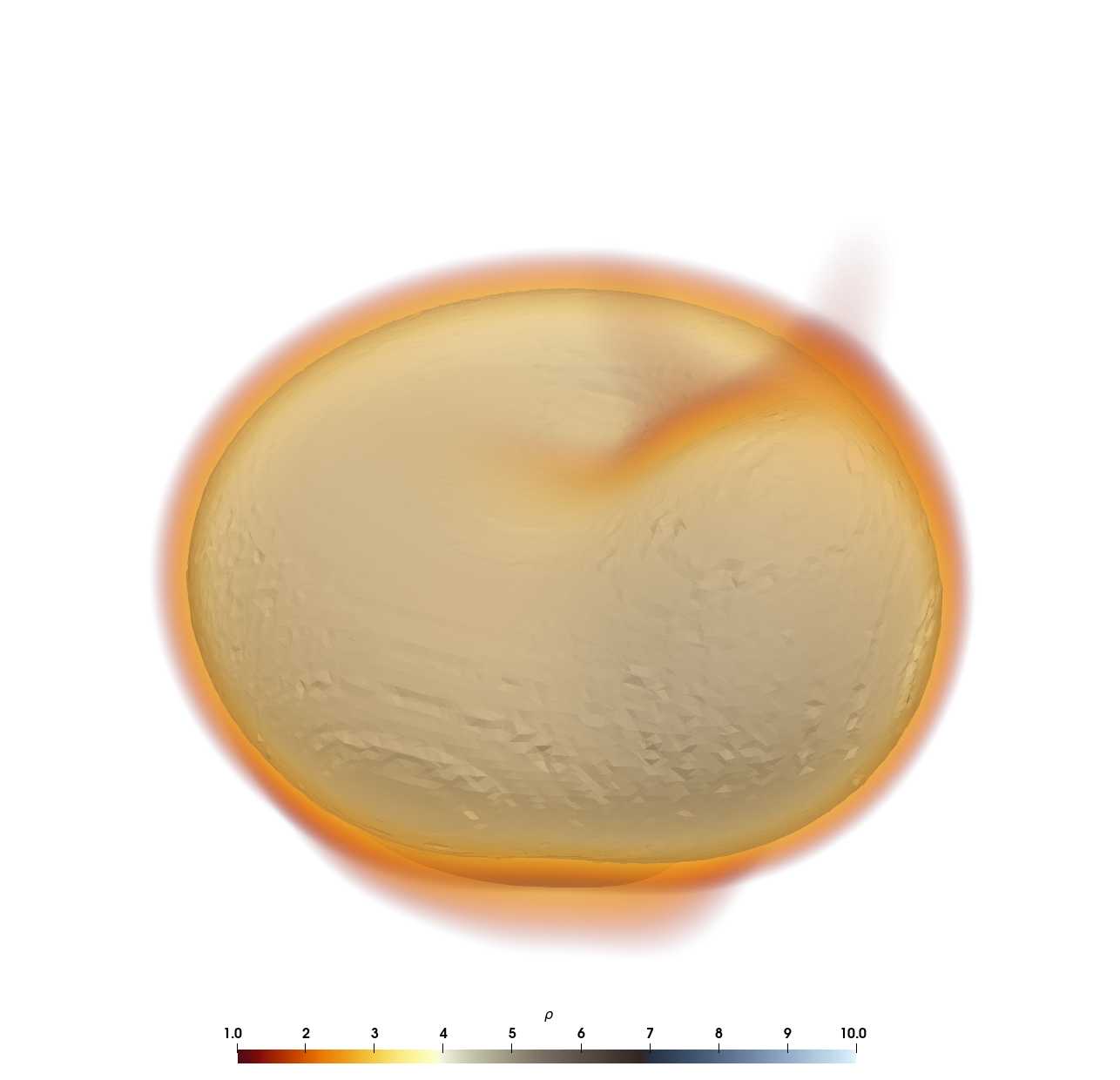}
        \caption*{$t=1.0$}
    \end{subfigure}
\subcaption{$\rho_2$}
\label{f:tb2}
\end{minipage}

\caption{Shape interpolation in a \textit{bunny}-\textit{torus} system. Left-right shows the time evolution of density. Top-down shows plots at different reaction strengths: $\alpha=0$, $\alpha=50,$ and $\alpha=100$.}
\label{f:tb}
\end{figure}

\subsection{3D shape interpolation}
In this subsection, we explore the application of our scheme \eqref{uncon3} to perform 3D shape interpolation. Shape interpolation, a fundamental concept in the realms of computer graphics and computer vision, is a technique used to continuously morph one 3D object into another, enabling a gradual transition between the two shapes and allowing for the creation of smooth animations and visual effects. Shape interpolation by computing Wasserstein barycenters was previously explored in \cite{solomon:hal-01188953} using convolutional Wasserstein distance.

We conduct a series of shape interpolation experiments by computing generalized Wasserstein barycenters of several 2-species systems with varying reaction strengths, namely $\alpha=0$, $\alpha=50$, and $\alpha=100$. The densities are represented by normalized indicator functions on $\Omega=[0,1]^3$ that are created by voxelizing 3D object files \cite{binvox, nooruddin03}. The voxelization process is carried out at a resolution of $256\times256\times256$, and the total mass of the normalized indicator functions is made equal to 1. The 3D objects we use in the experiments are the \textit{torus}, the \textit{double-torus}, and the \textit{bunny}. We use polynomials of degree $k=4$ on a spacetime mesh with $32\times32\times32$ uniform cubical elements in space and four uniform line segments in time to perform the computations. Here, we take the interaction strength $\beta_i$ to be 0.001 for each $i$, which acts as a regularization term in the optimization solver.

We present the results of our experiments using the panels in Figures \ref{f:tdt}, \ref{f:dtb}, \ref{f:tb}. Each row in a figure shows the time evolution of a particular density, with the overall density function plotted along with a single contour surface (in white) where the density equals to 3. These are plotted at regular intervals of $\Delta t=0.2$ from the initial time of 0 until the terminal time of 1. On the other hand, each column shows the density and the contour at the three different reaction strengths. 
% The barycenters are displayed in the last column.

Our results reveal that the computed maps $\rho_1(t)$ and $\rho_2(t)$ provide continuous interpolations between the initial densities as each can be seen morphing into their mutual (generalized) barycenter. We also observe that as the reaction strength $\alpha$ increases, the barycenter as well as the intermediate shapes start resembling their Euclidean ($L_2$) equivalents, as features of the adjoining density start appearing at earlier times. Additionally, in cases where $\alpha$ is non-zero, the barycenters share similarities but differ from those in the zero reaction case.

We conclude this subsection with discussions of a shape interpolation experiment involving a 3-density system consisting of all three species we mentioned above. Firstly, we calculate the barycenter with no reaction by setting $\alpha=0$, and then with the reaction by setting $\alpha=50$. The mesh and voxel resolutions, as well as the polynomial degree, remain unchanged in the discretization. The results are displayed in the panels of Figure \ref{f:tdtb}. Similar to the results in the 2 species experiments, we observe that the three initial densities gradually transform into their barycenter, providing continuous intermediate interpolated shapes. In the case where the reaction strength is non-zero, we can again observe the barycenter and the intermediate shapes resembling the corresponding Euclidean averages.

\begin{figure}[h!]
    \centering
\begin{minipage}[b]{\textwidth}

    \begin{minipage}[b]{\textwidth}
    \hfill
        \begin{subfigure}{\textwidth}
            \centering
            \includegraphics[width=\textwidth,trim=0mm 0mm 0mm 400mm, clip]{figures/reaction/doubleTorusBunny/pdhg0.0000..jpg}
        \end{subfigure}
    \end{minipage}
    
    \begin{subfigure}{0.16\textwidth}
        \centering
        \includegraphics[width=\textwidth,trim=30mm 40mm 30mm 70mm, clip]{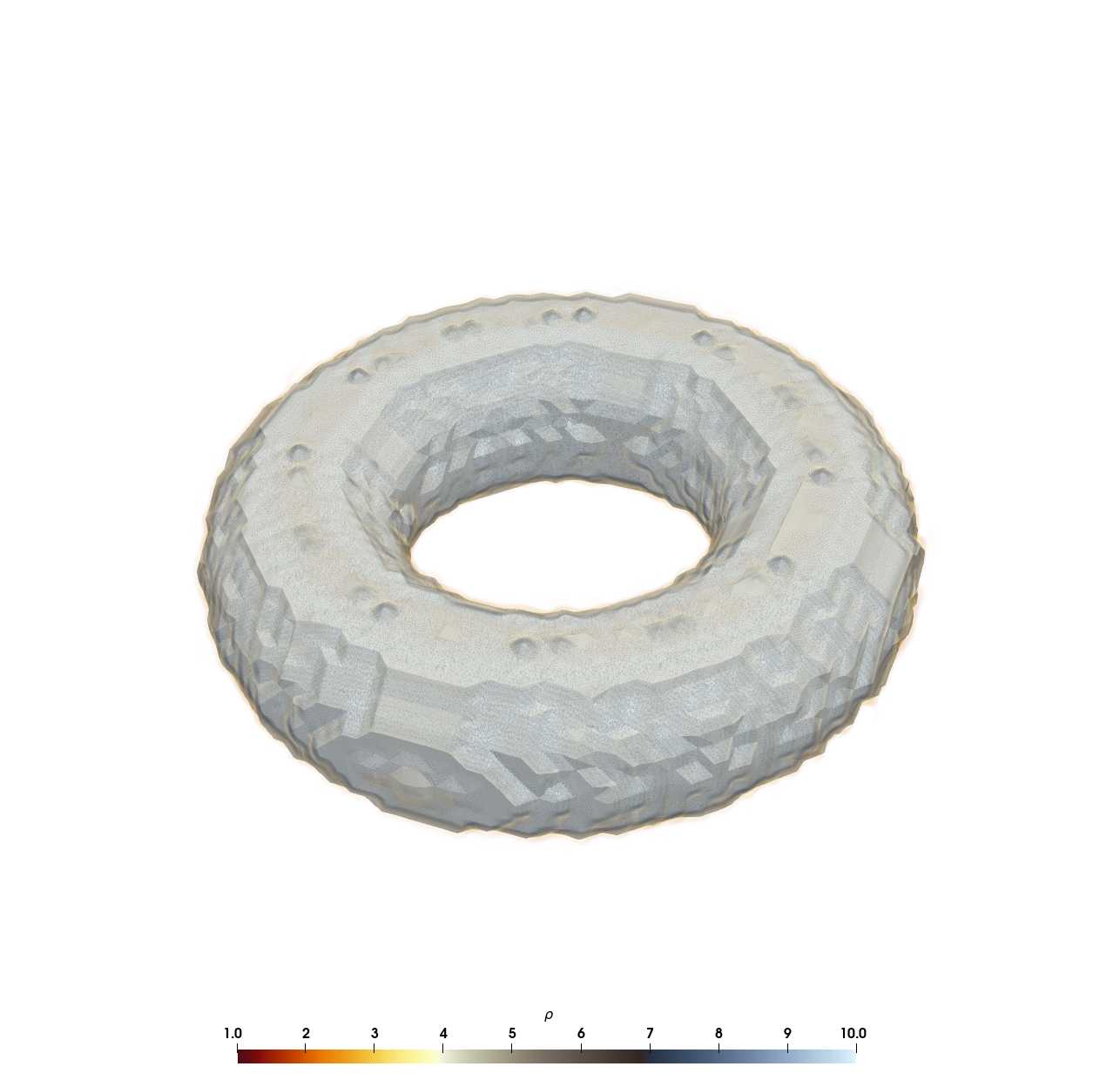}
    \end{subfigure}
    \begin{subfigure}{0.16\textwidth}
        \centering
        \includegraphics[width=\textwidth,trim=30mm 40mm 30mm 70mm, clip]{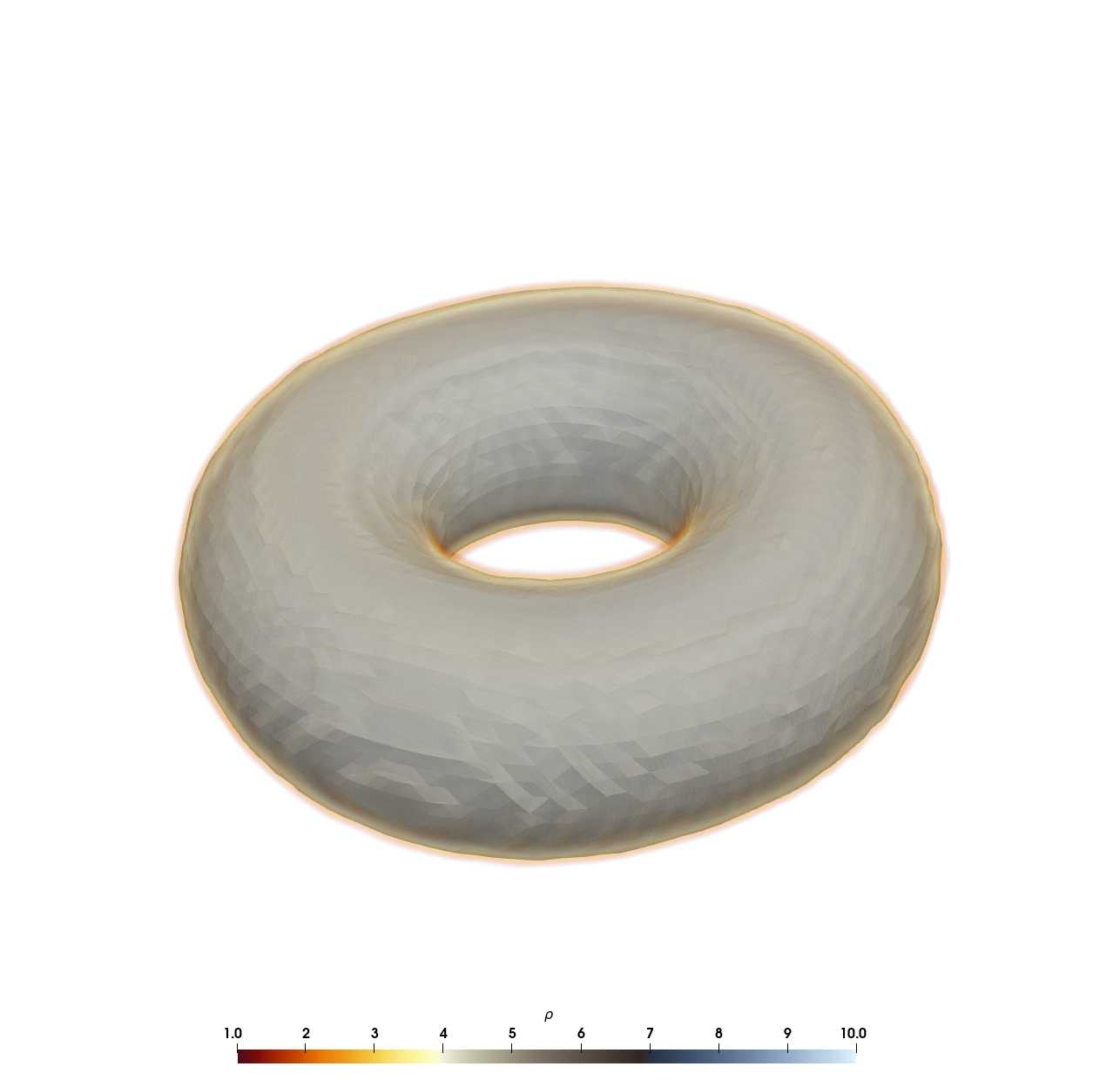}
    \end{subfigure}
    \begin{subfigure}{0.16\textwidth}
        \centering
        \includegraphics[width=\textwidth,trim=30mm 40mm 30mm 70mm, clip]{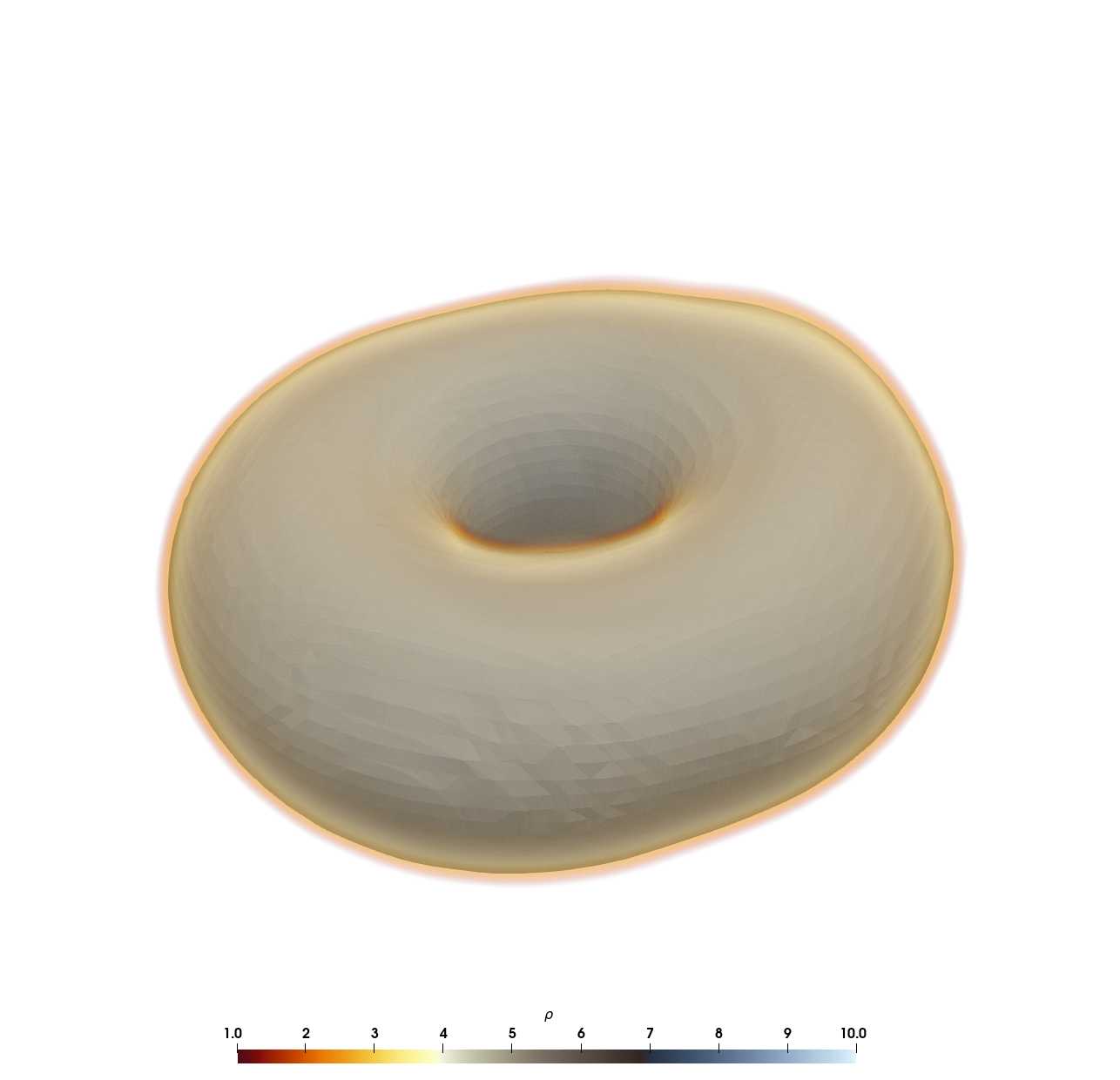}
    \end{subfigure}
    \begin{subfigure}{0.16\textwidth}
        \centering
        \includegraphics[width=\textwidth,trim=30mm 40mm 30mm 70mm, clip]{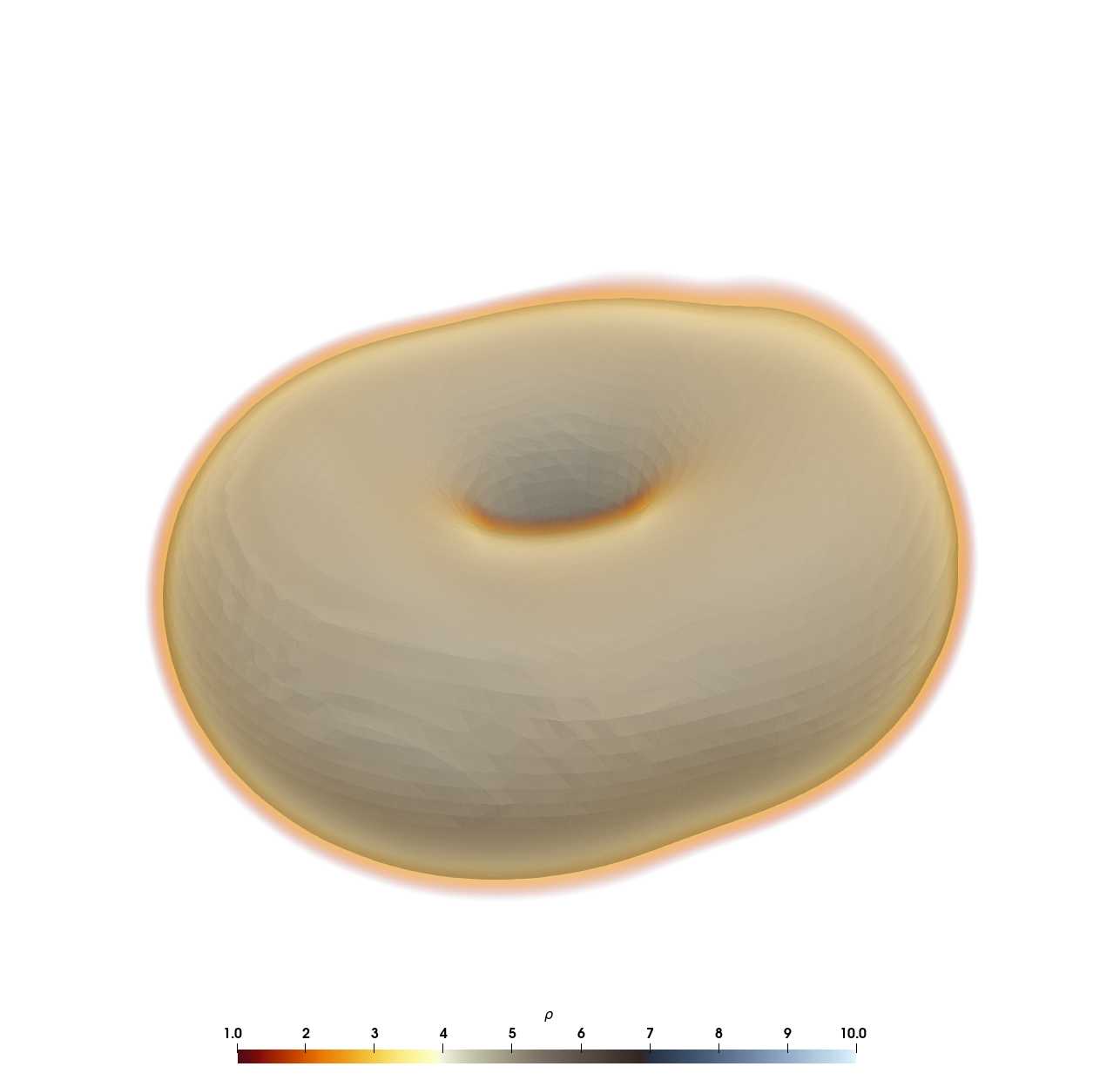}
    \end{subfigure}
    \begin{subfigure}{0.16\textwidth}
        \centering
        \includegraphics[width=\textwidth,trim=30mm 40mm 30mm 70mm, clip]{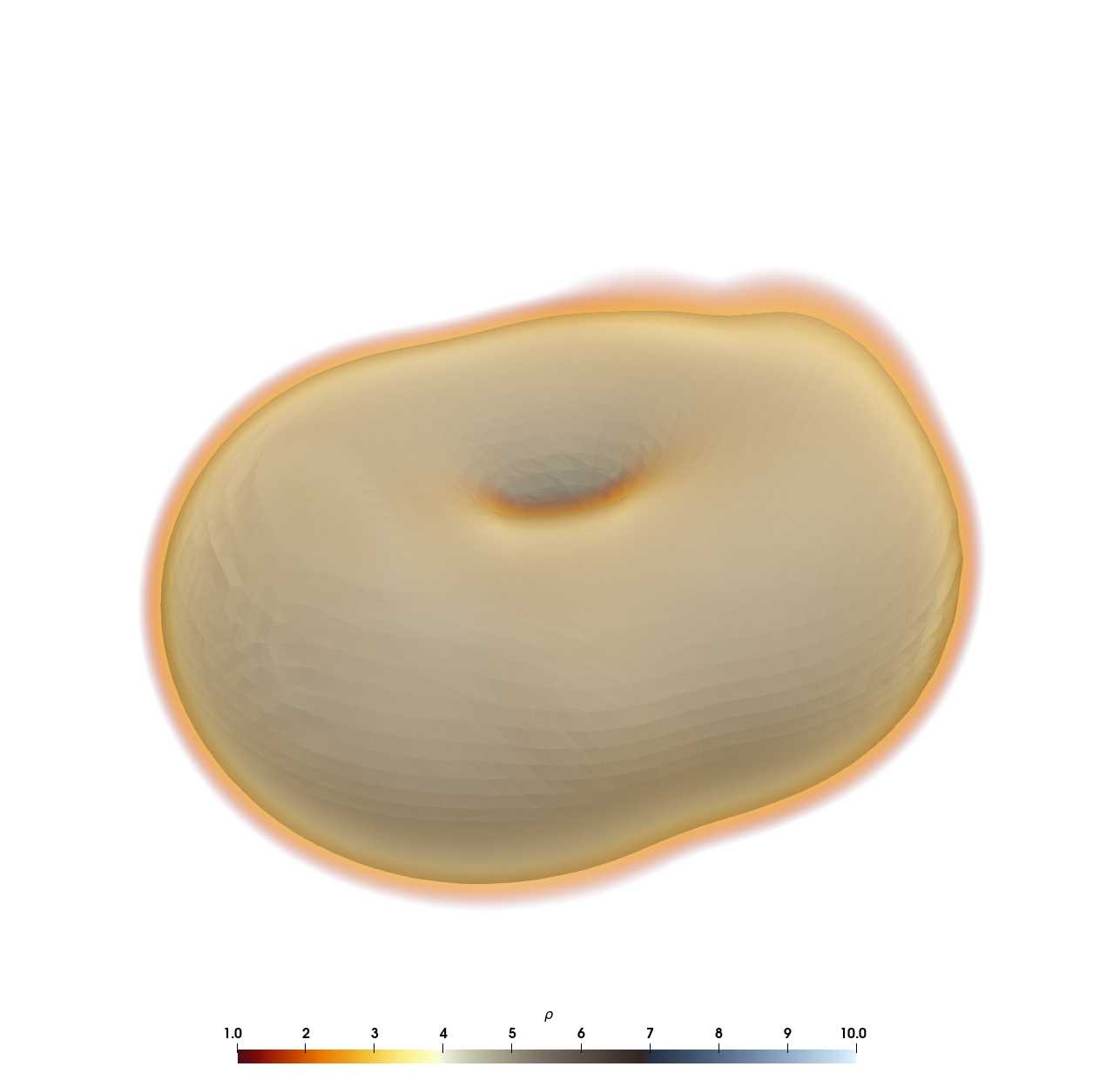}
    \end{subfigure}
    \begin{subfigure}{0.16\textwidth}
        \centering
        \includegraphics[width=\textwidth,trim=30mm 40mm 30mm 70mm, clip]{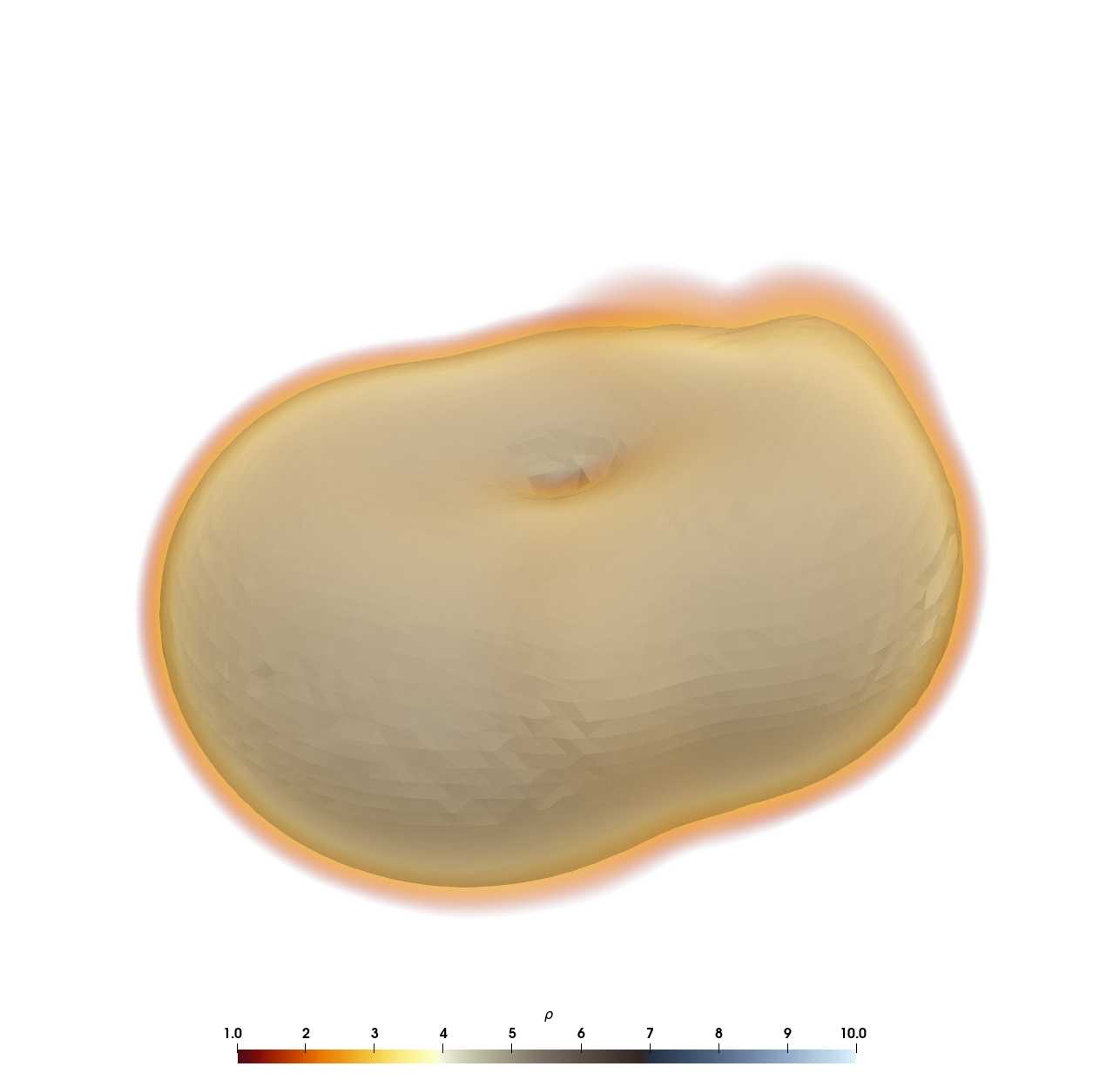}
    \end{subfigure}

    \begin{subfigure}{0.16\textwidth}
        \centering
        \includegraphics[width=\textwidth,trim=30mm 40mm 30mm 70mm, clip]{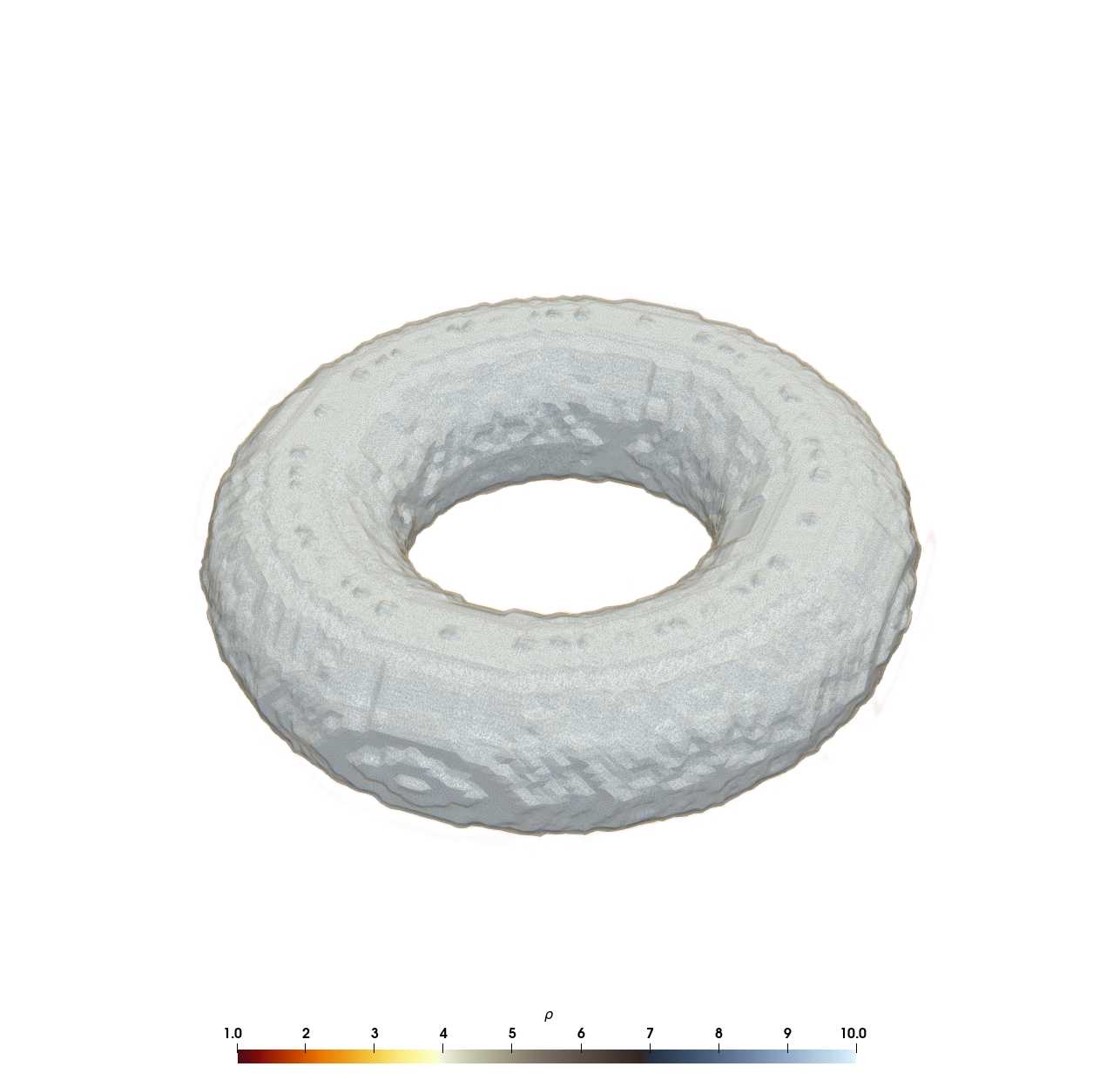}
        \caption*{$t=0$}
    \end{subfigure}
    \begin{subfigure}{0.16\textwidth}
        \centering
        \includegraphics[width=\textwidth,trim=30mm 40mm 30mm 70mm, clip]{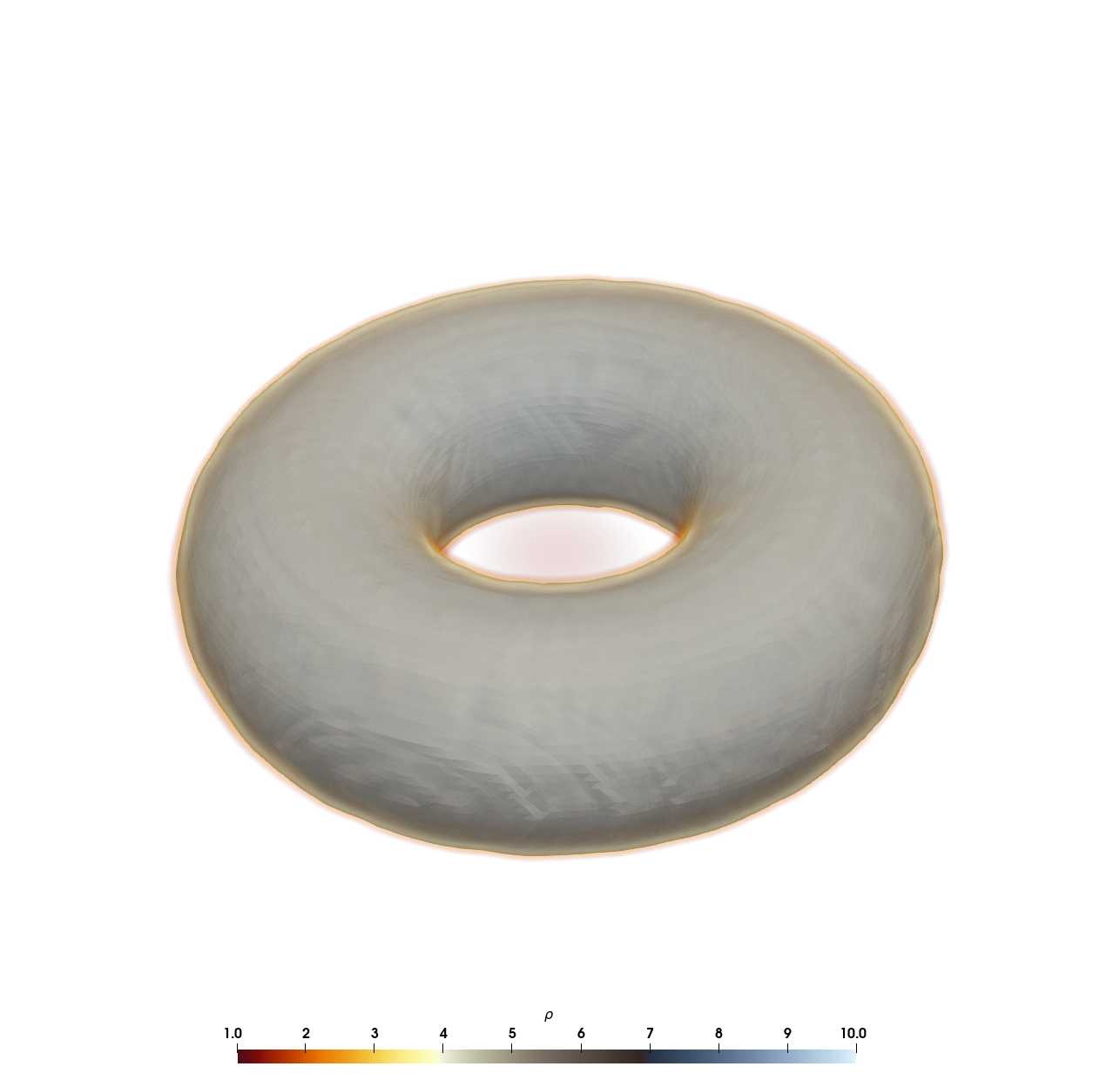}
        \caption*{$t=0.2$}
    \end{subfigure}
    \begin{subfigure}{0.16\textwidth}
        \centering
        \includegraphics[width=\textwidth,trim=30mm 40mm 30mm 70mm, clip]{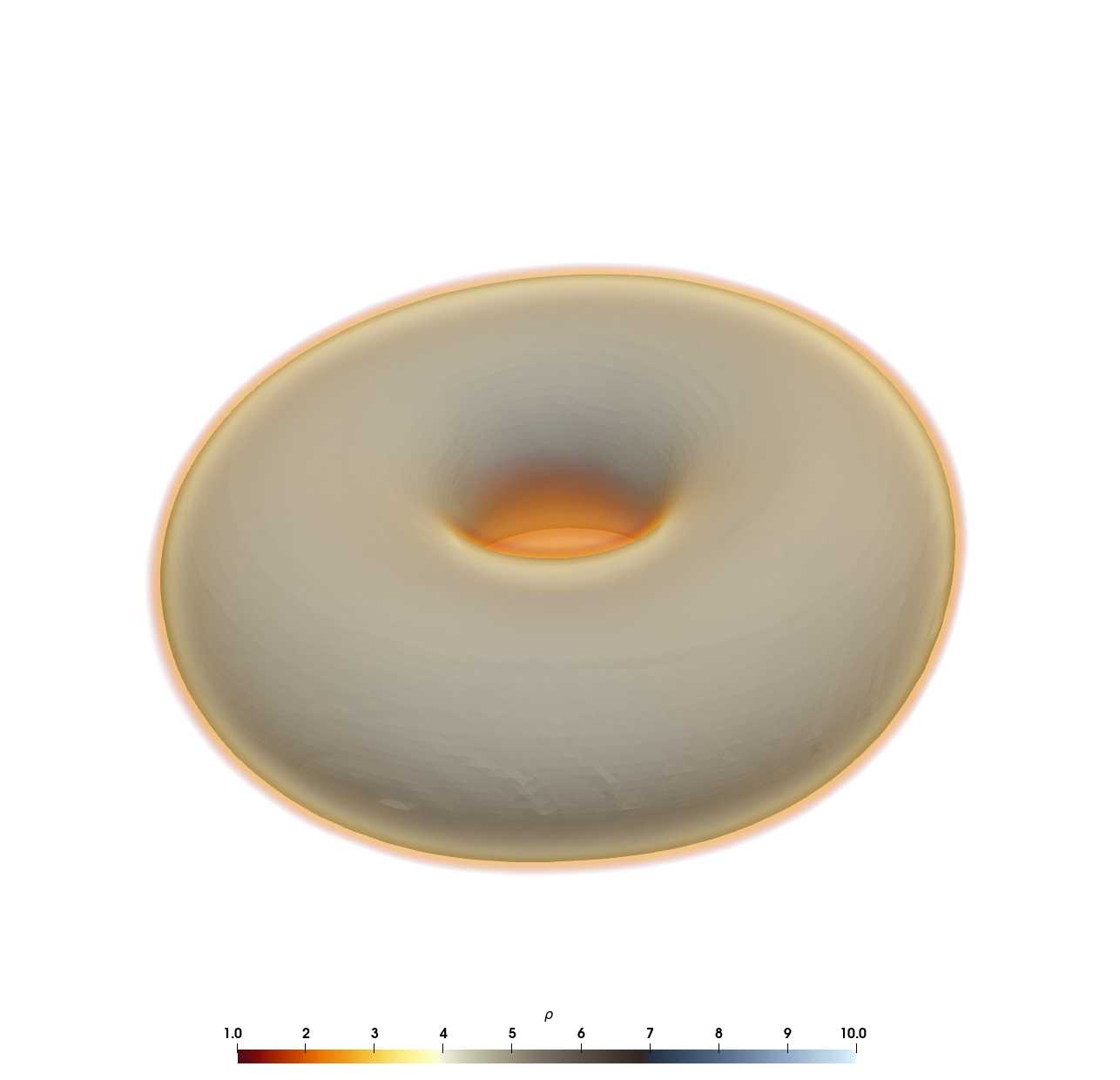}
        \caption*{$t=0.4$}
    \end{subfigure}
    \begin{subfigure}{0.16\textwidth}
        \centering
        \includegraphics[width=\textwidth,trim=30mm 40mm 30mm 70mm, clip]{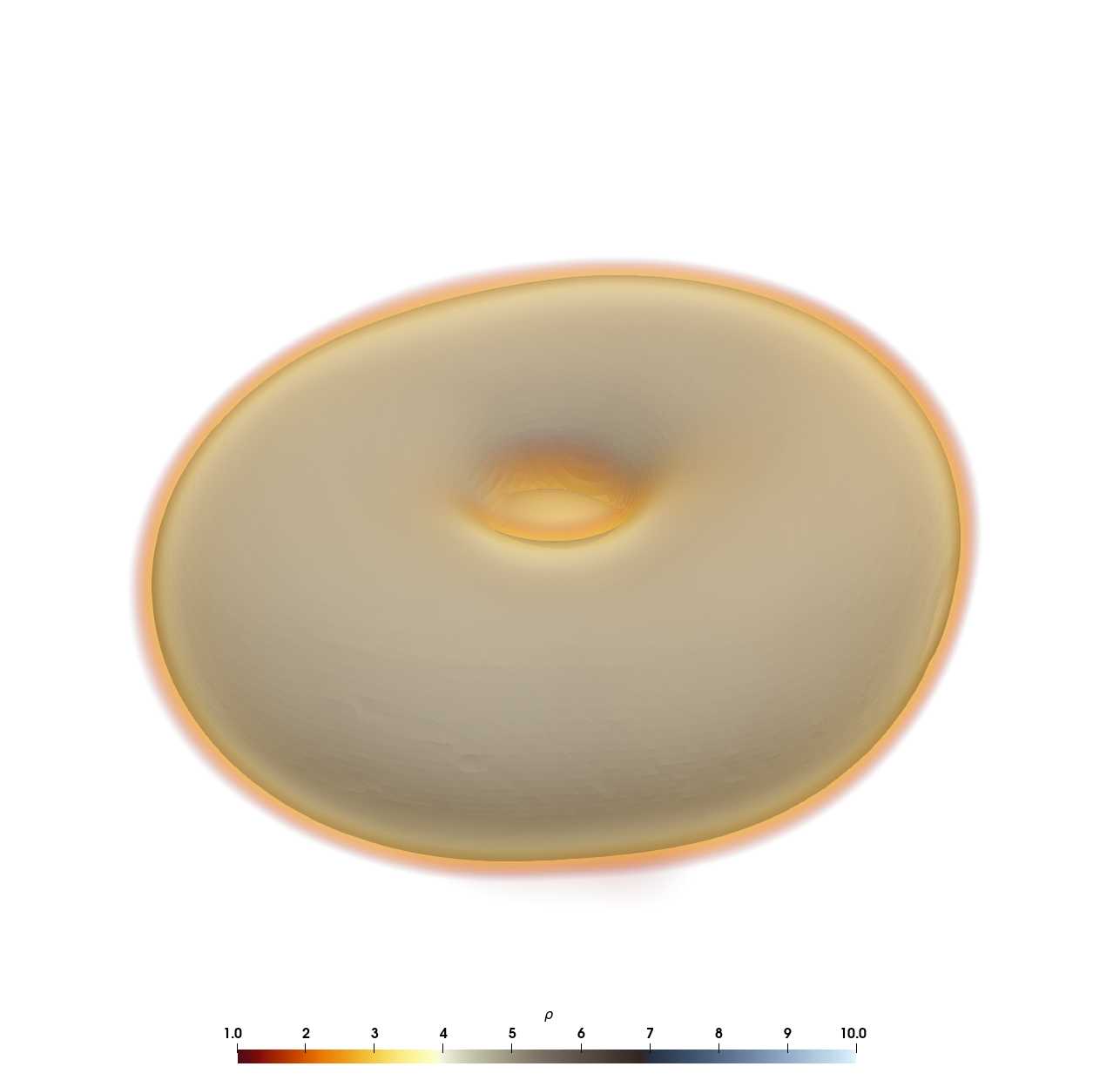}
        \caption*{$t=0.6$}
    \end{subfigure}
    \begin{subfigure}{0.16\textwidth}
        \centering
        \includegraphics[width=\textwidth,trim=30mm 40mm 30mm 70mm, clip]{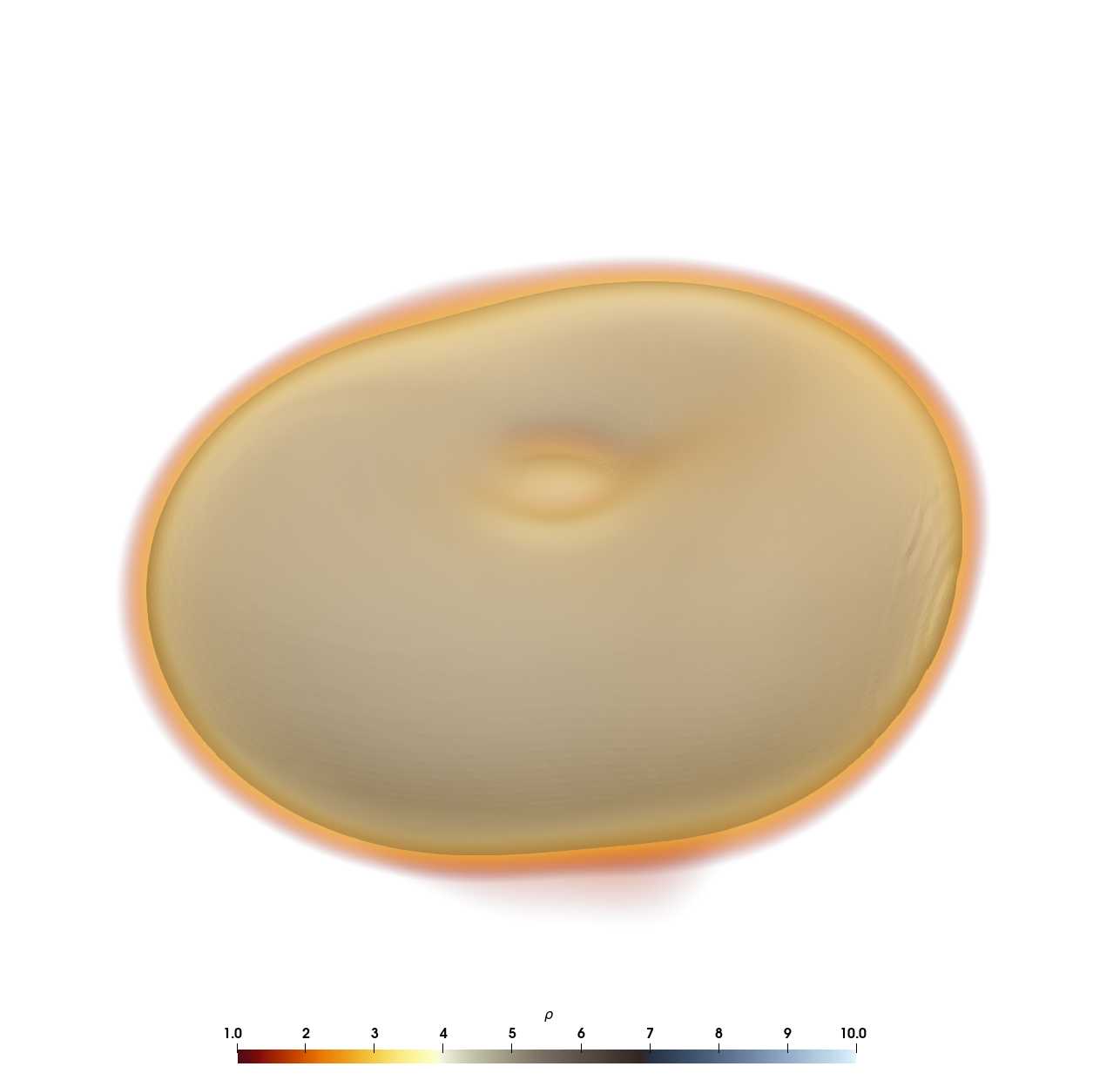}
        \caption*{$t=0.8$}
    \end{subfigure}
    \begin{subfigure}{0.16\textwidth}
        \centering
        \includegraphics[width=\textwidth,trim=30mm 40mm 30mm 70mm, clip]{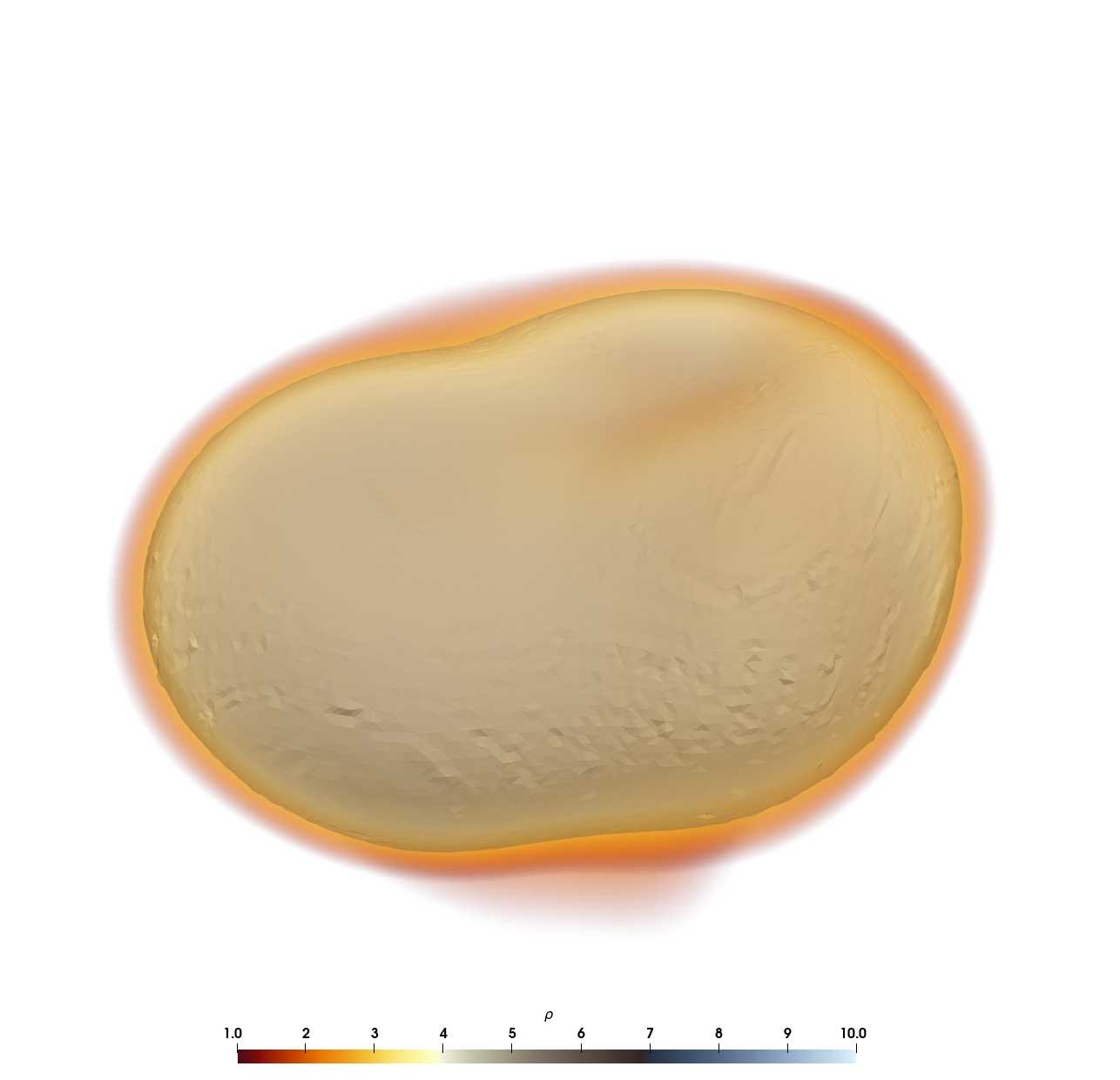}
        \caption*{$t=1.0$}
    \end{subfigure}
\subcaption{$\rho_1$}
\label{f:tdtb1}
\end{minipage}
\begin{minipage}[b]{\textwidth}
    \begin{minipage}[b]{\textwidth}
    \hfill
        \begin{subfigure}{\textwidth}
            \centering
            \includegraphics[width=\textwidth,trim=0mm 0mm 0mm 400mm, clip]{figures/reaction/doubleTorusBunny/pdhg0.0000..jpg}
        \end{subfigure}
    \end{minipage}
    
    \begin{subfigure}{0.16\textwidth}
        \centering
        \includegraphics[width=\textwidth,trim=30mm 40mm 30mm 70mm, clip]{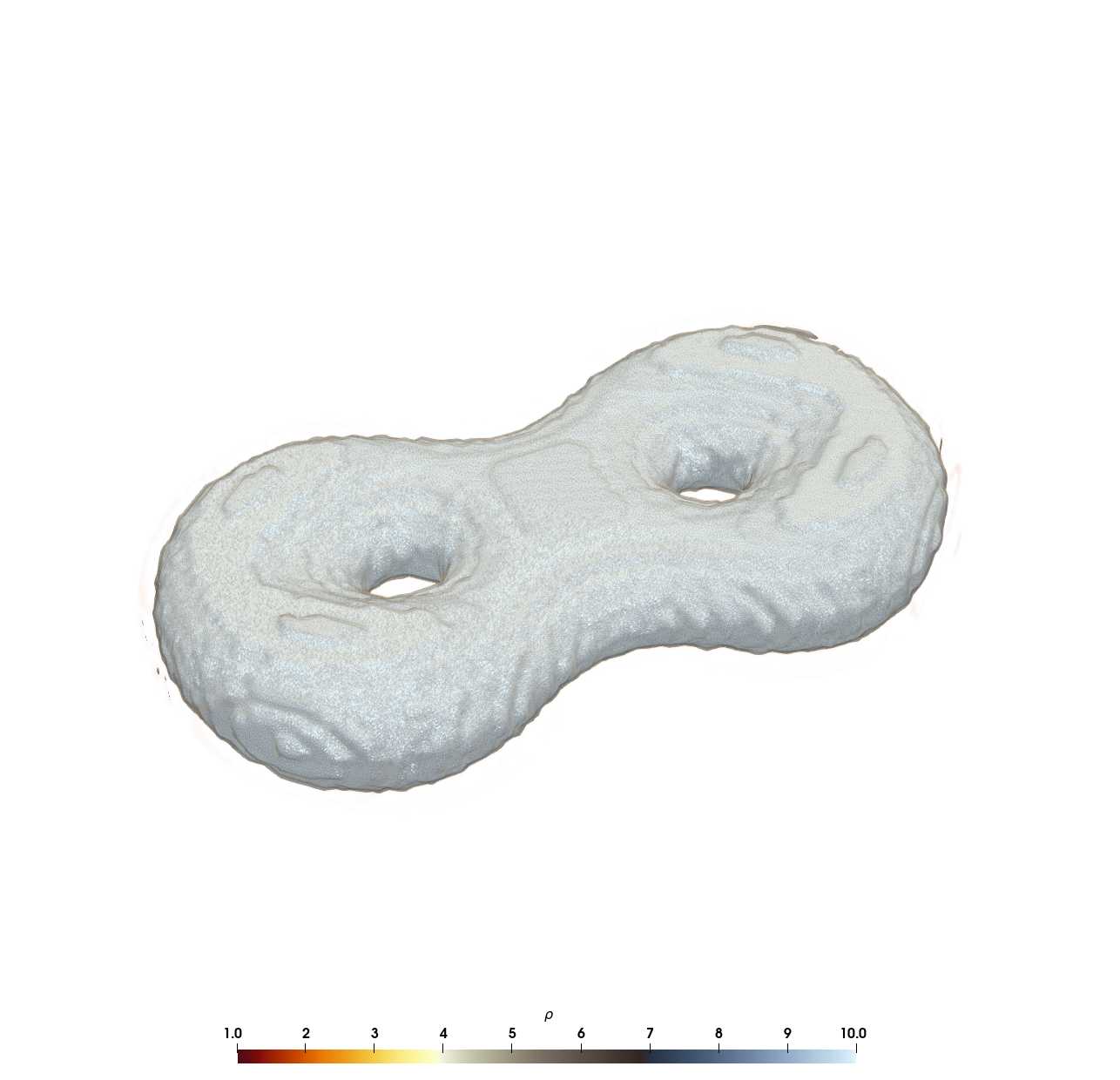}
    \end{subfigure}
    \begin{subfigure}{0.16\textwidth}
        \centering
        \includegraphics[width=\textwidth,trim=30mm 40mm 30mm 70mm, clip]{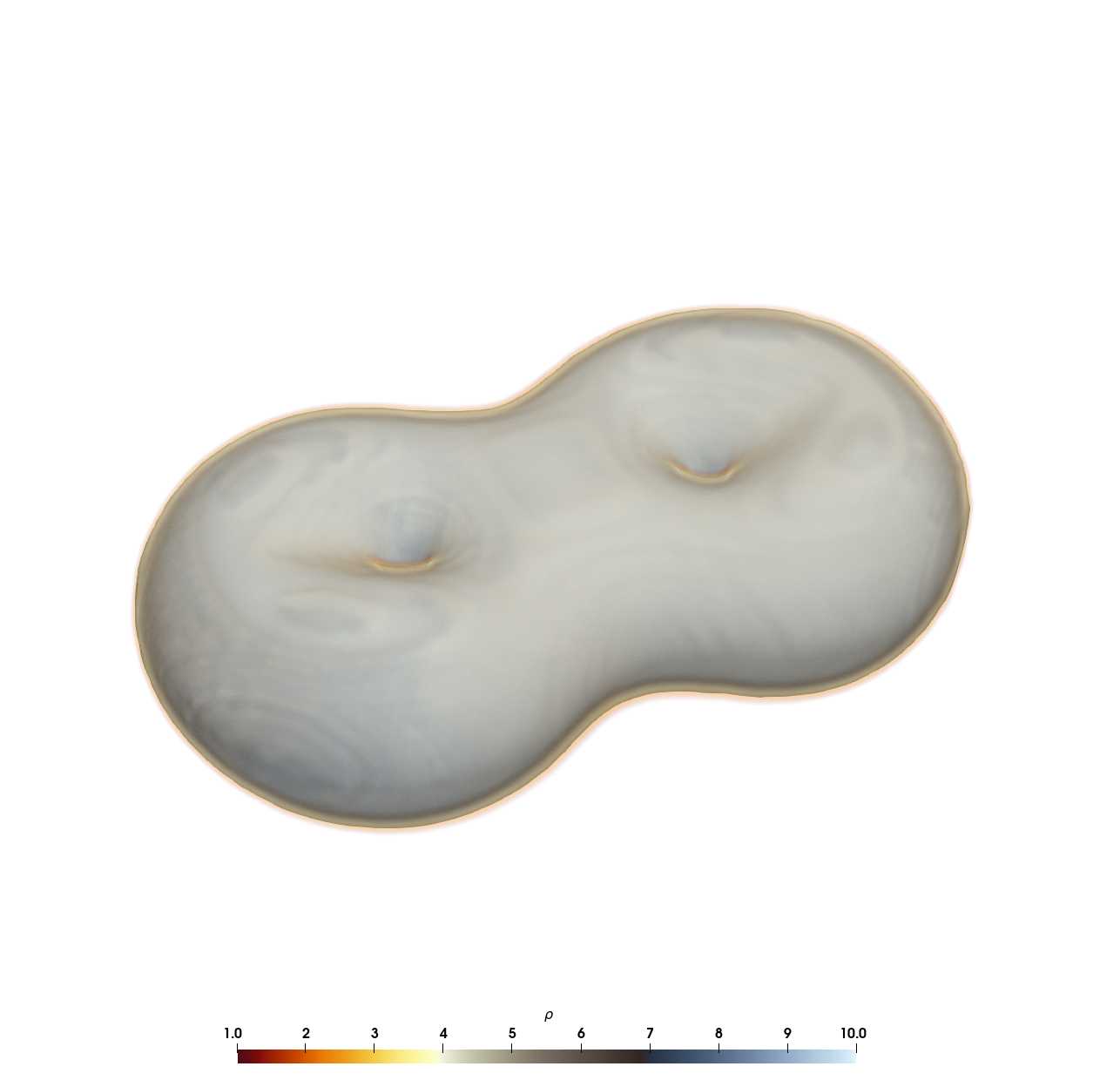}
    \end{subfigure}
    \begin{subfigure}{0.16\textwidth}
        \centering
        \includegraphics[width=\textwidth,trim=30mm 40mm 30mm 70mm, clip]{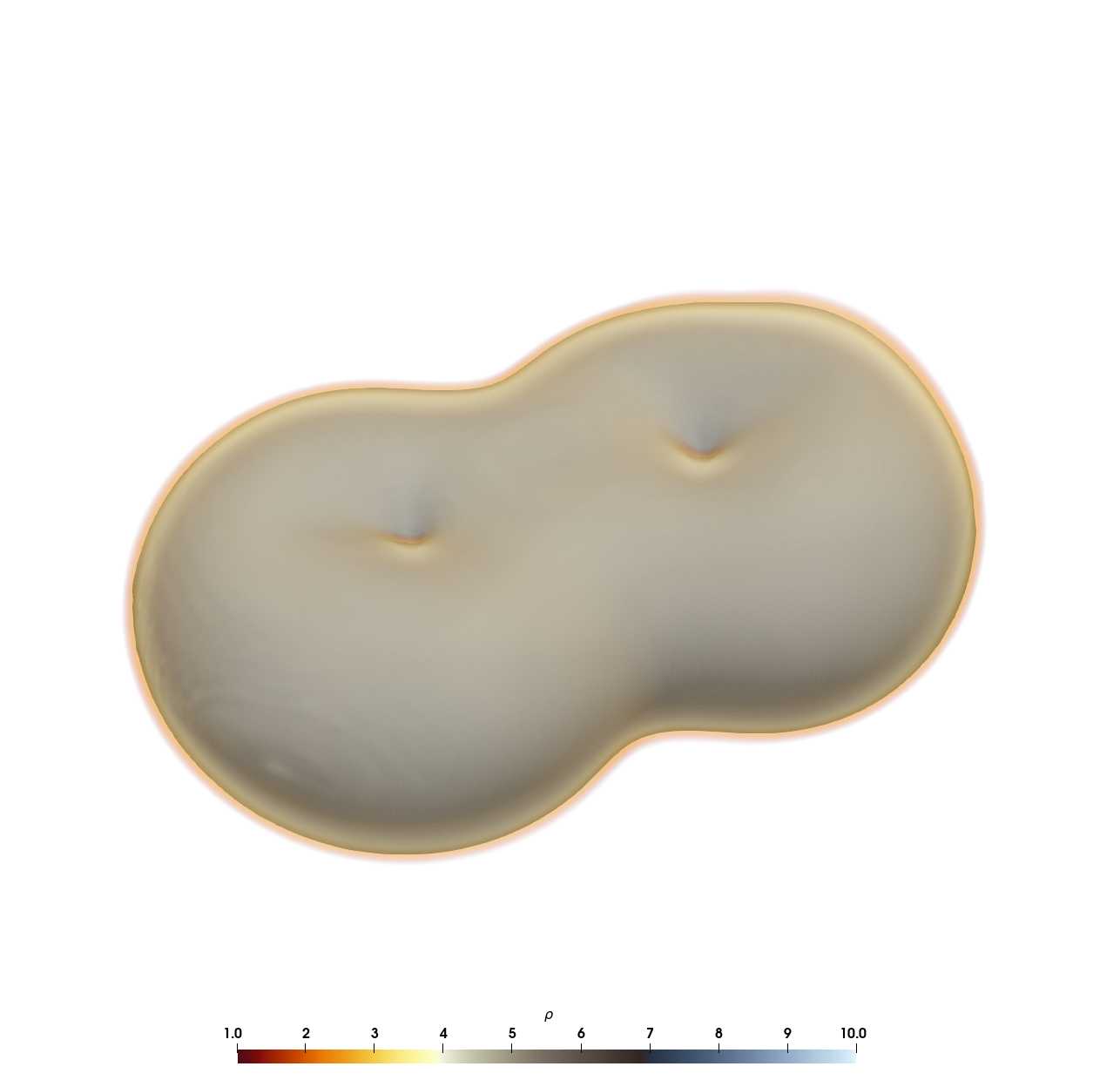}
    \end{subfigure}
    \begin{subfigure}{0.16\textwidth}
        \centering
        \includegraphics[width=\textwidth,trim=30mm 40mm 30mm 70mm, clip]{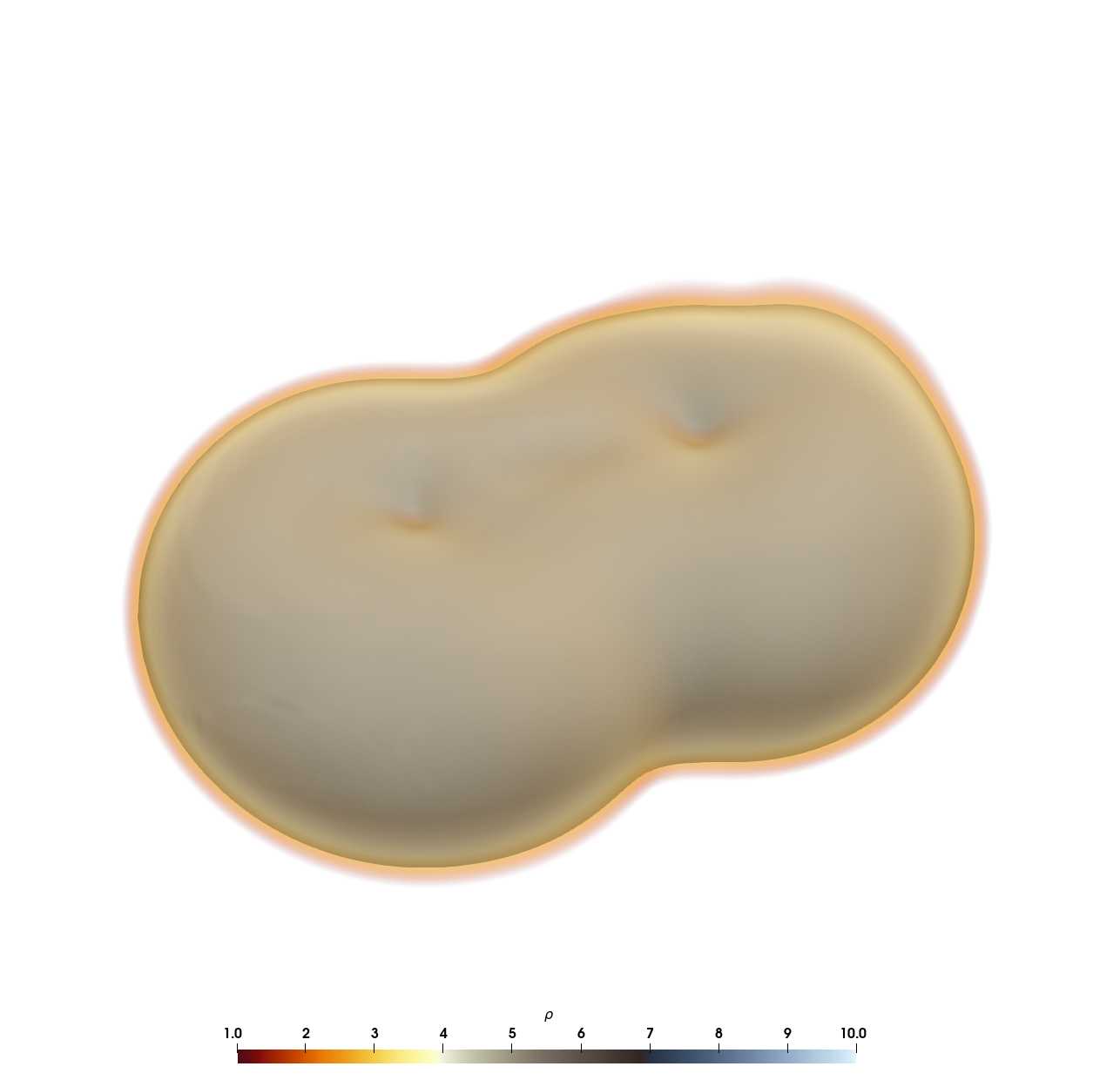}
    \end{subfigure}
    \begin{subfigure}{0.16\textwidth}
        \centering
        \includegraphics[width=\textwidth,trim=30mm 40mm 30mm 70mm, clip]{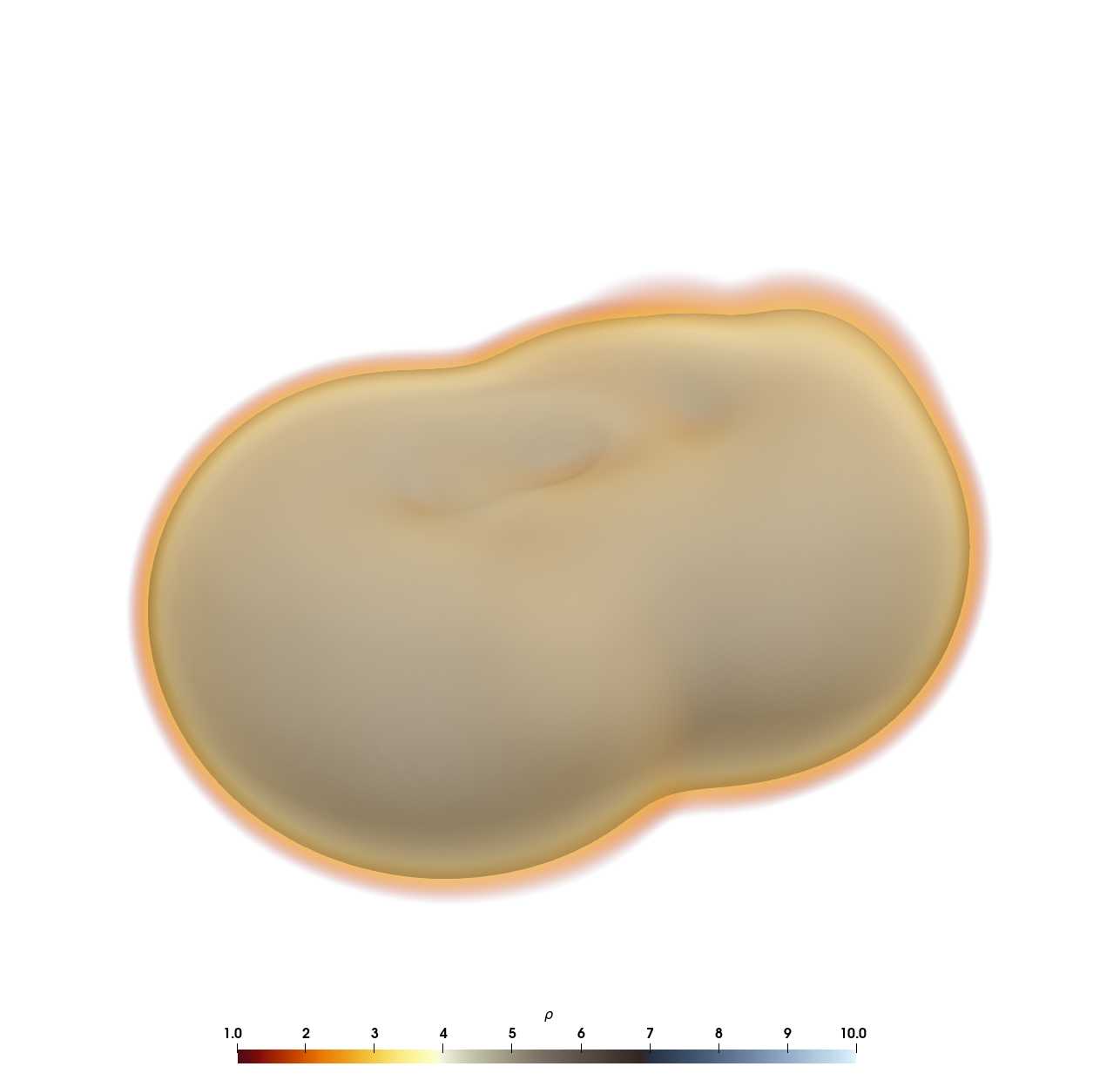}
    \end{subfigure}
    \begin{subfigure}{0.16\textwidth}
        \centering
        \includegraphics[width=\textwidth,trim=30mm 40mm 30mm 70mm, clip]{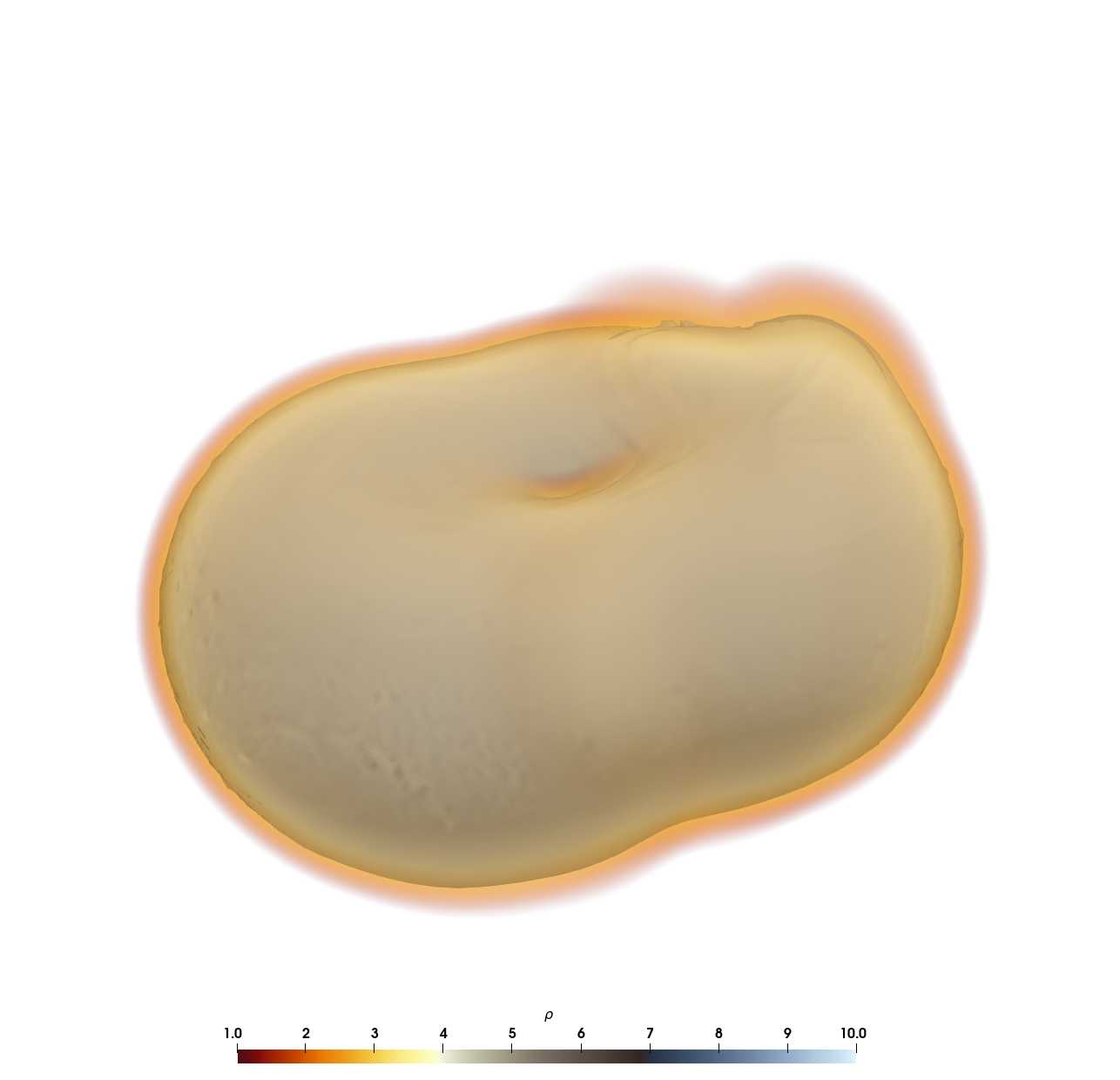}
    \end{subfigure}

    \begin{subfigure}{0.16\textwidth}
        \centering
        \includegraphics[width=\textwidth,trim=30mm 40mm 30mm 70mm, clip]{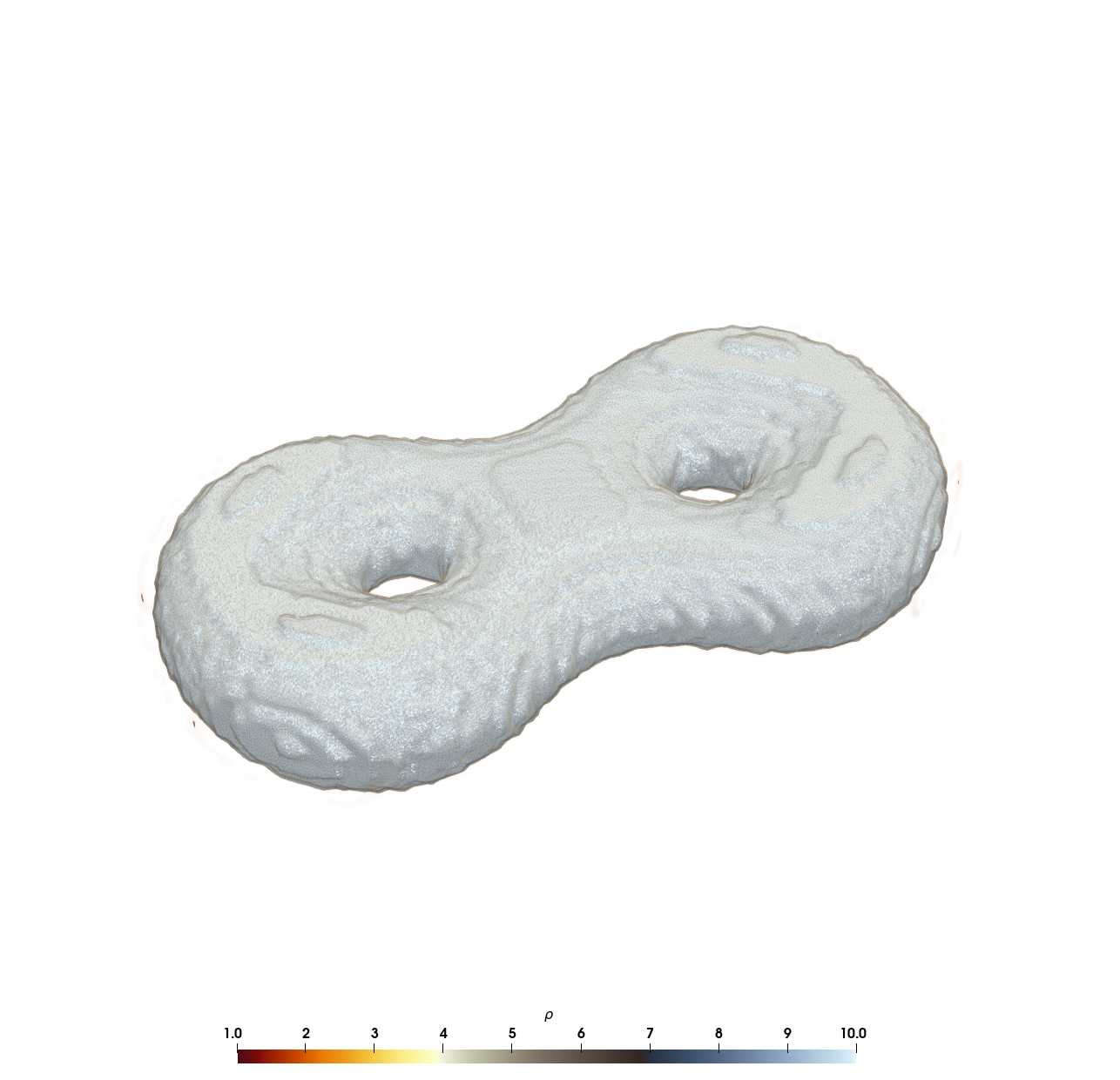}
        \caption*{$t=0$}
    \end{subfigure}
    \begin{subfigure}{0.16\textwidth}
        \centering
        \includegraphics[width=\textwidth,trim=30mm 40mm 30mm 70mm, clip]{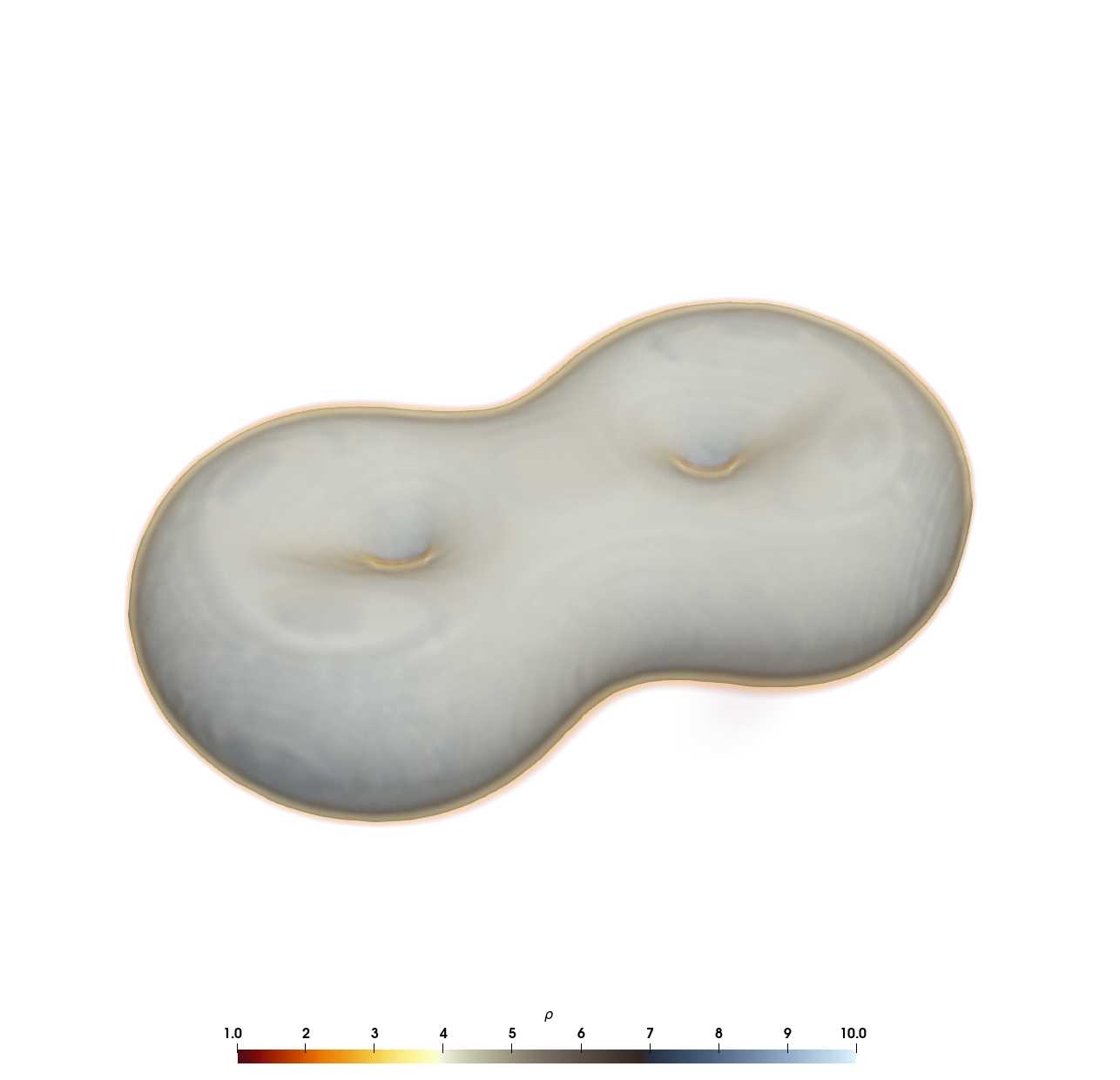}
        \caption*{$t=0.2$}
    \end{subfigure}
    \begin{subfigure}{0.16\textwidth}
        \centering
        \includegraphics[width=\textwidth,trim=30mm 40mm 30mm 70mm, clip]{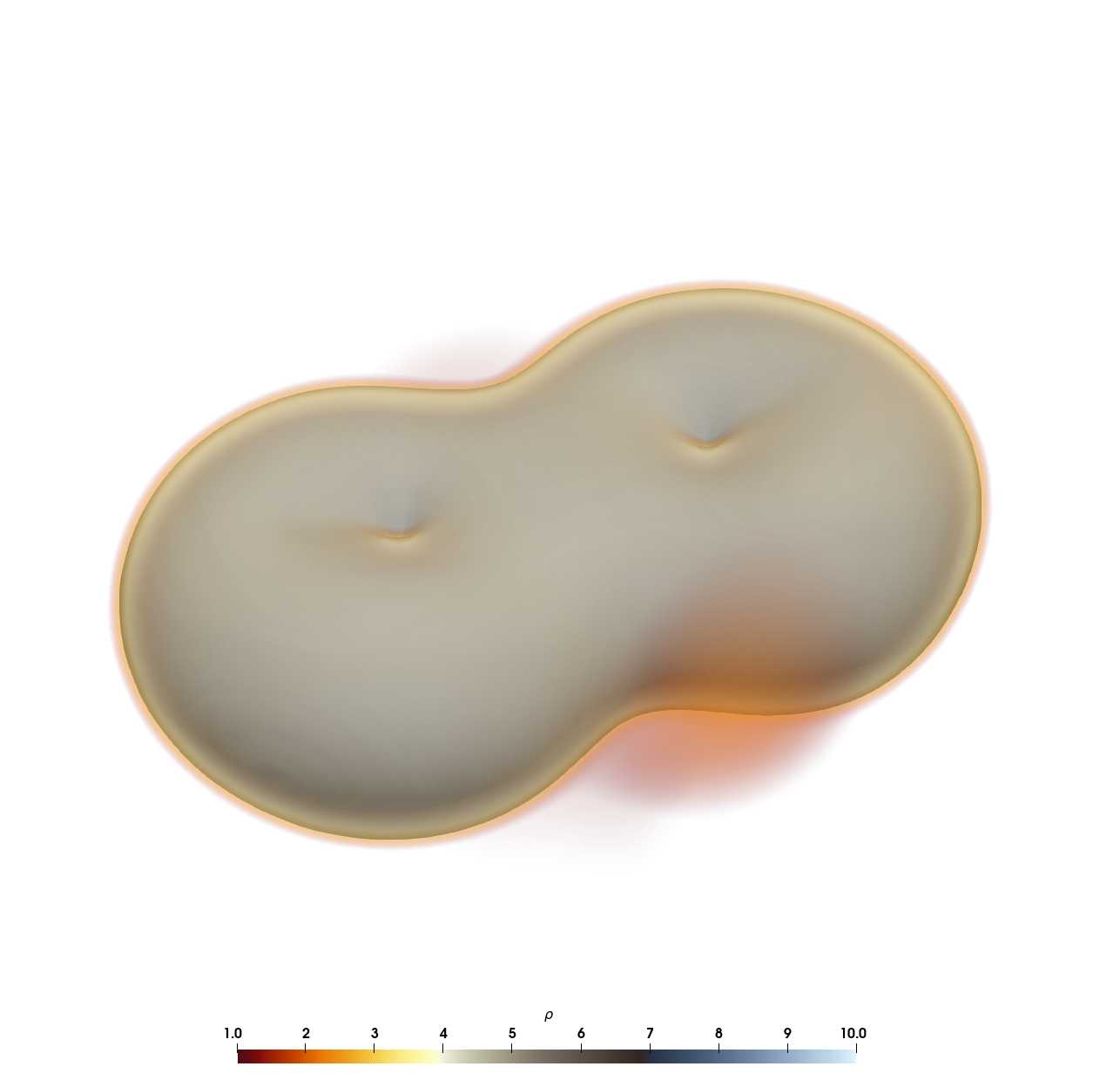}
        \caption*{$t=0.4$}
    \end{subfigure}
    \begin{subfigure}{0.16\textwidth}
        \centering
        \includegraphics[width=\textwidth,trim=30mm 40mm 30mm 70mm, clip]{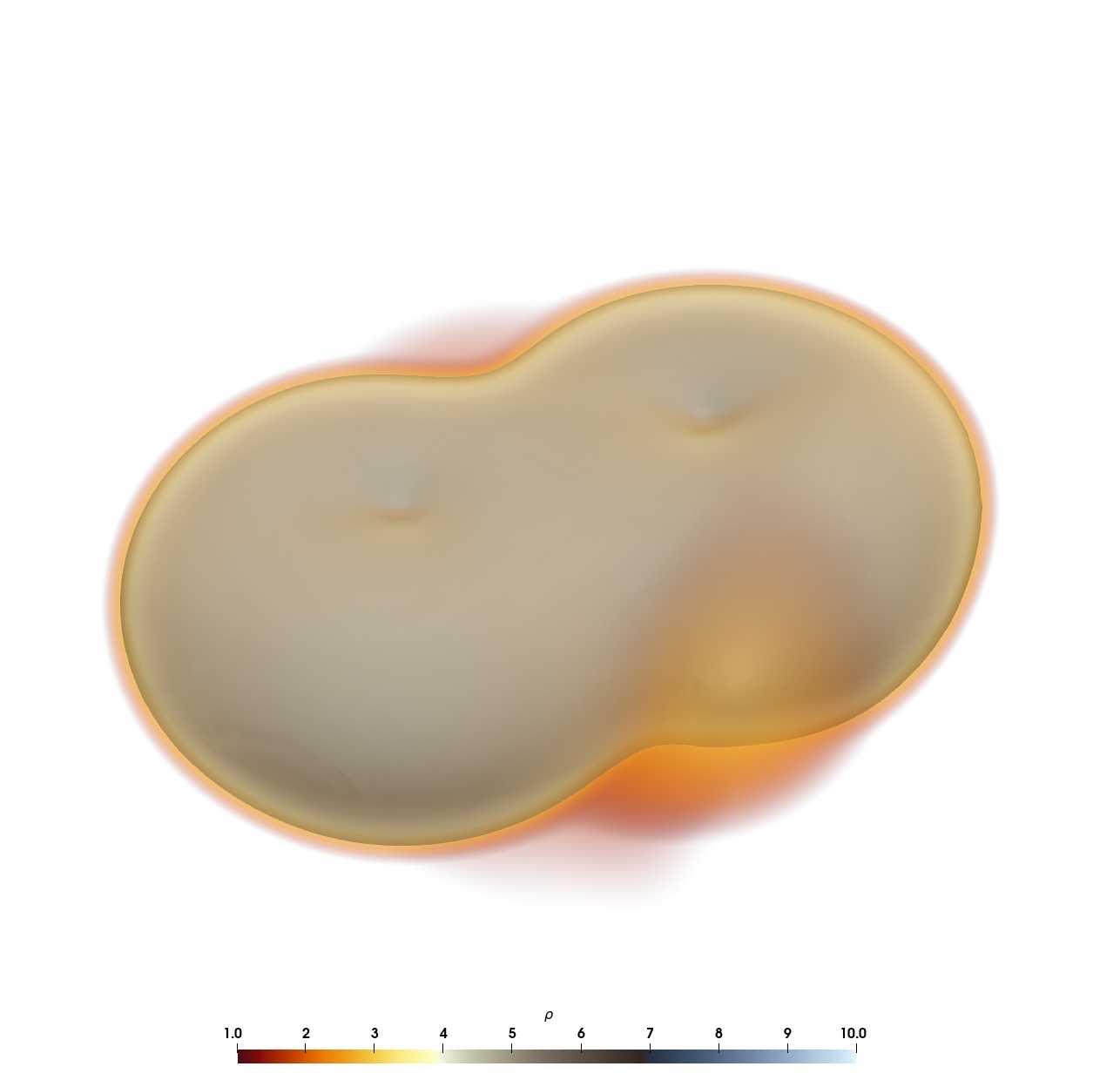}
        \caption*{$t=0.6$}
    \end{subfigure}
    \begin{subfigure}{0.16\textwidth}
        \centering
        \includegraphics[width=\textwidth,trim=30mm 40mm 30mm 70mm, clip]{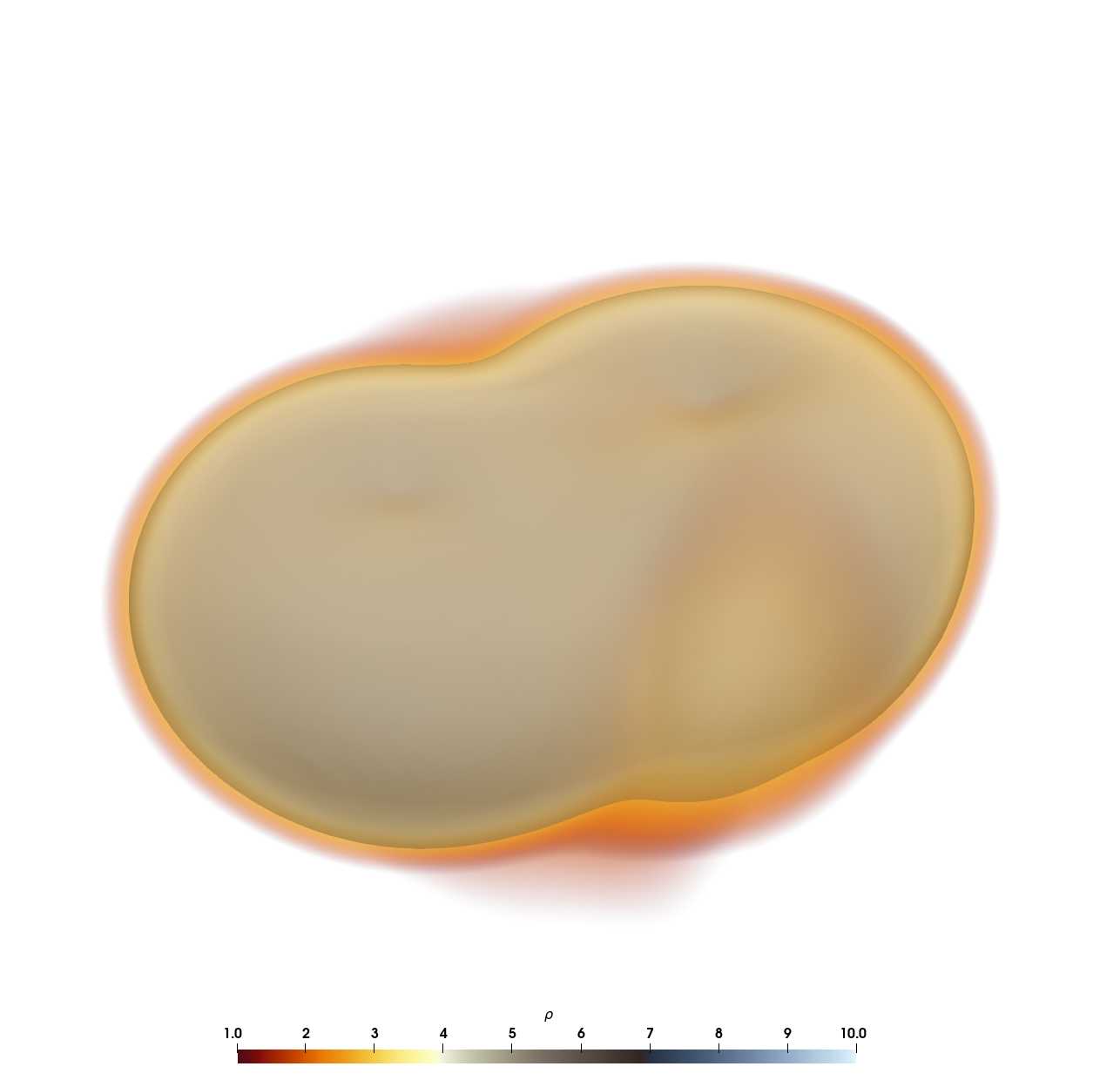}
        \caption*{$t=0.8$}
    \end{subfigure}
    \begin{subfigure}{0.16\textwidth}
        \centering
        \includegraphics[width=\textwidth,trim=30mm 40mm 30mm 70mm, clip]{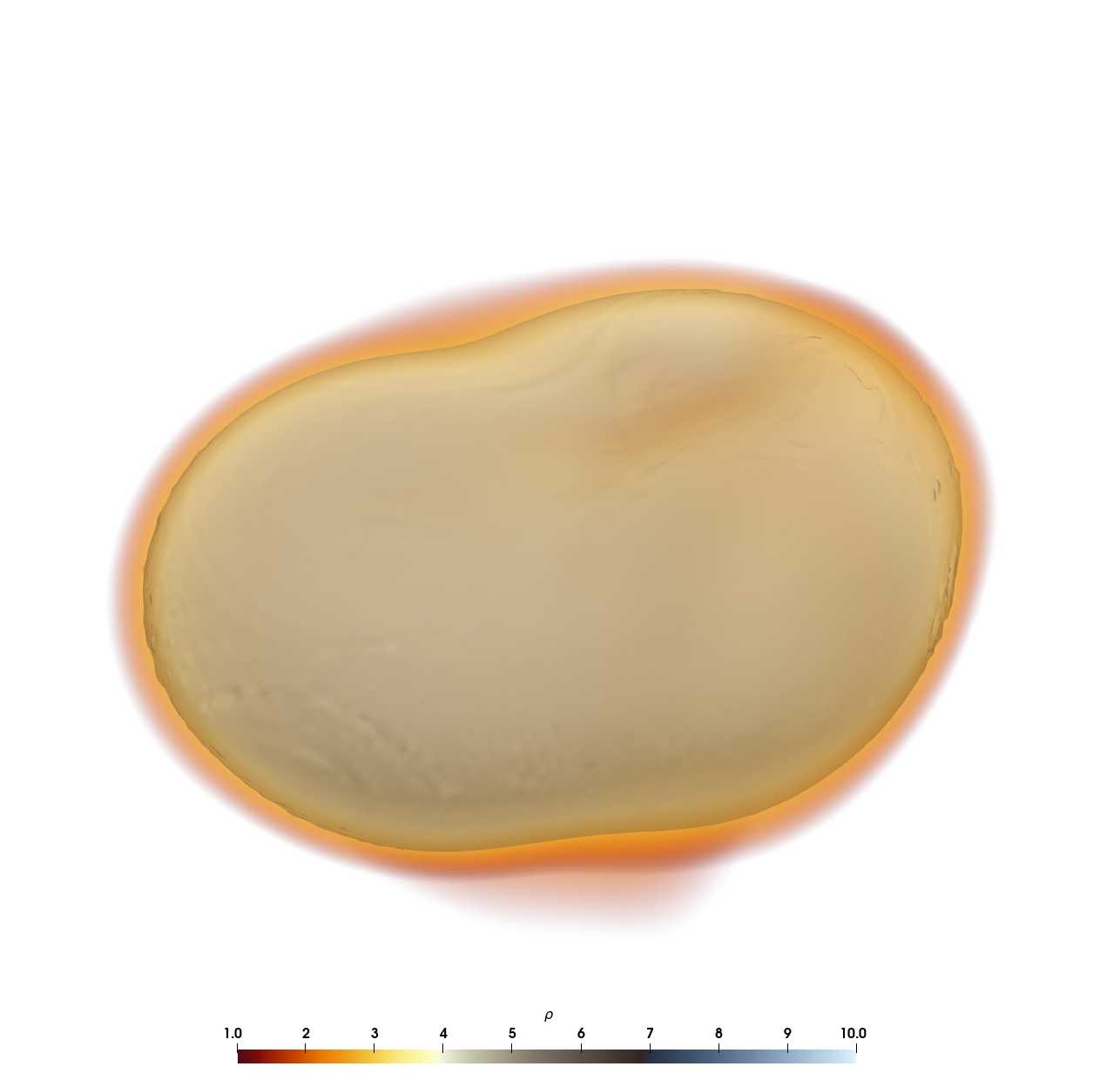}
        \caption*{$t=1.0$}
    \end{subfigure}

\subcaption{$\rho_2$}
\label{f:tdtb2}
\end{minipage}

\begin{minipage}[b]{\textwidth}

    \begin{minipage}[b]{\textwidth}
    \hfill
        \begin{subfigure}{\textwidth}
            \centering
            \includegraphics[width=\textwidth,trim=0mm 0mm 0mm 400mm, clip]{figures/reaction/doubleTorusBunny/pdhg0.0000..jpg}
        \end{subfigure}
    \end{minipage}
    
    \begin{subfigure}{0.16\textwidth}
        \centering
        \includegraphics[width=\textwidth,trim=30mm 40mm 30mm 70mm, clip]{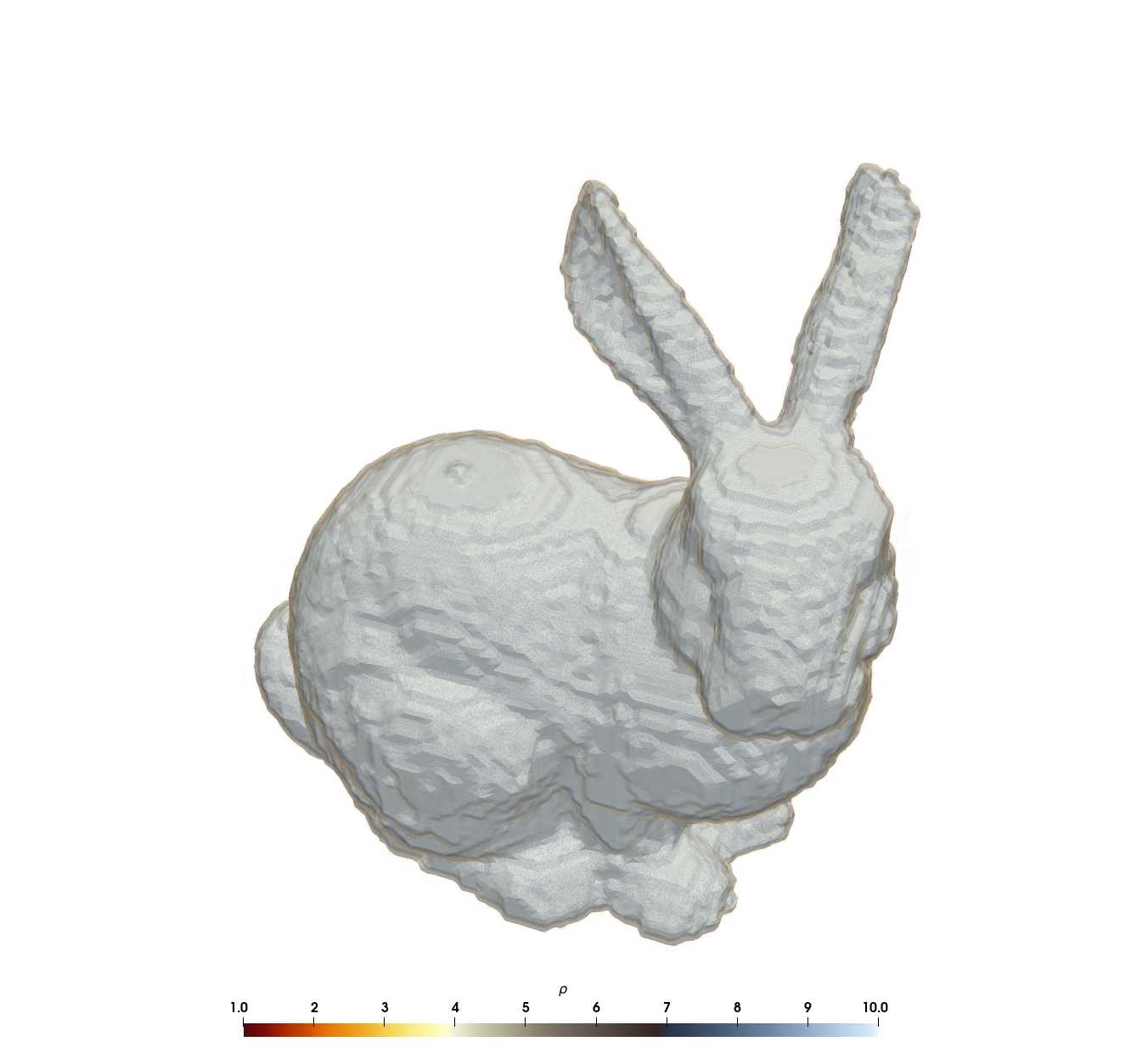}
    \end{subfigure}
    \begin{subfigure}{0.16\textwidth}
        \centering
        \includegraphics[width=\textwidth,trim=30mm 40mm 30mm 70mm, clip]{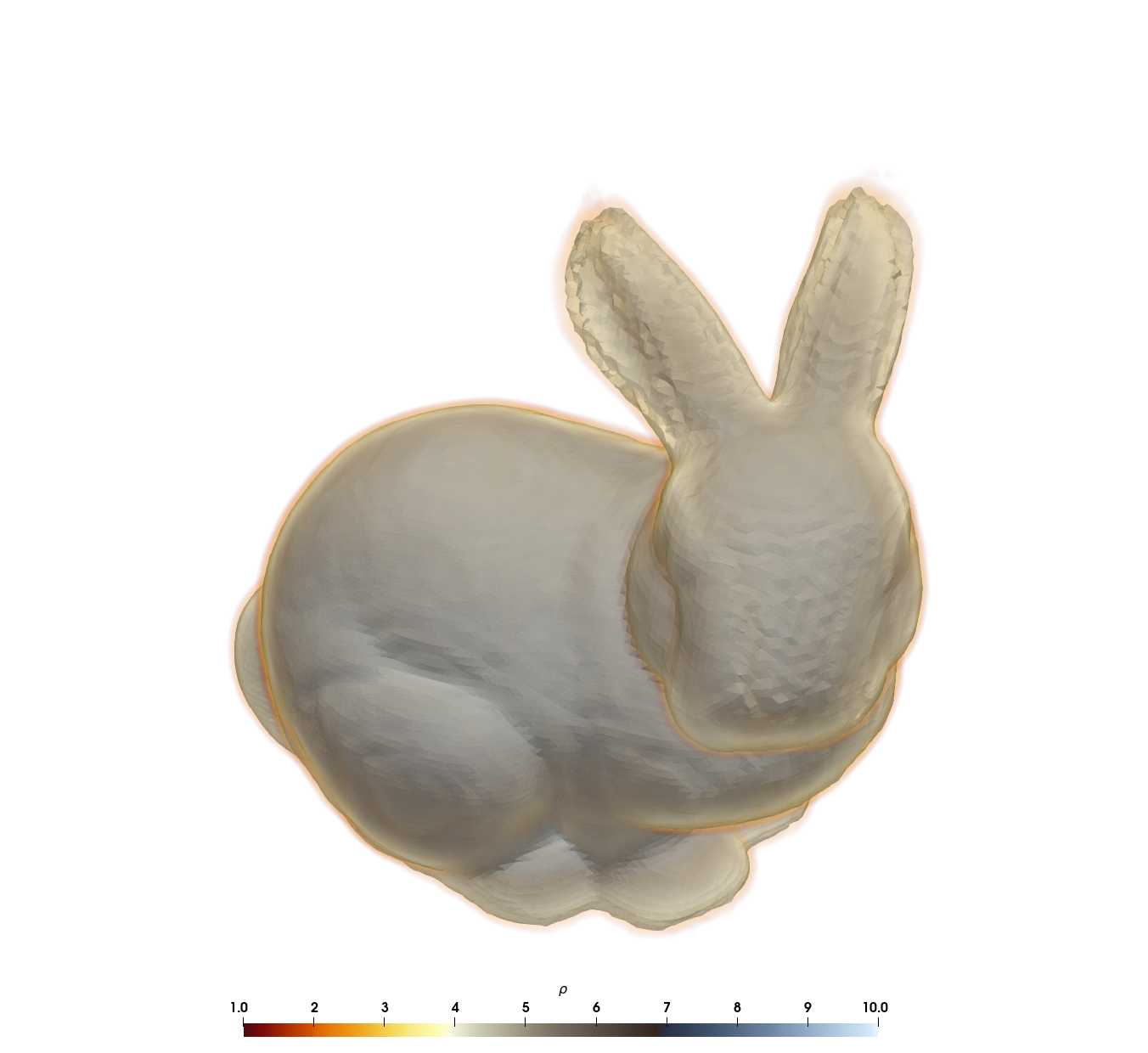}
    \end{subfigure}
    \begin{subfigure}{0.16\textwidth}
        \centering
        \includegraphics[width=\textwidth,trim=30mm 40mm 30mm 70mm, clip]{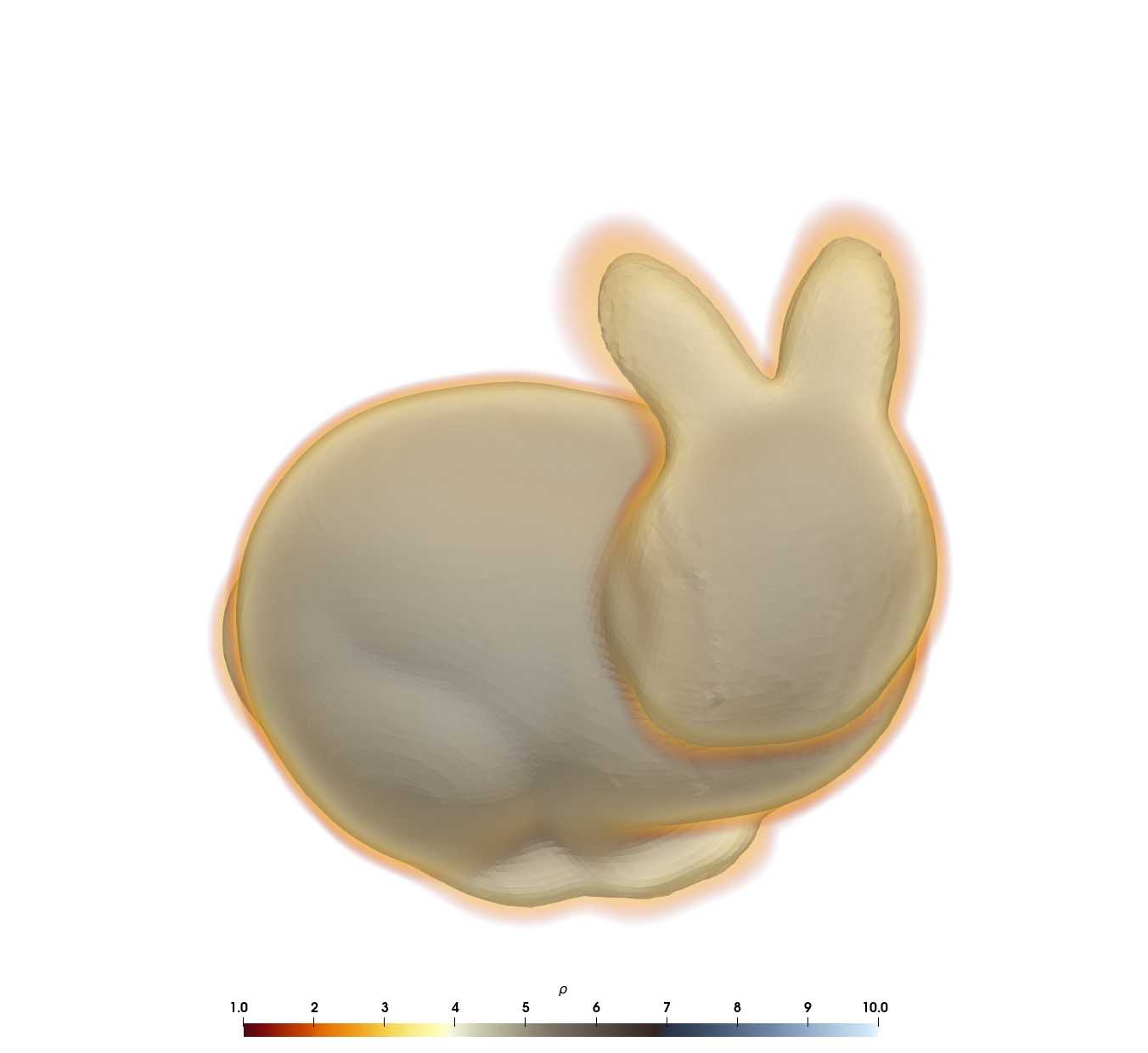}
    \end{subfigure}
    \begin{subfigure}{0.16\textwidth}
        \centering
        \includegraphics[width=\textwidth,trim=30mm 40mm 30mm 70mm, clip]{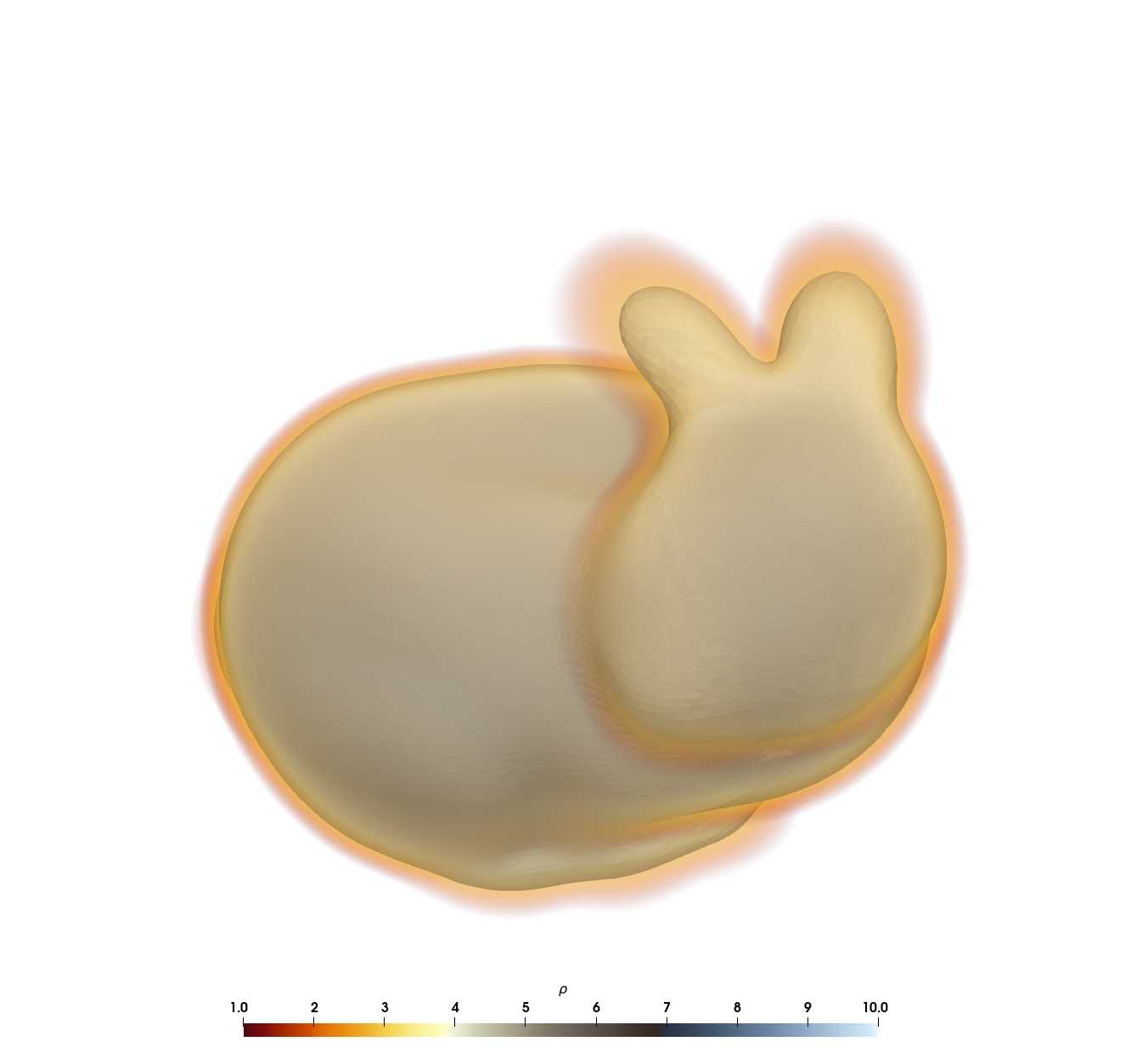}
    \end{subfigure}
    \begin{subfigure}{0.16\textwidth}
        \centering
        \includegraphics[width=\textwidth,trim=30mm 40mm 30mm 70mm, clip]{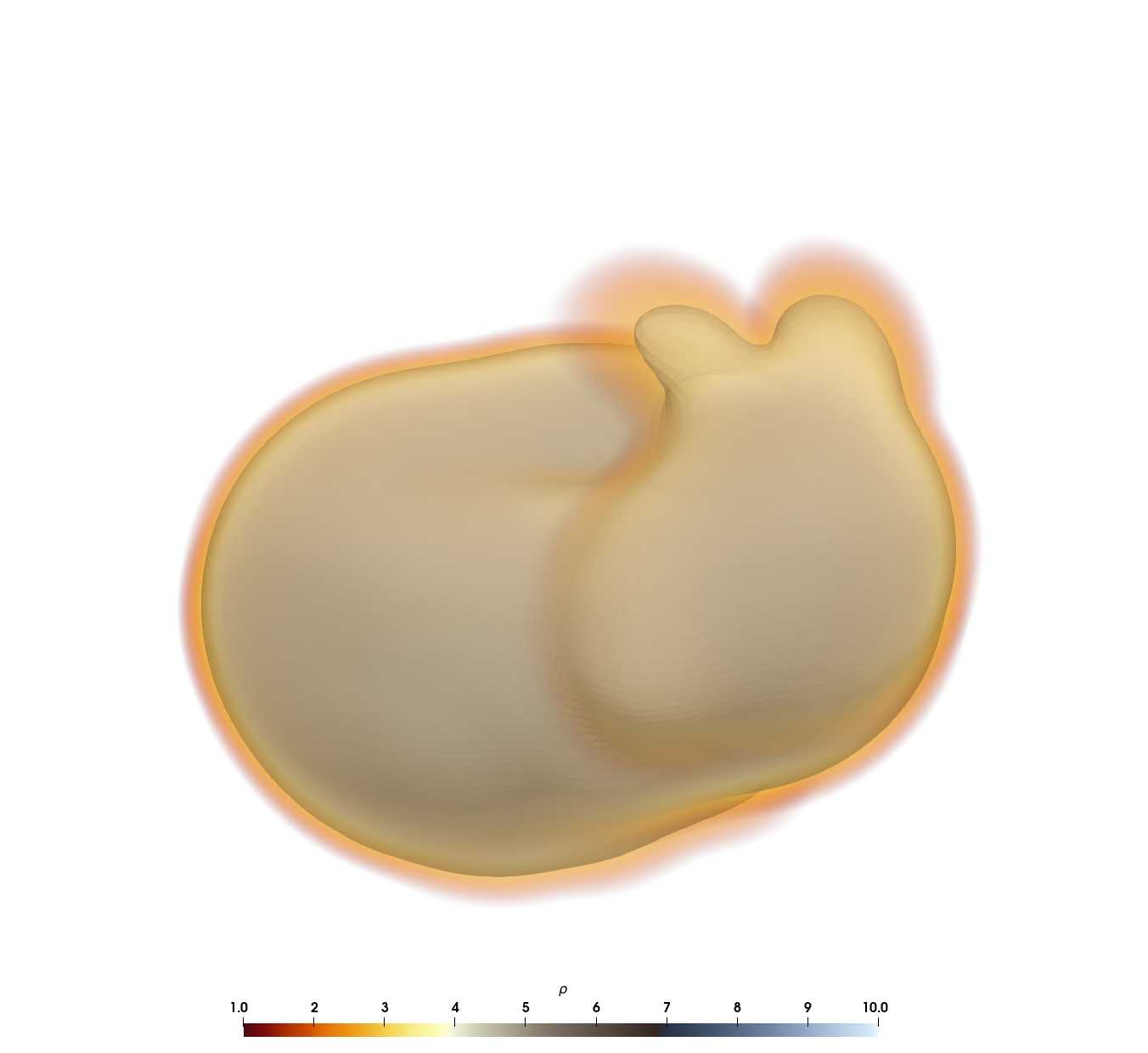}
    \end{subfigure}
    \begin{subfigure}{0.16\textwidth}
        \centering
        \includegraphics[width=\textwidth,trim=30mm 40mm 30mm 70mm, clip]{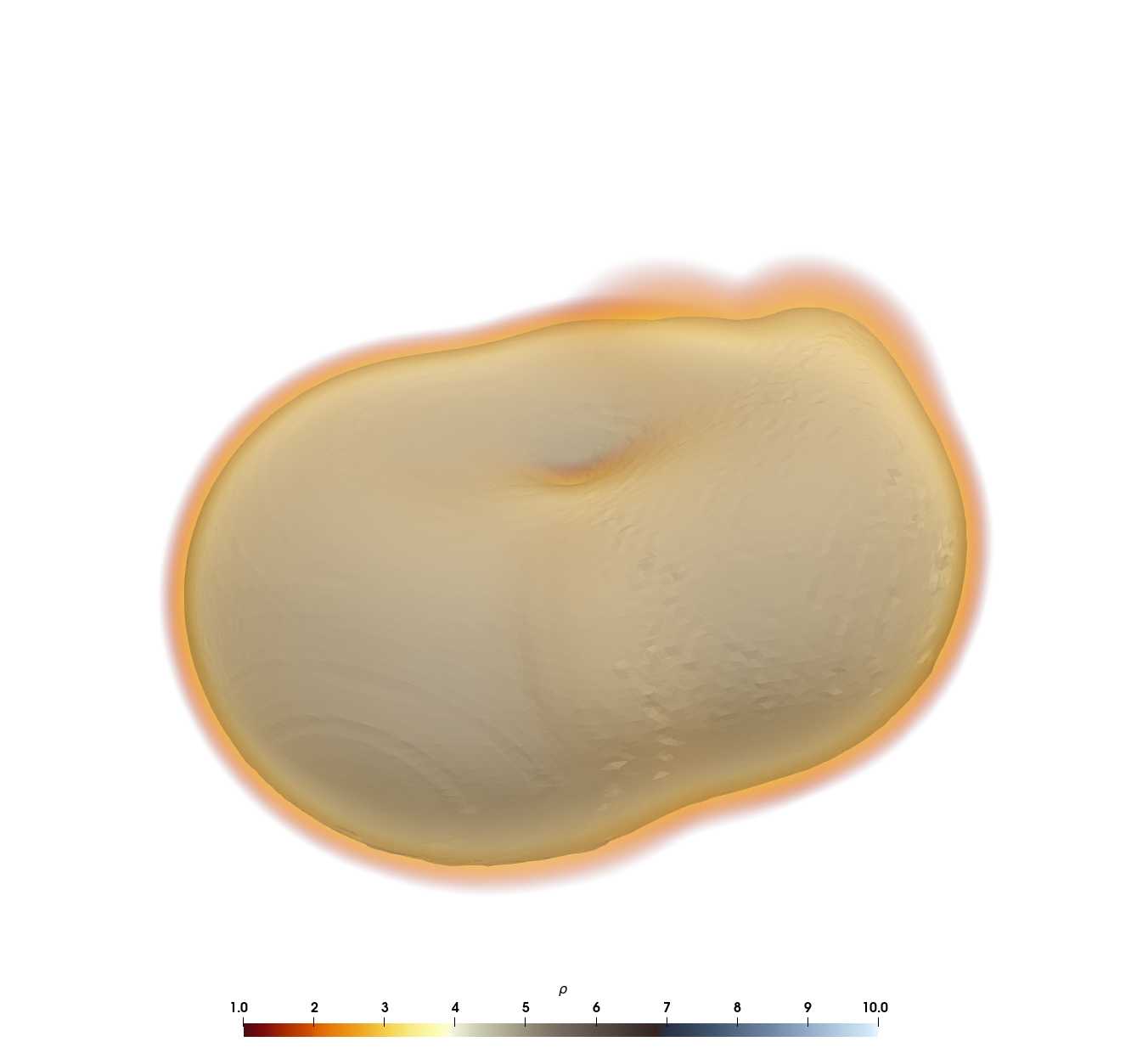}
    \end{subfigure}

    \begin{subfigure}{0.16\textwidth}
        \centering
        \includegraphics[width=\textwidth,trim=30mm 40mm 30mm 70mm, clip]{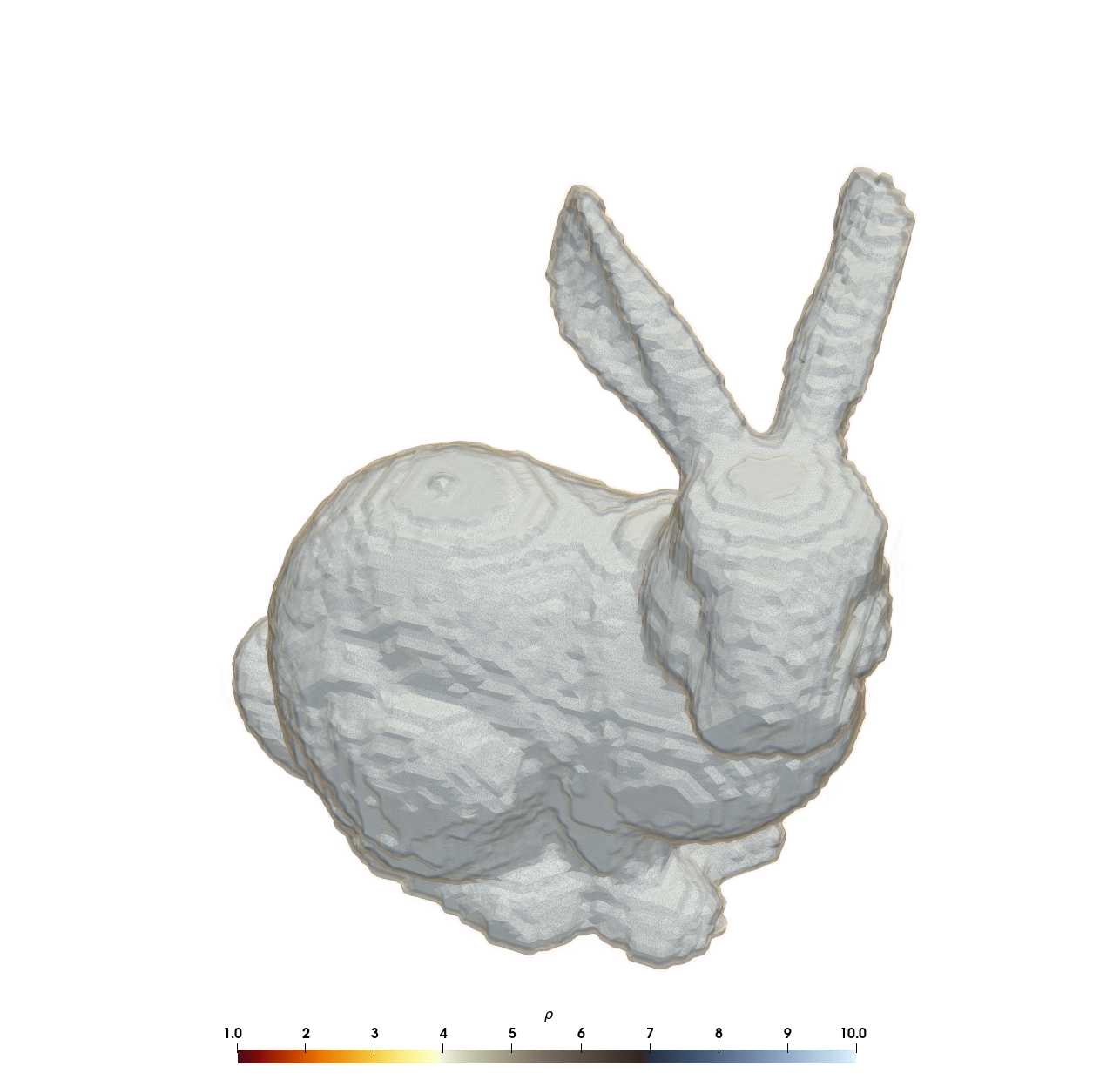}
        \caption*{$t=0$}
    \end{subfigure}
    \begin{subfigure}{0.16\textwidth}
        \centering
        \includegraphics[width=\textwidth,trim=30mm 40mm 30mm 70mm, clip]{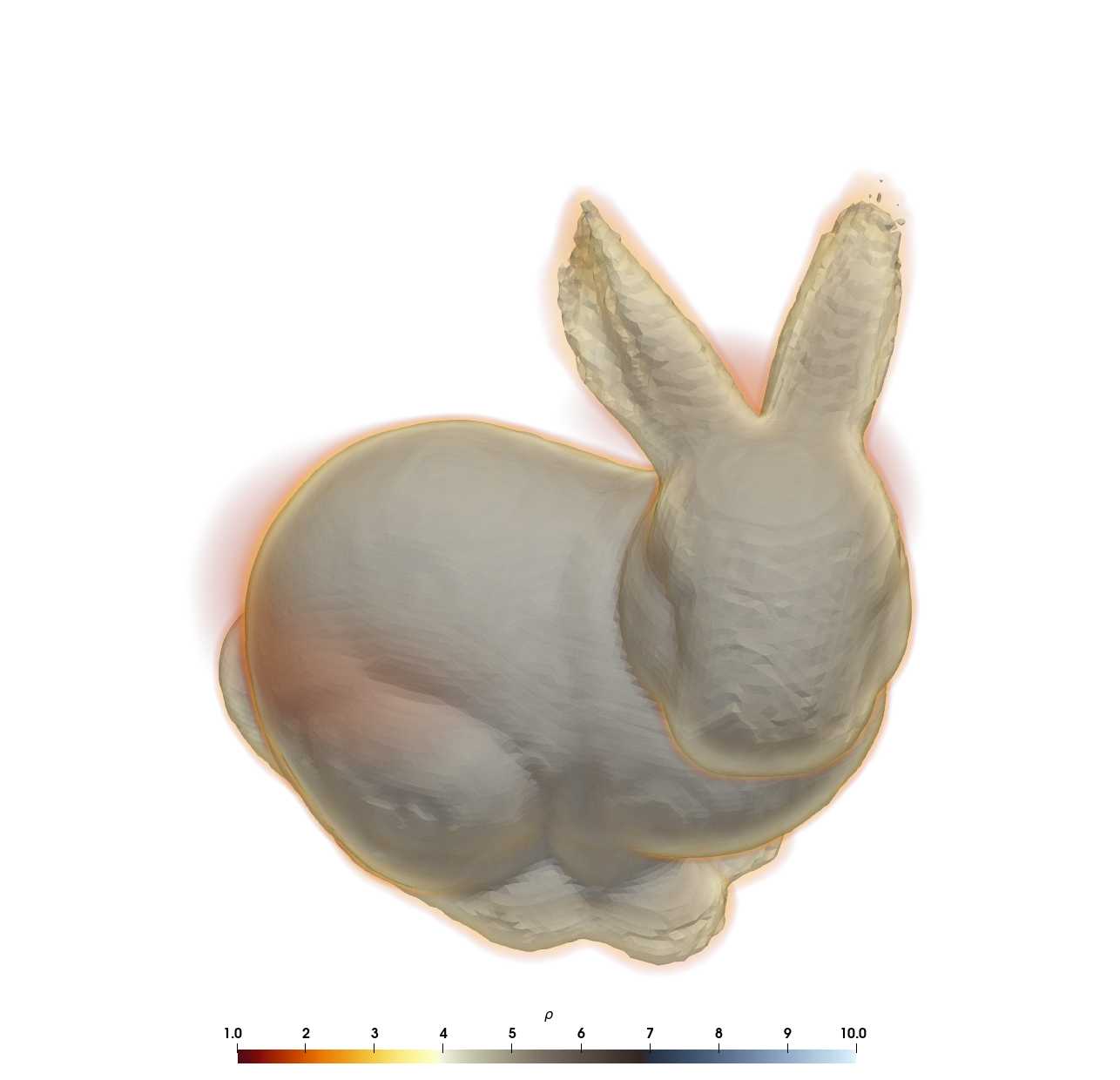}
        \caption*{$t=0.2$}
    \end{subfigure}
    \begin{subfigure}{0.16\textwidth}
        \centering
        \includegraphics[width=\textwidth,trim=30mm 40mm 30mm 70mm, clip]{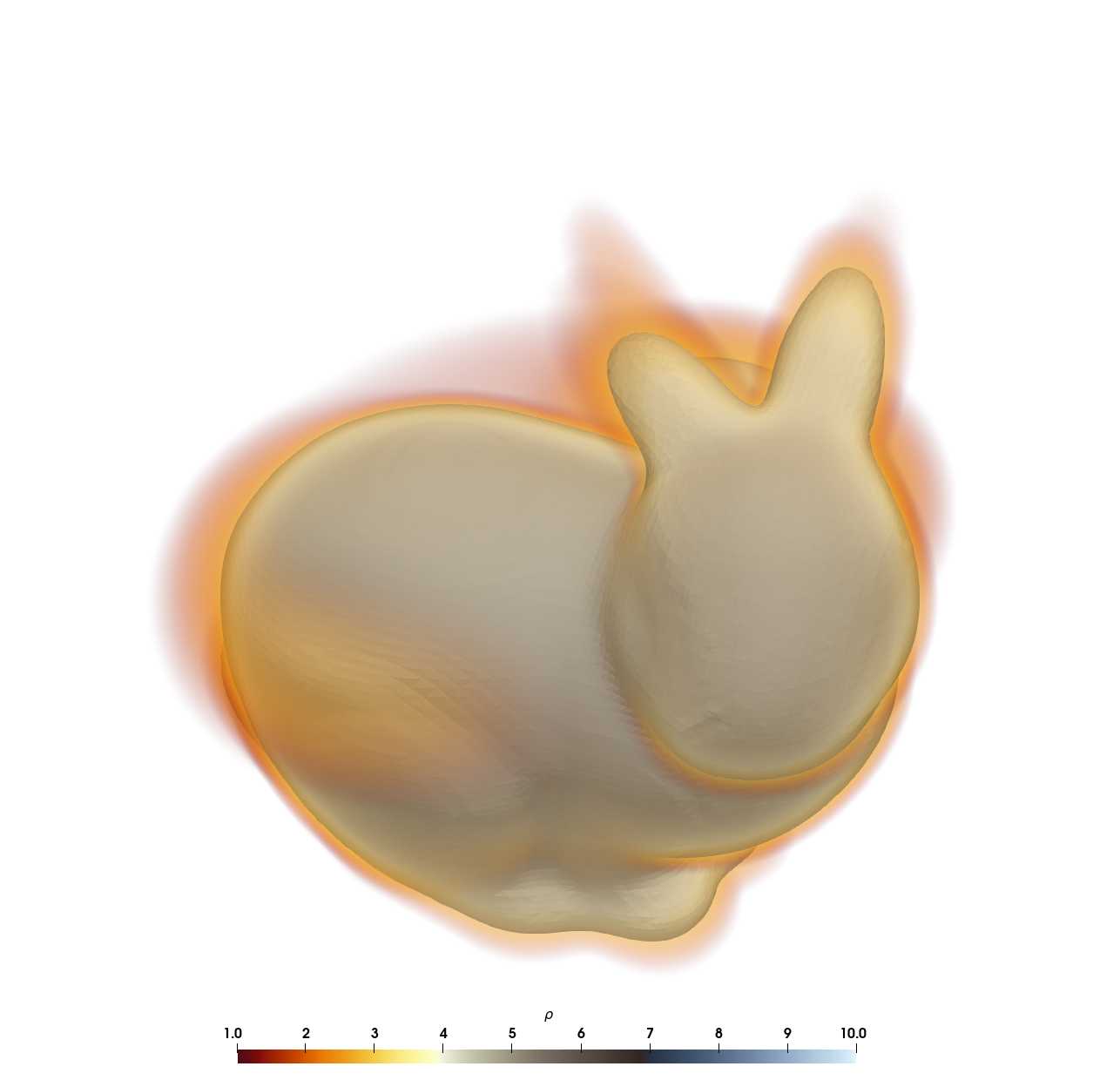}
        \caption*{$t=0.4$}
    \end{subfigure}
    \begin{subfigure}{0.16\textwidth}
        \centering
        \includegraphics[width=\textwidth,trim=30mm 40mm 30mm 70mm, clip]{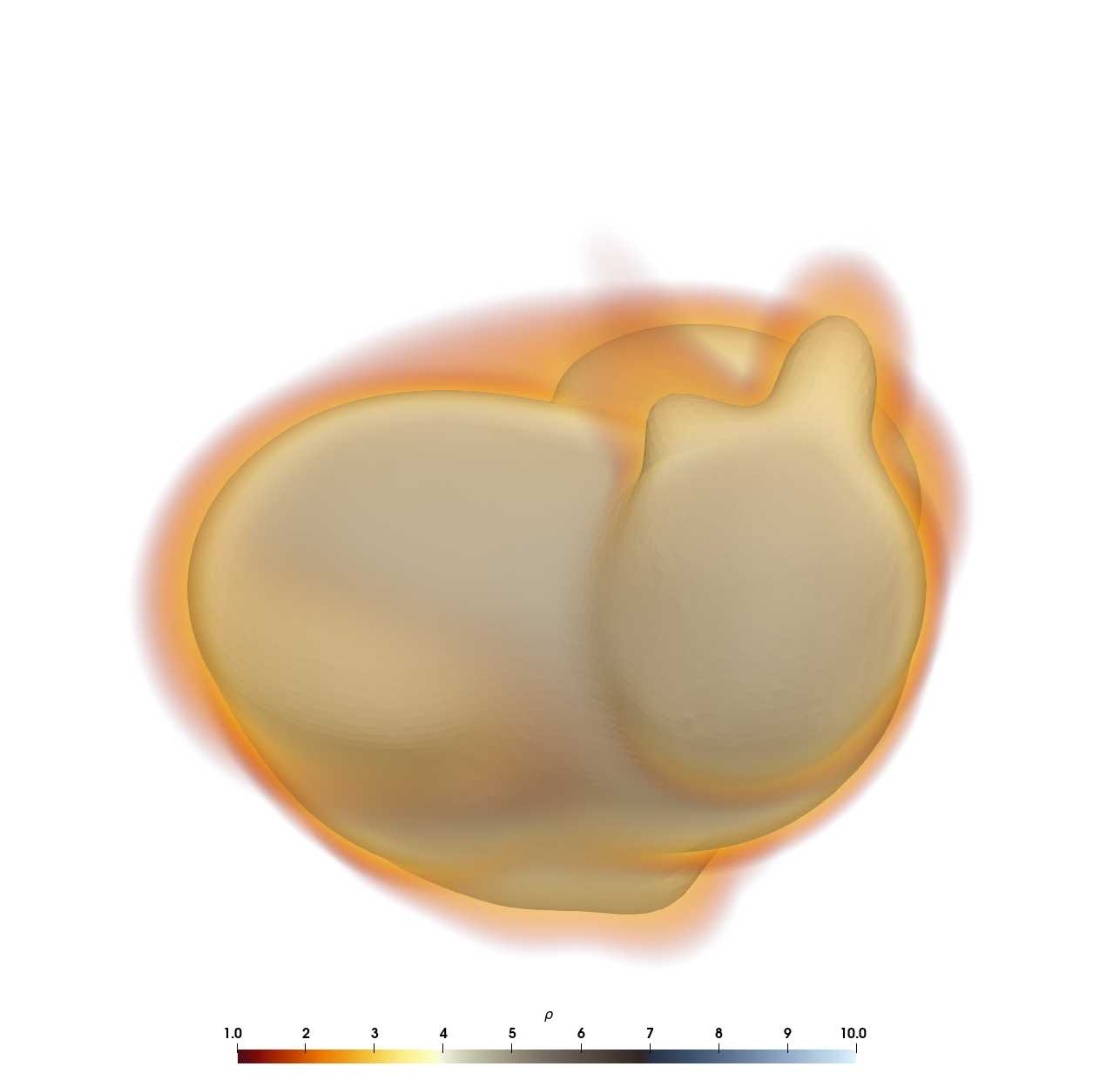}
        \caption*{$t=0.6$}
    \end{subfigure}
    \begin{subfigure}{0.16\textwidth}
        \centering
        \includegraphics[width=\textwidth,trim=30mm 40mm 30mm 70mm, clip]{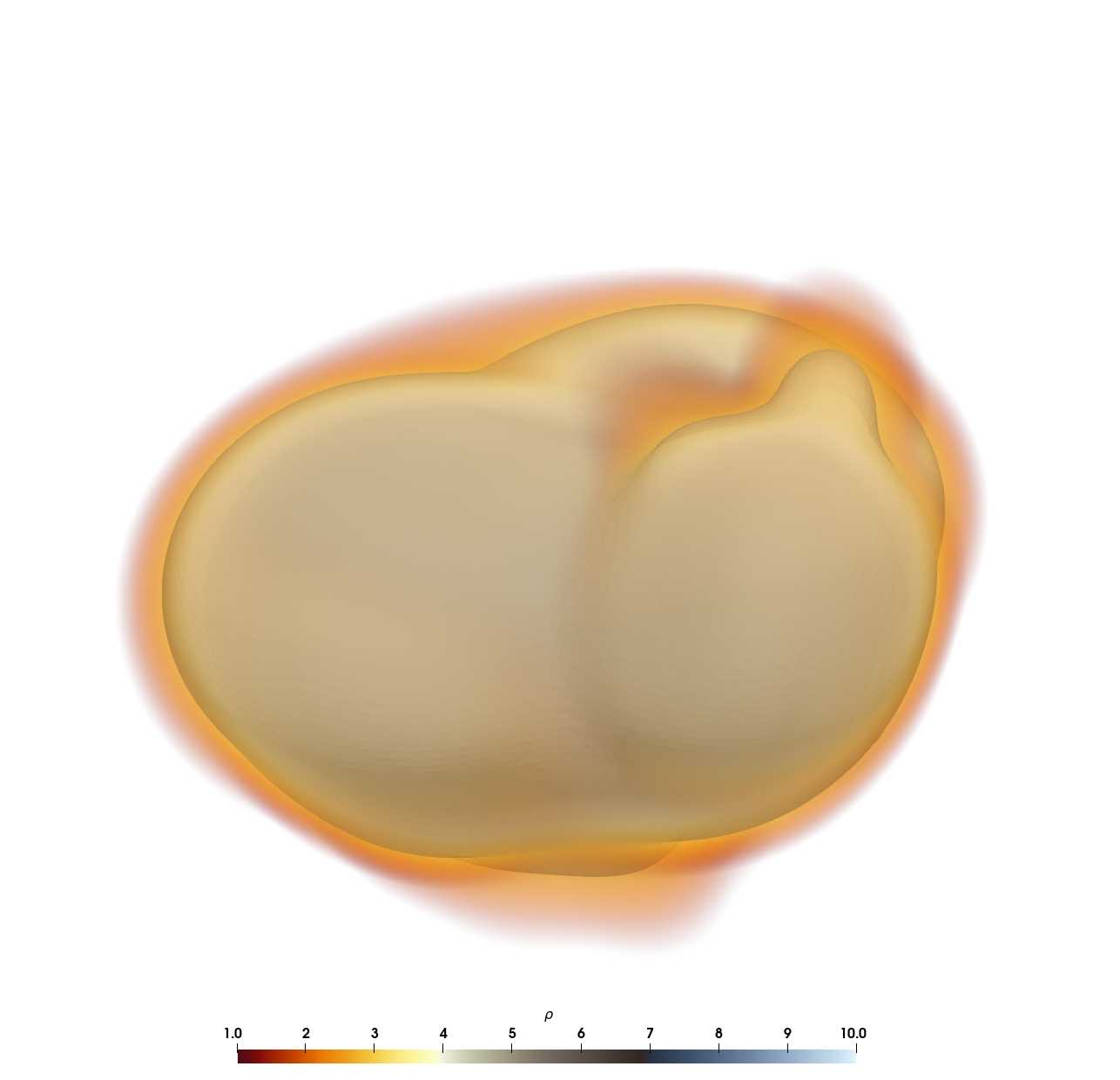}
        \caption*{$t=0.8$}
    \end{subfigure}
    \begin{subfigure}{0.16\textwidth}
        \centering
        \includegraphics[width=\textwidth,trim=30mm 40mm 30mm 70mm, clip]{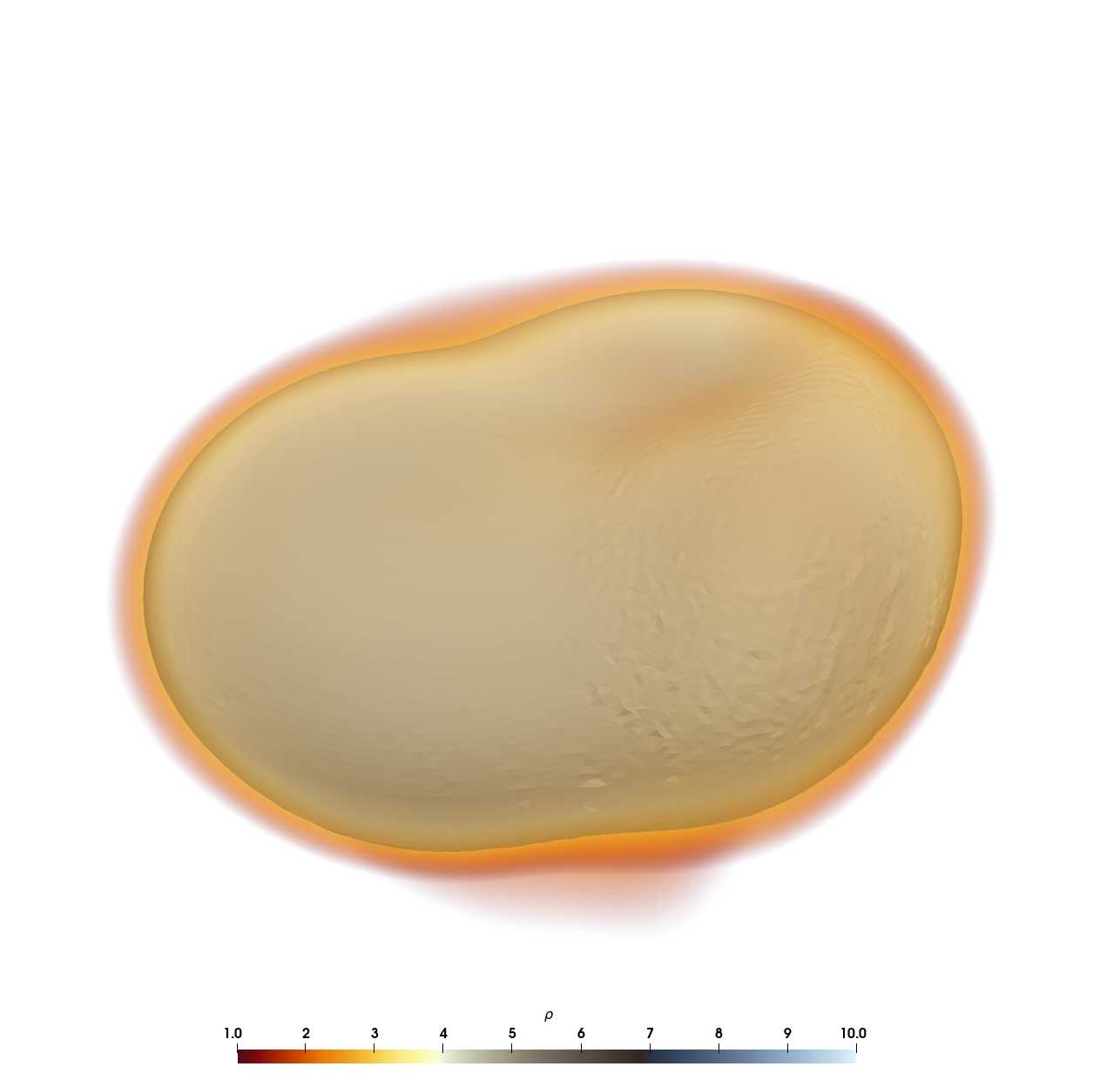}
        \caption*{$t=1.0$}
    \end{subfigure}
    
\subcaption{$\rho_3$}
\label{f:tdtb3}
\end{minipage}
\caption{Shape interpolation in a \textit{bunny}-\textit{torus}-\textit{double torus} system. Left-right shows the time evolution of density. Top-down shows plots at different reaction strengths: $\alpha=0$ and $\alpha=50$.}
\label{f:tdtb}
\end{figure}

\subsection{Barycenter on 3D surface geometry}
In this subsection, we employ our methodology to calculate generalized Wasserstein barycenters on two-dimensional surfaces embedded in $\mathbb{R}^3$. In our first illustration, we determine the barycenter of three Gaussian densities situated on the cylindrical surface $C \times [-1,1]$, where $C = {(\cos\theta,\sin\theta) : \theta \in [0,2\pi]}$. The expressions for the densities are as follows:

\begin{subequations}
    \begin{align}
        \rho_1 =&\; e^{-20\left( x^2 + (y+1)^2 + 2z^2 + 2xz\right)},\\
        \rho_2 =&\; e^{-20\left( (x-1)^2 + 2y^2 + z^2 \right)}, \\
        \rho_3 =&\; e^{-20\left( 2x^2 + (y-1)^2 + z^2 -2xz \right)}. 
    \end{align}
\end{subequations}

We employ $k=4$ degree polynomials on a spacetime mesh with $32\times 32$ spatial surface elements and four temporal line elements. We set the reaction strength $\alpha$ and the interaction strength $\beta_i$ to 0 for this computation. The results are visualized in Figure \ref{f:surfgauss}, where the density contours of all three densities are plotted on the same ambient mesh at regular intervals of 0.2 from the initial to the final time. One can see the three Gaussian densities gradually undergo rotation and translation and meet at their mutual barycenter density at the terminal time $t=1$.

Through our final numerical experiment, we further demonstrate the robustness of our scheme by computing the barycenter of a 4-density system on a complex surface mesh shaped like a dinosaur, obtained from the quadwild project  \href{https://github.com/nicopietroni/quadwild}{https://github.com/nicopietroni/quadwild}. The initial densities are indicator functions located at the head, the tail, and two of the feet. For this experiment, we use $k=4$ polynomials on a spacetime mesh with 5758 spatial surface elements and four temporal elements. Additionally, we take $\beta_i=0.001$ and $\alpha=0.1$. The results are displayed in Figure $\ref{f:dino}$, where contours of all four densities are simultaneously plotted on the dinosaur mesh from $t=0$ to $t=1$. The initial indicator functions are smoothened upon evolution 
% (thanks to non-zero interaction strengths) 
and meet at their barycenter at the terminal time. The barycenter density is localized around the dinosaur’s abdomen and appears to have a bimodal spread with a stronger peak on the dinosaur’s right.

\begin{figure}[h!]
    \centering
    \begin{subfigure}{0.32\textwidth}
        \centering
        \includegraphics[width=\textwidth,trim=20mm 20mm 40mm 0mm, clip]{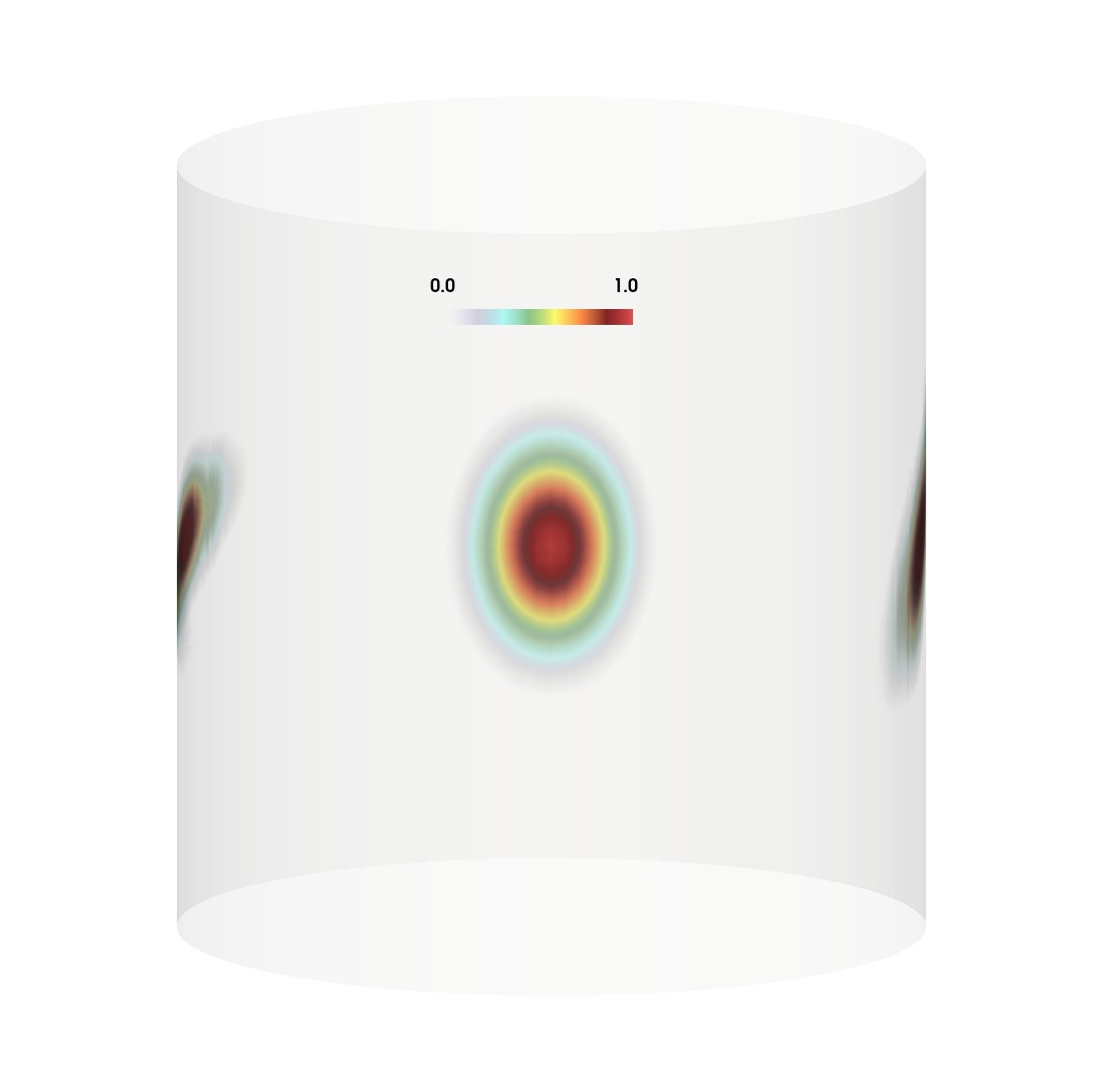}
        \caption{$t=0$}
    \end{subfigure}
    \begin{subfigure}{0.32\textwidth}
        \centering
        \includegraphics[width=\textwidth,trim=20mm 20mm 40mm 0mm, clip]{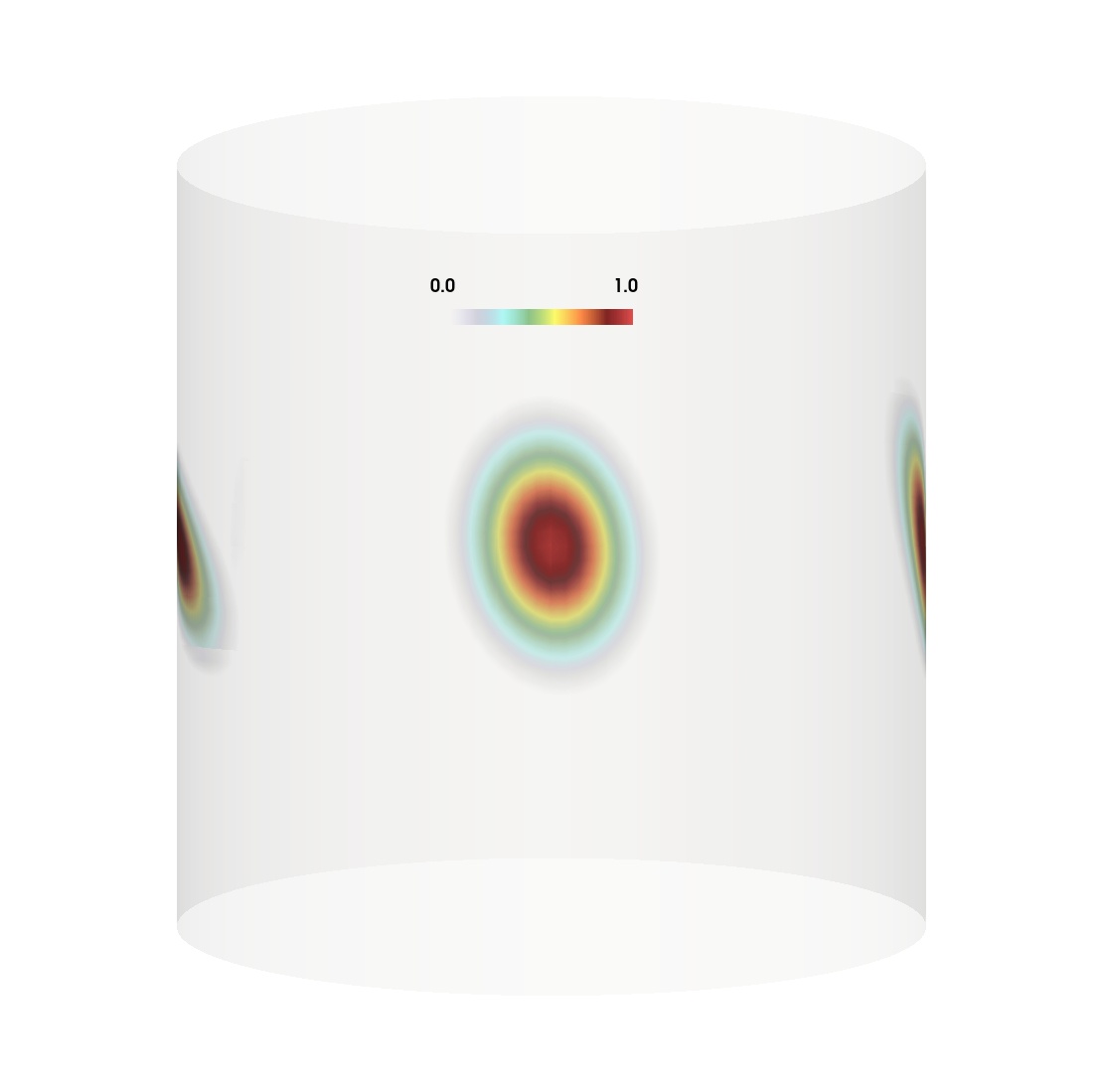}
        \caption{$t=0.2$}
    \end{subfigure}
    \begin{subfigure}{0.32\textwidth}
        \centering
        \includegraphics[width=\textwidth,trim=20mm 20mm 40mm 0mm, clip]{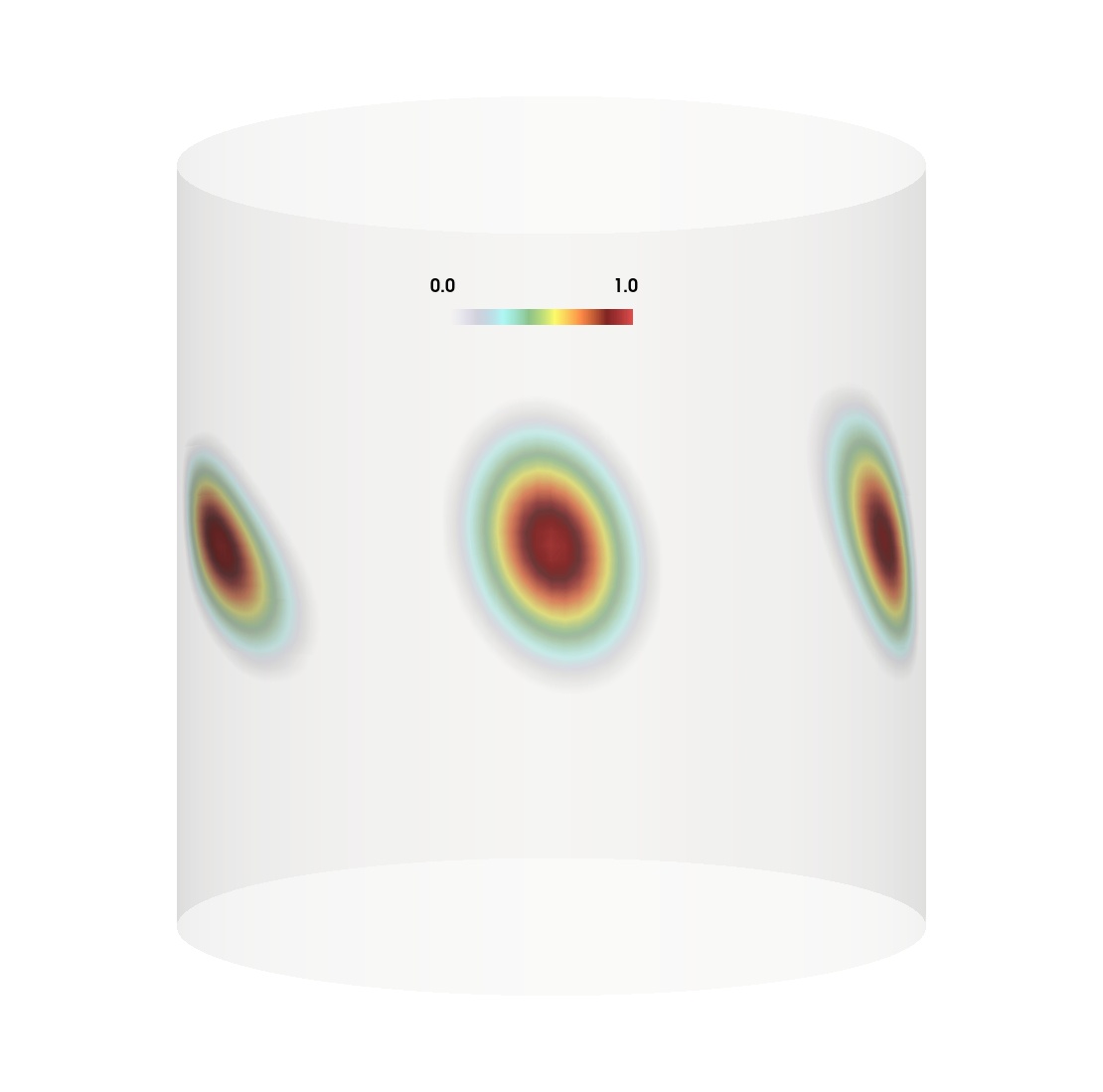}
        \caption{$t=0.4$}
    \end{subfigure}

    \begin{subfigure}{0.32\textwidth}
        \centering
        \includegraphics[width=\textwidth,trim=20mm 20mm 40mm 0mm, clip]{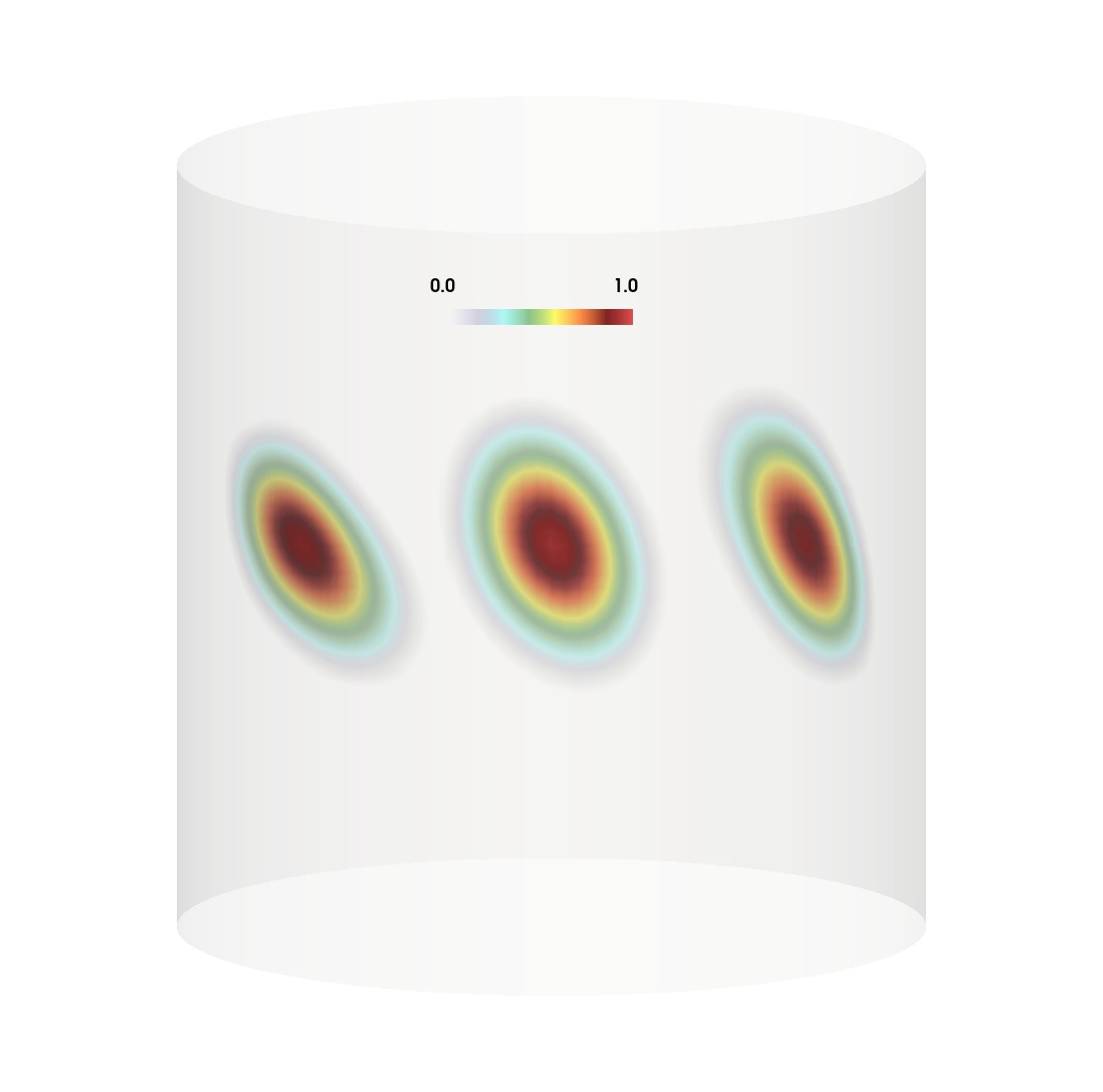}
        \caption{$t=0.6$}
    \end{subfigure}
    \begin{subfigure}{0.32\textwidth}
        \centering
        \includegraphics[width=\textwidth,trim=20mm 20mm 40mm 0mm, clip]{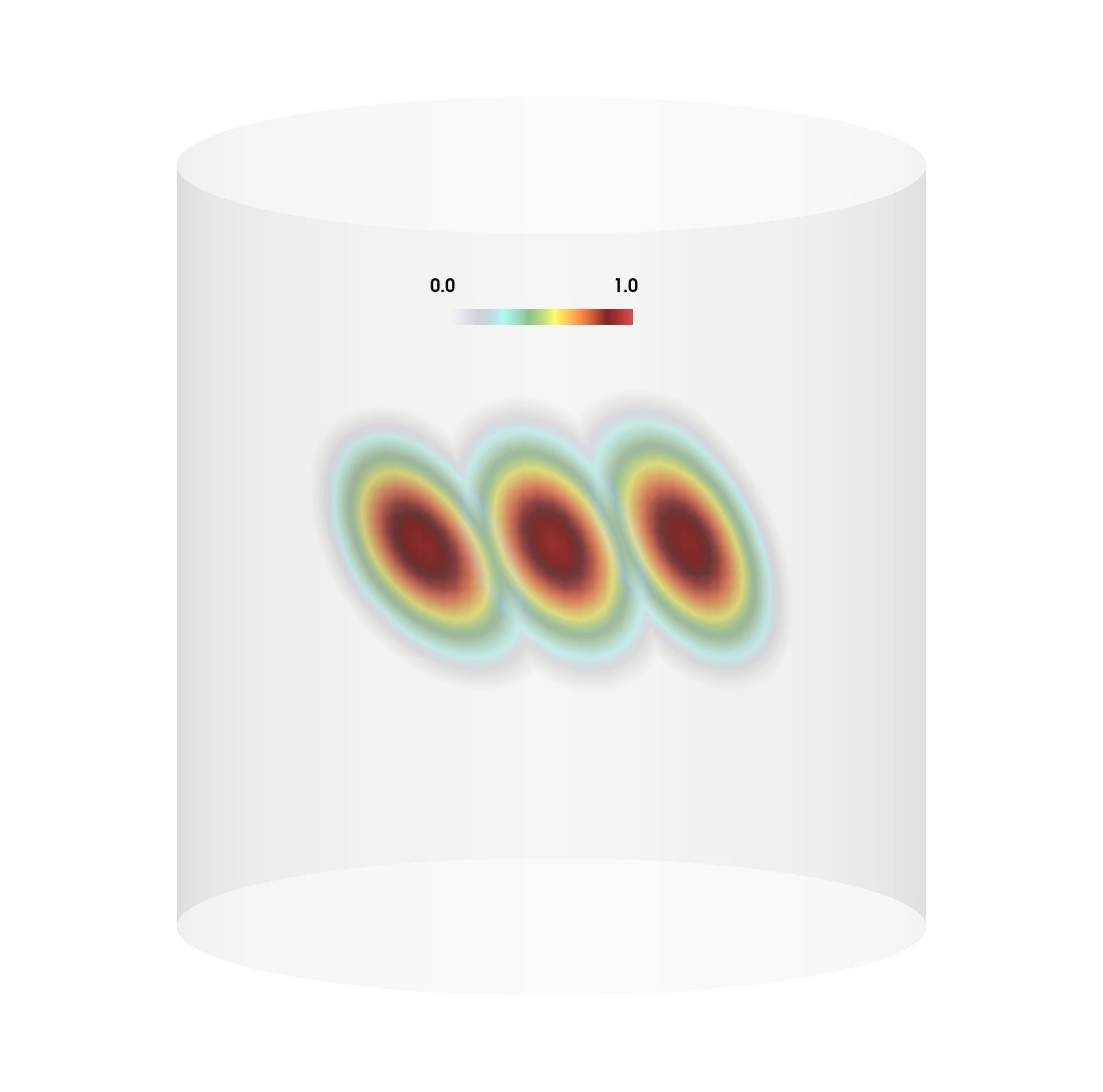}
        \caption{$t=0.8$}
    \end{subfigure}
    \begin{subfigure}{0.32\textwidth}
        \centering
        \includegraphics[width=\textwidth,trim=20mm 20mm 40mm 0mm, clip]{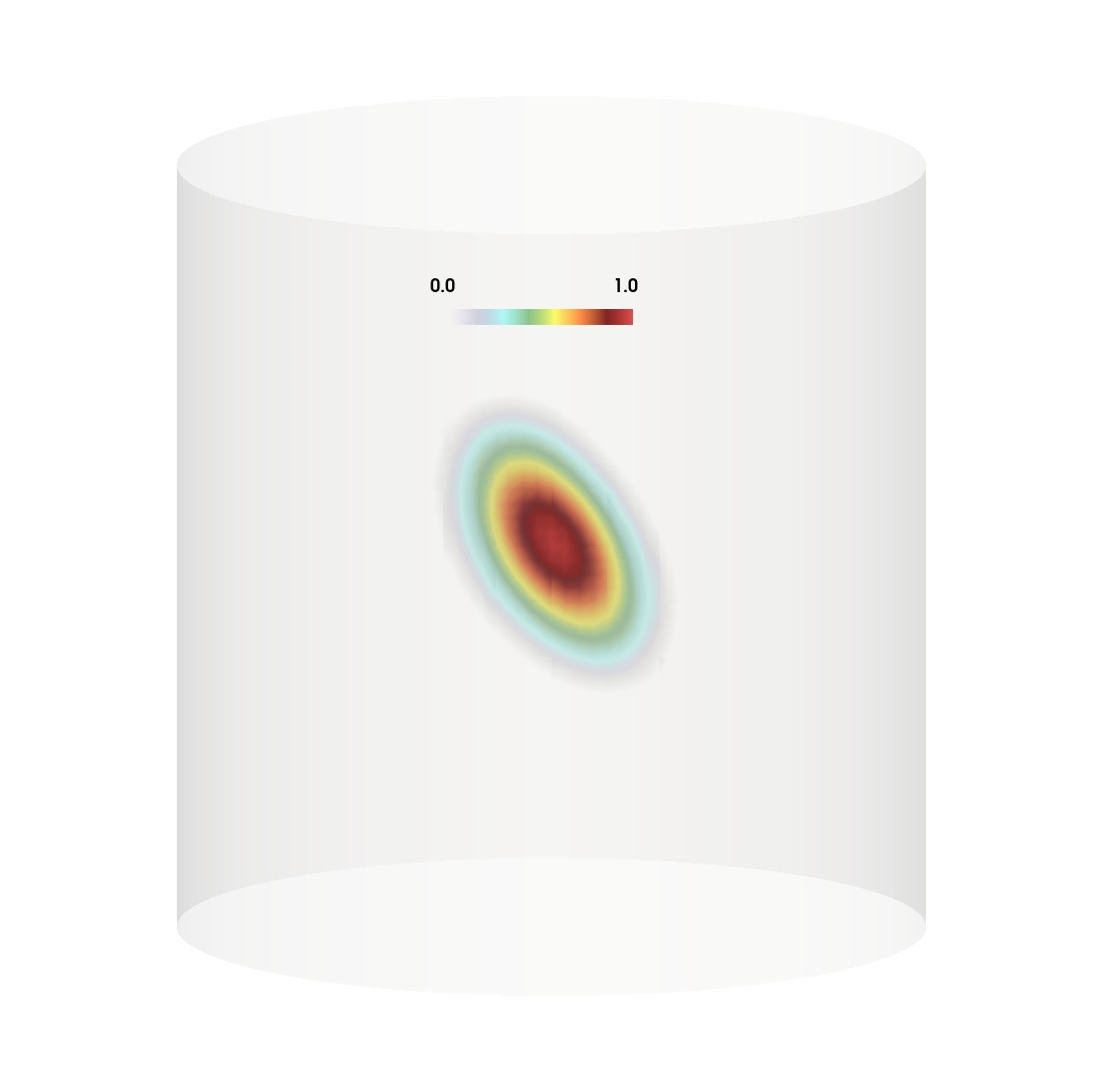}
        \caption{$t=1.0$}
    \end{subfigure}
  
    \caption{Density evolution of three Gaussian distributions on a cylindrical surface mesh.}
    \label{f:surfgauss}
\end{figure}

\begin{figure}[h]
    \centering
    \begin{subfigure}{0.32\textwidth}
        \centering
        \includegraphics[width=\textwidth,trim=20mm 20mm 90mm 0mm, clip]{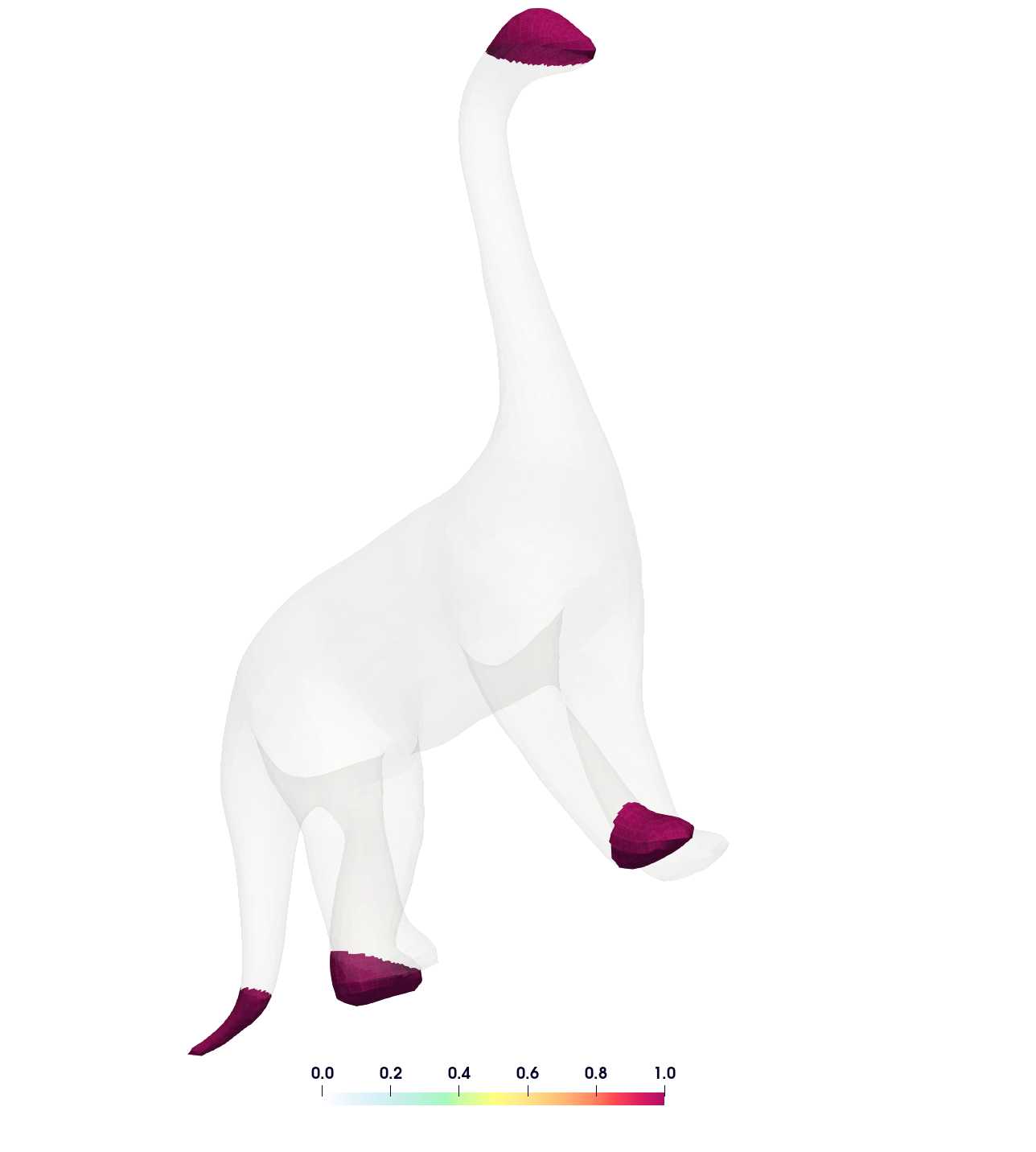}
        \caption{$t=0$}
    \end{subfigure}
    \begin{subfigure}{0.32\textwidth}
        \centering
        \includegraphics[width=\textwidth,trim=20mm 20mm 90mm 0mm, clip]{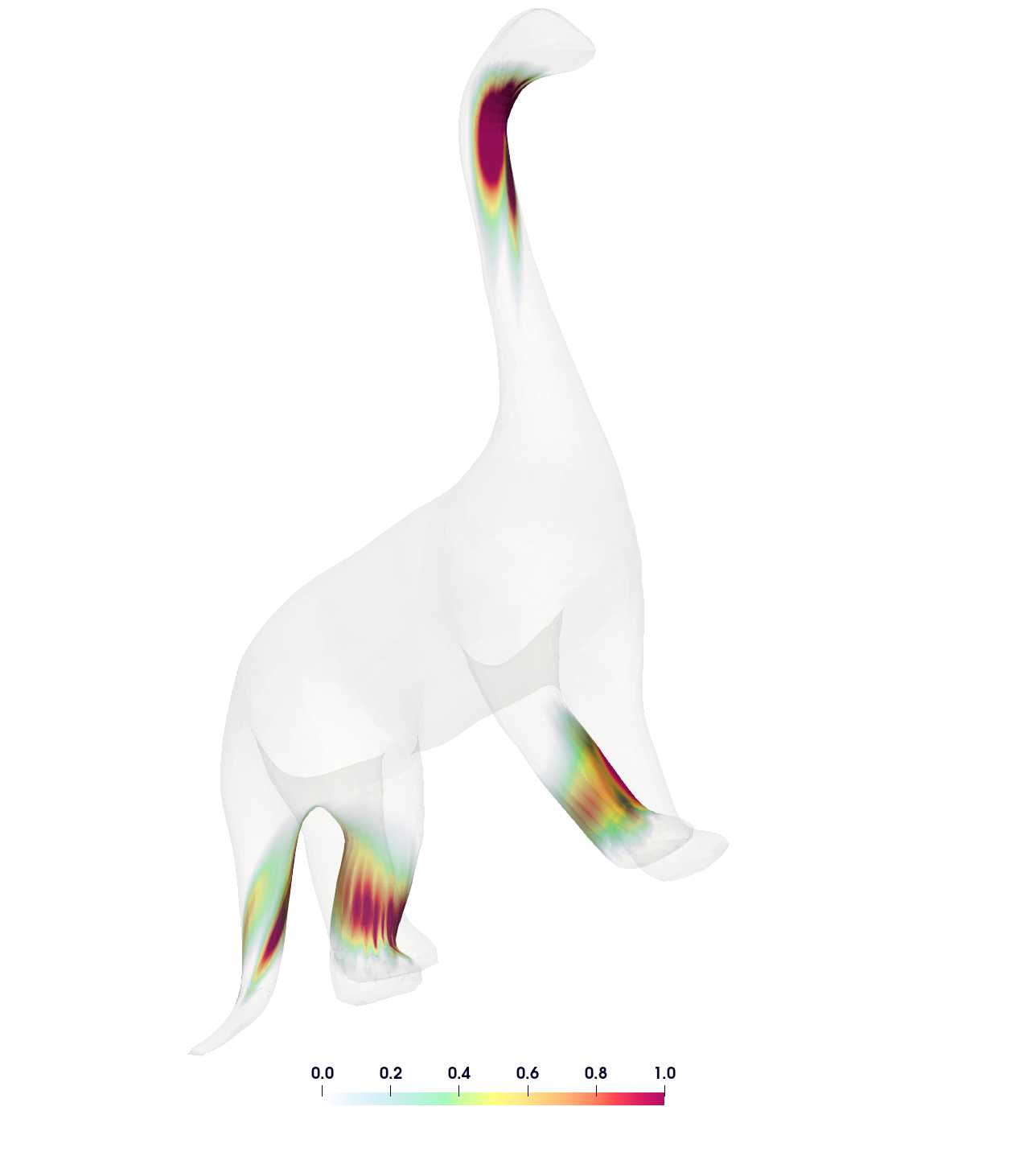}
        \caption{$t=0.2$}
    \end{subfigure}
    \begin{subfigure}{0.32\textwidth}
        \centering
        \includegraphics[width=\textwidth,trim=20mm 20mm 90mm 0mm, clip]{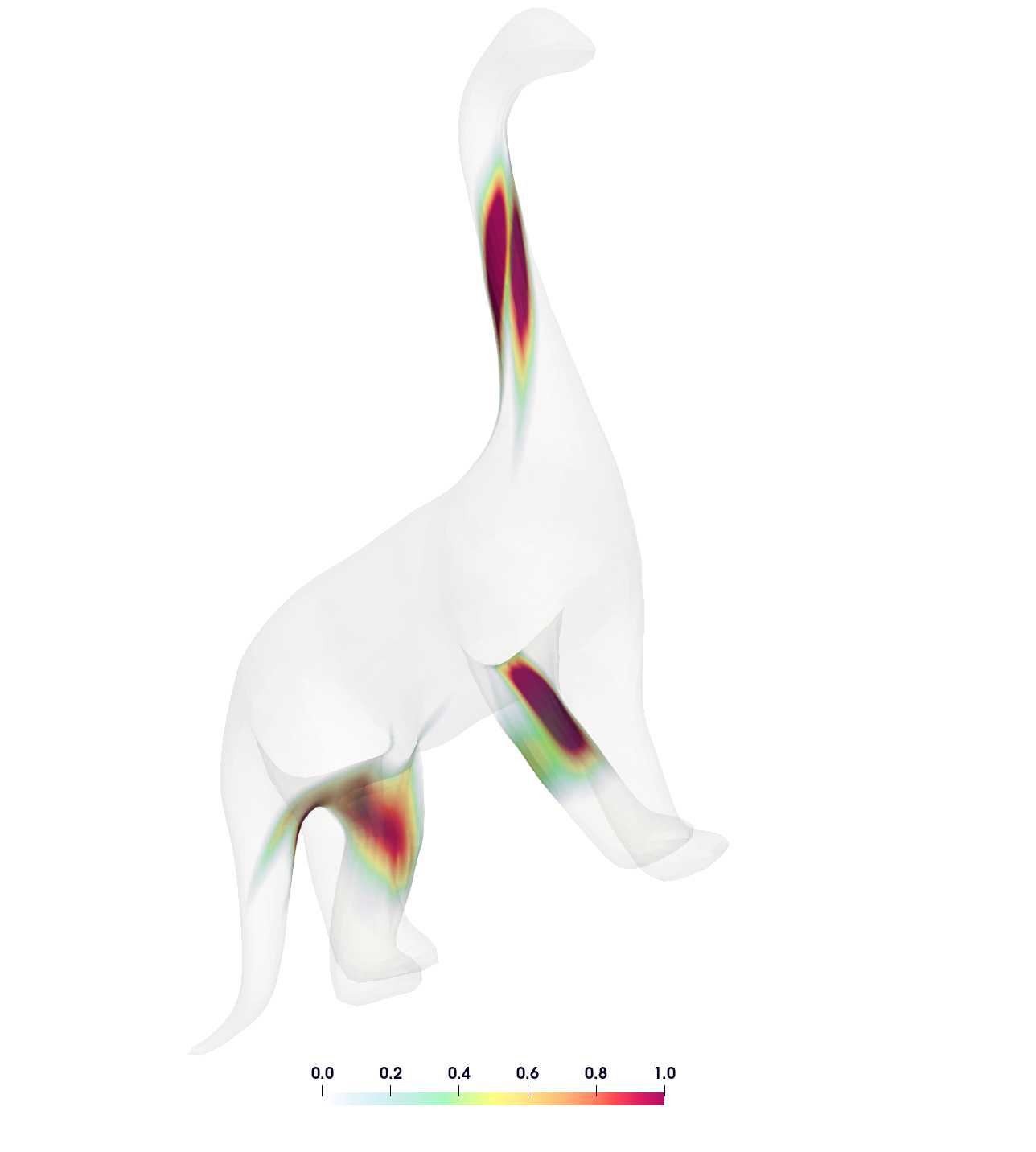}
        \caption{$t=0.4$}
    \end{subfigure}

    \begin{subfigure}{0.32\textwidth}
        \centering
        \includegraphics[width=\textwidth,trim=20mm 20mm 90mm 0mm, clip]{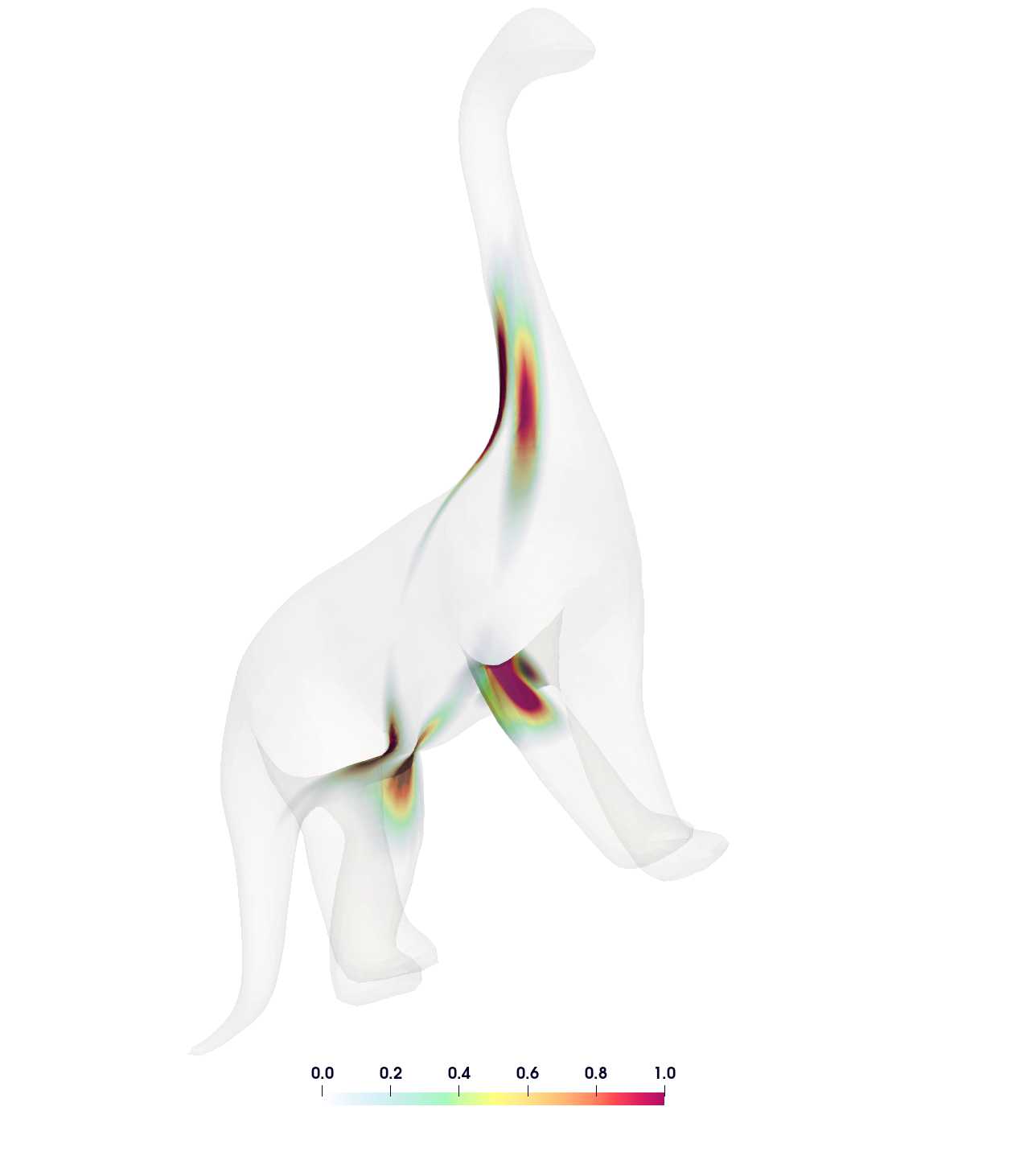}
        \caption{$t=0.6$}
    \end{subfigure}
    \begin{subfigure}{0.32\textwidth}
        \centering
        \includegraphics[width=\textwidth,trim=20mm 20mm 90mm 0mm, clip]{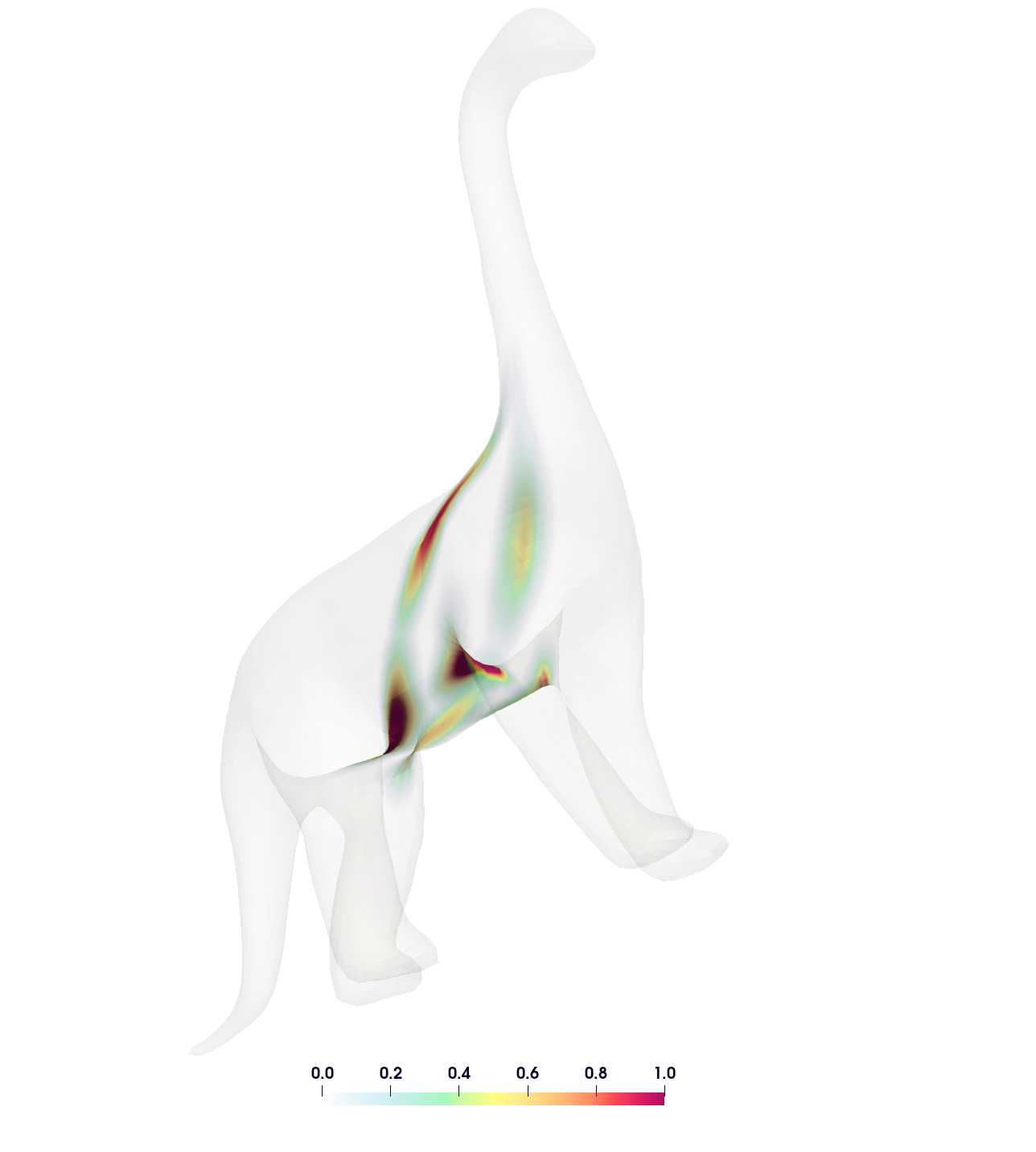}
        \caption{$t=0.8$}
    \end{subfigure}
    \begin{subfigure}{0.32\textwidth}
        \centering
        \includegraphics[width=\textwidth,trim=20mm 20mm 90mm 0mm, clip]{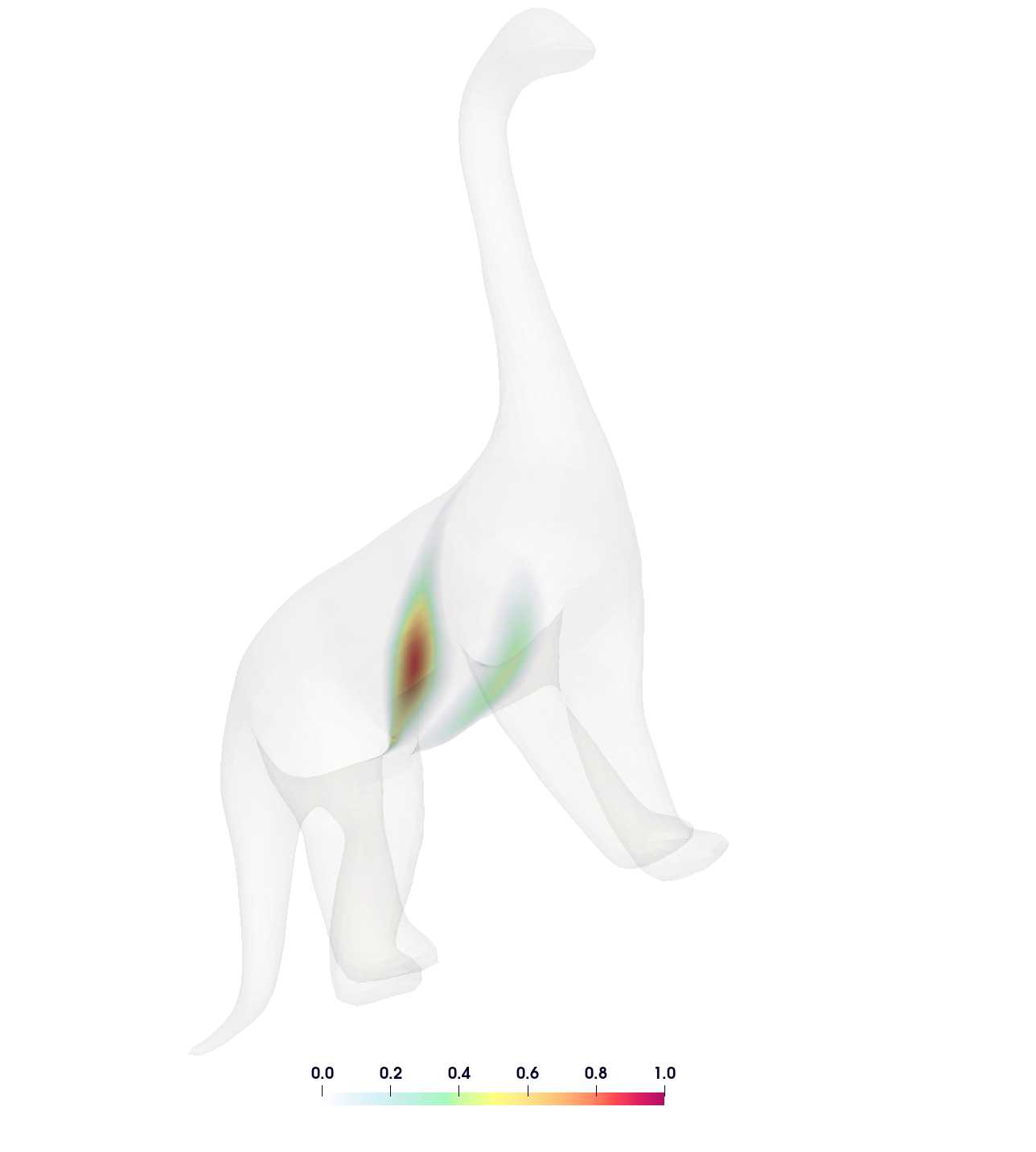}
        \caption{$t=1.0$}
    \end{subfigure}
    \caption{Density evolution of 4 indicator functions on a dinosaur surface mesh.}
    \label{f:dino}
\end{figure}

\newpage
\section{Conclusion}\label{sec5}
This paper studies a model of generalized Wasserstein barycenter problems based on MFC variational formulations. We apply the generalized gradient flow formulation for reaction-diffusion systems to obtain the generalized Wasserstein-2 distances with both transportation and reaction mobilities. The averaging optimization of these generalized Wasserstein-2 distances introduces a new set of Barycenter problems. We apply the high-order finite element methods to discretize spatial and time domains and then use the PDHG algorithm to compute the discretized MFC-barycenter problems. Numerical examples in the two-dimensional surface domain and three-dimensional volume domain demonstrate the effectiveness of the proposed method. 

In future work, we shall investigate the MFC-barycenter of general dynamics, including both reaction-diffusion equations/systems and conservation laws. It can bring general physics equations and modeling into the computational average of densities from application problems in computer vision and data sciences. We also plan to apply the current MFC-barycenter models to compute realistic barycenters of density vectors on 2D or 3D complicated spatial domains.

\vspace{0.2cm}

\noindent\textbf{Funding}  S. Osher is partially funded by AFOSR MURI FA9550-18-502 and ONR N00014-20-1-2787. G. Fu is supported by AFOSR YIP award No. FA9550-23-1-0087. W. Li is supported by AFOSR YIP award No. FA9550-23-1-0087, NSF RTG: 2038080, and NSF DMS: 2245097.

\noindent\textbf{Data Availability} Enquiries about data availability shall be directed to the authors.

\section*{Declaration}
\noindent\textbf{Conflict of interest}
The authors declare that there are no known conflicts of interest associated with this work.

\clearpage
% \bibliography{ref,HPC,HJKO}
\bibliography{ref}
\bibliographystyle{siam}

\end{document}